\newtheorem{theorem}{Theorem}[section]
\newtheorem{thm}[theorem]{Theorem}
\newtheorem{lemma}[theorem]{Lemma}
\newtheorem{lem}[theorem]{Lemma}
\newtheorem{proposition}[theorem]{Proposition}
\newtheorem{prop}[theorem]{Proposition}
\newtheorem{corollary}[theorem]{Corollary}
\newtheorem{cor}[theorem]{Corollary}
\newtheorem{conjecture}[theorem]{Conjecture}
\theoremstyle{remark}
\theoremstyle{definition}
\newtheorem{remark}[theorem]{Remark}
\newtheorem{example}[theorem]{Example}
\newtheorem{definition}[theorem]{Definition}
\def\H{\mathcal{H}}
\def\Ccal{\mathcal{C}} 
\def\Scal{\mathcal{S}}
\def\x{\mathfrak{x}}
\def\Cc{\mathfrak{C}} 
\def\S{\mathfrak{S}}
\def\hh{\mathfrak{h}}
\def\Zz{\mathcal{Z}}
\def\D{\mathcal{D}}
\def\R{\mathbb{R}}
\def\Z{\mathbb{Z}}
\def\Q{\mathbb{Q}}
\def\<{{\langle}}
\def\>{{\rangle}}
\def\det{{ \operatorname{det}}}
\def\e{{\epsilon}}
\def\VertL{{\operatorname{Vert}(\affL)}}
\def\map{\u}
\def\coeff{\theta}
\def\eig{\v}
\def\mutmat{\omega}
\def\coxmat{\mathbf{C}}
\def\bwmat{A}
\def\speed{ \operatorname{SPEED}}
\def\summ{ \operatorname{SUM}}
\newcommand{\col}[2]{\begin{pmatrix}
                         #1\\#2
                        \end{pmatrix}}
\def\rec{J}
\def\Vert{{ \operatorname{Vert}}}
\def\VertQ{{ \Vert(Q)}}
\def\x{{ \mathbf{x}}}
\def\affinite{{affine $\boxtimes$ finite }}
\def\affaff{{affine $\boxtimes$ affine }}
\def\affA{{\hat A}}
\def\affD{{\hat D}}
\def\affE{{\hat E}}
\def\affL{{\hat \Lambda}}
\def\l{{ \lambda}}
\def\Ttr{ \mathfrak{t}^\l}
\def\Ytr{ y^\l}
\def\i{{\mathbf{i}}}
\def\op{{ \operatorname{op}}}
\newcommand{\plusOne}[1]{%
\number\numexpr#1+1\relax%
}
\newcommand{\bl}[1]{[fillstyle=solid,fillcolor=lightgray,mnode=circle]#1}
\newcommand{\wh}[1]{[fillstyle=solid,fillcolor=white,mnode=circle]#1}
\newcommand{\ra}[2]{\ncline[linecolor=red]{#1}{#2}}
\newcommand{\ba}[2]{\ncline[linecolor=blue]{#1}{#2}}
\newcommand{\arr}[2]{\ncline[linecolor=black]{#1}{#2}}
\title{Quivers with subadditive labelings: classification and integrability}
\numberwithin{equation}{section}
\begin{document}

\author{Pavel Galashin}
\address{\hspace{-.3in} Department of Mathematics, Massachusetts Institute of Technology,
Cambridge, MA 02139, USA}
\email{galashin@mit.edu}

\author{Pavlo Pylyavskyy}
\address{\hspace{-.3in} Department of Mathematics, University of Minnesota,
Minneapolis, MN 55414, USA}
\email{ppylyavs@umn.edu}

\date{\today}

\thanks{P.~P. was partially supported by NSF grants  DMS-1148634, DMS-1351590, and Sloan Fellowship.}

\subjclass{
Primary 
13F60, 
Secondary
37K10, 05E99. 
}

\keywords{Cluster algebras, Zamolodchikov integrability, discrete solitons, domino tilings,linear recurrence}

\begin{abstract}

Strictly subadditive, subadditive and weakly subadditive labelings of quivers were introduced by the second author, generalizing Vinberg's definition for undirected graphs. In our previous work we have shown that quivers with strictly subadditive labelings are exactly the quivers exhibiting 
Zamolodchikov periodicity. In this paper, we classify all quivers with subadditive labelings. We conjecture them to exhibit a certain form of integrability, namely, as the $T$-system dynamics proceeds, the values at each vertex satisfy a linear recurrence. Conversely, 
we show that every quiver integrable in this sense is necessarily one of the $19$ items in our classification. For the quivers of type $\hat A \otimes A$ we express the coefficients of the recurrences 
in terms of the partition functions for domino tilings 
of a cylinder, called \emph{Goncharov-Kenyon Hamiltonians}.  
We also consider tropical $T$-systems 
of type $\hat A \otimes A$ and explain how affine slices exhibit solitonic behavior, i.e. soliton resolution and speed conservation. Throughout, we conjecture how the results in the paper are expected to generalize from $\hat A \otimes  A$ to all other quivers in our classification.
\end{abstract}

\maketitle

\setcounter{tocdepth}{1}
\tableofcontents

\section*{Introduction}

 A \emph{quiver} $Q$ is a directed graph without $1$-cycles (i.e. loops) and directed $2$-cycles. For a vertex $v$ of a quiver, one can define a certain operation called \emph{a mutation}, which produces a new quiver denoted $\mu_v(Q)$ (see Definition~\ref{dfn:mutations}). We say that a quiver is bipartite if its underlying graph is bipartite, in which case we say that a map $ \e:\VertQ\to\{0,1\},\ v\mapsto\e_v$ is a \emph{bipartition} if for every edge $u\to v$ of $Q$ we have $\e_u\neq \e_v$. Here $\VertQ$ is the set of vertices of $Q$.
 
 It is clear from Definition~\ref{dfn:mutations} that $\mu_u$ and $\mu_v$ commute if $u,v$ are not connected by an edge in $Q$. Therefore, we can define 
 \[\mu_\circ=\prod_{u:\e_u=0} \mu_u;\quad \mu_\bullet=\prod_{v:\e_v=1} \mu_v.\]
 We say that $Q$ is \emph{recurrent} if $\mu_\circ(Q)=\mu_\bullet(Q)=Q^{\op}$ where $Q^\op$ is the same quiver as $Q$ but with all the arrows reversed.

Let $Q$ be a bipartite recurrent quiver. Denote $\x:=\{x_v\}_{v\in\VertQ}$ to be the set of indeterminates, one for each vertex of $Q$, and let $\Q(\x)$ be the field of rational functions in these variables. The \emph{$T$-system} associated with $Q$ is a family $T_v(t)$ of elements of $\Q(\x)$ satisfying the following relations for all $v\in\VertQ$ and all $t\in\Z$ :
\begin{equation*}
T_v(t+1)T_v(t-1)=\prod_{u\to v} T_u(t)+\prod_{v\to w} T_w(t). 
\end{equation*}
 Here the products are taken over all \emph{arrows} connecting the two vertices. 

It is clear that the parity of $t+\e_v$ in all of the terms is the same, so the $T$-system associated with $Q$ splits into two completely independent ones. Without loss of generality we may consider only one of them. From now on we assume that the $T$-system is defined only for $t\in\Z$ and $v\in\VertQ$ satisfying 
\[t+\e_v\equiv 0\pmod 2.\] 

The $T$-system is set to the following initial conditions: 
\[T_v(\e_v)=x_v\]
for all $v\in\VertQ$.

Let us say that the $T$-system associated with a recurrent quiver $Q$ is {\it {integrable}} if for every vertex $v\in\VertQ$, there exists an integer $N$ and elements $\rec_0,\rec_1,\dots,\rec_{N}\in \Q(\x)$ satisfying $\rec_0,\rec_N\neq 0$ and 
\[\sum_{j=0}^{N} \rec_j T_v(t+2j)=0\] 
for all $t\in\Z$ with $t+\e_v$ even. We also refer to a recurrent quiver $Q$ as \emph{Zamolodchikov integrable}
if the associated $T$-system is integrable. If the recurrence has the form $T_v(t+2N)=T_v(t)$ for all $t\in\Z$ and $v\in\VertQ$, then we call $Q$ \emph{Zamolodchikov periodic}.

Just as in the periodic case, we call a quiver \emph{Zamolodchikov integrable} when the {\underline{bipartite}} $T$-system is integrable. More general notions of $T$-systems can be found in \cite{Na}.

Zamolodchikov periodicity for the case when $Q$ is a tensor product of two finite $ADE$ Dynkin diagrams has been studied extensively (see \cite{Z,RVT,KN,KNS,FZy,FS,GT,Vo,Sz}) and was proven in full generality in \cite{K} and later in  \cite{IIKKN1, IIKKN2}, where tropical $Y$-systems played a major role. By analyzing tropical $T$-systems  (see Part~\ref{part:tropical}), we have classified Zamolodchikov periodic quivers in~\cite{GP}, where we showed that these are exactly the quivers admitting a strictly subadditive labeling (Definition~\ref{dfn:subadditive}). 

Besides thermodynamic Bethe ansatz \cite{Z}, $T$-systems and $Y$-systems arise naturally in a lot of different contexts in physics and representation theory, e.g. \cite{KNS,KR,R,OW,FR,Kni,N}, see \cite{KNSi} for a survey.

Assem, Reutenauer and Smith~\cite{ReuA} showed that the affine Dynkin diagrams of types $\affA$ and $\affD$ are Zamolodchikov integrable, and later Keller and Scherotzke~\cite{KS} extended this result to all affine Dynkin diagrams. Conversely, it was shown in~\cite{ReuA} that if every vertex of a Zamolodchikov integrable quiver $Q$ is either a source or a sink then $Q$ is necessarily an affine Dynkin diagram. 

In Sections~\ref{sec:implies} and~\ref{subsec:implies_tropical} we prove (Theorem~\ref{thm:implies}) that if a bipartite recurrent quiver is Zamolodchikov integrable then it admits a subadditive labeling (see Definition~\ref{dfn:subadditive}). 

We then classify (Theorem~\ref{thm:class}) quivers that admit subadditive labelings in Part~\ref{part:classification}. We conjecture all of them to be Zamolodchikov integrable (Conjecture~\ref{conj:linearizable}). 

When $Q$ is a tensor product (see Definition~\ref{dfn:tensor_product}) of type $\affA\otimes A$, it was shown in~\cite{P} that $Q$ is Zamolodchikov integrable. In Sections~\ref{sec:variables}-\ref{sec:pleth}, we express the recurrence coefficients $\rec_1,\dots,\rec_N$ for the vertices of such $Q$ in terms of partition functions of domino tilings on the cylinder, called~\emph{Goncharov-Kenyon Hamiltonians}. See Theorem~\ref{thm:rec} and Corollary~\ref{cor:pleth}. In Section~\ref{sec:laur} we show that the above Goncharov-Kenyon Hamiltonians belong to the upper cluster algebra. 


In Part~\ref{part:tropical}, we analyze the tropical $T$-system associated with quivers admitting a subadditive labeling. We show (Corollary~\ref{cor:speeds}) that when $t\gg0$ or $t\ll0$, every affine slice of the tropical $T$-system moves with some constant speed. We explain how this can be seen as \emph{soliton resolution} in Section~\ref{sec:resolution}, and then we proceed to show \emph{speed conservation} in Section~\ref{sec:conservation}: for the quiver of type $\affA_{2n-1}\otimes A_m$, the speeds of the solitons at $t\gg0$ are equal to the speeds of solitons at $t\ll0$ after one applies a diagram automorphism to $A_m$. See Example~\ref{example:evolution} for an illustration of these solitonic phenomena. 

Finally, in Sections~\ref{sec:geometric_conjectures} and~\ref{sec:tropical_conjectures} we conjecture most of our results for all other quivers in our classification.

{\Large\part{Zamolodchikov integrable quivers.}}

\section{Preliminaries}

\subsection{Bigraphs}

In \cite{S} Stembridge studies admissible $W$-graphs for the case when $W=I(p)\times I(q)$ is a direct product of two dihedral groups. These $W$-graphs encode the structure of representations of Iwahori-Hecke algebras, and were first introduced by Kazhdan and Lusztig in \cite{KL}.
The following definitions are adapted from \cite{S} with slight modifications. A \emph{bigraph}  is an ordered pair of simple (undirected) graphs $(\Gamma, \Delta)$ which share a common set of vertices $V:=\Vert(\Gamma)=\Vert(\Delta)$ and do not share edges. A bigraph is called \emph{bipartite} if there is a map $\e:V\to\{0,1\}$ such that for every edge $(u,v)$ of $\Gamma$ or of $\Delta$ we have $\e_u\neq \e_v$. 

There is a simple one-to-one correspondence between bipartite quivers and bipartite bigraphs. Namely, to each bipartite quiver $Q$ with a bipartition $\e:\VertQ\to\{0,1\}$ we  
associate a bigraph $G(Q)=(\Gamma(Q),\Delta(Q))$ on the same set of vertices defined as follows:
\begin{itemize}
 \item $\Gamma(Q)$ contains an (undirected) edge $(u,v)$ if and only if $Q$ contains a directed edge $u\to v$ with $\e_u=0,\e_v=1$;
 \item $\Delta(Q)$ contains an (undirected) edge $(u,v)$ if and only if $Q$ contains a directed edge $u\to v$ with $\e_u=1,\e_v=0$. 
\end{itemize}
Similarly, we can direct the edges of any given bipartite bigaph $G$ to get a bipartite quiver $Q(G)$.

It is convenient to think of $(\Gamma, \Delta)$ as of a single graph with edges of two colors: red for the edges of $\Gamma$ and blue for the edges of $\Delta$. 

\begin{definition}\label{dfn:tensor_product}
 Let $S$ and $T$ be two bipartite undirected graphs. Then their \emph{tensor product} $S\otimes T$ is a bipartite bigraph $G=(\Gamma,\Delta)$ with vertex set $\Vert(S)\times \Vert(T)$ and the following edge sets:
 \begin{itemize}
  \item for each edge $\{u,u'\}\in S$ and each vertex $v\in T$ there is an edge between $(u,v)$ and $(u',v)$ in $\Gamma$;
  \item for each vertex $u\in S$ and each edge $\{v,v'\}\in T$ there is an edge between $(u,v)$ and $(u,v')$ in $\Delta$;
 \end{itemize}
 An example of a tensor product is given in Figure~\ref{fig:tensor_product}.
\end{definition}

\begin{figure}
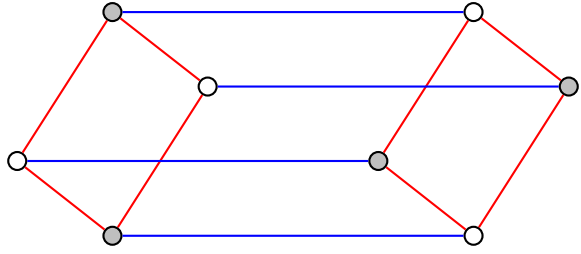

{$
\psmatrix[colsep=1cm,rowsep=0.5cm,mnode=circle]
            & \bl       &           &           &           & \wh\\
            &           & \wh       &           &           &            & \bl\\
\wh         &           &           &           & \bl\\
            & \bl       &           &           &            & \wh
\psset{arrows=-,arrowscale=2}
\ra{1,2}{3,1}
\ra{4,2}{3,1}
\ra{4,2}{2,3}
\ra{1,2}{2,3}
\ra{1,6}{3,5}
\ra{4,6}{3,5}
\ra{4,6}{2,7}
\ra{1,6}{2,7}
\ba{1,2}{1,6}
\ba{4,2}{4,6}
\ba{2,3}{2,7}
\ba{3,1}{3,5}
\endpsmatrix $}
 \caption{\label{fig:tensor_product} A tensor product of a square (type $\affA_3$) and a single edge (type $A_2$).}
\end{figure}

\subsubsection{Reformulation of the dynamics in terms of bigraphs}\label{subsec:bigraph_quiver_translation}

  Let $G=(\Gamma,\Delta)$ be a bipartite bigraph with a vertex set $V$. Then the associated $T$-system for $G$ is defined as follows:
 \begin{eqnarray*}
 T_v(t+1)T_v(t-1)&=&\prod_{(u,v)\in\Gamma} T_u(t)+\prod_{(v,w)\in\Delta} T_w(t);\\
 T_v(\e_v) &=& x_v.
 \end{eqnarray*}
 It is easy to see that this system is equivalent to the corresponding system defined for $Q(G)$ in the Introduction.

\subsection{Finite and affine $ADE$ Dynkin diagrams and their Coxeter numbers}\label{sec:ADEDynkin} 
By a \emph{finite $ADE$ Dynkin diagram} we mean a Dynkin diagram of type $A_n,D_n,E_6,E_7,$ or $E_8$. An \emph{affine $ADE$ Dynkin diagram} is a Dynkin diagram of type $\affA_n,\affD_n,\affE_6,\affE_7,$ or $\affE_8$, see Figure~\ref{figure:affine_ADE}.

The following characterization of finite and affine $ADE$ Dynkin diagrams is due to Vinberg \cite{V}:
\begin{theorem}\label{thm:Vinberg}
 Let $G=(V,E)$ be an undirected graph with possibly multiple edges. Then:
 \begin{itemize}
  \item $G$ is a finite $ADE$ Dynkin diagram if and only if there exists a map $\nu:V\to\R_{>0}$ such that for all $v\in V$, 
 \[2\nu(v)>\sum_{(u,v)\in E} \nu(u).\]
 \item$G$ is an affine $ADE$ Dynkin diagram if and only if there exists a map $\nu:V\to\R_{>0}$ such that for all $v\in V$, 
 \begin{equation}\label{eq:Vinberg_affine} 
  2\nu(v)=\sum_{(u,v)\in E} \nu(u).
 \end{equation}
 \end{itemize}
 The values of $\nu$ satisfying (\ref{eq:Vinberg_affine}) are given in Figure~\ref{figure:affine_ADE}.
\end{theorem}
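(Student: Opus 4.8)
This is a classical result (due, as the authors note, to Vinberg); the plan below follows the standard route through Perron--Frobenius theory and the classification of graphs of small spectral radius. First I would reduce to the case that $G$ is connected: every finite or affine $ADE$ Dynkin diagram is connected, and if a map $\nu$ as in the statement exists on $G$ then its restriction to each connected component still satisfies the same (in)equality there. So assume $G$ is connected and loopless, let $A=A(G)$ be its adjacency matrix, and note that the condition $2\nu(v)\ge\sum_{(u,v)\in E}\nu(u)$ for all $v$ is exactly $A\nu\le 2\nu$ coordinatewise, and likewise for strict inequality and for equality. Since $G$ is connected, $A$ is an irreducible nonnegative matrix; let $\rho=\rho(A)$ be its Perron--Frobenius eigenvalue, attained by a strictly positive eigenvector $w$.

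The next step is three elementary equivalences: there exists $\nu>0$ with $A\nu\le 2\nu$ if and only if $\rho\le 2$; there exists $\nu>0$ with $A\nu<2\nu$ coordinatewise if and only if $\rho<2$; and there exists $\nu>0$ with $A\nu=2\nu$ if and only if $\rho=2$. In each case the reverse implication is witnessed by $\nu=w$, since $Aw=\rho w$. For the forward implications, the Collatz--Wielandt identity $\rho=\min_{x>0}\max_v (Ax)_v/x_v$ gives $\rho\le\max_v (A\nu)_v/\nu_v$, which is $\le 2$ (respectively $<2$) under the hypothesis; and in the equality case, pairing $A\nu=2\nu$ with a strictly positive left Perron eigenvector of $A$ forces $2=\rho$. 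This reduces the theorem to the purely graph-theoretic statement that a connected $G$ has $\rho(A(G))<2$ precisely when $G$ is one of $A_n,D_n,E_6,E_7,E_8$, and has $\rho(A(G))=2$ precisely when $G$ is one of $\affA_n,\affD_n,\affE_6,\affE_7,\affE_8$.

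For this I would use the monotonicity lemma: if $G$ is connected and $H$ is a nonempty proper subgraph (fewer edges and/or fewer vertices), then $\rho(A(H))<\rho(A(G))$, which is the standard fact that $0\le B\le C$ with $C$ irreducible and $B\ne C$ implies $\rho(B)<\rho(C)$. The ``if'' direction is then immediate: each affine $ADE$ diagram carries a strictly positive $\nu$ with $A\nu=2\nu$, namely the labels in Figure~\ref{figure:affine_ADE} satisfying (\ref{eq:Vinberg_affine}), so $\rho=2$ there; and each finite $ADE$ diagram is a proper connected subgraph of the corresponding affine one (delete the extra node), hence has $\rho<2$.

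The remaining, and main, step is the ``only if'' direction: a connected $G$ with $\rho(A(G))\le 2$ must be on the list. By the monotonicity lemma it suffices to show that every connected graph not on the list properly contains an affine $ADE$ diagram, which forces $\rho>2$. This is a finite case analysis on the shape of $G$: a triple edge, or a double edge together with one more edge, properly contains $\affA_1$, so $G$ has no double edge unless $G=\affA_1$; a graph containing a cycle but not equal to one properly contains some $\affA_{k-1}$, so the remaining $G$ are simple trees; such a tree with a vertex of degree $\ge 4$ either equals $\affD_4=K_{1,4}$ or properly contains it; a simple tree with at least two vertices of degree $3$ either equals some $\affD_n$ or properly contains one; and a simple tree with at most one branch vertex is a path $A_n$ or a ``spider'' $S(p,q,r)$ with legs of edge-lengths $p\le q\le r$, which either satisfies $\frac{1}{p+1}+\frac{1}{q+1}+\frac{1}{r+1}\ge 1$ --- in which case it is one of $D_n=S(1,1,n-3)$, $E_6=S(1,2,2)$, $E_7=S(1,2,3)$, $E_8=S(1,2,4)$, $\affE_6=S(2,2,2)$, $\affE_7=S(1,3,3)$, $\affE_8=S(1,2,5)$ --- or violates it and then properly contains one of $\affE_6,\affE_7,\affE_8$. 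The graphs left standing are exactly the finite and affine $ADE$ diagrams, which closes the proof. I expect this last enumeration --- in particular solving $\frac{1}{p+1}+\frac{1}{q+1}+\frac{1}{r+1}\ge 1$ and matching its solutions to the $ADE$ list --- to be the most tedious part, although each case is routine; the conceptual content is entirely in the Perron--Frobenius reduction and the subgraph-monotonicity lemma.
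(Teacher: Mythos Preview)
The paper does not give its own proof of this theorem: it is stated as a classical result due to Vinberg \cite{V} and used as a black box throughout. Your sketch is correct and is essentially the standard argument---reduce, via Perron--Frobenius theory for the irreducible adjacency matrix, to the statement that a connected graph has spectral radius $<2$ (resp.\ $=2$) precisely when it is a finite (resp.\ affine) $ADE$ Dynkin diagram, and then carry out that classification by the subgraph-monotonicity lemma and the case analysis you describe (multiple edges, cycles, branch vertices, spiders $S(p,q,r)$). There is nothing in the paper to compare against; your write-up would serve as a self-contained substitute for the citation.
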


\begin{figure}
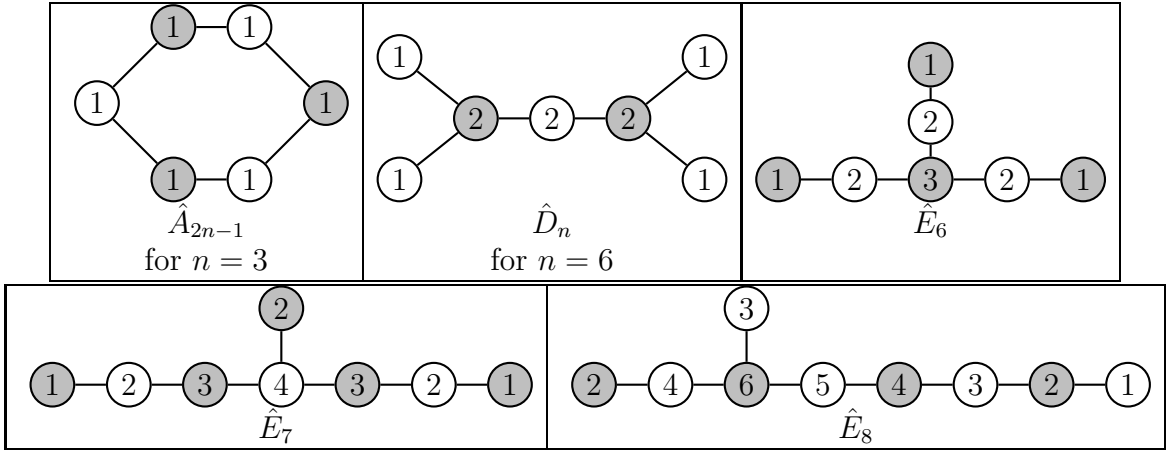

 \begin{tabular}{|c|c|c|} \hline
 {
$
\psmatrix[colsep=0.4cm,rowsep=0.4cm,mnode=circle]
            & \bl1      & \wh1\\
\wh1        &           &           & \bl1\\
            & \bl1      & \wh1
\psset{arrows=-,arrowscale=2}
\arr{2,1}{1,2}
\arr{2,1}{3,2}
\arr{2,4}{1,3}
\arr{2,4}{3,3}
\arr{1,2}{1,3}
\arr{3,2}{3,3}
\endpsmatrix $} 
&
{$
\psmatrix[colsep=0.4cm,rowsep=0.2cm,mnode=circle]
   \wh1     &           &           &           &  \wh1     \\
            &  \bl2     & \wh2      &   \bl2                \\
   \wh1     &           &           &           &  \wh1     
\psset{arrows=-,arrowscale=2}
\arr{2,2}{1,1}
\arr{2,2}{3,1}
\arr{2,2}{2,3}
\arr{2,4}{2,3}
\arr{2,4}{1,5}
\arr{2,4}{3,5}
\endpsmatrix $} 
&
{$
\psmatrix[colsep=0.4cm,rowsep=0.15cm,mnode=circle]
            &           &     \bl1                          \\
            &           &     \wh2                          \\
\bl1        &\wh2       &\bl3       &\wh2       &\bl1       
\arr{3,1}{3,2}
\arr{3,3}{3,2}
\arr{3,3}{3,4}
\arr{3,5}{3,4}
\arr{3,3}{2,3}
\arr{1,3}{2,3}
\endpsmatrix $} 
\\
$\affA_{2n-1}$ &$\affD_{n}$  &$\affE_6$ \\
for $n=3$      & for $n=6$ & \\\hline
\end{tabular}
\begin{tabular}{|c|c|}\hline
{
$
\psmatrix[colsep=0.4cm,rowsep=0.4cm,mnode=circle]
            &           &           & \bl2                                          \\
\bl1        &\wh2       &\bl3       &\wh4       &   \bl3    &\wh2       &\bl1       
\psset{arrows=-,arrowscale=2}
\arr{2,1}{2,2}
\arr{2,3}{2,2}
\arr{2,3}{2,4}
\arr{2,5}{2,4}
\arr{2,5}{2,6}
\arr{2,7}{2,6}
\arr{2,4}{1,4}
\endpsmatrix $} 
&
{
$
\psmatrix[colsep=0.4cm,rowsep=0.4cm,mnode=circle]
            &           & \wh3                                          \\
\bl2        &\wh4       &\bl6       &\wh5       &   \bl4    &\wh3       &\bl2 & \wh1   
\psset{arrows=-,arrowscale=2}
\arr{2,1}{2,2}
\arr{2,3}{2,2}
\arr{2,3}{2,4}
\arr{2,5}{2,4}
\arr{2,5}{2,6}
\arr{2,7}{2,6}
\arr{2,7}{2,8}
\arr{2,3}{1,3}
\endpsmatrix $}
\\
$\affE_{7}$ &$\affE_{8}$ \\\hline
 \end{tabular}
\caption{\label{figure:affine_ADE}Affine $ADE$ Dynkin diagrams and their additive labelings.}
\end{figure}

For each finite $ADE$ Dynkin diagram $\Lambda$ there is an associated integer $h(\Lambda)$ called \emph{Coxeter number}. We list Coxeter numbers of finite $ADE$ Dynkin diagrams in Figure~\ref{table:coxeter}. If $\affL$ is an affine Dynkin diagram, we set $h(\affL)=\infty$. 

\begin{table}
\centering
\begin{tabular}{|c|c|c|c|c|c|}\hline
 $\Lambda$   & $A_n$  & $D_m$  & $E_6$  & $E_7$ & $E_8$\\\hline
$h(\Lambda)$ & $n+1$  & $2m-2$ & $12$   & $18$  & $30$\\\hline
\end{tabular}
\caption{\label{table:coxeter}Coxeter numbers of finite $ADE$ Dynkin diagrams}
\end{table}

It is well-known that the Coxeter number has a nice interpretation in terms of eigenvalues of the adjacency matrix:
\begin{proposition}\label{prop:coxeter_eigenvalues}
 \begin{itemize}
  \item If $\Lambda$ is a finite $ADE$ Dynkin diagram then the dominant eigenvalue of its adjacency matrix equals $2\cos(\pi/h(\Lambda))$;
  \item if $\affL$ is an affine $ADE$ Dynkin diagram then the dominant eigenvalue of its adjacency matrix equals $2$.
 \end{itemize} \qed
\end{proposition}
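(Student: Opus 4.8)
The plan is to handle the two bullets separately: the affine statement drops out of Vinberg's theorem (Theorem~\ref{thm:Vinberg}) combined with Perron--Frobenius, while the finite statement requires actually identifying the largest adjacency eigenvalue.

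First the affine case. Let $\affL$ be an affine $ADE$ diagram and let $A$ be its adjacency matrix, where $A_{uv}$ is the number of edges between $u$ and $v$; then $A$ is symmetric and entrywise nonnegative, and it is irreducible because $\affL$ is connected. By the second bullet of Theorem~\ref{thm:Vinberg} there is a map $\nu\colon V\to\R_{>0}$ with $2\nu(v)=\sum_{(u,v)\in E}\nu(u)$ for every $v$, which says precisely $A\nu=2\nu$; so $\nu$ is a strictly positive eigenvector of $A$ with eigenvalue $2$. By Perron--Frobenius for irreducible nonnegative matrices, the spectral radius of $A$ is the \emph{only} eigenvalue admitting a nonnegative eigenvector; since $\nu>0$, the spectral radius — equivalently, $A$ being symmetric, the dominant (largest) eigenvalue — must equal $2$.

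For the finite case, the first bullet of Theorem~\ref{thm:Vinberg} already supplies $\nu>0$ with $A\nu<2\nu$ entrywise, and the same circle of Perron--Frobenius ideas (the spectral radius lies between $\min_v (A\nu)_v/\nu_v$ and $\max_v (A\nu)_v/\nu_v$) shows the dominant eigenvalue is strictly below $2$; but to pin down the value $2\cos(\pi/h(\Lambda))$ I would argue type by type. For the path $\Lambda=A_n$ on vertices $1,\dots,n$, the vectors $w^{(k)}=\bigl(\sin\tfrac{jk\pi}{n+1}\bigr)_{j=1}^{n}$, $k=1,\dots,n$, are eigenvectors with eigenvalues $2\cos\tfrac{k\pi}{n+1}$, via the identity $\sin\tfrac{(j-1)k\pi}{n+1}+\sin\tfrac{(j+1)k\pi}{n+1}=2\cos\tfrac{k\pi}{n+1}\sin\tfrac{jk\pi}{n+1}$ (the endpoint terms $j=0,n+1$ vanishing on their own), so the dominant eigenvalue is $2\cos\tfrac{\pi}{n+1}=2\cos\tfrac{\pi}{h(A_n)}$. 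For $D_m$ I would run the analogous sinusoidal ansatz along the long arm, the fork restricting the admissible frequencies to odd multiples of $\tfrac{\pi}{2m-2}$ (plus an extra eigenvalue $0$ from the antisymmetric combination of the two leaves), giving dominant eigenvalue $2\cos\tfrac{\pi}{2m-2}=2\cos\tfrac{\pi}{h(D_m)}$; and for $E_6,E_7,E_8$ I would just compute the characteristic polynomial of the $6\times6$, $7\times7$, $8\times8$ adjacency matrix and read off the largest root as $2\cos\tfrac{\pi}{12}$, $2\cos\tfrac{\pi}{18}$, $2\cos\tfrac{\pi}{30}$. Alternatively, uniformly over all finite types, one invokes the classical fact (going back to Coxeter and Kostant; see also Bourbaki, \emph{Lie Groups and Lie Algebras}, Ch.~V--VI) that the adjacency spectrum of $\Lambda$ is $\{2\cos(\pi m_i/h(\Lambda))\}$, where $1=m_1\le\dots\le m_n=h(\Lambda)-1$ are the exponents of the associated Weyl group, and uses $m_1=1$.

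The affine half has essentially no obstacle beyond quoting Perron--Frobenius in the precise form needed, which rests on connectedness of $\affL$. The finite half is elementary but stubbornly non-uniform: $A_n$ is clean, $D_m$ needs a little care at the fork, and the exceptional types reduce to a bounded computation — so the real "hard part" is just organizing the case analysis, or else importing the exponent–eigenvalue dictionary, whose own proof lies outside the scope of this paper.
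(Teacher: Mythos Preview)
Your argument is correct. The affine case is handled cleanly via Theorem~\ref{thm:Vinberg} plus Perron--Frobenius, and your type-by-type treatment of the finite case (or the alternative appeal to the exponent--eigenvalue correspondence) is standard and sound.

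There is nothing to compare against, however: the paper does not prove Proposition~\ref{prop:coxeter_eigenvalues} at all. The \qed immediately following the statement signals that the authors regard it as a well-known fact and offer no argument. So you have supplied a proof where the paper simply cites folklore; your write-up is more than the paper provides, not a different route to the same destination.
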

In particular, the second claim justifies setting $h(\affL):=\infty$. 

\subsection{Subadditive labelings}\label{subsec:subadditive}

Let $G=(\Gamma,\Delta)$ be a bipartite bigraph on vertex set $V$. A {\it {labeling}} of its vertices is a function $\nu: V\rightarrow \mathbb R_{>0}, \;$ which assigns to each vertex $v$ of $G$ a positive real label $\nu(v)$. 
\begin{definition}\label{dfn:subadditive}
A labeling $\nu:V\rightarrow\R_{>0}$ is called
\begin{itemize}
 \item \emph{strictly subadditive} if for any vertex $v\in V$, 
\[2 \nu(v) > \sum_{(u,v)\in\Gamma} \nu(u), \; \text{ and }  \; 2 \nu(v) > \sum_{(v,w)\in\Delta} \nu(w).\]
 \item \emph{subadditive} if for any vertex $v\in V$, 
\[2 \nu(v) \geq \sum_{(u,v)\in\Gamma} \nu(u), \; \text{ and }  \; 2 \nu(v) > \sum_{(v,w)\in\Delta} \nu(w).\]
 \item \emph{weakly subadditive} if for any vertex $v\in V$, 
\[2 \nu(v) \geq \sum_{(u,v)\in\Gamma} \nu(u), \; \text{ and }  \; 2 \nu(v) \geq \sum_{(v,w)\in\Delta} \nu(w).\]
\end{itemize}
Examples of each type can be found in Figure~\ref{figure:labelings}.
\end{definition}

\begin{figure}
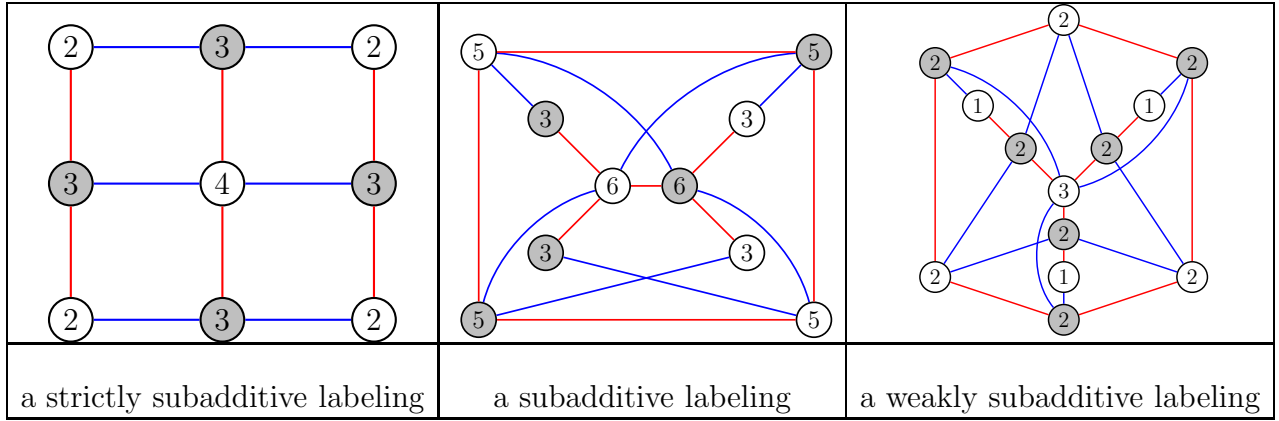

 \begin{tabular}{|c|c|c|}\hline
 {
 $\psmatrix[colsep=1.4cm,rowsep=1.2cm,mnode=circle]
\wh2 & \bl3 & \wh2 \\
\bl3 & \wh4 & \bl3 \\
\wh2 & \bl3 & \wh2 
\psset{arrows=-,arrowscale=2}
\ra{1,1}{2,1}
\ra{3,1}{2,1}
\ra{1,3}{2,3}
\ra{3,3}{2,3}
\ra{2,2}{1,2}
\ra{2,2}{3,2}
\ba{1,2}{1,1}
\ba{1,2}{1,3}
\ba{3,2}{3,1}
\ba{3,2}{3,3}
\ba{2,1}{2,2}
\ba{2,3}{2,2}
\endpsmatrix $
 }
  &
 \scalebox{0.8}{
$
\psmatrix[colsep=0.5cm,rowsep=0.5cm,mnode=circle]
   \wh5      &           &           &           &           &   \bl5     \\
            &   \bl3     &           &           &       \wh3                 \\
            &           &    \wh6    &  \bl6                               \\
            &   \bl3     &           &           &        \wh3                \\
   \bl5      &           &           &           &           &      \wh5     
\psset{arrows=-,arrowscale=2}
\ncarc[arcangle=30,linecolor=blue]{-}{5,1}{3,3}
\ncarc[arcangle=-30,linecolor=blue]{-}{5,6}{3,4}
\ncarc[arcangle=30,linecolor=blue]{-}{1,1}{3,4}
\ncarc[arcangle=-30,linecolor=blue]{-}{1,6}{3,3}
\ra{1,1}{1,6}
\ra{5,6}{1,6}
\ra{5,6}{5,1}
\ra{1,1}{5,1}
\ra{2,2}{3,3}
\ra{4,2}{3,3}
\ra{3,4}{3,3}
\ra{3,4}{2,5}
\ra{3,4}{4,5}
\ba{1,1}{2,2}
\ba{5,6}{4,2}
\ba{5,1}{4,5}
\ba{1,6}{2,5}
\endpsmatrix $}
  &
 \scalebox{0.7}{
$
\psmatrix[colsep=0.2cm,rowsep=0.2cm,mnode=circle]
            &           &           &   \wh2                                        \\
\bl2        &           &           &           &           &           & \bl2      \\
            &   \wh1    &           &           &           &  \wh1                 \\
            &           & \bl2      &           & \bl2                              \\
            &           &           &   \wh3                                        \\
            &           &           &   \bl2                                        \\
\wh2        &           &           &   \wh1    &           &           &  \wh2     \\
            &           &           &    \bl2                                         

\psset{arrows=-,arrowscale=2}
\ncarc[arcangle=30,linecolor=blue]{-}{2,1}{5,4}
\ncarc[arcangle=30,linecolor=blue]{-}{2,7}{5,4}
\ncarc[arcangle=40,linecolor=blue]{-}{8,4}{5,4}
\ra{2,1}{1,4}
\ra{2,7}{1,4}
\ra{2,7}{7,7}
\ra{8,4}{7,7}
\ra{8,4}{7,1}
\ra{2,1}{7,1}
\ra{3,2}{4,3}
\ra{5,4}{4,3}
\ra{5,4}{4,5}
\ra{3,6}{4,5}
\ra{5,4}{6,4}
\ra{7,4}{6,4}
\ba{2,1}{3,2}
\ba{2,7}{3,6}
\ba{8,4}{7,4}
\ba{7,1}{4,3}
\ba{1,4}{4,3}
\ba{1,4}{4,5}
\ba{7,7}{4,5}
\ba{7,7}{6,4}
\ba{7,1}{6,4}
\endpsmatrix $}
\\\hline
 & & \\
  a strictly subadditive labeling & a subadditive labeling & a weakly subadditive labeling \\\hline
 \end{tabular}
\caption{\label{figure:labelings} Different kinds of labelings}
\end{figure}

Strictly subadditive, subadditive and weakly subadditive labelings of quivers have been introduced in \cite{P}. The terminology is motivated by Vinberg's {\it {subadditive labelings}} \cite{V} for non-directed graphs (see Theorem~\ref{thm:Vinberg}).

\subsection{Quivers}\label{subsec:quiver_mutations}

\begin{definition}\label{dfn:mutations}
 For a vertex $v$ of $Q$ one can define the \emph{quiver mutation $\mu_v$ at $v$} as follows:
\begin{enumerate}
 \item\label{step:trans} for each pair of edges $u \rightarrow v$ and $v \rightarrow w$ create an edge $u \rightarrow w$;
 \item\label{step:reverse} reverse the direction of all edges adjacent to $v$;
 \item\label{step:remove} if some directed $2$-cycle is present, remove both of its edges; repeat until there are no more directed $2$-cycles.
\end{enumerate}
Let us denote the resulting quiver $\mu_v(Q)$. See Figure~\ref{figure:mut} for an example of each step.
\end{definition}

\newcommand{\ga}[2]{\ncline[linecolor=orange]{#1}{#2}}
\newcommand{\gaa}[2]{\ncarc[arcangle=30,linecolor=orange]{#1}{#2}}
\newcommand{\arrr}[2]{\ncline[linecolor=orange]{#1}{#2}}

\begin{figure}
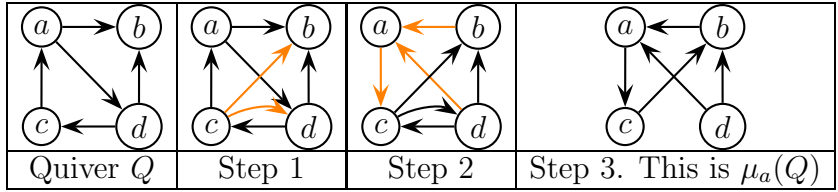

 \begin{tabular}{|c|c|c|c|}\hline
$
\psmatrix[colsep=0.7cm,rowsep=0.7cm,mnode=circle]
\wh{a}         & \wh{b}\\
\wh{c}         & \wh{d}
\psset{arrows=->,arrowscale=2}
\arr{2,1}{1,1}
\arr{1,1}{1,2}
\arr{1,1}{2,2}
\arr{2,2}{1,2}
\arr{2,2}{2,1}
\endpsmatrix $
&
$
\psmatrix[colsep=0.7cm,rowsep=0.7cm,mnode=circle]
\wh{a}         & \wh{b}\\
\wh{c}         & \wh{d}
\psset{arrows=->,arrowscale=2}
\arr{2,1}{1,1}
\arr{1,1}{1,2}
\arr{1,1}{2,2}
\arr{2,2}{1,2}
\arr{2,2}{2,1}
\ga{2,1}{1,2}
\gaa{2,1}{2,2}
\endpsmatrix $
&
$
\psmatrix[colsep=0.7cm,rowsep=0.7cm,mnode=circle]
\wh{a}         & \wh{b}\\
\wh{c}         & \wh{d}
\psset{arrows=->,arrowscale=2}
\arrr{1,1}{2,1}
\arrr{1,2}{1,1}
\arrr{2,2}{1,1}
\arr{2,2}{1,2}
\arr{2,2}{2,1}
\arr{2,1}{1,2}
\ncarc[arcangle=30,linecolor=black]{2,1}{2,2}
\endpsmatrix $
&
$
\psmatrix[colsep=0.7cm,rowsep=0.7cm,mnode=circle]
\wh{a}         & \wh{b}\\
\wh{c}         & \wh{d}
\psset{arrows=->,arrowscale=2}
\arr{1,1}{2,1}
\arr{1,2}{1,1}
\arr{2,2}{1,1}
\arr{2,2}{1,2}
\arr{2,1}{1,2}
\endpsmatrix $
\\\hline
Quiver $Q$ & Step~\ref{step:trans} & Step~\ref{step:reverse}& Step~\ref{step:remove}. This is $\mu_a(Q)$\\\hline
 \end{tabular}
\caption{\label{figure:mut}Mutating a quiver at vertex $a$.}
\end{figure}

Now, let $Q$ be a bipartite quiver. Recall that $\mu_\circ$ (resp., $\mu_\bullet$) is the simultaneous mutation at all white (resp., all black) vertices of $Q$, and that $Q$ is \emph{recurrent} if $\mu_\circ(Q)=\mu_\bullet(Q)=Q^{\operatorname{op}}$.

As we have observed in~\cite{GP}, this property translates nicely into the language of bigraphs:
\begin{corollary}\label{cor:recurrent_commuting}
 A bipartite quiver $Q$ is recurrent if and only if the associated bipartite bigraph $G(Q)$ has commuting adjacency matrices $A_\Gamma,A_\Delta$. 
\end{corollary}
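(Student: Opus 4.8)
The plan is to compute both sides of the recurrence relation $\mu_\circ(Q) = Q^{\op}$ (and the analogous one for $\mu_\bullet$) directly in terms of the bigraph data, and to see that the resulting condition on arrow multiplicities is exactly the statement that $A_\Gamma A_\Delta = A_\Delta A_\Gamma$. First I would fix a bipartition $\e:\VertQ\to\{0,1\}$ and note that since $Q$ is bipartite, all arrows go between white ($\e=0$) and black ($\e=1$) vertices, so $Q$ has no $2$-cycles before mutation, and in fact the simultaneous mutation $\mu_\circ$ at all white vertices has a clean description: step~\eqref{step:trans} creates, for each white vertex $v$ and each pair of arrows $u\to v\to w$ (necessarily $u,w$ black), a new arrow $u\to w$; step~\eqref{step:reverse} reverses every arrow (since every arrow touches exactly one white vertex); step~\eqref{step:remove} then cancels $2$-cycles among the new black-to-black arrows.

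Next I would make the multiplicities explicit. Let $a_{uv}$ denote the number of arrows $u\to v$ in $Q$; by bipartiteness we may record $Q$ by the two matrices $A_\Gamma$ and $A_\Delta$ where $A_\Gamma$ has entries counting red edges (arrows from white to black, i.e.\ $\e_u=0,\e_v=1$) and $A_\Delta$ counts blue edges (arrows from black to white). After $\mu_\circ$: the arrows incident to white vertices get reversed, so a former red edge $u\to v$ (white $u$, black $v$) becomes $v\to u$, and a former blue edge $v\to u$ (black $v$, white $u$) becomes $u\to v$; meanwhile between two black vertices $v,v'$ the number of newly created arrows $v\to v'$ minus $v'\to v$ equals $\sum_{u \text{ white}} A_\Gamma(u,v)A_\Delta(v',u) - A_\Gamma(u,v')A_\Delta(v,u)$, which is the $(v',v)$ minus $(v,v')$ entry of $A_\Delta A_\Gamma - (A_\Delta A_\Gamma)^{T}$ after tracking transposes carefully. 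Since $Q^{\op}$ is just $Q$ with arrows reversed — and $Q$ has no black-to-black arrows — the condition $\mu_\circ(Q)=Q^{\op}$ forces all these black-to-black contributions to cancel, i.e.\ the relevant product matrix is symmetric; combined with the matching symmetric condition coming from $\mu_\bullet(Q)=Q^{\op}$ (which similarly controls white-to-white arrows), one extracts exactly $A_\Gamma A_\Delta = A_\Delta A_\Gamma$. Conversely, if the adjacency matrices commute, the same computation shows no net black-to-black or white-to-white arrows survive and that the arrow-reversal matches $Q^{\op}$, so $Q$ is recurrent.

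I would then remark that this is essentially the argument already given in~\cite{GP}, so in the write-up I would either cite it or give the short matrix bookkeeping; the cleanest phrasing is to observe that a single round $\mu_\circ$ followed by reversal sends the pair $(A_\Gamma, A_\Delta)$ to something expressible through $A_\Gamma A_\Delta$ and $A_\Gamma, A_\Delta$ themselves, and recurrence is the fixed-point condition.

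The main obstacle is purely bookkeeping: keeping the orientations and the transposes straight when translating step~\eqref{step:trans} and step~\eqref{step:reverse} into matrix language, and making sure step~\eqref{step:remove} (the $2$-cycle cancellation) is correctly modeled by taking the \emph{difference} of opposite arrow multiplicities rather than both separately — this is where a sign or transpose error would creep in. There is also a small subtlety worth checking: one must verify that after $\mu_\circ$ no $2$-cycles appear between a white and a black vertex (they do not, because step~\eqref{step:trans} only creates black-to-black arrows when mutating at white vertices), so the only cancellations are among the newly created same-color arrows, which is what makes the matrix identity come out symmetric rather than something more complicated.
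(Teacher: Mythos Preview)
Your argument is correct and is exactly the computation one expects: the paper itself does not reprove this statement but simply cites \cite{GP}, and your matrix bookkeeping (new same-color arrows counted by the relevant block of $A_\Delta A_\Gamma$, cancellation forcing symmetry, symmetry of each factor turning this into commutation) is the standard way to unpack that reference. The only cosmetic point is that since $A_\Gamma$ and $A_\Delta$ are symmetric, your transpose-tracking simplifies: $(A_\Delta A_\Gamma)^T = A_\Gamma A_\Delta$, so ``$A_\Delta A_\Gamma$ symmetric on the black block'' and ``$A_\Gamma A_\Delta$ symmetric on the white block'' together with the automatic vanishing on mixed blocks give $A_\Gamma A_\Delta = A_\Delta A_\Gamma$ directly.
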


We define three variations of Stembridge's admissible $ADE$ bigraphs (see~\cite{S}):

\begin{definition}\label{dfn:affinite}
Let $G=(\Gamma,\Delta)$ be a bipartite bigraph, and assume that the adjacency $|V|\times|V|$ matrices $A_\Gamma$ and $A_\Delta$ of $\Gamma$ and $\Delta$ commute. In this case we encode the three definitions in Table~\ref{table:def}. For instance, $G$ is an \emph{\affinite $ADE$ bigraph} if each connected component of $\Gamma$ is an affine $ADE$ Dynkin diagram and each connected component of $\Delta$ is a finite $ADE$ Dynkin diagram. We similarly define the notions of \emph{admissible} and \emph{\affaff}$ADE$ bigraphs.
\begin{table}
 \begin{tabular}{|c|c|c|}\hline
  {\bf Notion}                      & {\bf all components of $\Gamma$ are }  & {\bf all components of $\Delta$ are} \\\hline
  admissible $ADE$ bigraph    & finite $ADE$ Dynkin diagrams     & finite $ADE$ Dynkin diagrams  \\\hline
  \affinite  $ADE$ bigraph    & affine $ADE$ Dynkin diagrams     & finite $ADE$ Dynkin diagrams  \\\hline
  \affaff  $ADE$ bigraph    & affine $ADE$ Dynkin diagrams     & affine $ADE$ Dynkin diagrams  \\\hline
 \end{tabular}
 \caption{\label{table:def} Three types of bigraphs}
\end{table}
\end{definition}

The following fact is an easy consequence of \cite[Lemma~4.3]{S}:

\def\v{\mathbf{v}}
\def\u{\mathbf{u}}
\begin{lemma}\label{lemma:eigenvalues}
 Let $G=(\Gamma,\Delta)$ be a bigraph and assume that the adjacency matrices $A_\Gamma,A_\Delta$ commute. Then the dominant eigenvalues of all components of $\Gamma$ are equal to the same value $\lambda_\Gamma$, and the dominant eigenvalues of all components of $\Delta$ are equal to the same value $\lambda_\Delta$. Matrices $A_\Gamma$ and $A_\Delta$ have a common dominant eigenvector $\v$ such that 
 \[A_\Gamma \v=\l_\Gamma\v;\quad A_\Delta \v=\l_\Delta\v.\] \qed
\end{lemma}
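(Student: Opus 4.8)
The plan is to deduce Lemma~\ref{lemma:eigenvalues} from \cite[Lemma~4.3]{S} together with standard Perron--Frobenius theory applied to the (entrywise nonnegative) adjacency matrices $A_\Gamma$ and $A_\Delta$. The only real content is to promote a statement about a single pair of commuting nonnegative matrices to the component-by-component statement, and to extract one common eigenvector. First I would observe that since $\Gamma$ is a finite simple graph, $A_\Gamma$ is a symmetric nonnegative matrix, and after reordering vertices it is block-diagonal with one block per connected component, each block being irreducible; likewise for $A_\Delta$. By Perron--Frobenius, each component block of $A_\Gamma$ has a dominant eigenvalue (its spectral radius) with a strictly positive eigenvector supported on that component, and similarly for $\Delta$.

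Next I would invoke \cite[Lemma~4.3]{S}, which (in Stembridge's setup of commuting adjacency matrices of a bigraph) gives exactly the statement that there is a common eigenvector $\v$ of $A_\Gamma$ and $A_\Delta$ that is strictly positive on all of $V$, with $A_\Gamma\v=\l_\Gamma\v$ and $A_\Delta\v=\l_\Delta\v$ for some scalars $\l_\Gamma,\l_\Delta$. Given such a $\v$, the restriction $\v|_C$ to any connected component $C$ of $\Gamma$ is a strictly positive vector (on $C$) fixed by the corresponding irreducible block of $A_\Gamma$ with eigenvalue $\l_\Gamma$; by the uniqueness part of Perron--Frobenius for irreducible nonnegative matrices, the only eigenvalue admitting a positive eigenvector is the spectral radius, so the dominant eigenvalue of that component equals $\l_\Gamma$. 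Since $C$ was arbitrary, all components of $\Gamma$ share the dominant eigenvalue $\l_\Gamma$, and symmetrically all components of $\Delta$ share $\l_\Delta$. This simultaneously yields the common dominant eigenvector claim, since $\v$ is by construction a dominant eigenvector for every component on both colors.

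Concretely, the write-up would be: (i) record that $A_\Gamma,A_\Delta$ are symmetric, nonnegative, and block-diagonal by connected components with irreducible blocks; (ii) cite \cite[Lemma~4.3]{S} to obtain a strictly positive common eigenvector $\v$ with eigenvalues $\l_\Gamma,\l_\Delta$; (iii) apply Perron--Frobenius uniqueness component-wise to identify $\l_\Gamma$ (resp.\ $\l_\Delta$) with the dominant eigenvalue of each component of $\Gamma$ (resp.\ $\Delta$), which also certifies $\v$ as a common dominant eigenvector. I expect the main (and essentially only) subtlety to be checking that Stembridge's Lemma~4.3 is being applied under the right hypotheses — in particular that commutativity of $A_\Gamma$ and $A_\Delta$ is exactly what is needed to force a \emph{common} positive eigenvector rather than merely separate Perron eigenvectors — and to make sure the bigraph here (whose two color classes need not be connected) matches the generality of \cite{S}. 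If one preferred a self-contained argument, one could instead note that $A_\Gamma A_\Delta = A_\Delta A_\Gamma$ implies $A_\Delta$ preserves each eigenspace of $A_\Gamma$; restricting to the Perron eigenspace of $A_\Gamma$ (which, across components, is spanned by the per-component positive Perron vectors), $A_\Delta$ acts as a nonnegative matrix, and a further Perron--Frobenius argument produces a nonnegative common eigenvector $\v$, whose strict positivity then follows by irreducibility of each block. Either route gives the lemma; I would present the short citation-based proof and relegate the alternative to a parenthetical remark.
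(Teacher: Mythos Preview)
Your proposal is correct and matches the paper's approach: the paper itself gives no proof beyond the sentence ``The following fact is an easy consequence of \cite[Lemma~4.3]{S}'' together with a \qed, so your write-up (citing \cite[Lemma~4.3]{S} and spelling out the Perron--Frobenius step component-by-component) is exactly the intended argument, just with more detail filled in.
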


\begin{corollary}\label{cor:common_coxeter}
 Let $G=(\Gamma,\Delta)$ be a bigraph and assume that the adjacency matrices $A_\Gamma,A_\Delta$ commute, and assume that all connected components of $\Gamma$ and of $\Delta$ are either affine or finite $ADE$ Dynkin diagrams. Then all connected components of $\Gamma$ have the same Coxeter number denoted $h(\Gamma)$, and all connected components of $\Delta$ have the same Coxeter number denoted $h(\Delta)$.
\end{corollary}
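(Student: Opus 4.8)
The plan is to reduce the statement directly to Lemma~\ref{lemma:eigenvalues} together with Proposition~\ref{prop:coxeter_eigenvalues}. First I would invoke Lemma~\ref{lemma:eigenvalues} to conclude that, since $A_\Gamma$ and $A_\Delta$ commute, all connected components of $\Gamma$ share a common dominant eigenvalue $\lambda_\Gamma$ and all connected components of $\Delta$ share a common dominant eigenvalue $\lambda_\Delta$. The content of the corollary is then simply that, for a finite or affine $ADE$ Dynkin diagram, the dominant eigenvalue of the adjacency matrix determines the Coxeter number.

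The key step is the following observation about Proposition~\ref{prop:coxeter_eigenvalues}: it gives a map from the (finite or affine) $ADE$ type of a component to its dominant eigenvalue, namely $2\cos(\pi/h)$ where $h$ is the Coxeter number (with the convention $h=\infty$, hence dominant eigenvalue $2$, in the affine case). Since $x\mapsto 2\cos(\pi/x)$ is strictly increasing on $[1,\infty]$, this map is injective as a function of $h$. Therefore two finite or affine $ADE$ Dynkin diagrams with the same dominant eigenvalue must have the same Coxeter number. Applying this to each pair of components of $\Gamma$ (which all have dominant eigenvalue $\lambda_\Gamma$ by Lemma~\ref{lemma:eigenvalues}) shows they all share one Coxeter number $h(\Gamma)$; the identical argument applied to the components of $\Delta$ gives $h(\Delta)$.

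Concretely, I would write: let $C_1,C_2$ be two components of $\Gamma$. By Lemma~\ref{lemma:eigenvalues} their dominant eigenvalues coincide, both equal to $\lambda_\Gamma$. By hypothesis each $C_i$ is a finite or affine $ADE$ Dynkin diagram, so by Proposition~\ref{prop:coxeter_eigenvalues} its dominant eigenvalue equals $2\cos(\pi/h(C_i))$ (interpreted as $2$ when $h(C_i)=\infty$). Hence $2\cos(\pi/h(C_1)) = 2\cos(\pi/h(C_2))$, and since $t\mapsto 2\cos(\pi/t)$ is injective on $\{1,2,3,\dots\}\cup\{\infty\}$ we get $h(C_1)=h(C_2)$. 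Thus the Coxeter number is constant across the components of $\Gamma$; call it $h(\Gamma)$. The same reasoning with $\Delta$ in place of $\Gamma$ yields $h(\Delta)$.

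I do not expect any real obstacle here; the only thing to be slightly careful about is the bookkeeping of the affine case, where the "Coxeter number" is $\infty$ by the convention set just after Proposition~\ref{prop:coxeter_eigenvalues}, and one should note that in that case Lemma~\ref{lemma:eigenvalues} forces $\lambda_\Gamma = 2$, so that \emph{every} component of $\Gamma$ is affine (no component can then be finite $ADE$, whose dominant eigenvalue is $2\cos(\pi/h)<2$). This is exactly what makes the statement "all components have the same Coxeter number" meaningful rather than vacuous, and it is worth one sentence in the write-up.
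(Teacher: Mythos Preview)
Your proposal is correct and follows exactly the approach the paper intends: the corollary is stated immediately after Lemma~\ref{lemma:eigenvalues} with no separate proof, precisely because it is the direct combination of that lemma with Proposition~\ref{prop:coxeter_eigenvalues} via the injectivity of $h\mapsto 2\cos(\pi/h)$. Your extra sentence about the affine case is a nice clarification but not strictly needed.
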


Combining Lemma~\ref{lemma:eigenvalues}, Definition~\ref{dfn:affinite}, Definition~\ref{dfn:subadditive}, Vinberg's characterization (Theorem~\ref{thm:Vinberg}), and Proposition~\ref{prop:coxeter_eigenvalues}, we get the following proposition, whose part (\ref{item:admissible_ADE}) was shown in \cite[Proposition 5.1]{GP}. The proof for parts (\ref{item:affinite}) and (\ref{item:affaff}) is completely analogous and we refer the reader to~\cite{GP} for details.

\begin{proposition}\label{prop:affinite_subadditive}
 Let $Q$ be a bipartite recurrent quiver $Q$ and $G(Q)=(\Gamma,\Delta)$ be the corresponding bipartite bigraph. Then 
 \begin{enumerate}
  \item \label{item:admissible_ADE} $Q$ admits a strictly subadditive labeling if and only if $G(Q)$ is an admissible $ADE$ bigraph;
  \item \label{item:affinite} $Q$ admits a subadditive labeling which is not strictly subadditive if and only if $G(Q)$ is an \affinite $ADE$ bigraph;
  \item \label{item:affaff} $Q$ admits a weakly subadditive labeling which is not subadditive if and only if $G(Q)$ is an \affaff $ADE$ bigraph.
 \qed
 \end{enumerate}
\end{proposition}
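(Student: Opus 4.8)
The plan is to derive Proposition~\ref{prop:affinite_subadditive} by assembling the ingredients that have already been set up, treating all three parts uniformly. Recall from Corollary~\ref{cor:recurrent_commuting} that a bipartite quiver $Q$ is recurrent exactly when the adjacency matrices $A_\Gamma,A_\Delta$ of the associated bigraph $G(Q)=(\Gamma,\Delta)$ commute; thus throughout we may assume this commuting hypothesis, which is what Lemma~\ref{lemma:eigenvalues} and Corollary~\ref{cor:common_coxeter} require. The key observation is that the existence of a labeling with a prescribed combination of strict/non-strict inequalities on the $\Gamma$-side and the $\Delta$-side translates, componentwise, into Vinberg's criterion (Theorem~\ref{thm:Vinberg}): on any connected component, a positive vector $\nu$ satisfying $2\nu(v)\ge\sum_{(u,v)}\nu(u)$ for all $v$ forces that component to be finite or affine $ADE$, with equality everywhere iff it is affine; and conversely such $\nu$ exists on each finite or affine component.

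First I would prove the ``only if'' direction. Suppose $Q$ admits a weakly subadditive labeling $\nu$. Restricting the inequality $2\nu(v)\ge\sum_{(u,v)\in\Gamma}\nu(u)$ to a connected component $\Gamma_0$ of $\Gamma$ and applying Vinberg's theorem, $\Gamma_0$ is a finite or affine $ADE$ Dynkin diagram; same for $\Delta$. So $G(Q)$ is an \affaff $ADE$ bigraph in the loose sense that every component is finite or affine — but we need the sharper statement. Here is where the dominant-eigenvalue machinery enters: by Corollary~\ref{cor:common_coxeter}, all components of $\Gamma$ share a common Coxeter number $h(\Gamma)$, and by Proposition~\ref{prop:coxeter_eigenvalues} this means either all components of $\Gamma$ are affine (dominant eigenvalue $2$, $h=\infty$) or all are finite (dominant eigenvalue $2\cos(\pi/h(\Gamma))<2$); the same dichotomy holds for $\Delta$. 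Now I distinguish: if $\nu$ is strictly subadditive, the strict inequality on the $\Gamma$-side rules out affine components, so $\Gamma$ is all-finite, and likewise $\Delta$, giving an admissible bigraph. If $\nu$ is subadditive but not strictly so, strictness still holds on the $\Delta$-side (so $\Delta$ is all-finite), while failure of strictness somewhere on the $\Gamma$-side, combined with the common-Coxeter-number dichotomy, forces \emph{every} $\Gamma$-component to be affine — giving an \affinite bigraph. The weakly-subadditive-but-not-subadditive case is symmetric: the $\Delta$-side must also contain a component where equality holds, forcing all $\Delta$-components affine, and since the labeling is still weakly subadditive the $\Gamma$-side is all-affine or all-finite; but it cannot be all-finite, for then one could perturb $\nu$ to make it strictly subadditive on $\Gamma$ while keeping the $\Delta$ equalities, contradicting ``not subadditive''—so $\Gamma$ is all-affine too, an \affaff bigraph.

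For the ``if'' direction, suppose $G(Q)$ is of the relevant type. Use Lemma~\ref{lemma:eigenvalues}: since $A_\Gamma$ and $A_\Delta$ commute, there is a common positive dominant eigenvector $\v$ with $A_\Gamma\v=\lambda_\Gamma\v$ and $A_\Delta\v=\lambda_\Delta\v$. Set $\nu:=\v$, read off componentwise. On the $\Gamma$-side, $2\nu(v)-\sum_{(u,v)\in\Gamma}\nu(u)=(2-\lambda_\Gamma)\nu(v)$, which is $>0$ when $\Gamma$ is all-finite ($\lambda_\Gamma=2\cos(\pi/h(\Gamma))<2$) and $=0$ when $\Gamma$ is all-affine ($\lambda_\Gamma=2$); identically for $\Delta$. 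In the admissible case this $\nu$ is strictly subadditive; in the \affinite case it is subadditive but, having equalities on $\Gamma$, not strictly subadditive; in the \affaff case it is weakly subadditive with equalities on both sides, hence not subadditive. This closes all three equivalences. The main obstacle is bookkeeping in the ``only if'' direction: one must be careful that the \emph{non-strictness} hypotheses (``not strictly subadditive'', ``not subadditive'') are genuinely used to push components from ``finite-or-affine'' to ``all-affine,'' and this relies crucially on the rigidity provided by Corollary~\ref{cor:common_coxeter} — a single component being affine drags all its companions along. Since part~(\ref{item:admissible_ADE}) is already established in~\cite[Proposition 5.1]{GP} and the eigenvalue arguments for the other two parts are, as the excerpt notes, entirely parallel, the write-up can legitimately be brief and cite \cite{GP} for the shared details.
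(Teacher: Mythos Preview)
Your approach---assembling Vinberg's theorem, the common dominant eigenvector of Lemma~\ref{lemma:eigenvalues}, and the rigidity from Corollary~\ref{cor:common_coxeter}---is exactly what the paper has in mind, and your ``if'' direction via $\nu:=\v$ is correct.

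There is a genuine gap in the ``only if'' direction for parts~(\ref{item:affinite}) and~(\ref{item:affaff}). You claim that if a given subadditive labeling $\nu$ has a non-strict $\Gamma$-inequality at some vertex, then the $\Gamma$-components must be affine. This inference is false: finite $ADE$ diagrams readily carry positive vectors with $2\nu(v)\ge\sum_{(u,v)}\nu(u)$ that are not everywhere strict. Concretely, on $G(Q)=A_2\otimes A_2$ (vertices $a,b,c,d$; $\Gamma$-edges $\{a,c\},\{b,d\}$; $\Delta$-edges $\{a,b\},\{c,d\}$) the choice $\nu(a)=\nu(b)=1$, $\nu(c)=\nu(d)=2$ is subadditive with $\Gamma$-equalities at $a$ and $b$, yet both $\Gamma$-components are $A_2$, hence finite. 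The perturbation argument you give in part~(\ref{item:affaff}) fails for the same reason, and also because producing a \emph{different} labeling $\nu'$ that happens to be subadditive does not contradict any hypothesis about the original $\nu$. The intended reading of the proposition is as a trichotomy: part~(\ref{item:affinite}) should be parsed as ``$Q$ admits some subadditive labeling but admits no strictly subadditive labeling'', and under that reading the only-if argument goes through cleanly by invoking part~(\ref{item:admissible_ADE}) contrapositively---if $\Gamma$ were all-finite, $G(Q)$ would be admissible and hence would admit a strictly subadditive labeling, contrary to hypothesis.
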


\section{Zamolodchikov integrable quivers admit weakly subadditive labelings}\label{sec:implies}

Recall that a bipartite recurrent quiver $Q$ is called \emph{Zamolodchikov integrable} if for every vertex $v\in\VertQ$, there exists an integer $N$ and rational functions $\rec_0,\dots,\rec_N\in \Q(\x)$ such that $\rec_0,\rec_N\neq 0$ and 
\[\sum_{j=0}^{N} \rec_j T_v(t+2j)=0\]
for all $t\in\Z$ with $t+\e_v$ even.  

The following lemma is the first step towards the proof of Theorem~\ref{thm:implies}
\begin{lemma}\label{lemma:implies}
 If a bipartite recurrent quiver $Q$ is Zamolodchikov integrable then $Q$ admits a weakly subadditive labeling.
\end{lemma}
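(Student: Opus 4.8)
The plan is to track the growth rate of the $T$-system values under the dynamics and show that a linear recurrence forces this growth to be controlled, which in turn yields the weak subadditivity inequalities. The natural tool is the \emph{tropical $T$-system}: specialize each initial variable $x_v$ to a formal parameter and pass to the tropical (max-plus) semiring, or equivalently track the degrees/exponents of the Laurent monomials appearing in $T_v(t)$. Concretely, set all $x_v=1$ except tropicalize to record the order of vanishing as a single parameter $\epsilon\to 0$; the resulting tropical values $t_v(t)$ satisfy $t_v(t+1)+t_v(t-1)=\max\left(\sum_{(u,v)\in\Gamma} t_u(t),\ \sum_{(v,w)\in\Delta} t_w(t)\right)$. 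The key point is that if $T_v$ satisfies a genuine linear recurrence $\sum_{j=0}^N \rec_j T_v(t+2j)=0$ over $\Q(\x)$ with $\rec_0,\rec_N\neq 0$, then $T_v(t)$ cannot grow super-exponentially; more precisely, the tropicalized sequence $t_v(t)$ grows at most linearly in $t$ (the characteristic roots are fixed elements of an algebraic closure, so their contribution to the valuation is linear in $t$).

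First I would make precise the statement that integrability implies at-most-linear growth of the tropical values: from $\sum_j \rec_j T_v(t+2j)=0$, pick a valuation on $\Q(\x)$ (e.g. the one measuring total degree in the $x_v$, or a generic one adapted to an eigenvector) and observe that the $\rec_j$ have bounded valuation while the recurrence forces $\mathrm{val}(T_v(t+2N))$ to be a max of finitely many shifted copies of earlier terms plus constants, hence $|\mathrm{val}(T_v(t))|=O(|t|)$. Second, I would invoke the structure of the tropical $T$-system on a bipartite recurrent quiver: by Corollary~\ref{cor:recurrent_commuting} the adjacency matrices $A_\Gamma, A_\Delta$ commute, so by Lemma~\ref{lemma:eigenvalues} there is a common positive eigenvector $\v$ with $A_\Gamma\v=\l_\Gamma\v$, $A_\Delta\v=\l_\Delta\v$. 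Third, I would run the tropical dynamics from an initial condition proportional to $\v$ (or use $\v$ as a test functional against the tropical values): since $\v>0$ and $A_\Gamma,A_\Delta$ act by scaling on $\v$, the quantity $\sum_v \v_v\, t_v(t)$ evolves under a one-dimensional recurrence governed by $\max(\l_\Gamma,\l_\Delta)$, and its growth rate is exactly $\log\max(\l_\Gamma,\l_\Delta)$ (or the relevant combination) — this must be $\le 0$ for linear (non-exponential) growth, forcing $\max(\l_\Gamma,\l_\Delta)\le 2$. Finally, $\l_\Gamma\le 2$ and $\l_\Delta\le 2$ combined with Perron--Frobenius and Theorem~\ref{thm:Vinberg} (Vinberg) give exactly the weak subadditivity inequalities: a connected graph with dominant eigenvalue $\le 2$ admits a positive eigenvector-type labeling $\nu$ with $2\nu(v)\ge\sum_{(u,v)}\nu(u)$, and taking $\nu=\v$ simultaneously works for both $\Gamma$ and $\Delta$.

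I would then assemble: the common eigenvector $\v$ furnishes a single labeling $\nu(v)=\v_v>0$ that is weakly subadditive for $G(Q)=(\Gamma,\Delta)$ because $A_\Gamma\v=\l_\Gamma\v$ with $\l_\Gamma\le 2$ gives $\sum_{(u,v)\in\Gamma}\nu(u)=\l_\Gamma\nu(v)\le 2\nu(v)$, and symmetrically for $\Delta$. Hence $Q$ admits a weakly subadditive labeling.

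The main obstacle I anticipate is the first step — rigorously extracting "at most linear growth of tropical values" from the existence of a linear recurrence over the function field $\Q(\x)$. One has to choose the right valuation (or family of valuations) so that the $\rec_j\in\Q(\x)$ have controlled valuation uniformly in $t$ and so that the valuation is sensitive enough to detect the exponential blow-up that would occur if $\max(\l_\Gamma,\l_\Delta)>2$; care is needed because the recurrence coefficients themselves are rational functions, and one must rule out cancellation phenomena that could mask the growth. A clean way around this is to tropicalize first (work with a single parameter deformation of the $x_v$ along the eigenvector direction $\v$, so all tropical values become explicit affine functions of $t$ with slope determined by $\l_\Gamma,\l_\Delta$) and then note that a linear recurrence over $\Q(\x)$ tropicalizes to a max-plus linear recurrence, whose solutions are eventually piecewise-linear with bounded slopes — incompatible with the strictly super-linear (in fact geometric) growth forced when $\max(\l_\Gamma,\l_\Delta)>2$. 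This is presumably the content the authors will spell out in Sections~\ref{sec:implies} and~\ref{subsec:implies_tropical}.
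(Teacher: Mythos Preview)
Your approach is sound and genuinely different from the paper's. The paper does not touch the common eigenvector $\v$ or the tropical $T$-system at this stage; instead it specializes all $x_v\mapsto 1$, obtains positive integer sequences $a(v,t)$, and invokes a lemma of Assem--Reutenauer--Smith on the asymptotics of unbounded positive integer sequences satisfying a linear recurrence to extract growth data $\lambda(v)\ge 1$, $d(v)\ge 0$ vertex by vertex, then sets $\nu(v)=\log\lambda(v)+d(v)$ and checks weak subadditivity directly from the $T$-system relation. Your route fixes the candidate labeling as $\v$ upfront (via Lemma~\ref{lemma:eigenvalues}) and reduces everything to the single inequality $\max(\lambda_\Gamma,\lambda_\Delta)\le 2$, obtained by contrasting the exponential growth of the tropical $T$-system along the eigenvector direction with the at-most-linear growth of $q$-degrees forced by any fixed linear recurrence. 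This is conceptually cleaner and in fact anticipates the tropical machinery the paper deploys only later (Section~\ref{subsec:implies_tropical}) to exclude the \affaff case.

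Two points to tighten. First, a minor slip: the growth rate of the scalar sequence $c_{t+1}+c_{t-1}=\mu c_t$ with $\mu=\max(\lambda_\Gamma,\lambda_\Delta)$ is $\log\rho$ where $\rho=(\mu+\sqrt{\mu^2-4})/2$, not $\log\mu$; the conclusion $\mu\le 2$ is unaffected since $\rho>1\iff\mu>2$. Second, the obstacle you flag is real: under $x_v\mapsto q^{\v_v}$ the extremal coefficient $J_N$ could vanish identically (the paper's specialization $x_v\mapsto 1$ has the analogous issue, glossed over there). The fix is to clear denominators to a polynomial recurrence $\sum_j P_j(\x)\,T_v(t+2j)=0$ with $P_j\in\Q[\x^{\pm1}]$; if every $P_j$ dies under the substitution they share a common factor lying in the kernel of $\x\mapsto q^{\v}$, which can be divided out, and one iterates until some coefficient survives. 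Once that is in place your argument goes through.
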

\begin{proof}

For $v\in\VertQ,t\in\Z$, define a positive number $a(v,t):=T_v(2t+\e_v)\mid_{\x:=1}$ to be the value of $T_v(2t+\e_v)$ if one substitutes $x_u:=1$ for all $u\in\VertQ$. By the Laurent Phenomenon (see~\cite{FZ}), the numbers $a(v,t)$ are integers. Note that, unlike $T_v(t)$, the numbers $a(v,t)$ are defined for all $v,t$, regardless of parity. 

Since $a(v,t)$ is always a positive integer, it is easy to see that the sequences $a(v,*):=\left(a(v,t)\right)_{t\in\Z}$ are either simultaneously bounded or simultaneously unbounded (for all $v$). Assume for the sake of contradiction that for some vertex $v$, the sequence $a(v,*)$ is unbounded, but there is another vertex $u$ for which the sequence $a(u,*)$ is bounded, say, $|a(u,t)|<C$ for all $t\in\Z$. Since $Q$ is connected, we may assume that $u$ and $v$ are neighbors in $Q$. Let $t$ be such that $a(v,t)>C^2$. Then by the definition of the $T$-system, we have
\[a(u,t+1)> \frac{a(v,t)}{a(u,t)}>C,\]
where the first inequality uses the fact that all the numbers involved are positive integers, hence each of them is at least $1$. This leads to an immediate contradiction.

If all the sequences are simultaneously bounded then they are periodic with the same period. This implies that the $T$-system associated with $Q$ is periodic for \emph{any} initial data, see~\cite[Remark 7.2]{GP}. In particular, such $Q$ admits a strictly subadditive labeling by~\cite[Theorem 1.10]{GP}. Thus the only case left for us to consider is when the sequence $a(v,*)$ is unbounded for every $v$.

We need to show that if $Q$ is Zamolodchikov integrable then $Q$ admits a weakly subadditive labeling. The way to find such a labeling is going to be very similar to the proof of \cite[Theorem 1]{ReuA}.

The fact that $Q$ is Zamolodchikov integrable implies that for each $v$, the sequences $a(v,*)$ satisfy a linear recurrence. Knowing that each of them is unbounded suggests using \cite[Lemma 1]{ReuA} that describes the asymptotic behavior of sequences $a(v,*)$. Before we state it, let us denote $A(k)\approx B(k)$ for two functions of $k$ if their ratio tends to a positive constant as $k\to \infty$. 
\begin{lemma}[see {\cite[Lemma 1]{ReuA}}]\label{lemma:reu}
Let $a(v,*)$ be an unbounded sequence of positive integers satisfying a linear recurrence for each $v\in\VertQ$. Then there exist:
\begin{itemize}
 \item an integer $p\geq 1$;
 \item real numbers $\l(v,l)\geq 1$ for each $v\in\VertQ,\ l=0,\dots,p$;
 \item integers $d(v,l)\geq 0$ for each $v\in\VertQ,\ l=0,\dots,p$;
 \item a strictly increasing sequence $(n_k)_{k\in\Z_{\geq 0}}$ of nonnegative integers
\end{itemize}
such that the following things hold:
\begin{enumerate}
 \item for every $v\in\VertQ$ and every $l=0,\dots,p$, $a(v,pn_k+l)\approx \l(v,l)^{n_k} n_k^{d(v,l)}$;
 \item \label{item:reu2} for every $v\in \VertQ$ there exists $l=0,\dots,p$ such that $\l(v,l)>1$ or $d(v,l)\geq 1$;
 \item \label{item:reu3} for every $v\in\VertQ$ we have $\l(v,0)=\l(v,p)$ and $d(v,0)=d(v,p)$.
\end{enumerate}
\qed
\end{lemma}
Clearly, the sequences $a(v,*)$ satisfy all the requirements of Lemma~\ref{lemma:reu}. For each $v\in\VertQ$, define 
\[\l(v):=\prod_{l=0}^{p-1} \l(v,l)\in\R_{\geq 1};\quad d(v):=\sum_{l=0}^{p-1} d(v,l)\in\Z_{\geq 0}.\]
For all $v\in\VertQ$ and $t\in\Z$, define
\[b(v,t):=\prod_{l=0}^{p-1} a(v,t+l).\]
Applying Lemma~\ref{lemma:reu} yields
\begin{equation}\label{eq:asymptotic}
b(v,pn_k)\approx \l(v)^{n_k} n_k^{d(v)}.
\end{equation}

By property (\ref{item:reu3}) of Lemma~\ref{lemma:reu}, we have $a(v,pn_k)\approx a(v,pn_k+p)$ for every $v\in\VertQ$ and thus we can write 
\begin{eqnarray*}
b(v,pn_k)^2 &\approx& \prod_{l=0}^{p-1} a(v,pn_k+l)a(v,pn_k+l+1) =\prod_{l=0}^{p-1} T_v(2(pn_k+l)+\e_v)T_v(2(pn_k+l+1)+\e_v)\\
 &=&\prod_{l=0}^{p-1} \left(\prod_{u\to v}T_u(2(pn_k+l)+\e_v+1)+\prod_{v\to w}T_w(2(pn_k+l)+\e_v+1)\right)\\
 &\geq&\left( \prod_{u\to v}\prod_{l=0}^{p-1}T_u(2(pn_k+l)+\e_v+1)\right)+\left(\prod_{v\to w}\prod_{l=0}^{p-1} T_w(2(pn_k+l)+\e_v+1)\right)\\
 &\approx& \left( \prod_{u\to v}b(u,pn_k)\right)+\left(\prod_{v\to w}b(w,pn_k)\right).
\end{eqnarray*}
The last equality is justified as follows: if $\e_v=0$ then $T_u(2(pn_k+l)+\e_v+1)$ is indeed equal to $a(u,pn_k+l)$. If $\e_v=1$ then $T_u(2(pn_k+l)+\e_v+1)=a(u,pn_k+l+1)$ and then we again use $a(u,pn_k)\approx a(u,pn_k+p)$ in order to get to the last line. 

By analyzing the asymptotics (\ref{eq:asymptotic}) of $b(v,pn_k)$, we see that for all $v\in\VertQ$,
\begin{equation}\label{eq:almost_there}
\l(v)^2\geq \max\left(\prod_{u\to v}\l(u),\prod_{v\to w}\l(w)\right);\quad d(v)\geq\max\left(\sum_{u\to v} d(u),\sum_{v\to w} d(w)\right). 
\end{equation}

Note that $\log\l(v)\geq 0$ and $d(v)\geq 0$ for all $v\in\VertQ$. Define $\nu(v):=\log\l(v)+d(v)$. By property~(\ref{item:reu2}) of Lemma~\ref{lemma:reu}, $\nu(v)>0$ for all $v\in\VertQ$. By (\ref{eq:almost_there}), $\nu$ is a weakly subadditive labeling of $\VertQ$.
\end{proof}

Applying Proposition~\ref{prop:affinite_subadditive}, we get
\begin{corollary}\label{cor:finite_or_affine}
 If a bipartite recurrent quiver $Q$ is Zamolodchikov integrable then $G(Q)$ is either 
 \begin{enumerate}
  \item\label{item:finite_finite} an admissible $ADE$ bigraph, or
  \item\label{cor:finite_or_affine_item:affinite} an \affinite $ADE$ bigraph, or
  \item an \affaff $ADE$ bigraph. \qed
 \end{enumerate} 
\end{corollary}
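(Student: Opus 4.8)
The plan is to read off the corollary directly from Lemma~\ref{lemma:implies} and Proposition~\ref{prop:affinite_subadditive}; the only point requiring care is a simple trichotomy for weakly subadditive labelings. First I would apply Lemma~\ref{lemma:implies} to obtain a weakly subadditive labeling $\nu\colon\VertQ\to\R_{>0}$. This is the step that contains all the substance, and it has just been established above.

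Next I would record that the three notions of Definition~\ref{dfn:subadditive} are nested: a strictly subadditive labeling is subadditive, and a subadditive labeling is weakly subadditive. Hence our labeling $\nu$ lies in exactly one of the three mutually exclusive classes: (a) $\nu$ is strictly subadditive; (b) $\nu$ is subadditive but not strictly subadditive; (c) $\nu$ is weakly subadditive but not subadditive. Indeed, if $\nu$ is not in case (a) and not in case (c), then, being weakly subadditive but not failing subadditivity, it is subadditive, and, not being strictly subadditive, it lies in case (b); the mutual exclusivity of the three classes is immediate from the definitions.

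Finally I would invoke the three parts of Proposition~\ref{prop:affinite_subadditive} case by case: part~(\ref{item:admissible_ADE}) turns case (a) into ``$G(Q)$ is an admissible $ADE$ bigraph'', part~(\ref{item:affinite}) turns case (b) into ``$G(Q)$ is an \affinite $ADE$ bigraph'', and part~(\ref{item:affaff}) turns case (c) into ``$G(Q)$ is an \affaff $ADE$ bigraph''. Since these three classes of bigraphs are themselves mutually exclusive (each connected component of an $ADE$ Dynkin diagram is finite \emph{or} affine, never both), no inconsistency can arise regardless of how many kinds of labeling $Q$ happens to admit, and the corollary follows. There is essentially no obstacle remaining at this stage: the real work lives in Lemma~\ref{lemma:implies} (resting on \cite[Lemma~1]{ReuA}) and in Proposition~\ref{prop:affinite_subadditive} (resting on Vinberg's theorem and \cite[Lemma~4.3]{S}), and all that is left is the elementary bookkeeping above.
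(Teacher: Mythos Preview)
Your proof is correct and follows exactly the same approach as the paper. The paper treats this corollary as an immediate consequence of Lemma~\ref{lemma:implies} and Proposition~\ref{prop:affinite_subadditive} (it is stated with a \qed\ and the prefatory sentence ``Applying Proposition~\ref{prop:affinite_subadditive}, we get''), and you have simply spelled out the underlying trichotomy explicitly.
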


\begin{remark}\label{rmk:zperiod}
 We have shown in~\cite{GP} that case (\ref{item:finite_finite}) of the above corollary holds if and only if $Q$ is \emph{Zamolodchikov periodic}, that is, the $T$-system associated with $Q$ is periodic. Obviously, this is a special case of Zamolodchikov integrability.
\end{remark}

In fact, only cases (\ref{item:finite_finite}) and (\ref{cor:finite_or_affine_item:affinite}) of Corollary~\ref{cor:finite_or_affine} are possible when $Q$ is Zamolodchikov integrable:

\begin{theorem}\label{thm:implies}
If a bipartite recurrent quiver $Q$ is Zamolodchikov integrable then $G(Q)$ is either 
 \begin{enumerate}
  \item an admissible $ADE$ bigraph, or
  \item an \affinite $ADE$ bigraph.
 \end{enumerate} 
\end{theorem}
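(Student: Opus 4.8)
By Corollary~\ref{cor:finite_or_affine}, we already know that $G(Q)$ is one of the three types of bigraphs. The only thing left to rule out is the \affaff case, so the plan is to assume for contradiction that both $\Gamma$ and $\Delta$ have all components affine $ADE$ Dynkin diagrams, and derive a contradiction with Zamolodchikov integrability. By Proposition~\ref{prop:affinite_subadditive}(\ref{item:affaff}), in this case $Q$ admits a weakly subadditive labeling which is not subadditive; equivalently, by Proposition~\ref{prop:coxeter_eigenvalues} and Lemma~\ref{lemma:eigenvalues}, the common dominant eigenvalue of $A_\Gamma$ and of $A_\Delta$ both equal $2$, and there is a common positive Perron eigenvector $\v$ with $A_\Gamma\v = A_\Delta\v = 2\v$. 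The moral reason such a quiver cannot be integrable is that the $T$-system then grows \emph{super-exponentially} (like $c^{t^2}$ rather than $c^t$), which is incompatible with satisfying a linear recurrence with coefficients in $\Q(\x)$.

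\textbf{Key steps.} First I would return to the specialized positive integers $a(v,t) = T_v(2t+\e_v)\mid_{\x:=1}$ from the proof of Lemma~\ref{lemma:implies}, which are defined for all $v,t$ regardless of parity and which are unbounded (the bounded case was already shown to force strict subadditivity, hence the admissible case, so we may assume unboundedness). Set $f(t) := \sum_{v\in\VertQ}\nu(v)\log a(v,t)$ where $\nu = \v$ is the common Perron eigenvector scaled to be positive. The defining relation $a(v,t+1)a(v,t-1) = \prod_{u\to v}a(u,t) + \prod_{v\to w}a(w,t)$ together with $\prod_{u\to v}a(u,t) \le \prod_{u\to v}a(u,t) + \prod_{v\to w}a(w,t) = a(v,t+1)a(v,t-1)$ and the analogous lower bound on the other summand gives, after taking $\log$, multiplying by $\nu(v)$ and summing over $v$,
\[
f(t+1) + f(t-1) \ge \sum_{v}\nu(v)\max\Bigl(\sum_{u\to v}\log a(u,t),\ \sum_{v\to w}\log a(w,t)\Bigr) \ge \tfrac12\Bigl(\sum_v\nu(v)\!\!\sum_{u\to v}\!\!\log a(u,t) + \sum_v\nu(v)\!\!\sum_{v\to w}\!\!\log a(w,t)\Bigr).
\]
Rewriting the right-hand side by summing over edges and using $A_\Gamma\v = A_\Delta\v = 2\v$ — more precisely $\sum_{v:\,u\to v\text{ in }\Gamma\text{-part}}\nu(v) = (A_\Gamma\nu)(u) = 2\nu(u)$ and similarly for $\Delta$ — one obtains $f(t+1)+f(t-1)\ge 2f(t)$, i.e. $f$ is (discretely) convex. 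Since $a(v,*)$ is unbounded, $f(t)\to\infty$ in at least one direction, and convexity then forces $f(t)/|t|\to\infty$, i.e. $\log a(v,t)$ grows faster than linearly in $|t|$ for at least one, hence (by connectedness and the usual propagation argument as in Lemma~\ref{lemma:implies}) for \emph{every} $v$. But a sequence of positive integers satisfying a linear recurrence over $\Q$ — equivalently, over $\Z$ after clearing denominators, hence a $\Z$-linear recurrence — has at most exponential growth: $|a(v,t)| \le C\,\rho^{|t|}$ for some $\rho\ge 1$. This contradicts $\log a(v,t)/|t|\to\infty$. Hence the \affaff case is impossible, and $G(Q)$ must be admissible or \affinite.

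\textbf{Main obstacle.} The delicate point is obtaining genuinely super-linear (not merely linear) growth of $f$ from convexity plus unboundedness, and making the convexity inequality tight enough. The inequality $\max(x,y)\ge\frac12(x+y)$ that I used is lossy, but it suffices because all we need is $f(t+1)+f(t-1)\ge 2f(t)$ with \emph{some} slack that guarantees $f$ is nonconstant — and nonconstancy of a convex function forces linear-in-$|t|$ lower bounds on one side, which already beats... no: one must be slightly more careful. Convexity alone only gives $f(t)\gtrsim c|t|$, which is exactly exponential growth of $a(v,t)$ and thus \emph{consistent} with a linear recurrence. To truly rule out the \affaff case one needs a strict improvement: because $\nu$ is only weakly subadditive and \emph{not} subadditive, there is at least one vertex $v_0$ and a direction where $2\nu(v_0) = \sum\nu(u)$ with equality on \emph{both} the $\Gamma$- and $\Delta$-sides, and iterating the $T$-system relation at such vertices one can extract a second-difference bound of the form $f(t+1)+f(t-1)\ge 2f(t) + \delta$ for a fixed $\delta>0$ once $t$ is large (using that the two summands $\prod_{u\to v}a(u,t)$ and $\prod_{v\to w}a(w,t)$ are both large and comparable, so their sum exceeds either one by a bounded-below multiplicative factor, contributing $\ge\log 2$ per step). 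A second difference bounded below by a positive constant forces $f(t)\gtrsim \delta t^2/2$, i.e. $a(v,t)$ grows like $e^{\Theta(t^2)}$, which is incompatible with any linear recurrence over $\Q(\x)$ specialized at $\x:=1$. Pinning down exactly where this uniform $\delta$ comes from — and confirming it survives the $\max \ge \tfrac12(\cdot+\cdot)$ step rather than being washed out — is the crux of the argument.
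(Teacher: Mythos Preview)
Your approach is different from the paper's and can be completed, but the crux you identify is not resolved by the reasoning you give. The paper's proof works via the tropical $T$-system: Proposition~\ref{prop:tropical_linearization} shows that integrability forces inequality~(\ref{eq:inequality_tropical}) to eventually hold for every red component; by the $\Gamma\leftrightarrow\Delta$ symmetry in the \affaff case the reverse inequality also eventually holds, so the tropical $Y$-system values $\Ytr_v(t)$ are eventually all zero; since $Y$-mutations are invertible this forces $\Ytr\equiv 0$ for every initial data $\l$, which is absurd. No growth-rate argument beyond Proposition~\ref{prop:tropical_linearization} is used.

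Your direct estimate avoids the tropical $Y$-system, but your route through $\max(x,y)\ge\tfrac12(x+y)$ throws away exactly the constant you then struggle to recover, and the justification ``the two summands are comparable'' is neither proved nor needed. The clean fix is AM--GM: for positive $P,Q$ one has $P+Q\ge 2\sqrt{PQ}$, hence $\log(P+Q)\ge\log 2+\tfrac12(\log P+\log Q)$ \emph{unconditionally}. Apply this with $P=\prod_{(u,v)\in\Gamma}T_u(t)\mid_{\x=1}$ and $Q=\prod_{(v,w)\in\Delta}T_w(t)\mid_{\x=1}$, weight by the common Perron vector $\nu$ (satisfying $A_\Gamma\nu=A_\Delta\nu=2\nu$), and sum over the vertices updated at step $t$: the eigenvector identities convert the right-hand side into $2\phi(t)+c_t\log 2$ where $c_t=\sum_{v:\e_v\not\equiv t}\nu(v)>0$ and $\phi(s)=\sum_{v:\e_v\equiv s}\nu(v)\log\bigl(T_v(s)\mid_{\x=1}\bigr)$. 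Thus $\phi(t+1)-2\phi(t)+\phi(t-1)\ge c_t\log 2$, so $\phi$ grows quadratically and some $\log\bigl(T_v(t)\mid_{\x=1}\bigr)$ grows super-linearly --- contradicting the at-most-exponential growth of any $\Q$-linearly recurrent sequence. This is arguably more elementary than the paper's $Y$-system step; what it loses is the structural picture of Section~\ref{sec:resolution} (soliton resolution), which the paper gets for free along the way.

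Two minor corrections: (i) under the paper's definition $a(v,t)=T_v(2t+\e_v)\mid_{\x=1}$ the recurrence is $a(v,t+1)\,a(v,t)=\cdots$ (not $a(v,t+1)\,a(v,t-1)$), with a color-dependent time shift on the right; it is cleaner to work with $T_v(t)\mid_{\x=1}$ directly and keep the parity constraint. (ii) Your sentence ``convexity then forces $f(t)/|t|\to\infty$'' is false as written --- convex functions may grow linearly --- which you yourself note two lines later; only the strict second-difference bound above yields the needed super-linear growth.
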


We postpone the proof of this theorem until Section~\ref{subsec:implies_tropical}.

\def\Edges{ \operatorname{Edges}}

{\Large\part{The classification of \affinite $ADE$ bigraphs}\label{part:classification}}
Each affine $ADE$ Dynkin diagram $\affL$ has the associated dominant eigenvector $\v_\affL:\Vert(\affL)\to\R$ corresponding to the eigenvalue $2$. In other words, for every $v\in\affL$ we have
\[2 \v_\affL(v)=\sum_{(v,w)\in \Edges(\affL)} \v_\affL(w).\]
We normalize $\v_\affL$ so that its entries are positive integers with the smallest entry equal to $1$. The values of $\v_\affL$ are given in Figure~\ref{figure:affine_ADE}.

\section{Self and double bindings}

In this section, we classify all the bipartite \affinite $ADE$ bigraphs $G=(\Gamma,\Delta)$ such that $\Gamma$ has either one or two connected components. If $\Gamma$ has just one connected component then $G$ is called a \emph{self binding}, and if $\Gamma$ has two connected components then $G$ is called a \emph{double binding}. We start with self bindings.

Throughout this section we assume that $h(\Delta)>2$, i.e. that $\Delta$ has at least one edge (because if $h(\Delta)=2$ then all connected components of $\Delta$ are of type $A_1$). 

\subsection{Self bindings}\label{subsec:self}

\begin{lemma}\label{lemma:self_bindings_eigenvalues}
 If $G=(\Gamma,\Delta)$ is a self binding then all the connected components of $\Delta$ are of type $A_2$.
\end{lemma}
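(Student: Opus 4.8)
The plan is to exploit the fact that, for a self binding, $\Gamma$ is connected and affine, so its adjacency matrix $A_\Gamma$ has simple dominant eigenvalue $2$ with Perron eigenvector $\v_\Gamma$ (the normalized additive labeling of Figure~\ref{figure:affine_ADE}), and since $A_\Delta$ commutes with $A_\Gamma$ it must preserve the line spanned by $\v_\Gamma$, i.e. $A_\Delta\v_\Gamma=\lambda_\Delta\v_\Gamma$ for a scalar $\lambda_\Delta$. By Lemma~\ref{lemma:eigenvalues} and Proposition~\ref{prop:coxeter_eigenvalues}, $\lambda_\Delta=2\cos(\pi/h(\Delta))$ is the common dominant eigenvalue of the components of $\Delta$; since $h(\Delta)>2$ we have $1\le\lambda_\Delta<2$.

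Next I would look at a vertex $v$ of $\Gamma$ realizing the minimum label, $\v_\Gamma(v)=1$ (such $v$ exists by the normalization fixed at the start of Part~\ref{part:classification}). Reading off the $v$-coordinate of $A_\Delta\v_\Gamma=\lambda_\Delta\v_\Gamma$ gives
\[\sum_{(v,w)\in\Delta}\v_\Gamma(w)=\lambda_\Delta<2,\]
and since each term $\v_\Gamma(w)$ is a positive integer, $v$ has at most one $\Delta$-neighbor. On the other hand, by Corollary~\ref{cor:common_coxeter} all connected components of $\Delta$ share the Coxeter number $h(\Delta)>2$, so no component of $\Delta$ is of type $A_1$; hence every vertex of $\Delta$, and in particular $v$, has at least one $\Delta$-neighbor. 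Therefore $v$ has exactly one $\Delta$-neighbor $x$, and $\v_\Gamma(x)=\lambda_\Delta$.

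Since $\v_\Gamma(x)$ is a positive integer and $\lambda_\Delta<2$, this forces $\lambda_\Delta=1$, i.e. $2\cos(\pi/h(\Delta))=1$, i.e. $h(\Delta)=3$. By Table~\ref{table:coxeter} the only finite $ADE$ Dynkin diagram with Coxeter number $3$ is $A_2$, so every connected component of $\Delta$ is of type $A_2$.

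The argument is short, so I do not expect a genuine obstacle; the points requiring care are the two soft inputs — that $\v_\Gamma$ really is a common eigenvector with $A_\Delta\v_\Gamma=\lambda_\Delta\v_\Gamma$ (this is where connectedness of $\Gamma$ is used, via Lemma~\ref{lemma:eigenvalues} and simplicity of the Perron eigenvalue), and that $h(\Delta)>2$ excludes isolated vertices of $\Delta$ so that $v$ actually has a $\Delta$-neighbor. The one subtlety worth flagging is that integrality of the labels $\v_\Gamma$ must be invoked twice: once to bound the $\Delta$-degree of $v$ by $1$, and once more to upgrade the real bound $\lambda_\Delta<2$ to the exact value $\lambda_\Delta=1$.
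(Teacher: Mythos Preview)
Your proof is correct and follows essentially the same approach as the paper: use the common eigenvector $\v_\Gamma$, evaluate the eigenvalue equation $A_\Delta\v_\Gamma=\lambda_\Delta\v_\Gamma$ at a vertex with label $1$, and conclude from integrality of the labels and $\lambda_\Delta<2$ that $\lambda_\Delta=1$, hence $h(\Delta)=3$. The paper phrases the key step slightly more tersely (simply noting that $\lambda_\Delta$ must be an integer), whereas you unpack it into the two integrality observations, but the content is identical.
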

\begin{proof}
 Let $\v:\Vert(G)\to\R$ be the common eigenvector for $A_\Gamma$ and $A_\Delta$ from Lemma~\ref{lemma:eigenvalues}. Thus $A_\Gamma\v=2\v$. Since $\Gamma$ has just one connected component, we may rescale $\v$ so that it is equal to $\v_\Gamma$. Now, let $\l_\Delta:=2\cos(\pi/h(\Delta))$ be the dominant eigenvalue for $A_\Delta$. We have that for every $v\in\Vert(G)$, 
 \[\sum_{(v,w)\in\Delta} \v(w)=\l_\Delta\v(v).\]
 Since there exists a vertex $v$ for which $\v(v)=1$, it follows that $\l_\Delta$ is an integer. This can only happen when $h(\Delta)=3$, that is, when all the connected components of $\Delta$ have Coxeter number $3$. The only finite $ADE$ Dynkin diagram with Coxeter number $3$ is $A_2$. 
\end{proof}

\newcommand{\selfb}[1]{\Scal_{#1}}

\begin{figure}
{$
\psmatrix[colsep=1cm,rowsep=0.5cm,mnode=circle]
            & \bl      \\
\wh         &           & \wh\\
  \bl       &           & \bl\\
            & \wh    
\psset{arrows=-,arrowscale=2}
\ra{2,1}{1,2}
\ra{2,1}{3,1}
\ra{4,2}{3,1}
\ra{2,3}{1,2}
\ra{2,3}{3,3}
\ra{4,2}{3,3}
\ba{1,2}{4,2}
\ba{2,1}{3,3}
\ba{3,1}{2,3}
\endpsmatrix $}
 \caption{\label{fig:self_binding} Self binding $\selfb{5}$.}
\end{figure}

\begin{proposition}\label{prop:self_bindings_classif}
\begin{itemize}
 \item For every $n\geq 1$, there is a self binding $\selfb{4n+1}=(\Gamma_n,\Delta_n)$ where $\Gamma_n$ is an affine $ADE$ Dynkin diagram of type $\affA_{4n+1}$, that is, a single cycle with $4n+2$ vertices, and two vertices of $\Gamma_n$ are connected by an edge of $\Delta_n$ iff they are the opposite vertices of that cycle (see Figure~\ref{fig:self_binding}); 
 \item There are no other self bindings.
\end{itemize}
\end{proposition}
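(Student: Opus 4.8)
The plan is to classify self bindings by exploiting Lemma~\ref{lemma:self_bindings_eigenvalues}, which already forces every connected component of $\Delta$ to be of type $A_2$. So we are left with a single connected affine $ADE$ Dynkin diagram $\Gamma$ whose $|V|$ vertices are partitioned into pairs (the edges of $\Delta$), each pair being an $A_2$; in particular $|V|$ is even and $\Delta$ is a perfect matching on $\Vert(\Gamma)$. The constraints we must use are: (i) $A_\Gamma$ and $A_\Delta$ commute (Corollary~\ref{cor:recurrent_commuting}), and (ii) $\Delta$ must be bipartite with respect to the \emph{same} bipartition $\e$ as $\Gamma$, i.e. every matched pair joins a white to a black vertex.

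The main computation is to understand when the matching $\Delta$ commutes with $A_\Gamma$. First I would set up notation: let $M = A_\Delta$ be the permutation matrix of the fixed-point-free involution $\sigma$ on $\Vert(\Gamma)$ defined by the matching, so $M^2 = I$. The relation $A_\Gamma M = M A_\Gamma$ says precisely that $\sigma$ is a graph automorphism of $\Gamma$: for all $u,v$, the number of edges from $u$ to $v$ in $\Gamma$ equals the number of edges from $\sigma(u)$ to $\sigma(v)$. So the task becomes: classify all affine $ADE$ Dynkin diagrams $\Gamma$ admitting a fixed-point-free involutive automorphism $\sigma$ that (a) is \emph{$\e$-preserving is wrong} — rather, swaps colors, i.e. $\e_{\sigma(v)} \ne \e_v$ for all $v$ — and (b) has the property that $v$ and $\sigma(v)$ are never adjacent in $\Gamma$ (since $\Gamma$ and $\Delta$ share no edges). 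I would then go through the list of affine diagrams in Figure~\ref{figure:affine_ADE} one at a time. For $\affD_n$, $\affE_6$, $\affE_7$, $\affE_8$ the automorphism group is small (a product of small symmetric groups coming from the fork ends, plus at most an order-$2$ or order-$3$ symmetry), and one checks directly that no fixed-point-free color-swapping involution exists — typically because any nontrivial automorphism fixes the branch vertex, or because the diagram has odd order (e.g. $\affE_6$ has $7$ vertices, $\affE_7$ has $8$ but its only nontrivial automorphism is the order-$2$ flip which fixes two vertices, $\affE_8$ has $9$ vertices, $\affD_n$ has $n+1$ vertices and any automorphism fixes the spine). For $\affA_{m}$ (a cycle on $m+1$ vertices): bipartiteness forces $m$ odd, say $m = 2n-1$ so the cycle has $2n$ vertices; the fixed-point-free involutive automorphisms of a $2n$-cycle are the "antipodal map" $v \mapsto v+n$ and the "near-reflections," and color-swapping plus the non-adjacency condition single out the antipodal map, which also requires $n$ even for $\sigma$ to swap colors (vertex $i$ has color $i \bmod 2$, and $\sigma$ shifts by $n$, so we need $n$ odd — wait, we need $n$ to make $i$ and $i+n$ differ in parity, i.e. $n$ odd). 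Let me recompute: with $2n$ vertices on the cycle, colors alternate, so color$(i+n)\neq$color$(i)$ iff $n$ is odd. Writing the cycle length as $4n'+2$ corresponds to $n = 2n'+1$ odd, matching the statement's $\selfb{4n+1}$ (the affine type is $\affA_{4n+1}$, i.e. a cycle with $4n+2$ vertices). Thus the only self bindings are the claimed $\selfb{4n+1}$.

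The key steps in order: (1) invoke Lemma~\ref{lemma:self_bindings_eigenvalues} to reduce $\Delta$ to a perfect matching; (2) translate "$A_\Gamma$ commutes with the matching's permutation matrix" into "the matching's involution is a graph automorphism of $\Gamma$"; (3) add the two side conditions (color-swapping, from bipartiteness of $\Delta$; and matched pairs non-adjacent, from $\Gamma,\Delta$ sharing no edges); (4) enumerate automorphism groups of the five families of affine $ADE$ diagrams and eliminate $\affD,\affE_6,\affE_7,\affE_8$; (5) for $\affA$, identify the valid involution as the antipodal map on a $(4n+2)$-cycle and check it indeed yields a bipartite \affinite bigraph (the $\Delta$-edges connect antipodal vertices, each $\Delta$-component is $A_2$, and the dominant eigenvalue of $A_\Delta$ is $1 = 2\cos(\pi/3)$, consistent with $h(\Delta)=3$), recovering $\selfb{4n+1}$ of Figure~\ref{fig:self_binding}.

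The main obstacle I expect is step (4): one must be careful that an automorphism of the \emph{underlying graph} $\Gamma$ need not be one that is "nice" with respect to the classification — e.g. for $\affD_4$ (the star with $4$ leaves) the automorphism group is $S_4$ acting on the leaves and fixing the center, so there are fixed-point-free involutions on the leaves, but the center is always fixed, killing fixed-point-freeness on all of $\Vert(\Gamma)$; for $\affD_n$ with $n>4$ the two "near" leaves on each end can be swapped but the long path is rigid, so again no globally fixed-point-free involution; and one must not forget that $\affA_1$ itself (a double edge on two vertices) and very small cases could behave degenerately, but these are excluded since we assumed $h(\Delta)>2$ so $\Gamma$ has at least $|V|\geq 4$ vertices and we may also note $\affA_1$ has a double edge which cannot be a component of $\Gamma$ here. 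I would handle the $\affE$ and $\affD$ eliminations by a short uniform remark: in each such diagram every automorphism fixes the unique branch vertex (or, for $\affE_7$, the center of symmetry consists of two fixed vertices), so no fixed-point-free automorphism exists, leaving only the cycle case $\affA$.
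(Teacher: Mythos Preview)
Your approach is essentially the same as the paper's: reduce $\Delta$ to a perfect matching via Lemma~\ref{lemma:self_bindings_eigenvalues}, interpret $A_\Gamma A_\Delta = A_\Delta A_\Gamma$ as saying the matching involution $\sigma$ is a graph automorphism of $\Gamma$, and then classify fixed-point-free color-reversing involutive automorphisms of affine $ADE$ diagrams. The paper's proof is terser but follows exactly this line.

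There is one concrete error in your elimination of $\affD_n$. Your ``uniform remark'' that every automorphism fixes the unique branch vertex fails here: $\affD_n$ has \emph{two} branch vertices, and for $n$ odd the reflection swapping the two ends is a fixed-point-free, color-reversing, involutive automorphism (e.g.\ on $\affD_5$ with leaves $l_1,l_2$ attached to $p_1$ and $l_3,l_4$ attached to $p_2$, the map $l_1\leftrightarrow l_3$, $l_2\leftrightarrow l_4$, $p_1\leftrightarrow p_2$ works). So your step~(4) as written does not eliminate $\affD_n$. The fix is already in your toolkit: the non-adjacency condition from step~(3). In any such reflection the two central path vertices are matched to each other and are $\Gamma$-adjacent, so $\Gamma$ and $\Delta$ would share an edge, violating the bigraph axioms. (This is the same mechanism by which you correctly rule out the edge-midpoint reflections on the cycle $\affA_{2k-1}$.) Once you invoke non-adjacency for $\affD_n$ as well, the argument goes through and matches the paper's conclusion.
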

\begin{proof}
 Let $G=(\Gamma,\Delta)$ be a self binding. By Lemma~\ref{lemma:self_bindings_eigenvalues}, all the components of $\Delta$ are just isolated single edges. Let us define an involution $\i:\Vert(G)\to\Vert(G)$ such that $v$ and $\i(v)$ are exactly the vertices connected by the edges of $\Delta$. This is a fixed point free involution, otherwise $\Delta$ would have a connected component of type $A_1$. Moreover, since $G$ is bipartite, $\i$ should reverse the colors of vertices. Finally, if $(u,v)\in\Gamma$ then one must also have $(\i(u),\i(v))\in\Gamma$ because otherwise the adjacency matrices $A_\Gamma$ and $A_\Delta$ would not commute. Thus $\i$ is a color-reversing involutive automorphism of $G$ without fixed points. The only affine $ADE$ Dynkin diagram admitting such an automorphism is $\affA_{4n+1}$ for $n\geq 1$, where the automorphism is just a rotation by $180^\circ$.
\end{proof}

\subsection{Double bindings: scaling factor}

The classification of double bindings is going to be much richer than that of self bindings. Throughout the rest of this section, we assume that $G=(\Gamma,\Delta)$ is a double binding, and that $\Vert(G)=X\sqcup Y$, where $X$ and $Y$ are the two connected components of $\Gamma$, and recall that they are affine $ADE$ Dynkin diagrams. A \emph{parallel binding} is a bigraph of type $\affL\otimes A_2$ and, following \cite{S}, is denoted $\affL\equiv\affL$.

\def\scf{ \operatorname{scf}}
\begin{definition}
 The \emph{scaling factor} of $G$ (denoted $\scf(G)$) is the number $\l_\Delta^2$ where $\l_\Delta=2\cos(\pi/h(\Delta))$ is the dominant eigenvalue for $A_\Delta$.
\end{definition}

\begin{proposition}\label{prop:scf_types}
 The scaling factor $\scf(G)$ is an integer equal to either $1,2,$ or $3$. Moreover,
 \begin{enumerate}
  \item if $\scf(G)=1$ then all connected components of $\Delta$ are of type $A_2$;
  \item if $\scf(G)=2$ then all connected components of $\Delta$ are of type $A_3$;
  \item if $\scf(G)=3$ then all connected components of $\Delta$ are either of type $A_5$ or of type $D_4$.
 \end{enumerate}
\end{proposition}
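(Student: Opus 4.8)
The plan is to leverage the common dominant eigenvector supplied by Lemma~\ref{lemma:eigenvalues}, just as in the proof of Lemma~\ref{lemma:self_bindings_eigenvalues}, but now exploiting that $\Gamma$ has \emph{two} components $X$ and $Y$, each an affine $ADE$ Dynkin diagram with dominant eigenvalue $2$. Let $\v:\Vert(G)\to\R_{>0}$ be the common dominant eigenvector, so $A_\Gamma\v=2\v$ and $A_\Delta\v=\l_\Delta\v$ with $\l_\Delta=2\cos(\pi/h(\Delta))$. Restricted to each of $X$ and $Y$, the vector $\v$ is a positive multiple of the integer eigenvector $\v_\affL$ normalized to have smallest entry $1$; write $\v|_X=\alpha\,\v_X$ and $\v|_Y=\beta\,\v_Y$ for positive reals $\alpha,\beta$. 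Since every edge of $\Delta$ joins a vertex of $X$ to a vertex of $Y$ (the two components of $\Gamma$ cannot be linked by $\Delta$ in a connected bigraph unless... — here one should first check that $\Delta$ has no edge internal to $X$ or to $Y$; if it did, that edge together with its $\i$-image under the commuting condition would force $X$ or $Y$ to contain a sub-configuration contradicting its being affine $ADE$, or more simply: an internal $\Delta$-edge at a vertex $v\in X$ where $\v|_X$ takes the value $\alpha$ would force, summing the $\Delta$-eigenvalue equation at an extremal vertex, that $\l_\Delta$ is rational times $\beta/\alpha$ in a way we can control — I'll handle this carefully), the eigenvalue equation for $A_\Delta$ at a vertex $v\in X$ reads $\sum_{(v,w)\in\Delta}\v(w)=\l_\Delta\v(v)$, i.e. $\beta\sum_w \v_Y(w)=\l_\Delta\,\alpha\,\v_X(v)$, and symmetrically at $w\in Y$. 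Pick a vertex $v_0\in X$ with $\v_X(v_0)=1$ and a vertex $w_0\in Y$ with $\v_Y(w_0)=1$.

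From these two equations at $v_0$ and $w_0$ one gets $\beta\cdot(\text{integer})=\l_\Delta\alpha$ and $\alpha\cdot(\text{integer})=\l_\Delta\beta$; multiplying, $\l_\Delta^2$ equals the product of two nonnegative integers, hence $\scf(G)=\l_\Delta^2\in\Z_{\geq 0}$. Since $0\le\l_\Delta<2$, we have $\l_\Delta^2\in\{0,1,2,3\}$, and $\l_\Delta^2=0$ is excluded by the standing assumption $h(\Delta)>2$. So $\scf(G)\in\{1,2,3\}$, and $h(\Delta)$ is determined: $\l_\Delta^2=1\Leftrightarrow h(\Delta)=3$; $\l_\Delta^2=2\Leftrightarrow \l_\Delta=\sqrt2\Leftrightarrow h(\Delta)=4$; $\l_\Delta^2=3\Leftrightarrow\l_\Delta=\sqrt3\Leftrightarrow h(\Delta)=6$. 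Now translate $h(\Delta)$ into the list of finite $ADE$ Dynkin diagrams with that Coxeter number, reading off Table~\ref{table:coxeter}: Coxeter number $3$ forces every component of $\Delta$ to be $A_2$; Coxeter number $4$ forces $A_3$ (note $D_3\cong A_3$, and no other type has $h=4$); Coxeter number $6$ allows $A_5$ (since $n+1=6$) and $D_4$ (since $2m-2=6$), and nothing else ($E_6$ has $h=12$, etc.). By Corollary~\ref{cor:common_coxeter} all components of $\Delta$ share the same Coxeter number, so the trichotomy is uniform across components, giving exactly the three cases claimed.

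I expect the main obstacle to be the bookkeeping around whether $\Delta$ can have edges \emph{within} $X$ or within $Y$: the clean argument above implicitly assumes $\Delta$ connects $X$ to $Y$. One must rule out internal $\Delta$-edges (or, if they are allowed a priori, show the eigenvalue computation still forces $\l_\Delta^2\in\Z$). The tool is the commutation $A_\Gamma A_\Delta=A_\Delta A_\Gamma$: if $e$ is a $\Delta$-edge inside $X$, then applying $A_\Gamma$ (which acts within $X$ and within $Y$ separately) and comparing forces structural constraints; alternatively, evaluate the $A_\Delta$-eigenvalue equation at a vertex of $X$ where $\v_X$ attains its minimum $1$ and where all $\Delta$-neighbours also lie in $X$ — then $\l_\Delta=\sum\v_X(\text{neighbours})\in\Z_{\geq1}$, impossible for $0<\l_\Delta<2$ unless $\l_\Delta=1$, which still lands us in the $\scf=1$ case. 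Either way the conclusion survives; I would isolate this as a short preliminary lemma (perhaps "$\Delta$ is `bipartite' with respect to $\{X,Y\}$ except when $\scf(G)=1$") before running the eigenvalue computation above. The rest is routine once $h(\Delta)\in\{3,4,6\}$ is established.
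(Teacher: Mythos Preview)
Your approach is essentially the same as the paper's: take the common Perron eigenvector from Lemma~\ref{lemma:eigenvalues}, normalize so that its restriction to $X$ (resp.\ $Y$) is a positive multiple of the integer vector $\v_X$ (resp.\ $\v_Y$), evaluate the $A_\Delta$-eigenvalue equation at a vertex of $X$ with $\v_X$-value $1$ and at a vertex of $Y$ with $\v_Y$-value $1$, and multiply the two resulting integrality constraints to force $\l_\Delta^2\in\Z_{>0}$; since $0<\l_\Delta<2$ this gives $\scf(G)\in\{1,2,3\}$, hence $h(\Delta)\in\{3,4,6\}$, and reading off Figure~\ref{table:coxeter} yields the stated component types.

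The one place you diverge from the paper is your explicit worry about $\Delta$-edges internal to $X$ or to $Y$. The paper's proof simply writes the eigenvalue equations (its (\ref{eq:eig_defn_1})--(\ref{eq:eig_defn_2})) as if every $\Delta$-neighbor of a vertex in $X$ lies in $Y$ and vice versa, without comment. So your concern is not a defect in your argument relative to the paper's --- you are flagging a point the paper itself glosses over. Your proposed remedies (either ruling out internal $\Delta$-edges via commutation, or noting that the integrality argument survives at an extremal vertex) are reasonable, but you needn't elevate this to a separate lemma for the purposes of matching the paper's level of rigor here.
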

\begin{proof}
 We view maps $\tau:\Vert(G)\to\R$ as pairs $\col{\tau_X}{\tau_Y}$ where $\tau_X:\Vert(X)\to\R$ and $\tau_Y:\Vert(Y)\to\R$ are restrictions of $\tau$ to the corresponding subsets. Let $\tau=\col{\tau_X}{\tau_Y}$ be the common dominant eigenvector for $A_\Gamma$ and $A_\Delta$ from Lemma~\ref{lemma:eigenvalues}. We may rescale it so that $\tau_X=\alpha\v_X$ and $\tau_Y=\v_Y$ for some $\alpha\in\R$. Since the entries of the dominant eigenvector are positive, we may assume $\alpha>0$. Since $A_\Delta\tau=\l_\Delta\tau$, we have
 \begin{eqnarray}
 \label{eq:eig_defn_1}\sum_{(v,w)\in\Delta} \v_Y(w)&=&\l_\Delta \alpha \v_X(v),\quad \forall\,v\in X;\\
 \label{eq:eig_defn_2}\sum_{(v,w)\in\Delta}\alpha  \v_X(v)&=&\l_\Delta \v_Y(w),\quad \forall\,w\in Y. 
 \end{eqnarray}
If we substitute $v\in X$ such that $\v_X(v)=1$ in (\ref{eq:eig_defn_1}), we will get that $\l_\Delta\alpha\in\Z_{>0}$. Similarly, if we substitute $w\in X$ such that $\v_Y(w)=1$ in (\ref{eq:eig_defn_2}), we will get that $\l_\Delta/\alpha\in\Z_{>0}$. Therefore their product $\l_\Delta^2$ belongs to $\Z_{>0}$ as well. A straightforward case analysis shows that this can only happen when $h(\Delta)=3,4,$ or $6$, and the result follows.
\end{proof}
A simple consequence of the proof is the following observation:

\begin{corollary}\label{cor:points}
 Up to switching $X$ and $Y$, we have:
  \begin{eqnarray}
 \label{eq:points_1}\sum_{(v,w)\in\Delta} \v_Y(w)&=&\scf(G) \v_X(v),\quad \forall\,v\in X;\\
 \label{eq:points_2}\sum_{(v,w)\in\Delta}\v_X(v)&=&\v_Y(w),\quad \forall\,w\in Y. 
 \end{eqnarray}
\end{corollary}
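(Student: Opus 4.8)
The plan is to read off Corollary~\ref{cor:points} directly from the computation inside the proof of Proposition~\ref{prop:scf_types}, essentially unwinding what was already proved there. The main content of that proof is the pair of identities~(\ref{eq:eig_defn_1}) and~(\ref{eq:eig_defn_2}), together with the conclusions $\l_\Delta\alpha\in\Z_{>0}$ and $\l_\Delta/\alpha\in\Z_{>0}$ (from substituting a vertex where $\v_X$, resp.\ $\v_Y$, equals $1$). So first I would recall these facts and note that, by definition, $\scf(G)=\l_\Delta^2=(\l_\Delta\alpha)(\l_\Delta/\alpha)$.

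Next, the key step: set $a:=\l_\Delta\alpha$ and $b:=\l_\Delta/\alpha$, both positive integers with $ab=\scf(G)$. Rewriting~(\ref{eq:eig_defn_1}) gives $\sum_{(v,w)\in\Delta}\v_Y(w)=a\,\v_X(v)$ for all $v\in X$, and rewriting~(\ref{eq:eig_defn_2}), after dividing by $\alpha$, gives $\sum_{(v,w)\in\Delta}\v_X(v)=(\l_\Delta/\alpha)\,\v_Y(w)=b\,\v_Y(w)$ for all $w\in Y$. Since $\scf(G)\in\{1,2,3\}$ by Proposition~\ref{prop:scf_types}, and each of $a,b$ is a positive integer, one of them must equal $1$ (indeed in all three cases $\{a,b\}=\{1,\scf(G)\}$ since $\scf(G)$ is $1$ or prime). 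Up to swapping the roles of $X$ and $Y$ — which swaps $a\leftrightarrow b$ and correspondingly swaps the two displayed identities — we may assume $b=1$ and hence $a=\scf(G)$. Substituting $b=1$ into the second identity yields~(\ref{eq:points_2}), and $a=\scf(G)$ into the first yields~(\ref{eq:points_1}).

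There is essentially no obstacle here; the only point requiring a word of care is the claim that one of $a,b$ equals $1$, which relies on $\scf(G)$ being $1$ or prime (values $1,2,3$), established in Proposition~\ref{prop:scf_types}. I would also make explicit that the normalization $\tau_X=\alpha\v_X$, $\tau_Y=\v_Y$ used in the proof of Proposition~\ref{prop:scf_types} is exactly what makes the coefficients come out as stated, so no rescaling issues arise. The whole argument is a two-line bookkeeping consequence of the earlier proof.

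\begin{proof}
 We reuse the notation and computations from the proof of Proposition~\ref{prop:scf_types}. There we chose the common dominant eigenvector $\tau=\col{\tau_X}{\tau_Y}$ of $A_\Gamma$ and $A_\Delta$ normalized so that $\tau_X=\alpha\v_X$ and $\tau_Y=\v_Y$ for some $\alpha>0$, and we obtained identities~(\ref{eq:eig_defn_1}) and~(\ref{eq:eig_defn_2}), as well as the fact that $a:=\l_\Delta\alpha$ and $b:=\l_\Delta/\alpha$ are both positive integers. By definition $\scf(G)=\l_\Delta^2=ab$. Dividing~(\ref{eq:eig_defn_2}) by $\alpha>0$, the two identities become
 \[\sum_{(v,w)\in\Delta}\v_Y(w)=a\,\v_X(v)\quad(\forall v\in X);\qquad \sum_{(v,w)\in\Delta}\v_X(v)=b\,\v_Y(w)\quad(\forall w\in Y).\]
 By Proposition~\ref{prop:scf_types} we have $ab=\scf(G)\in\{1,2,3\}$, so $\scf(G)$ is either $1$ or a prime; since $a$ and $b$ are positive integers, one of them must equal $1$. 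After possibly interchanging the roles of $X$ and $Y$ (which swaps $a$ and $b$ and swaps the two displayed identities), we may assume $b=1$, hence $a=ab=\scf(G)$. Substituting these values yields exactly~(\ref{eq:points_1}) and~(\ref{eq:points_2}).
\end{proof}
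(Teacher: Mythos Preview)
Your proposal is correct and takes essentially the same approach as the paper's proof: both argue that the positive integers $\l_\Delta\alpha$ and $\l_\Delta/\alpha$ from the proof of Proposition~\ref{prop:scf_types} multiply to $\scf(G)\in\{1,2,3\}$, forcing one of them to equal $1$. The paper phrases this by noting that $\l_\Delta\in\{1,\sqrt2,\sqrt3\}$ forces $\alpha\in\{\l_\Delta,1/\l_\Delta\}$, while you argue via the factorization $ab=\scf(G)$; the content is identical.
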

\begin{proof}
 We know that $\l_\Delta^2\in\{1,2,3\}$ and thus $\l_\Delta\in\{1,\sqrt2,\sqrt3\}$. Thus the only $\alpha\in\R$ satisfying $\l_\Delta/\alpha\in\Z_{>0}$ and $\l_\Delta\alpha\in\Z_{>0}$ is either $\alpha=\l_\Delta$ or $\alpha=1/\l_\Delta$.
\end{proof}

By the same reasoning as in the proof of Proposition~\ref{prop:self_bindings_classif}, if $\scf(G)=1$ then $G$ is a parallel binding. It remains to classify double bindings with scaling factor $2$ and $3$. We say that a double binding is \emph{nontrivial} if it is not a parallel binding, i.e. if the scaling factor is $2$ or $3$.

\begin{definition}
 When $X$ is an affine $ADE$ Dynkin diagram of type $\affL$ and $Y$ is an affine $ADE$ Dynkin diagram of type $\affL'$ then we say that $G$ is a double binding \emph{of type $\affL\ast\affL'$}.
\end{definition}

Note that Corollary~\ref{cor:points} is not symmetric in $X$ and $Y$, so if $G$ is a double binding of type $\affL\ast\affL'$ then necessarily $X$ has type $\affL$, $Y$ has type $\affL'$ and (\ref{eq:points_1}) and (\ref{eq:points_2}) hold. In other words, we treat double bindings of types $\affL\ast\affL'$ and $\affL'\ast\affL$ differently.

A simple consequence of (\ref{eq:points_2}) is
\begin{corollary}\label{cor:max_value_v_Y}
 For any double binding $G$, the maximal value of $\v_X$ is less than or equal to the maximal value of $\v_Y$. 
\end{corollary}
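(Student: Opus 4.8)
The plan is to prove Corollary~\ref{cor:max_value_v_Y} as an immediate consequence of equation~(\ref{eq:points_2}) together with the normalization of $\v_X$ and the fact that $\Delta$ is a nonempty bigraph (recall the standing assumption $h(\Delta)>2$ in this section, so every component of $\Delta$ has at least one edge). First I would fix a vertex $w\in Y$ at which $\v_Y$ attains its maximum, say $\v_Y(w)=M_Y$. Since every vertex of $Y$ is incident to at least one edge of $\Delta$ (as $\Delta$ restricted to $Y$'s side is a disjoint union of finite $ADE$ Dynkin diagrams, each with at least one edge), the sum on the left-hand side of~(\ref{eq:points_2}) is over a nonempty set of vertices $v\in X$.

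Next I would invoke the normalization stated at the start of Part~\ref{part:classification}: $\v_X=\v_\affL$ has positive integer entries with smallest entry $1$, so in particular every term $\v_X(v)$ in the sum is a positive integer, hence $\geq 1$. Then from~(\ref{eq:points_2}),
\[
M_Y=\v_Y(w)=\sum_{(v,w)\in\Delta}\v_X(v)\geq \v_X(v_0)
\]
for any particular neighbor $v_0$ of $w$ in $\Delta$. The remaining point is to compare $M_Y$ with $M_X:=\max_{v\in X}\v_X(v)$. I would take $v^\ast\in X$ with $\v_X(v^\ast)=M_X$; since $\Delta$ restricted to $X$'s side is also a disjoint union of finite $ADE$ Dynkin diagrams (each with $\geq 1$ edge), $v^\ast$ has at least one $\Delta$-neighbor $w^\ast\in Y$, and then by~(\ref{eq:points_2}) applied to $w^\ast$ we get $\v_Y(w^\ast)=\sum_{(v,w^\ast)\in\Delta}\v_X(v)\geq \v_X(v^\ast)=M_X$, whence $M_Y\geq \v_Y(w^\ast)\geq M_X$. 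This is exactly the claimed inequality.

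There is essentially no obstacle here; the only thing to be careful about is justifying that $\Delta$ has no isolated vertices, which is what makes the sums in~(\ref{eq:points_2}) nonempty. This follows because each connected component of $\Delta$ is a finite $ADE$ Dynkin diagram and, under the section's running hypothesis $h(\Delta)>2$, these components are not of type $A_1$, so each has at least one edge; by the commutation of $A_\Gamma$ and $A_\Delta$ and connectivity of $X$ and $Y$ (they are affine $ADE$ Dynkin diagrams, hence connected), no vertex of $G$ can be $\Delta$-isolated. With that in hand the corollary is a one-line estimate, so I would simply write: pick $v^\ast\in X$ maximizing $\v_X$, pick a $\Delta$-neighbor $w^\ast$ of $v^\ast$, and apply~(\ref{eq:points_2}) at $w^\ast$ to conclude.
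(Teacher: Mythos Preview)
Your argument is correct and matches the paper's proof exactly: pick $v^\ast\in X$ where $\v_X$ is maximal, take a $\Delta$-neighbor $w^\ast\in Y$, and read off $\v_Y(w^\ast)\geq\v_X(v^\ast)$ from~(\ref{eq:points_2}). Your first paragraph (starting from a maximizer in $Y$) is an unnecessary detour you yourself abandon, and the justification that no vertex is $\Delta$-isolated follows more simply from Corollary~\ref{cor:common_coxeter} (all $\Delta$-components share Coxeter number $h(\Delta)>2$, so none is $A_1$) rather than from commutation of adjacency matrices; but these are cosmetic points.
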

\begin{proof}
 Let $\v_X(u)$ be the maximal value of $\v_X$, then clearly $\v_Y(w)\geq\v_X(u)$ for any $(u,w)\in\Delta$ by (\ref{eq:points_2}).
\end{proof}

Denote by $\v_Y^{-1}(1)$ the set of vertices $u$ of $Y$ with $\v_Y(u)=1$. 

\begin{proposition}\label{prop:not_the_same}
 There are no non-trivial double bindings of type $\affL\ast\affL$ (i.e. when $X$ and $Y$ have the same type).
\end{proposition}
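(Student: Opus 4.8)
The plan is to suppose for contradiction that $G=(\Gamma,\Delta)$ is a nontrivial double binding of type $\affL\ast\affL$, so that $X$ and $Y$ are both of type $\affL$ and, by Proposition~\ref{prop:scf_types}, $\scf(G)=s\in\{2,3\}$. By Corollary~\ref{cor:points} we have the two asymmetric relations $\sum_{(v,w)\in\Delta}\v_Y(w)=s\,\v_X(v)$ for all $v\in X$ and $\sum_{(v,w)\in\Delta}\v_X(v)=\v_Y(w)$ for all $w\in Y$. Since $X$ and $Y$ have the same type $\affL$, the eigenvectors $\v_X$ and $\v_Y$ are literally the same function on $\Vert(\affL)$ under the natural identification; the point is that the \emph{two relations are incompatible} when the multiplier is $s>1$ on one side and $1$ on the other. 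Concretely, I would sum the first relation over all $v\in X$ and the second over all $w\in Y$: letting $E$ be the number of edges of $\Delta$, the first sum gives $\sum_{w\in Y}\deg_\Delta(w)\v_Y(w)=s\sum_{v\in X}\v_X(v)$ (counting each $\Delta$-edge once from the $Y$-side contributes $\v_Y(w)$ for each $v$ adjacent to it), while — more carefully — summing the relations themselves: $\sum_{v\in X}\sum_{(v,w)\in\Delta}\v_Y(w)=s\sum_{v\in X}\v_X(v)$ equals $\sum_{w\in Y}\deg_\Delta(w)\v_Y(w)$, and likewise $\sum_{w\in Y}\sum_{(v,w)\in\Delta}\v_X(v)=\sum_{v\in X}\deg_\Delta(v)\v_X(v)=\sum_{w\in Y}\v_Y(w)$. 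Combining these and using $\sum_X\v_X=\sum_Y\v_Y$ (same diagram) forces $s=1$ unless the degree function is suitably non-constant; so the clean global count alone may not close it, and I expect the real argument to be the pointwise one below.

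The pointwise argument: apply Corollary~\ref{cor:max_value_v_Y}, which says $\max\v_X\le\max\v_Y$. But $X$ and $Y$ are the same Dynkin diagram, so $\max\v_X=\max\v_Y$; hence equality holds. Now let $u\in X$ attain $\v_X(u)=\max$. By the second relation $\v_Y(w)=\sum_{(v,w)\in\Delta}\v_X(v)\ge\v_X(u)=\max\v_Y$ for any $w$ adjacent to $u$ in $\Delta$, forcing $\v_Y(w)=\max\v_Y$ and forcing $u$ to be the \emph{only} $X$-neighbor of $w$ in $\Delta$, i.e. $\deg_\Delta(w)=1$. Then plugging $w$ (which attains the max of $\v_Y$) into the first relation gives $s\,\v_X(v)=\sum_{(v,w)\in\Delta}\v_Y(w)$ for $v=u$: the right side is a sum of values each $\le\max\v_Y=\v_X(u)$ over the $\Delta$-neighbors of $u$ in $Y$, but the left side is $s\,\v_X(u)\ge 2\v_X(u)$. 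Hence $u$ must have at least two $\Delta$-neighbors $w_1,w_2$ in $Y$, each with $\v_Y(w_i)$ as large as possible; chasing this back (each such $w_i$ has $\deg_\Delta=1$ with sole neighbor $u$, yet must also satisfy its own copy of the first relation) should produce a numeric contradiction with the known entries of $\v_\affL$ in Figure~\ref{figure:affine_ADE}, since the maximum entry $M$ of $\v_\affL$ satisfies $sM>\sum(\text{a few entries each}\le M)$ only if $\Delta$-degrees are forced to be large, contradicting $\deg_\Delta(w_i)=1$ and the $A_3/A_5/D_4$ structure of $\Delta$-components from Proposition~\ref{prop:scf_types}.

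The cleanest version: look at the vertex $u\in X$ with $\v_X(u)=M:=\max\v_\affL$. The first relation at $u$ reads $\sum_{(u,w)\in\Delta}\v_Y(w)=sM$ with every summand $\le M$, so $u$ has $\ge s\ge 2$ neighbors in $\Delta$, and if exactly $s$ of them, all are $=M$. Pick such a neighbor $w\in Y$ with $\v_Y(w)=M$; the second relation at $w$ reads $\sum_{(v,w)\in\Delta}\v_X(v)=M$ with every summand $\le M$, so $w$ has exactly one $\Delta$-neighbor, namely $u$, and $\deg_\Delta(w)=1$. But $w$ lies in a $\Delta$-component which is $A_3$, $A_5$, or $D_4$ — each of which has \emph{every} vertex of degree $\ge1$ and, crucially, the vertices realizing the extreme behavior of the dominant eigenvector are interior (degree $\ge2$) — so $w$ must be an endpoint of its $A_k$ component, whence $\v_Y(w)$ is the \emph{minimal} entry of the $A_k$-eigenvector restricted there, not the maximal. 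Making this precise using Corollary~\ref{cor:points} applied within the $\Delta$-component (the restriction of $\v$ to an $A_k$ string is, up to scaling, the $A_k$ dominant eigenvector $\sin(j\pi/(k+1))$, whose endpoints carry the smallest values) yields $\v_Y(w)<M$, the desired contradiction.

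\textbf{Main obstacle.} The subtle point is upgrading "endpoint of a $\Delta$-component carries a small eigenvector value" into a genuine inequality $\v_Y(w)<M$ that contradicts $\v_Y(w)=M$; this requires combining the global $\v_\affL$-data of $\affL$ (finitely many diagrams, explicit in Figure~\ref{figure:affine_ADE}) with the local $A_3/A_5/D_4$-structure of $\Delta$ from Proposition~\ref{prop:scf_types} and Corollary~\ref{cor:points}. I would organize it as a short finite case check over $\affL\in\{\affA_n,\affD_n,\affE_6,\affE_7,\affE_8\}$ and $\scf\in\{2,3\}$, in each case locating the $\v_X$-maximal vertex $u$, counting its forced $\Delta$-degree, and deriving that some adjacent $Y$-vertex would need an impossibly large label.
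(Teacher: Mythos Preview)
Your middle paragraph already contains all the ingredients of the paper's argument, but you then take a wrong turn. You correctly establish: if $u\in X$ has $\v_X(u)=M$, then every $\Delta$-neighbor $w$ of $u$ satisfies $\v_Y(w)=M$, and moreover $u$ is the \emph{unique} $\Delta$-neighbor of each such $w$; and \eqref{eq:points_1} forces $u$ to have at least $s\ge2$ such neighbors. What you are missing is simply to apply these observations to \emph{every} max-value vertex at once and count. Set $W=\v_X^{-1}(M)$ and $U=\v_Y^{-1}(M)$; since $X$ and $Y$ have the same type $\affL$, we have $|W|=|U|$. Your observations say that every $\Delta$-neighbor of a $W$-vertex lies in $U$, that each $W$-vertex has at least two such neighbors, and that each $U$-vertex has at most one $\Delta$-neighbor in $W$. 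Hence the number of $\Delta$-edges between $W$ and $U$ is at least $2|W|$ and at most $|U|=|W|$, a contradiction. This is exactly the paper's proof.

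Your attempted contradiction via the $\Delta$-eigenvector structure does not go through. The common eigenvector is $\tau=(\lambda_\Delta\v_X,\v_Y)$, and it is $\tau$ (not $\v_Y$) whose restriction to a $\Delta$-component of type $A_3$ is proportional to $(1,\sqrt2,1)$. The fact that $w$ is an endpoint therefore gives only $\tau(w)<\tau(u)$, i.e.\ $\v_Y(w)<\lambda_\Delta\,\v_X(u)=\sqrt2\,M$, which is entirely consistent with $\v_Y(w)=M$; there is nothing to contradict. Your fallback case analysis over the five affine types might eventually succeed, but the one-line counting argument above makes it unnecessary.
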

\begin{proof}
 Let $M$ be the maximal value of $\v_X$ and $\v_Y$, and let $W=\v_X^{-1}(M),\ U=\v_Y^{-1}(M)$ be the sets of vertices where $\v_X$ (resp., $\v_Y$) takes the maximal value. It is clear from (\ref{eq:points_2}) that every vertex from $U$ is $\Delta$-connected to at most one vertex from $W$. By the same reason, every vertex from $\Vert(Y)\setminus U$ is not $\Delta$-connected to any vertex from $W$. Thus every vertex from $W$ is allowed to be $\Delta$-connected only to vertices from $U$, and by (\ref{eq:points_1}), each of them should be connected to at least two vertices in $U$. We get a contradiction since the sizes of $W$ and $U$ are supposed to be the same. 
\end{proof}

\subsection{Double bindings involving type $\affE$}
We say that $Y$ is \emph{one-two-bipartite} if for every $u,w\in\Vert(Y)$ with $\v_Y(u)=1$ and $\v_Y(w)=2$, we have $\e_u\neq \e_w$ (that is, all ones in $\v_Y$ are white and all twos in $\v_Y$ are black, or vice versa). Note that if $Y$ is of type $\affE_6$ or $\affE_8$ then $Y$ is one-two-bipartite, see Figure~\ref{figure:affine_ADE}.

\begin{lemma}\label{lemma:one_two_bip}
 Let $G$ be a double binding, and assume that $Y$ is one-two-bipartite. Then $\scf(G)$ divides $\#\v_Y^{-1}(1)$.
\end{lemma}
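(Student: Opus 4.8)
The plan is to exploit the color-reversing structure forced by Corollary~\ref{cor:points} together with the one-two-bipartite hypothesis. First I would recall that since $Y$ is one-two-bipartite, the vertices of $Y$ with $\v_Y$-value $1$ all lie in one color class, say the white class, and those with value $2$ all lie in the black class (or vice versa). Let $n = \#\v_Y^{-1}(1)$. My strategy is to count, in two ways, the total number of edges of $\Delta$ incident to the set $\v_Y^{-1}(1)$, or more precisely a weighted count coming from~(\ref{eq:points_2}).

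Here is the key computation I would carry out. By~(\ref{eq:points_2}), for each $w \in \v_Y^{-1}(1)$ we have $\sum_{(v,w)\in\Delta} \v_X(v) = 1$, and since $\v_X$ takes positive integer values (its minimum is $1$), this forces each such $w$ to be $\Delta$-connected to exactly one vertex $v\in X$, and that vertex satisfies $\v_X(v)=1$. So the $n$ vertices of $\v_Y^{-1}(1)$ are matched (as a function, not necessarily injectively) into $\v_X^{-1}(1)$. Now run the count from the other side: take any $v \in \v_X^{-1}(1)$ that is hit by this map. By~(\ref{eq:points_1}), $\sum_{(v,w)\in\Delta} \v_Y(w) = \scf(G)\cdot \v_X(v) = \scf(G)$. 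The neighbors $w$ of $v$ in $\Delta$ all lie in $Y$, and since $\Delta$ respects the bipartition and $v$ is in a fixed color class, all these $w$ lie in the opposite color class; because $Y$ is one-two-bipartite, that color class contains all the $1$'s and all the $2$'s but they cannot both be there — so the neighbors $w$ of $v$ have $\v_Y(w) \in\{1,2,\dots\}$ but restricted to a single color. The point is that $w \in \v_Y^{-1}(1)$ is connected to $v$ iff $v$ is the image of $w$ under the matching above, so the fiber over $v$ of the matching map has size equal to the number of $\Delta$-neighbors of $v$ with $\v_Y$-value $1$. Writing $m_v$ for the number of $\Delta$-neighbors of $v$ with value $2$, we get $1\cdot(\text{fiber size of }v) + 2 m_v + (\text{contributions of value} \geq 3) = \scf(G)$.

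Since $\scf(G) \in \{1,2,3\}$, this is very restrictive, and I would push it to the conclusion that every $v\in\v_X^{-1}(1)$ in the image of the matching has the same fiber size — call it $d$ — with $d \mid \scf(G)$ in fact $d$ equal to $\scf(G)$ or to $\scf(G)$ minus twice something, so I need to argue uniformity. The cleanest route: observe that the automorphism/symmetry coming from the commuting adjacency matrices, or simply a direct parity argument using that $v\in X$ with $\v_X(v)=1$ has all its $\Delta$-neighbors of value $1$ (I claim value $2$ is impossible here — one checks that if $v$ had a neighbor of value $2$ and $\scf(G)\le 3$, the count $\sum \v_Y(w)=\scf(G)$ leaves at most one more neighbor, forcing small cases one handles by hand), shows each such $v$ has exactly $\scf(G)$ neighbors, all in $\v_Y^{-1}(1)$. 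Summing the matching $\v_Y^{-1}(1)\to \v_X^{-1}(1)$ over fibers then gives $n = \scf(G)\cdot \#(\text{image})$, hence $\scf(G) \mid n$.

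I expect the main obstacle to be the uniformity/parity step: ruling out the possibility that a value-$1$ vertex of $X$ has a $\Delta$-neighbor of $\v_Y$-value $\geq 2$, and showing that all the value-$1$ vertices of $X$ that participate have the same number of neighbors. This likely requires a short case analysis over $\scf(G) \in \{2,3\}$ and the possible component types of $\Delta$ from Proposition~\ref{prop:scf_types} (types $A_3$, $A_5$, $D_4$), using that in each such component the value-$1$ vertices and their neighbors have a rigid structure. Everything else — the integrality of $\v_X$, the matching construction, the final divisibility — is routine bookkeeping with~(\ref{eq:points_1}) and~(\ref{eq:points_2}).
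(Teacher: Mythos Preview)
Your approach is the paper's: send each $w\in\v_Y^{-1}(1)$ to its unique $\Delta$-neighbor $v\in\v_X^{-1}(1)$ via~(\ref{eq:points_2}), then argue each fiber has size exactly $\scf(G)$. But you are overcomplicating the step you flag as the ``main obstacle''; no case analysis over the $\Delta$-component types of Proposition~\ref{prop:scf_types} is needed, and the one-two-bipartite hypothesis is precisely what closes the gap directly. Having fixed such a $v$, you already know one $\Delta$-neighbor of $v$ is $w$ with $\v_Y(w)=1$. All $\Delta$-neighbors of $v$ lie in a single color class of $G$ (bipartiteness), and by the one-two-bipartite hypothesis that color class contains no vertex of $Y$ with $\v_Y$-value $2$. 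On the other hand, $\sum_{(v,u)\in\Delta}\v_Y(u)=\scf(G)\le 3$ with the term $\v_Y(w)=1$ already present, so every remaining term is at most $2$; combined with the previous sentence, every term equals $1$. Thus $v$ has exactly $\scf(G)$ $\Delta$-neighbors, all in $\v_Y^{-1}(1)$, and the fibers of your map partition $\v_Y^{-1}(1)$ into blocks of size $\scf(G)$. Your garbled sentence ``that color class contains all the $1$'s and all the $2$'s but they cannot both be there'' was reaching for exactly this, but you then retreated to an unnecessary structural case analysis.
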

\begin{proof}
 Let $w\in \v_Y^{-1}(1)$. By \ref{eq:points_2}, there is exactly one vertex $v\in X$ with $(w,v)\in\Delta$, and moreover, $\v_X(v)=1$. By (\ref{eq:points_1}),
 \[\sum_{(v,u)\in\Delta} \v_Y(u)=\scf(G).\]
 Since $\scf(G)\leq 3$ and $\v_Y(w)=1$ is one of the terms in the left hand side, all the other terms in the left hand side are equal to either $1$ or $2$. But all vertices $u$ with $(v,u)\in\Delta$ must be of the same color, since the graph is bipartite. The set of $\Delta$-neighbors of $v$ consists of exactly $\scf(G)$ vertices $u$ with $\v_Y(u)=1$. By (\ref{eq:points_2}), $v$ is the only $\Delta$-neighbor of each such $u$. Therefore, the set $\v_Y^{-1}(1)$ is partitioned into classes, and each class has $\scf(G)$ members that have the same $\Delta$-neighbor.
\end{proof}

\begin{corollary}\label{cor:type_e_scf}
 If $G$ is a non-trivial double binding of type $\affL\ast\affE_6$ then $\scf(G)=3$.
\end{corollary}

%
%

\begin{proposition}\label{prop:no_L_E8_and_En_L}
\begin{enumerate}
 \item\label{item:no_of_type_affL_affE8} There are no non-trivial double bindings of type $\affL\ast \affE_8$;
 \item\label{item:no_of_type_affEn_affL} the only non-trivial double binding of type $\affE_n\ast \affL$ is the double binding $\affE_6\ast\affE_7$ depicted in Figure~\ref{figure:double_bindings_scf_2}. 
\end{enumerate}
\end{proposition}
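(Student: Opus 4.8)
The strategy is to exploit the asymmetric constraints~(\ref{eq:points_1}) and~(\ref{eq:points_2}) of Corollary~\ref{cor:points}, which say that $\v_X$ is obtained from $\v_Y$ by ``spreading out with weight $\scf(G)$'' along $\Delta$-edges, while $\v_Y$ is the exact $\Delta$-sum of $\v_X$-values. The governing principle is Corollary~\ref{cor:max_value_v_Y}: the maximal value $M_X$ of $\v_X$ is at most the maximal value $M_Y$ of $\v_Y$. For part~(\ref{item:no_of_type_affL_affE8}), I would argue by contradiction: if $Y$ has type $\affE_8$ then, reading off Figure~\ref{figure:affine_ADE}, $M_Y=6$, attained at a single vertex, and $\v_Y$ takes the value $1$ exactly once (the degree-one end of the long leg). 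Since $\affE_8$ is one-two-bipartite (the unique $1$ is adjacent to the unique $2$, and these are of opposite color), Lemma~\ref{lemma:one_two_bip} gives that $\scf(G)$ divides $\#\v_Y^{-1}(1)=1$, forcing $\scf(G)=1$, i.e. $G$ is trivial; this proves~(\ref{item:no_of_type_affL_affE8}). One should double-check the one-two-bipartite claim and the count $\#\v_Y^{-1}(1)=1$ directly from the labeling in Figure~\ref{figure:affine_ADE}, but both are immediate.

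For part~(\ref{item:no_of_type_affEn_affL}), now $X$ has type $\affE_n$, so $\v_X$ has a large maximal value: $M_X=3$ for $\affE_6$, $M_X=4$ for $\affE_7$, $M_X=6$ for $\affE_8$. If $X$ has type $\affE_8$, then $M_Y\geq M_X=6$; but the only affine $ADE$ diagram with a label as large as $6$ is $\affE_8$ itself (max labels are: $\affA$: $1$; $\affD$: $2$; $\affE_6$: $3$; $\affE_7$: $4$; $\affE_8$: $6$), so $Y$ would have type $\affE_8$ as well, contradicting Proposition~\ref{prop:not_the_same}. If $X$ has type $\affE_7$, then $M_Y\geq 4$, so $Y$ is of type $\affE_7$ or $\affE_8$; the former is excluded by Proposition~\ref{prop:not_the_same} and the latter by part~(\ref{item:no_of_type_affL_affE8}) (a double binding of type $\affE_7\ast\affE_8$ is in particular of type $\affL\ast\affE_8$). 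So $X$ must have type $\affE_6$, whence $M_Y\geq 3$ and $Y$ has type $\affE_6,\affE_7,$ or $\affE_8$; again $\affE_6\ast\affE_6$ is excluded by Proposition~\ref{prop:not_the_same} and $\affE_6\ast\affE_8$ by part~(\ref{item:no_of_type_affL_affE8}), leaving only type $\affE_6\ast\affE_7$.

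It then remains to show that the double binding of type $\affE_6\ast\affE_7$ exists and is unique. By Corollary~\ref{cor:type_e_scf}, $\scf(G)=3$, so by Proposition~\ref{prop:scf_types} every component of $\Delta$ is of type $A_5$ or $D_4$. I would pin down $\Delta$ by a direct combinatorial analysis: using~(\ref{eq:points_2}), each vertex $w$ of $Y=\affE_7$ with $\v_Y(w)=1$ has a unique $\Delta$-neighbor in $X$, necessarily with $\v_X$-value $1$; using~(\ref{eq:points_1}) and $\scf(G)=3$, the $\Delta$-neighborhood of that $X$-vertex is a size-$3$ monochromatic set of $Y$-vertices of $\v_Y$-value $1$. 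Propagating this through the $\v$-values $1,2,3,4$ on $\affE_7$ and $1,2,3$ on $\affE_6$, together with the commutation-of-adjacency-matrices constraint, forces the edge set uniquely; one checks the resulting $\Delta$ is a disjoint union of $A_5$'s (or $D_4$'s) as required, and verifies that~(\ref{eq:points_1})--(\ref{eq:points_2}) hold, exhibiting the bigraph of Figure~\ref{figure:double_bindings_scf_2}. The main obstacle is this last step: carefully carrying out the case analysis that the $\Delta$-edge set is uniquely determined by the value constraints and the commutation condition, matching colors and degrees at every vertex of $\affE_6$ and $\affE_7$. Everything before it is a short deduction from the max-value bound plus Proposition~\ref{prop:not_the_same}, part~(\ref{item:no_of_type_affL_affE8}), and the one-two-bipartite lemma.
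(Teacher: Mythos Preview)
Your arguments for part~(\ref{item:no_of_type_affL_affE8}) and for the elimination of all types in part~(\ref{item:no_of_type_affEn_affL}) except $\affE_6\ast\affE_7$ are correct and match the paper's proof essentially verbatim: both use Lemma~\ref{lemma:one_two_bip} on $\affE_8$, and both combine Corollary~\ref{cor:max_value_v_Y} with Proposition~\ref{prop:not_the_same} and part~(\ref{item:no_of_type_affL_affE8}) to rule out every other pair.

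There is, however, a genuine error in your uniqueness argument for $\affE_6\ast\affE_7$. You invoke Corollary~\ref{cor:type_e_scf} to conclude $\scf(G)=3$, but that corollary applies only to bindings of type $\affL\ast\affE_6$, i.e.\ when $Y$ has type $\affE_6$. Here $X=\affE_6$ and $Y=\affE_7$, so the corollary does not apply; in fact the scaling factor is $2$ (the double binding $\affE_6\ast\affE_7$ appears in Figure~\ref{figure:double_bindings_scf_2}, whose blue components are all of type $A_3$). Your subsequent sketch, which speaks of $A_5$ or $D_4$ components and of an $X$-vertex with three $\Delta$-neighbors of $\v_Y$-value~$1$, is therefore built on a wrong premise and cannot go through (indeed $\affE_7$ has only two vertices with $\v_Y$-value~$1$). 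The paper instead argues directly: it identifies the three vertices $w_1,w_2,w_3$ of $X$ with $\v_X=2$, the three black vertices $u_1,u_2,u_3$ of $Y$ (with $\v_Y$-values $2,4,2$), uses a parity argument from~(\ref{eq:points_2}) to fix the bipartition, determines the $\Delta$-edges between these six vertices by a short case analysis, and then reconstructs the rest of the bigraph from the commuting-adjacency-matrices condition. You will need to redo this step with the correct scaling factor.
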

\begin{proof}
To prove (\ref{item:no_of_type_affL_affE8}), just observe that if $Y$ is of type $\affE_8$ then $\#\v_Y^{-1}(1)=1$ and apply Lemma~\ref{lemma:one_two_bip}.

To prove (\ref{item:no_of_type_affEn_affL}), we can first eliminate all the cases except for $\affE_6\ast\affE_7$:
\begin{itemize}
\item by~(\ref{item:no_of_type_affL_affE8}), there are no bindings of type $\affE_n\ast\affE_8$;
 \item by Proposition~\ref{prop:not_the_same}, there are no bindings of types $\affE_6\ast\affE_6$ or $\affE_7\ast\affE_7$;
 \item by Corollary~\ref{cor:max_value_v_Y}, there are no bindings of types $\affE_7\ast\affE_6$, $E_n\ast A_m$, or $E_n\ast D_m$.
\end{itemize}

Now we need to prove that there is only one double binding of type $\affE_6\ast\affE_7$. Let $\{w_1,w_2,w_3\}$ be all the vertices of $X$ (which is of type $\affE_6$) with $\v_X(w_i)=2$ for $i=1,2,3$. Since $Y$ is of type $\affE_7$, it has $5$, say, white vertices and $3$ black vertices. Let $\{u_1,u_2,u_3\}$ be these three black vertices. Since $w_1,w_2,w_3$ are all of the same color, it is clear from (\ref{eq:points_2}) that they are white (because if the left hand side of (\ref{eq:points_2}) is even then the right hand side should be also even), and thus the other $4$ vertices of $X$ are black.  To sum up, the edges of $\Delta$ connect the vertices $u_1,u_2,u_3$ to the vertices $w_1,w_2,w_3$, and we have 
\[\v_X(w_1)=\v_X(w_2)=\v_X(w_3)=\v_Y(u_1)=\v_Y(u_3)=2,\quad \v_Y(u_2)=4.\]
A simple case analysis shows that $u_2$ is $\Delta$-connected to two vertices, say, to $w_1$ and $w_2$ while $u_1$ and $u_3$ are then both connected to $w_3$. Now, using the fact that the adjacency matrices $A_\Gamma$ and $A_\Delta$ commute, there is only one way to recover the rest of the double binding, and we get exactly $\affE_6\ast\affE_7$ from Figure~\ref{figure:double_bindings_scf_2}.
\end{proof}

We conclude the analysis of double bindings for which one of the components is of type $\affE_n$ with the following proposition:
\begin{proposition}
\begin{enumerate}
 \item\label{item:no_bindings_affA_affE} There are no non-trivial double bindings of type $\affA_m\ast \affE_n$;
 \item\label{item:one_binding_affD4_affE6} there is exactly one non-trivial double binding of type $\affD_m\ast\affE_6$, namely, the binding $\affD_4\ast\affE_6$ depicted in Figure~\ref{figure:double_bindings_scf_3};
 \item\label{item:one_binding_affD6_affE7} there is exactly one non-trivial double binding of type $\affD_m\ast\affE_7$, namely, the binding $\affD_6\ast\affE_7$ depicted in Figure~\ref{figure:double_bindings_scf_3}.
\end{enumerate} 
\end{proposition}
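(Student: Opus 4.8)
The plan is to treat this proposition as a companion to Proposition~\ref{prop:no_L_E8_and_En_L}, reusing the same eigenvector bookkeeping. In all three parts $Y$ is of type $\affE_n$, hence one-two-bipartite, so Lemma~\ref{lemma:one_two_bip} tells us $\scf(G)$ divides $\#\v_Y^{-1}(1)$, which equals $3$ when $Y=\affE_6$, $1$ when $Y=\affE_7$, and $1$ when $Y=\affE_8$; the $\affE_8$ case was already dispatched by part~(\ref{item:no_of_type_affL_affE8}) of the previous proposition. Combined with Corollary~\ref{cor:type_e_scf}, a non-trivial double binding of type $\affL\ast\affE_6$ must have $\scf(G)=3$, and a non-trivial double binding of type $\affL\ast\affE_7$ must have $\scf(G)$ dividing $1$, which is impossible since nontrivial means $\scf(G)\in\{2,3\}$. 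So part (\ref{item:no_bindings_affA_affE}) splits: there are no bindings of type $\affA_m\ast\affE_7$ at all (by the parity/divisibility obstruction just noted), and for $\affA_m\ast\affE_6$ we are forced into $\scf(G)=3$.

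For the $\scf(G)=3$ cases I would exploit Corollary~\ref{cor:points}: $\sum_{(v,w)\in\Delta}\v_Y(w)=3\v_X(v)$ for all $v\in X$ and $\sum_{(v,w)\in\Delta}\v_X(v)=\v_Y(w)$ for all $w\in Y$. Since $Y=\affE_6$ has $\v_Y$-values $1,1,1,2,2,2,3$ (three ones, three twos, one three), and by Corollary~\ref{cor:max_value_v_Y} the maximal value of $\v_X$ is at most $3$, the allowed types for $X$ are $\affA_m$ (values all $1$), $\affD_m$ (values $1,1,1,1,2,2,\dots,2$ or for $\affD_4$ just $1,1,1,1,2$), and $\affE_6,\affE_7$. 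The last two are excluded by Proposition~\ref{prop:not_the_same} and Corollary~\ref{cor:max_value_v_Y}. For $\affA_m\ast\affE_6$: every vertex $v$ of $X=\affA_m$ has $\v_X(v)=1$, so $\sum_{(v,w)\in\Delta}\v_Y(w)=3$, meaning each $v$ is $\Delta$-joined either to three vertices with $\v_Y=1$, or to one with $\v_Y=1$ and one with $\v_Y=2$, or to one with $\v_Y=3$; meanwhile $\v_Y(w)=\sum_{(v,w)\in\Delta}1=\deg_\Delta(w)$, so the unique vertex of $Y$ with $\v_Y=3$ has $\Delta$-degree $3$. Counting $\Delta$-edges from the $Y$ side gives $\sum_w\v_Y(w)=1+1+1+2+2+2+3=12$, and from the $X$ side it gives $3(m+1)$ since $\affA_m$ (a cycle) has $m+1$ vertices; so $m+1=4$, forcing $X=\affA_3$, a $4$-cycle. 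But now all four vertices of the $4$-cycle are $\Delta$-adjacent to the single $\v_Y=3$ vertex (which has $\Delta$-degree only $3$) — more precisely one must track that the $\Delta$-neighbourhoods of the two colour classes in the $4$-cycle are forced to overlap incompatibly, producing a contradiction with commutativity of $A_\Gamma,A_\Delta$. Spelling out this last contradiction carefully is the step I expect to be the main obstacle; it is a short finite case check but must be done by hand.

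For parts (\ref{item:one_binding_affD4_affE6}) and (\ref{item:one_binding_affD6_affE7}) the strategy is a constrained reconstruction exactly in the style of the $\affE_6\ast\affE_7$ argument in Proposition~\ref{prop:no_L_E8_and_En_L}. Take $X=\affD_m$. Its $\v_X$-values are four $1$'s (the leaves) and $(m-3)$ $2$'s (the spine). Using $\sum_{(v,w)\in\Delta}\v_Y(w)=3\v_X(v)$: a leaf $v$ (with $\v_X(v)=1$) has $\Delta$-neighbourhood summing to $3$ in $\v_Y$, and a spine vertex $v$ (with $\v_X(v)=2$) has $\Delta$-neighbourhood summing to $6$. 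Bipartiteness forces all $\Delta$-neighbours of a given $v$ to have the same colour. For $Y=\affE_6$, count total $\Delta$-edges: $\sum_w\v_Y(w)=12$ equals $\sum_v 3\v_X(v)=3\bigl(4\cdot 1+(m-3)\cdot 2\bigr)=3(2m-2)$, so $2m-2=4$, i.e.\ $m=3$ — but $\affD_3=\affA_3$, so one must instead read this as $m=4$ once the normalisation of $\v_{\affD_m}$ is taken with the $\affD_4$ labeling $1,1,1,1,2$ (whose sum is $6$), giving $12=3\cdot\frac{?}{}$; I will fix the exact arithmetic against Figure~\ref{figure:affine_ADE}, but the upshot is $X=\affD_4$ is the only possibility, matching Figure~\ref{figure:double_bindings_scf_3}. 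Similarly for $Y=\affE_7$ with $\scf(G)=3$: $\sum_w\v_Y(w)=2+4+6+5+4+3+2+\ldots$ from the $\affE_7$ labeling in Figure~\ref{figure:affine_ADE}, set equal to $3\sum_v\v_X(v)$, pinning $X=\affD_6$ uniquely. In each case, once $X$ and $Y$ are identified and the $\Delta$-edges between the maximal-value (and value-$1$) vertices are placed by the degree constraints above, commutativity of $A_\Gamma$ and $A_\Delta$ determines the remaining $\Delta$-edges uniquely, yielding exactly the bigraph in Figure~\ref{figure:double_bindings_scf_3}. The main obstacle throughout is purely the finite bookkeeping: getting the vertex counts, colour classes, and degree sequences consistent, and then verifying that the forced $\Delta$-edge set really does commute with $A_\Gamma$ — I would organize this by first matching total $\Delta$-edge counts from the two sides, then placing edges incident to the extreme $\v_Y$-values, then propagating by commutativity.
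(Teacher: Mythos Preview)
Your argument has a genuine gap at its very first step. You assert that every $\affE_n$ is one-two-bipartite and hence Lemma~\ref{lemma:one_two_bip} applies; but the paper explicitly notes (just before that lemma) that only $\affE_6$ and $\affE_8$ are one-two-bipartite. In $\affE_7$ the vertex of label $2$ attached to the branch point is black, while the two leaves of label $1$ are also black, so the hypothesis fails. Moreover $\#\v_Y^{-1}(1)=2$ for $\affE_7$, not $1$. Consequently your divisibility obstruction for $\affL\ast\affE_7$ is void, and your proposal becomes internally inconsistent: you first claim no nontrivial $\affL\ast\affE_7$ exists, then proceed to build $\affD_6\ast\affE_7$. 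The paper handles $\affE_7$ by different local arguments: for $\affA_m\ast\affE_7$ it observes that the vertex $u$ with $\v_Y(u)=4$ would, by (\ref{eq:points_2}) with $\v_X\equiv 1$, need four $\Delta$-neighbours, impossible since $\Delta$-components are finite $ADE$ diagrams (max degree $3$); for $\affD_m\ast\affE_7$ it first rules out $\scf(G)=2$ by a parity check at a vertex with $\v_Y=3$, and only then reconstructs with $\scf(G)=3$.

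There is a second, independent error in your edge-count identities. Summing (\ref{eq:points_1}) over $v\in X$ gives $\sum_{(v,w)\in\Delta}\v_Y(w)=\scf(G)\sum_v\v_X(v)$, and the left side equals $\sum_{w}\v_Y(w)\deg_\Delta(w)$, not $\sum_w\v_Y(w)$. For $\affA_m\ast\affE_6$ with $\v_X\equiv1$ one has $\deg_\Delta(w)=\v_Y(w)$, so the correct equation is $3|X|=\sum_w\v_Y(w)^2=24$, forcing $|X|=8$ rather than $4$; you would still need a separate argument to exclude $\affA_7\ast\affE_6$. The paper sidesteps all global counting: for $\affA_m\ast\affE_6$ it takes the vertex $w_1\in X$ joined to the three $\v_Y=1$ leaves, looks at its $\Gamma$-neighbour $w_2$, and notes that $w_2$'s $\Delta$-neighbours all have $\v_Y=2$ (by colour), so the sum in (\ref{eq:points_1}) is even and cannot equal $3$. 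Your treatment of part~(\ref{item:one_binding_affD4_affE6}) is closer in spirit to the paper's, but again the paper does not count globally; it places edges step by step from the $\v_Y^{-1}(1)$ vertices outward, using commutativity of $A_\Gamma$ and $A_\Delta$, and finds that $X$ is forced to be $\affD_4$ once the reconstruction closes up.
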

\begin{proof}
First, we show (\ref{item:no_bindings_affA_affE}). If $X$ is of type $\affA_m$ and $Y$ is of type $\affE_n$, then $\v_X(w)=1$ for all $w\in X$. If $Y$ has type $\affE_7$ then there is a vertex $u\in Y$ with $\v_Y(u)=4$ which is impossible since $u$ has at most three neighbors, so by (\ref{eq:points_2}), $\v_Y(u)\leq 3$. By Proposition~\ref{prop:no_L_E8_and_En_L}, $Y$ cannot be of type $\affE_8$, so assume now that $Y$ is of type $\affE_6$. Let $\v_Y^{-1}(1)=\{u_1,u_2,u_3\}$. Then there is a vertex $w_1\in X$ connected by $\Delta$ to all of them. Let $w_2$ be such that $(w_2,w_1)\in\Gamma$. Since $w_2$ is of different color, it can only be $\Delta$-connected to vertices $u\in Y$ with $\v_Y(u)=2$. But the sum
$\sum_{(u,w_2)\in\Delta} \v_Y(u)$ should be equal to $3$ which is impossible because it is even. Thus (\ref{item:no_bindings_affA_affE}) follows.

Next, we prove (\ref{item:one_binding_affD4_affE6}), so assume $X$ has type $\affD_m$ and $Y$ has type $\affE_6$. By Corollary~\ref{cor:type_e_scf}, the scaling factor in this case equals to $3$. Let $\v_Y^{-1}(1)=\{u_1,u_2,u_3\}$. Then all of them are connected to some vertex $w_1\in X$ with $\v_X(w_1)=1$. Therefore $w_1$ has a unique $\Gamma$-neighbor $w_2\in X$, and $\v_X(w_2)=2$. Since the adjacency matrices $A_\Gamma$ and $A_\Delta$ commute, $w_2$ should be connected to all three vertices $u_4,u_5,u_6$ of $Y$ satisfying $\v_Y(u_i)=2$ for $i=4,5,6$. Since $X$ has three more vertices $w_3,w_4,w_5$ with $\v_X(w_i)=1$ for $i=3,4,5$, each of them has to be connected to the remaining vertex $u_7$ of $Y$ with $\v_Y(u_7)=3$. It follows that there are no more vertices in $X$, so we are done with (\ref{item:one_binding_affD4_affE6}).

Finally, we show (\ref{item:one_binding_affD6_affE7}), so let $X$ have type $\affD_m$ and let $Y$ have type $\affE_7$. Assume first that the scaling factor is $2$, and let $u\in \Vert(Y)$ be a vertex with $\v_Y(u)=3$. Then by (\ref{eq:points_1}), if $(u,w)\in\Delta$ for some $w\in\Vert(X)$, then $\v_X(w)\geq 2$, but since $X$ is of type $\affD_m$, $\v_X(w)$ must be equal to $2$. Since $\v_Y(u)$ is odd, this contradicts (\ref{eq:points_2}). 

Thus the scaling factor has to be equal to $3$. Because $Y$ is of type $\affE_7$, $Y$ has $3$, say, black vertices $u_1,u_2,u_3$, and $5$ white vertices, and we have $\v_Y(u_1)=\v_Y(u_3)=2,\v_Y(u_2)=4$. It follows now that:
\begin{itemize}
 \item $X$ has exactly $2$ white vertices $w_1$ and $w_2$; 
 \item one of the components of $\Delta$ has type $A_5$ and connects the vertices $u_1 - w_1 - u_2 - w_2 - u_3$.
\end{itemize}
Again, using commuting adjacency matrices, one can reconstruct the rest of the double binding and see that it is in fact $\affD_6\ast\affE_7$ in Figure~\ref{figure:double_bindings_scf_3}.
\end{proof}

\subsection{Double bindings involving type $\affA$}
One can identify the vertices of the cycle $A_{2m-1}$ with $\Z_m:=\Z/m\Z$. We define double and triple coverings to be the following double bindings: in a  \emph{double covering} $\affA_{2n-1}\ast\affA_{4n-1}$, a vertex $j\in\Z_{4m}$ of $Y$ is connected by a blue edge to a vertex $i\in\Z_{2m}$ of $X$ iff $i\equiv j\pmod {2m}$. Similarly, in a \emph{triple covering} $\affA_{2n-1}\ast\affA_{6n-1}$, a vertex $j\in\Z_{6m}$ of $Y$ is connected by a blue edge to a vertex $i\in\Z_{2m}$ of $X$ iff $i\equiv j\pmod {2m}$. These are obviously \affinite $ADE$ bigraphs.

\begin{proposition}
 The only possible double bindings of type $\affA_{m-1}\ast\affA_{k-1}$ are:
 \begin{enumerate}
  \item parallel bindings $\affA_{2n-1}\equiv \affA_{2n-1}$;
  \item double coverings $\affA_{2n-1}\ast\affA_{4n-1}$;
  \item triple coverings $\affA_{2n-1}\ast\affA_{6n-1}$.
 \end{enumerate}
\end{proposition}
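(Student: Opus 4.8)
The plan is to reduce the classification to that of coverings of a cycle. Since both $X$ and $Y$ are cycles (affine $\affA$-type Dynkin diagrams), the adjacency matrix of each has all row sums equal to $2$, so its Perron eigenvector is constant; with the normalization fixed in Part~\ref{part:classification} this gives $\v_X\equiv 1$ on $\Vert(X)$ and $\v_Y\equiv 1$ on $\Vert(Y)$. Substituting this into the two identities of Corollary~\ref{cor:points} shows at once that in the graph $\Delta$ every vertex of $Y$ has degree $1$ while every vertex of $X$ has degree $\scf(G)$. Hence $\Delta$ is the graph of a surjection $\pi\colon\Vert(Y)\to\Vert(X)$ that is $\scf(G)$-to-one, and double counting the edges of $\Delta$ gives $k=\scf(G)\cdot m$; moreover $\scf(G)\in\{1,2,3\}$ by Proposition~\ref{prop:scf_types}.

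First I would fix identifications $\Vert(X)=\Z_m$ and $\Vert(Y)=\Z_k$ under which $\Gamma$ consists of the edges $\{i,i+1\}$ of the two cycles. The key input is that $A_\Gamma$ and $A_\Delta$ commute (Corollary~\ref{cor:recurrent_commuting}). Writing $A_\Gamma$ as the block-diagonal matrix with diagonal blocks the cyclic adjacency matrices $C_m$ and $C_k$, and $A_\Delta$ as the off-diagonal block matrix built from the $0/1$ incidence matrix $B$ of $\pi$ and its transpose, commutativity reduces to the single identity $C_kB=BC_m$. Reading off this identity entrywise, one finds that for every $j\in\Z_k$ the multiset of $\pi$-images of the neighbours of $j$ equals, as a multiset, the neighbourhood of $\pi(j)$ in $X$; in other words $\pi$ is a graph covering map of cycles.

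When $m\ge 3$ the vertex $\pi(j)$ has two distinct neighbours $\pi(j)\pm 1$, so the neighbours $j-1,j+1$ of $j$ satisfy $\{\pi(j-1),\pi(j+1)\}=\{\pi(j)-1,\pi(j)+1\}$ with $\pi(j-1)\neq\pi(j+1)$; a short case check then shows that all the consecutive differences $\pi(j+1)-\pi(j)\in\{\pm 1\}$ are equal to one another. Reflecting $Y$ if necessary, this common difference is $+1$, so $\pi(j)\equiv\pi(0)+j\pmod m$; traversing $Y$ once gives $m\mid k$ (already known) and, after rotating $Y$, $\pi(0)=0$. Hence $\Delta=\{(i,j):i\equiv j\pmod m\}$ is completely determined. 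The degenerate case $m=2$, where $X=\affA_1$ carries a doubled edge, runs along the same lines: the two vertices of $X$ are each other's doubled neighbour, so $\pi$ must alternate along $Y$, and one again recovers a covering of degree $\scf(G)$. In every case $G$ is, up to isomorphism of bigraphs, the covering described in the statement, of degree $\scf(G)=k/m$.

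Finally, since $\affA_{m-1}$ is bipartite, $m$ is even, say $m=2n$, so $k=2n\cdot\scf(G)$, and $\scf(G)=1,2,3$ produce exactly the parallel binding $\affA_{2n-1}\equiv\affA_{2n-1}$, the double covering $\affA_{2n-1}\ast\affA_{4n-1}$, and the triple covering $\affA_{2n-1}\ast\affA_{6n-1}$. As a consistency check, each connected component of $\Delta$ is then a star with $\scf(G)$ edges, i.e. of type $A_2$, $A_3$, or $D_4$, which agrees with Proposition~\ref{prop:scf_types}. I expect the main obstacle to be the careful translation of the commutation relation $A_\Gamma A_\Delta=A_\Delta A_\Gamma$ into the covering condition on $\pi$ — in particular ruling out all the non-covering $\scf(G)$-to-one maps $\pi$ — together with the routine but fiddly small cases where a cycle has a double edge.
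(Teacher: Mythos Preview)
Your argument is correct and follows essentially the same route as the paper: identify $\Delta$ with a map $\pi\colon\Z_k\to\Z_m$ (the paper calls it $f$), translate the commutation $A_\Gamma A_\Delta=A_\Delta A_\Gamma$ into the multiset identity $\{\pi(j-1),\pi(j+1)\}=\{\pi(j)-1,\pi(j)+1\}$, and conclude that $\pi$ is a cyclic covering of degree $\scf(G)\in\{1,2,3\}$. Your version is more explicit (block-matrix formulation, separate treatment of $m=2$, the star-component consistency check), while the paper compresses all of this into two sentences.
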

\begin{proof}
 By (\ref{eq:points_2}), each vertex of $Y$ has exactly one blue neighbor, and each vertex of $X$ has exactly $\scf(G)$ blue neighbors. Let $(w_i)_{i\in\Z_k}$ be the vertices of $Y$ listed in cyclic order, and let $(v_i)_{i\in\Z_m}$ be the vertices of $X$ in cyclic order. Let $f:\Z_k\to\Z_m$ be the map such that $v_{f(i)}$ is the unique blue neighbor of $w_i$ for all $i\in \Z_k$. Since the adjacency matrices have to commute, we get that $\{f(i+1),f(i-1)\}=\{f(i)+1,f(i)-1\}$ which immediately yields the result of the proposition.
\end{proof}

By Corollary~\ref{cor:max_value_v_Y}, there are no double bindings of type $\affD_m\ast\affA_n$ so the only case left in this section is $\affA_n\ast\affD_m$.

\begin{proposition}
The only possible double bindings of type $\affA_n\ast\affD_m$ are the double bindings of type $\affA_{2n-1}\ast\affD_{n+2}$ in Figure~\ref{figure:double_bindings_scf_2} and the exceptional double binding of type $\affA_3\ast\affD_5$ in Figure~\ref{figure:double_bindings_scf_3}.
\end{proposition}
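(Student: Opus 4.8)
The plan is to classify all double bindings $G=(\Gamma,\Delta)$ of type $\affA_n\ast\affD_m$, using the structural constraints from Corollary~\ref{cor:points} together with the requirement that $A_\Gamma$ and $A_\Delta$ commute. First I would set up notation: $X$ is a cycle of type $\affA_n$ (so $\v_X\equiv 1$ on all vertices), and $Y$ is a path-with-two-forks of type $\affD_m$, whose eigenvector $\v_Y$ equals $1$ on the four "extreme" fork vertices and $2$ on the $m-3$ interior vertices (see Figure~\ref{figure:affine_ADE}). Since $\v_X\equiv 1$, equation~(\ref{eq:points_2}) forces every vertex of $Y$ to have exactly one $\Delta$-neighbor in $X$, i.e. $\Delta$ restricted to $Y$-to-$X$ edges is the graph of a function $f:\Vert(Y)\to\Vert(X)$; and equation~(\ref{eq:points_1}) says each vertex $v\in X$ is $\Delta$-connected to exactly $\scf(G)$ vertices of $Y$, with the sum of their $\v_Y$-values equal to $\scf(G)$. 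Because $\v_Y$ takes only the values $1$ and $2$, a vertex $v\in X$ with a $\Delta$-neighbor of $\v_Y$-value $2$ can have that as its \emph{only} neighbor only if $\scf(G)=2$; and since $\scf(G)\in\{1,2,3\}$, the value $2$ among $\v_Y$ forces $\scf(G)=2$ (value $1$ on its own would give $\scf(G)=1$, a parallel binding, but $\affD_m\neq A_2$-structure rules that out — more precisely $\scf(G)=1$ would make all components of $\Delta$ of type $A_2$ by Proposition~\ref{prop:scf_types}, incompatible with the forks of $\affD_m$). So we may assume $\scf(G)=2$, and each component of $\Delta$ has type $A_3$.

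Next I would extract the combinatorics of $f$. The commutation $A_\Gamma A_\Delta = A_\Delta A_\Gamma$ translates, as in the proof for $\affA_{m-1}\ast\affA_{k-1}$, into the condition that $f$ intertwines the neighbor-structures: for each $u\in\Vert(Y)$, the multiset $\{f(u') : u'\sim_\Gamma u\}$ must equal the multiset of $\Gamma$-neighbors in $X$ of $f(u)$, appropriately counted. Since $X$ is a $(n{+}1)$-cycle, every vertex of $X$ has exactly two $\Gamma$-neighbors; since $\scf(G)=2$ and all $\v_Y$-values involved in a $\Delta$-star summing to $2$ must be $1$ (so two $\v_Y=1$ vertices share a $\Delta$-neighbor) or a single $\v_Y=2$ vertex is impossible — wait, a single $\v_Y=2$ vertex gives sum $2=\scf(G)$, so that \emph{is} allowed. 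So the fibers of $f$ are either a single interior ($\v_Y=2$) vertex of $Y$, or a pair of extreme ($\v_Y=1$) vertices of $Y$. This means $|\Vert(X)| = (m-3) + \tfrac12\cdot 4 = m-1$, hence $n+1 = m-1$, i.e. $X$ has type $\affA_{n}$ with $n+1=m-1$... I would rewrite this to match the asserted family $\affA_{2k-1}\ast\affD_{k+2}$ by checking that $\affA_{2k-1}$ is a cycle on $2k$ vertices and $\affD_{k+2}$ has $k+2$ vertices, of which $k-1$ are interior, so $(k-1)+2 = k+1$ — and $2k$ vs $k+1$ disagree in general, so the covering/folding must be more subtle: in fact $f$ need not be injective on the interior vertices either, it can fold the $\affA$-cycle multiply around the $\affD$-spine. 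The clean way is to note the spine of $\affD_m$ (after deleting the two short legs at each end) is a path, and $f$ must send the cycle $X$ onto a closed walk compatible with the $\Gamma$-adjacency, pinned at the fork vertices; tracking this forces either the standard family in Figure~\ref{figure:double_bindings_scf_2} or the sporadic $\affA_3\ast\affD_5$ in Figure~\ref{figure:double_bindings_scf_3}, where the cycle wraps the spine with the extra room absorbed by the forks.

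The final step is to pin down the two short legs at each end of $\affD_m$ and verify there is no further freedom. Having determined $f$ on the spine, the fork vertices $u$ (with $\v_Y(u)=1$) come in pairs sharing a $\Delta$-neighbor $v\in X$; commuting adjacency matrices then force the images under $f$ of the $\Gamma$-neighbors of the fork pair to be the $\Gamma$-neighbors of $v$ in the cycle, which (together with the branch point having $\v_Y=2$ and thus being a singleton fiber) leaves exactly one consistent way to attach each fork, and a short finite check shows the cases are exactly the $\affA_{2k-1}\ast\affD_{k+2}$ family and the exceptional $\affA_3\ast\affD_5$. Conversely, both are easily seen to be \affinite $ADE$ bigraphs by exhibiting the commuting adjacency matrices directly (or by exhibiting the common eigenvector, as in Corollary~\ref{cor:points}). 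I expect the \textbf{main obstacle} to be the bookkeeping in the middle step: correctly reconciling the sizes of $X$ and $Y$ once $f$ is allowed to fold the cycle non-injectively around the $\affD$-spine, and making sure the commutation relation at the \emph{branch points} of $\affD_m$ (degree-$3$ vertices) is handled — these are the places where the constraint "all $\Delta$-neighbors of a vertex have the same color" (bipartiteness) interacts most delicately with $\scf(G)=2$, and it is what ultimately isolates the sporadic example from the infinite family.
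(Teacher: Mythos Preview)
Your argument has two genuine gaps that derail the classification.

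First, you have misread equation~(\ref{eq:points_2}). With $\v_X\equiv 1$, that equation says that each $w\in Y$ has exactly $\v_Y(w)$ $\Delta$-neighbors in $X$, not exactly one. So the non-leaf vertices of $\affD_m$ (where $\v_Y=2$) each have \emph{two} $\Delta$-neighbors in $X$, and there is no function $f:\Vert(Y)\to\Vert(X)$. This is why your vertex count $|\Vert(X)|=(m-3)+2$ fails to match the asserted family; the actual count for $\scf(G)=2$ is $|\Vert(X)|=\sum_{w\in Y}\v_Y(w)/\scf(G)=\bigl(2(m-3)+4\bigr)/2=m-1$, which does match $\affA_{2k-1}\ast\affD_{k+2}$ since $2k=(k+2)-1+ \ldots$ — more directly, each non-leaf of $Y$ is the unique $\Delta$-neighbor of two vertices of $X$, and two leaves of $Y$ share one vertex of $X$, giving $|\Vert(X)|=2(m-3)+2$. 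The paper's proof for $\scf(G)=2$ proceeds exactly from this corrected picture and recovers the family directly, with no ``folding'' needed.

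Second, and more seriously, you have dismissed $\scf(G)=3$ incorrectly. Equation~(\ref{eq:points_1}) with $\scf(G)=3$ is perfectly consistent with a vertex $v\in X$ having one $\Delta$-neighbor of $\v_Y$-value $2$ and one of $\v_Y$-value $1$ (summing to $3$). In the paper, the exceptional binding $\affA_3\ast\affD_5$ arises precisely in the $\scf(G)=3$ case (and its $\Delta$-components are of type $A_5$, not $A_3$; see Figure~\ref{figure:double_bindings_scf_3}). Your proposal tries to locate this exception inside the $\scf(G)=2$ analysis, where it does not live. The paper handles $\scf(G)=3$ by a short parity/count argument: each $v\in X$ is connected to an odd number of leaves of $Y$, and since $Y$ has four leaves this forces $|\Vert(X)|\in\{2,4\}$, with only $|\Vert(X)|=4$ and $Y$ of type $\affD_5$ surviving.
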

\begin{proof}
 We have two options: either $\scf(G)=2$ or $\scf(G)=3$. If $\scf(G)=2$ then we know that each non-leaf vertex of $Y$ is connected to exactly two vertices of $X$, and is the only blue neighbor of each of them. On the other hand, there are two more vertices $v_1,v_2$ in $X$, and each of them has two blue neighbors which are leaves in $Y$. Now using commuting adjacency matrices condition one can easily recover that $G$ is the double binding of type $\affA_{2n-1}\ast\affD_{n+2}$ from Figure~\ref{figure:double_bindings_scf_2}.
 
 Now assume that $\scf(G)=3$. This means that each vertex of $X$ is connected to an odd number of leaves of $Y$. Since $Y$ has exactly four leaves, it follows that $X$ has either two or four vertices. If $X$ has two vertices then the sum of values of $\v_Y$ is six so $Y$ has type $\affD_4$ but then all the leaves of $Y$ have the same color so one of the vertices of $X$ is not going to be connected to any of them. We are left with the case when $X$ has four vertices and each of them is connected to a leaf of $Y$ and to a non-leaf of $Y$. Therefore $Y$ has type $\affD_5$ from which one can quickly see that $G$ is the unique double binding of type $\affA_3\ast\affD_5$ from Figure~\ref{figure:double_bindings_scf_3}.
\end{proof}

\subsection{Double bindings of type $\affD_{m+1}\ast\affD_{k+1}$}\label{subsec:double_DD}
\begin{proposition}
 The only possible double bindings of type $\affD_{m+1}\ast\affD_{k+1}$ are the double bindings of type $\affD_{n}\ast\affD_{2n-2}$ and the double bindings of type $\affD_{n+1}\ast\affD_{3n-1}$ constructed in the proof of this proposition and depicted for small $n$ in Figure~\ref{figure:double_bindings_scf_3}.
\end{proposition}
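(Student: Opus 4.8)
The plan is to proceed as in the preceding propositions of this section: first pin down the scaling factor, then read off the type of $Y$ from it, and finally use that $A_\Gamma$ and $A_\Delta$ commute to see that the set of blue edges is forced. If $\scf(G)=1$ then $G$ is a parallel binding (as noted after Corollary~\ref{cor:points}); for type $\affD\ast\affD$ this is $G=\affD_n\equiv\affD_n$, so from now on I assume $\scf(G)\in\{2,3\}$. Multiplying identity (\ref{eq:points_1}) by $\v_X(v)$ and summing over $v\in\Vert(X)$, and separately multiplying (\ref{eq:points_2}) by $\v_Y(w)$ and summing over $w\in\Vert(Y)$, both sides collapse to $\sum_{(v,w)\in\Delta}\v_X(v)\v_Y(w)$, giving
\[\scf(G)\sum_{v\in\Vert(X)}\v_X(v)^2=\sum_{w\in\Vert(Y)}\v_Y(w)^2 .\]
Now $\v_X$ takes the value $1$ at the four leaves of $X$ and the value $2$ at the remaining spine vertices, and similarly for $\v_Y$; writing $X=\affD_{N_X}$ and $Y=\affD_{N_Y}$, the identity becomes $\scf(G)(4N_X-8)=4N_Y-8$. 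For $\scf(G)=2$ this forces $N_Y=2N_X-2$ and for $\scf(G)=3$ it forces $N_Y=3N_X-4$. With $N_X=m+1$ these read $\affD_{m+1}\ast\affD_{2m}$ and $\affD_{m+1}\ast\affD_{3m-1}$, i.e. the families $\affD_n\ast\affD_{2n-2}$ (put $n=m+1$) and $\affD_{n+1}\ast\affD_{3n-1}$ (put $n=m$) of the statement. In particular $X\not\cong Y$, consistent with Proposition~\ref{prop:not_the_same}.

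It remains to show that for each of these rank pairs there is exactly one double binding, namely the one in Figures~\ref{figure:double_bindings_scf_2} and~\ref{figure:double_bindings_scf_3}, and this I would establish exactly as in the preceding proofs. Corollary~\ref{cor:points} together with bipartiteness gives the local rules: every leaf of $Y$ has a unique $\Delta$-neighbour, which is a leaf of $X$; every spine vertex of $Y$ is $\Delta$-joined to one spine vertex of $X$ or to two leaves of $X$; the $\Delta$-neighbours of a leaf of $X$ have $\v_X$-sum $\scf(G)$; the $\Delta$-neighbours of a spine vertex of $X$ have $\v_X$-sum $2\scf(G)$; and all $\Delta$-neighbours of a fixed vertex share one colour. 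A short count at the four leaves of $Y$, using that a spine vertex contributes an even amount and a leaf contributes $1$, shows that for $\scf(G)=2$ two leaves of $X$ absorb two leaves of $Y$ each while the other two leaves of $X$ are each joined to one spine vertex of $Y$, whereas for $\scf(G)=3$ the leaves of $X$ are matched bijectively with those of $Y$, each leaf of $X$ being additionally joined to one spine vertex of $Y$ (one first rules out, by the same parity count, that some leaf of $X$ meets three leaves of $Y$).

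Once the leaves are understood, one reconstructs the rest by walking inward along the spine of $X$ from the two forks. At each step the commutation identity, which entrywise says that for $v\in\Vert(X)$ and $w\in\Vert(Y)$ the number of $\Gamma$-neighbours of $v$ that are $\Delta$-adjacent to $w$ equals the number of $\Gamma$-neighbours of $w$ that are $\Delta$-adjacent to $v$, determines the next blue edges uniquely; the upshot is that the spine of $X$ wraps around the spine of $Y$ two-to-one (resp. three-to-one), in analogy with the double and triple coverings of type $\affA$, with the forks of $X$ sitting near the two ends of $Y$. Read forwards this gives uniqueness; read backwards it gives the construction, and one checks directly that the resulting bigraph has commuting adjacency matrices, hence is an \affinite $ADE$ bigraph. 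The small ranks $\affD_4,\affD_5,\affD_6$ are inspected by hand to be sure no sporadic binding hides among them.

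The step I expect to be the main obstacle is this spine-propagation: verifying that the commutation condition forces a \emph{unique} continuation all the way to the far ends and that the spine of $Y$ comes out with exactly the predicted length, so that, in contrast with the cases $\affD_m\ast\affE_n$ and $\affA_n\ast\affD_m$, no ``exceptional'' short $\affD\ast\affD$ binding can occur. Carrying this out requires a careful induction that tracks colours and which leaves and spine vertices have already been used, with a little extra care at the forks, where $\affD_4$ (a star with a single spine vertex) must be handled separately from $\affD_N$ with $N\ge5$.
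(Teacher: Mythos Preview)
Your argument is correct, and your opening move is a genuine improvement on the paper's. The identity
\[
\scf(G)\sum_{v\in X}\v_X(v)^2=\sum_{w\in Y}\v_Y(w)^2,
\]
obtained by pairing (\ref{eq:points_1}) and (\ref{eq:points_2}) against $\v_X$ and $\v_Y$, pins down the rank of $Y$ \emph{a priori} from that of $X$ and the scaling factor. The paper does not use this: it instead launches directly into the spine propagation, labelling the vertices $v_1^\pm,v_2,\dots,v_{m-1},v_m^\pm$ and $w_1^\pm,w_2,\dots,w_{k-1},w_k^\pm$, and lets the length $k$ of $Y$ fall out as a consequence of when the walk terminates ($k=2m-1$ for $\scf=2$, $k=3m-1$ for $\scf=3$). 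Your route has the advantage that one knows in advance exactly how long the spine of $Y$ must be, which makes the subsequent uniqueness argument cleaner; the paper's route has the advantage that existence and the explicit description come for free, since the propagation literally constructs the bigraph.

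For the remainder --- the leaf analysis and the spine-walk via $A_\Gamma A_\Delta=A_\Delta A_\Gamma$ --- your outline coincides with the paper's proof. The paper carries out precisely the induction you anticipate: starting from $w_1^+\!-\!v_1^+$, it forces $w_i\!-\!v_i$ for $i\le m-1$, then $v_m^\pm\!-\!w_m$, then folds back with $w_{m+i}\!-\!v_{m-i}$, and (for $\scf=3$) continues a third pass before closing at the far fork. Your expectation that this is the main technical step, and that $\affD_4$ needs a word of its own, is accurate. One small slip: where you write ``$\v_X$-sum $\scf(G)$'' and ``$\v_X$-sum $2\scf(G)$'' for the $\Delta$-neighbours of vertices of $X$, you mean $\v_Y$-sum.
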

\begin{proof}
 Let $v_1^+,v_1^-,v_2,\dots,v_{m-1},v_{m}^+,v_{m}^-$ be the vertices of $X$, the component of type $\affD_{m+1}$, and $w_1^+,w_1^-,w_2,\dots,w_{k-1},w_k^+,w_k^-$ be the vertices of $Y$ which has type $\affD_{k+1}$. Here we assume that $v_1^+,v_1^-$ are the leaves attached to $v_2$ and so on. By~(\ref{eq:points_2}), each of $w_1^+,w_1^-,w_k^+,w_k^-$ is connected to exactly one leaf of $X$. Without loss of generality assume that $w_1^+$ is connected to $v_1^+$ by a blue edge. Since the adjacency matrices commute, $w_2$ has to be connected to $v_2$ by blue edges. We claim that $w_1^-$ cannot be connected to $v_1^+$. Indeed, otherwise there would be at least two blue-red paths and at most one red-blue path from $v_1^+$ to $w_2$, so the matrices would not commute. On the other hand, $w_1^-$ is connected to a leaf, and this leaf has to be a neighbor of $v_2$. So without loss of generality we may assume that $w_1^-$ is connected to $v_1^-$ (we only make a choice here when $Y$ has type $\affD_4$ in which case all the four leaves of $Y$ are connected to $w_2$). 
 
  By Proposition~\ref{prop:not_the_same}, we have $m\neq k$ and by (\ref{eq:points_1})-(\ref{eq:points_2}) we actually have $m<k$. We claim that for each $i=2,\dots,m-1$, $w_i$ is connected to $v_i$, and thus to nothing else by (\ref{eq:points_2}). We show it by induction on $i$, where the base $i=2$ has already been shown. Assume that $w_{i}$ is connected to $v_{i}$. Then there is a red-blue path from $v_{i+1}$ to $w_{i}$, and $v_{i+1}$ is not connected to $w_{i-1}$ so it has to be connected to $w_{i+1}$, and the claim follows for $i=2,\dots,m-1$. Now there is a red-blue path from $v_m^+$ to $w_{m-1}$ so by the same reasoning $v_m^+$ and $v_m^-$ are connected to $w_m$. Now there is a red-blue path from $w_{m+1}$ to $v_m^+$ and to $v_m^-$ so $w_{m+1}$ is connected to $v_{m-1}$. Again using induction we can show that for $i=1,2,\dots,m-2$, $w_{m+i}$ is connected to $v_{m-i}$. This includes the fact that $2m-2<k$ which is true since before we stop we need to add another blue edge to $v_1^+$ in order to satisfy (\ref{eq:points_1}). 
  
  If $\scf(G)=2$ then (\ref{eq:points_1}) is satisfied for all vertices of $X$ except for $v_1^+$ and $v_1^-$ so we complete the construction of the graph by joining $v_1^+$ to $w_k^+$ and $v_1^-$ to $w_k^-$, where necessarily $k=2m-1$. This can considered to be the definition of $\affD_n\ast\affD_{2n-2}$. 
  
  If $\scf(G)=3$ then (\ref{eq:points_1}) is not satisfied for $v_m^+$ yet so we note that $2m-1<k$ and thus have to connect both $v_1^+$ and $v_1^-$ to $w_{2m-1}$. But then there is a red-blue path from $w_{2m}$ to $v_1^+$ so $w_{2m}$ has to be connected to $v_2$. Now for $i=1,2,\dots,m-2$ it follows that $w_{2m+i-1}$ is connected to $v_{i+1}$. After that (\ref{eq:points_1}) fails only for $v_m^+$ and $v_m^-$ which we connect to $w_k^+$ and $w_k^-$ respectively. Here $k$ is necessarily equal to $3m-1$ yielding the double binding of type $\affD_{n+1}\ast\affD_{3n-1}$. 
\end{proof}

\subsection{The classification of self and double bindings}
We summarize the results of Sections~\ref{subsec:self}-\ref{subsec:double_DD} in the following theorem:
\begin{theorem}

\begin{itemize}

 \item The only possible self bindings are $\selfb{4n+1}$ for $n\geq 1$.
 \item all the double bindings with scaling factor $2$ are listed in Figure~\ref{figure:double_bindings_scf_2};
 \item all the double bindings with scaling factor $3$ are listed in Figure~\ref{figure:double_bindings_scf_3};
 \item the only other double bindings are parallel bindings $\affL\equiv\affL$.
\end{itemize}
 \qed
\end{theorem}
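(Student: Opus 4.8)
The final statement is a summary theorem collecting the classification results from the preceding subsections, so the proof is essentially a bookkeeping argument that all cases have been exhausted.

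\begin{proof}[Proof proposal]
The plan is to observe that this theorem is a direct corollary of the propositions proved throughout Sections~\ref{subsec:self}--\ref{subsec:double_DD}, together with Proposition~\ref{prop:scf_types}. First I would recall that every \affinite $ADE$ bigraph $G=(\Gamma,\Delta)$ whose $\Gamma$ has one connected component is a self binding, and by Proposition~\ref{prop:self_bindings_classif} these are precisely the $\selfb{4n+1}$ for $n\geq 1$; this disposes of the first bullet. For the remaining bullets, assume $\Gamma$ has two connected components $X$ and $Y$, so $G$ is a double binding. By Proposition~\ref{prop:scf_types}, the scaling factor $\scf(G)$ is $1$, $2$, or $3$, and these three values are mutually exclusive, so the cases partition the set of all double bindings.

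Next I would handle each value of the scaling factor in turn. When $\scf(G)=1$, all components of $\Delta$ are of type $A_2$, and the involution argument of Proposition~\ref{prop:self_bindings_classif} applied componentwise shows that $G$ must be a parallel binding $\affL\equiv\affL$; this is exactly the last bullet (note the parenthetical remark after Corollary~\ref{cor:points} asserting precisely this). When $\scf(G)=2$ or $\scf(G)=3$, I would enumerate the possibilities for the unordered pair of types $\{\text{type}(X),\text{type}(Y)\}$: each of $X,Y$ is one of $\affA$, $\affD$, $\affE_6$, $\affE_7$, $\affE_8$. Proposition~\ref{prop:not_the_same} eliminates the diagonal cases $\affL\ast\affL$; Proposition~\ref{prop:no_L_E8_and_En_L} eliminates everything involving $\affE_8$ and pins down $\affE_6\ast\affE_7$; the subsequent proposition handles all double bindings with an $\affE_6$ or $\affE_7$ component together with an $\affA$ or $\affD$ component; the propositions in the $\affA$-subsection handle $\affA\ast\affA$ and $\affA\ast\affD$ (and rule out $\affD\ast\affA$ via Corollary~\ref{cor:max_value_v_Y}); and the proposition of Section~\ref{subsec:double_DD} handles $\affD\ast\affD$. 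Collecting the nontrivial outputs of these propositions gives exactly the lists in Figures~\ref{figure:double_bindings_scf_2} and~\ref{figure:double_bindings_scf_3}, separated according to whether $\scf(G)=2$ or $\scf(G)=3$ as recorded in Proposition~\ref{prop:scf_types}.

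The only genuinely substantive point to verify in assembling the proof is that the case analysis over pairs of types is genuinely complete, i.e.\ that every ordered pair $\affL\ast\affL'$ with $\scf\in\{2,3\}$ is covered by exactly one of the cited propositions; since Corollary~\ref{cor:points} is asymmetric in $X$ and $Y$ one must be careful to track both orderings, but Corollary~\ref{cor:max_value_v_Y} rules out the orderings in which $\v_X$ has larger maximal value than $\v_Y$, which is what makes the list finite and the bookkeeping manageable. I expect the main (and only real) obstacle to be purely organizational: making sure no ordered pair of affine types slips through the cross-referencing. Once that table is laid out, the theorem follows immediately by concatenating the conclusions of the previously established results.
\end{proof}
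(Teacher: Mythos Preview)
Your proposal is correct and matches the paper's approach exactly: the theorem is stated with a bare \qed\ because it is a pure summary of the propositions in Sections~\ref{subsec:self}--\ref{subsec:double_DD} together with Proposition~\ref{prop:scf_types}, and your write-up simply makes the implicit bookkeeping explicit. There is nothing to add or correct.
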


\begin{figure}
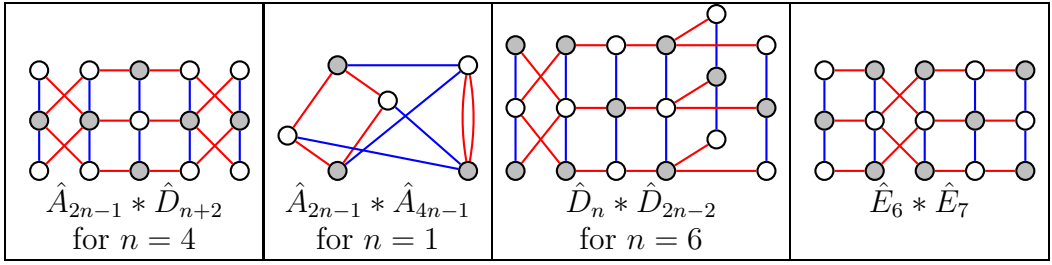

 \begin{tabular}{|c|c|c|c|} \hline
 {
$
\psmatrix[colsep=0.4cm,rowsep=0.4cm,mnode=circle]
\wh         & \wh       & \bl       & \wh       & \wh       \\
\bl         & \bl       & \wh       & \bl       & \bl		\\
\wh         & \wh       & \bl       & \wh       & \wh
\psset{arrows=-,arrowscale=2}
\foreach \y in {1,2,3,4,5}
{
\ba{1,\y}{2,\y}
\ba{2,\y}{3,\y}
}
\ra{1,1}{2,2}
\ra{3,1}{2,2}
\ra{2,2}{2,3}
\ra{2,4}{2,3}
\ra{2,4}{1,5}
\ra{2,4}{3,5}
\ra{2,1}{1,2}
\ra{2,1}{3,2}
\ra{1,2}{1,3}
\ra{1,4}{1,3}
\ra{3,2}{3,3}
\ra{3,4}{3,3}
\ra{1,4}{2,5}
\ra{3,4}{2,5}
\endpsmatrix $} 
&
{$
\psmatrix[colsep=0.4cm,rowsep=0.2cm,mnode=circle]
            & \bl       &           &           & \wh\\
            &           & \wh\\
\wh \\
            & \bl       &           &           & \bl
\psset{arrows=-,arrowscale=2}
\ra{3,1}{1,2}
\ra{3,1}{4,2}
\ra{2,3}{4,2}
\ra{2,3}{1,2}
\ba{1,2}{1,5}
\ba{4,2}{1,5}
\ba{3,1}{4,5}
\ba{2,3}{4,5}
\ncarc[arcangle=-10,linecolor=red]{-}{1,5}{4,5}
\ncarc[arcangle=10,linecolor=red]{-}{1,5}{4,5}
\endpsmatrix $} 
&
{$
\psmatrix[colsep=0.4cm,rowsep=0.15cm,mnode=circle]
            &           &           &           &    \wh                \\
  \bl       &   \bl     &  \wh      &  \bl      &           & \wh       \\
            &           &           &           &     \bl              \\
\wh         & \wh       &   \bl     &  \wh      &           &  \bl      \\
            &           &           &           &    \wh                \\
  \bl       &   \bl     &  \wh      &  \bl      &           & \wh       
\psset{arrows=-,arrowscale=2}
\foreach \y in {1,2,3,4,6}
{
\ba{2,\y}{4,\y}
\ba{6,\y}{4,\y}
}
\ba{1,5}{3,5}
\ba{5,5}{3,5}
\ra{2,1}{4,2}
\ra{6,1}{4,2}
\ra{4,3}{4,2}
\ra{4,3}{4,4}
\ra{4,6}{4,4}
\ra{3,5}{4,4}
\ra{4,1}{2,2}
\ra{4,1}{6,2}
\ra{6,3}{6,2}
\ra{6,3}{6,4}
\ra{6,6}{6,4}
\ra{5,5}{6,4}
\ra{2,3}{2,2}
\ra{2,3}{2,4}
\ra{2,6}{2,4}
\ra{1,5}{2,4}
\endpsmatrix $} 
&
{
$
\psmatrix[colsep=0.4cm,rowsep=0.4cm,mnode=circle]
\wh         & \bl       & \bl       & \wh       & \bl       \\
\bl         & \wh       & \wh       & \bl       & \wh		\\
\wh         & \bl       & \bl       & \wh       & \bl
\psset{arrows=-,arrowscale=2}
\foreach \y in {1,2,3,4,5}
{
\ba{1,\y}{2,\y}
\ba{2,\y}{3,\y}
}
\ra{1,1}{1,2}
\ra{2,3}{1,2}
\ra{2,3}{2,4}
\ra{2,5}{2,4}
\ra{2,3}{3,2}
\ra{3,1}{3,2}
\ra{2,1}{2,2}
\ra{1,3}{2,2}
\ra{1,3}{1,4}
\ra{1,5}{1,4}
\ra{3,3}{2,2}
\ra{3,3}{3,4}
\ra{3,5}{3,4}
\endpsmatrix $} \\
$\affA_{2n-1}\ast\affD_{n+2}$ &$\affA_{2n-1}\ast\affA_{4n-1}$ & $\affD_{n}\ast\affD_{2n-2}$ &$\affE_{6}\ast\affE_{7}$ \\
for $n=4$ & for $n=1$ & for $n=6$ & \\\hline
 \end{tabular}
\caption{\label{figure:double_bindings_scf_2}Three infinite and one exceptional family of double bindings with scaling factor $2$. All blue components have type $A_3$.}
\end{figure}

\begin{figure}
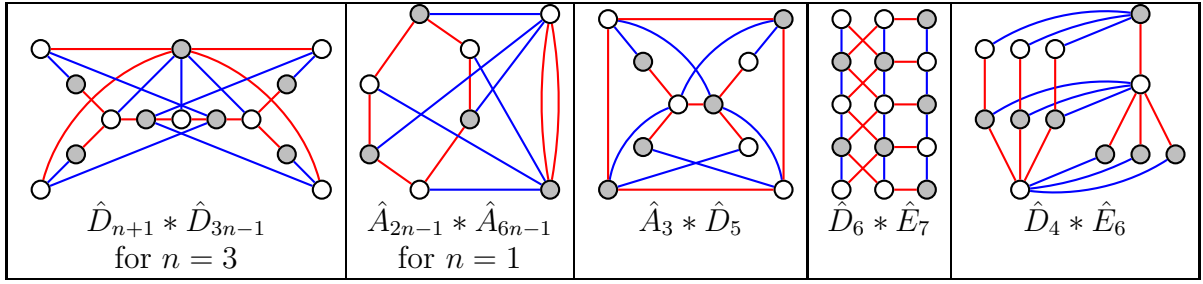

 \begin{tabular}{|c|c|c|c|c|} \hline
 {
$
\psmatrix[colsep=0.2cm,rowsep=0.2cm,mnode=circle]
   \wh      &           &           &           &    \bl    &           &           &           &   \wh     \\
            &   \bl     &           &           &           &           &           &   \bl                 \\
            &           &    \wh    &  \bl      &  \wh      &  \bl      &   \wh                             \\
            &   \bl     &           &           &           &           &           &   \bl                 \\
   \wh      &           &           &           &           &           &           &           &   \wh     
\psset{arrows=-,arrowscale=2}
\ncarc[arcangle=-30,linecolor=red]{-}{1,5}{5,1}
\ncarc[arcangle=30,linecolor=red]{-}{1,5}{5,9}
\ra{1,1}{1,5}
\ra{1,5}{1,9}
\ra{2,2}{3,3}
\ra{4,2}{3,3}
\ra{3,4}{3,3}
\ra{3,4}{3,5}
\ra{3,6}{3,5}
\ra{3,6}{3,7}
\ra{2,8}{3,7}
\ra{4,8}{3,7}
\ba{1,1}{2,2}
\ba{1,1}{3,6}
\ba{5,1}{3,6}
\ba{5,1}{4,2}
\ba{2,8}{1,9}
\ba{3,4}{1,9}
\ba{3,4}{5,9}
\ba{4,8}{5,9}
\ba{1,5}{3,3}
\ba{1,5}{3,5}
\ba{1,5}{3,7}
\endpsmatrix $} 
& 
{$
\psmatrix[colsep=0.4cm,rowsep=0.2cm,mnode=circle]
            & \bl       &           &           & \wh\\
            &           & \wh\\
\wh \\
            &           & \bl\\
\bl \\
            & \wh       &           &           & \bl
\psset{arrows=-,arrowscale=2}
\ra{3,1}{1,2}
\ra{3,1}{5,1}
\ra{2,3}{4,3}
\ra{2,3}{1,2}
\ra{5,1}{6,2}
\ra{4,3}{6,2}
\ba{1,2}{1,5}
\ba{4,3}{1,5}
\ba{5,1}{1,5}
\ba{3,1}{6,5}
\ba{2,3}{6,5}
\ba{6,2}{6,5}
\ncarc[arcangle=-10,linecolor=red]{-}{1,5}{6,5}
\ncarc[arcangle=10,linecolor=red]{-}{1,5}{6,5}
\endpsmatrix $} 
& 
{
$
\psmatrix[colsep=0.2cm,rowsep=0.3cm,mnode=circle]
   \wh      &           &           &           &           &   \bl     \\
            &   \bl     &           &           &       \wh                 \\
            &           &    \wh    &  \bl                               \\
            &   \bl     &           &           &        \wh                \\
   \bl      &           &           &           &           &      \wh     
\psset{arrows=-,arrowscale=2}
\ncarc[arcangle=30,linecolor=blue]{-}{5,1}{3,3}
\ncarc[arcangle=-30,linecolor=blue]{-}{5,6}{3,4}
\ncarc[arcangle=30,linecolor=blue]{-}{1,1}{3,4}
\ncarc[arcangle=-30,linecolor=blue]{-}{1,6}{3,3}
\ra{1,1}{1,6}
\ra{5,6}{1,6}
\ra{5,6}{5,1}
\ra{1,1}{5,1}
\ra{2,2}{3,3}
\ra{4,2}{3,3}
\ra{3,4}{3,3}
\ra{3,4}{2,5}
\ra{3,4}{4,5}
\ba{1,1}{2,2}
\ba{5,6}{4,2}
\ba{5,1}{4,5}
\ba{1,6}{2,5}
\endpsmatrix $} 
 & 
{
$
\psmatrix[colsep=0.3cm,rowsep=0.3cm,mnode=circle]
   \wh      & \wh       &   \bl     \\
   \bl      & \bl       &   \wh     \\
   \wh      & \wh       &   \bl     \\
   \bl      & \bl       &   \wh     \\
   \wh      & \wh       &   \bl     
\psset{arrows=-,arrowscale=2}
\foreach \x in {1,2,3,4}
{
\ra{\x,1}{\plusOne{\x},2}
\ra{\x,2}{\plusOne{\x},1}
\ba{\x,1}{\plusOne{\x},1}
\ba{\x,2}{\plusOne{\x},2}
\ba{\x,3}{\plusOne{\x},3}
}
\foreach \x in {1,2,3,4,5}
{
\ra{\x,3}{\x,2}
}
\endpsmatrix $} 
 & 
{
$
\psmatrix[colsep=0.2cm,rowsep=0.2cm,mnode=circle]
            &           &           &           &           &     \bl               \\
   \wh      &  \wh      &   \wh                                                     \\
            &           &           &           &           &     \wh               \\
   \bl      &  \bl      &   \bl                                                     \\
            &           &           &           &   \bl     &     \bl   &\bl        \\
            &  \wh                                                                  
\psset{arrows=-,arrowscale=2}
\ncarc[arcangle=20,linecolor=blue]{-}{2,1}{1,6}
\ncarc[arcangle=10,linecolor=blue]{-}{2,2}{1,6}
\ba{2,3}{1,6}
\ncarc[arcangle=20,linecolor=blue]{-}{4,1}{3,6}
\ncarc[arcangle=10,linecolor=blue]{-}{4,2}{3,6}
\ba{4,3}{3,6}
\ncarc[arcangle=20,linecolor=blue]{-}{5,7}{6,2}
\ncarc[arcangle=10,linecolor=blue]{-}{5,6}{6,2}
\ba{5,5}{6,2}
\ra{2,1}{4,1}
\ra{2,2}{4,2}
\ra{2,3}{4,3}
\ra{6,2}{4,1}
\ra{6,2}{4,2}
\ra{6,2}{4,3}
\ra{1,6}{3,6}
\ra{5,5}{3,6}
\ra{5,6}{3,6}
\ra{5,7}{3,6}
\endpsmatrix $}\\ 
$\affD_{n+1}\ast\affD_{3n-1}$ &$\affA_{2n-1}\ast\affA_{6n-1}$ & $\affA_{3}\ast\affD_{5}$ &$\affD_{6}\ast\affE_{7}$&$\affD_{4}\ast\affE_{6}$ \\
for $n=3$ & for $n=1$ &  & &  \\\hline
 \end{tabular}
\caption{\label{figure:double_bindings_scf_3}Two infinite and three exceptional families of double bindings with scaling factor $3$. All blue components have types $A_5$ or $D_4$.}
\end{figure}

\begin{remark}
 In~\cite[Section 9.1]{GP}, we introduced duality of symmetric bigraphs (not to be confused with Stembridge's dual bigraphs in \cite{S}). Here we briefly list some pairs of dual symmetric bigraphs for certain choices of the auxiliary  data\footnote{i.e. $\i,V_+,V_0,V_-,X$ in the notation of \cite{GP}} which we omit:
 \begin{itemize}
  \item $\affA_{2n-1}\ast\affD_{n+2}$ is dual to $\affA_{2n-1}\otimes A_3$;
  \item $\affA_{2n-1}\ast\affA_{4n-1}$ is dual to $\affA_{4n-1}\otimes A_3$;
  \item $\affD_{n}\ast\affD_{2n-2}$ is dual to $\affA_{2n-1}\ast\affA_{4n-1}$;
  \item $\affE_{6}\ast\affE_{7}$ is dual to itself;
  \item $\affD_{n+1}\ast\affD_{3n-1}$ is dual to $\affA_{2n-1}\ast\affA_{6n-1}$;
  \item $\affD_6\ast\affE_7$ is dual to $\affD_4\ast\affE_6$;
  \item $\affA_3\ast\affD_5$ is dual to the triple covering $\affA_1\ast\affA_5$.
 \end{itemize}
\end{remark}

\section{The classification}\label{sec:classification}
To classify \affinite $ADE$ bigraphs, we mostly follow the strategy of \cite{S}: we are going to show that the \emph{component graph} $\Ccal$ of $\Gamma$ defined below is a path with either at most one loop (in case there is a self binding) or at most one non-parallel double binding.

\begin{definition}
 Let $G=(\Gamma,\Delta)$ be a bigraph. Let $C_1,C_2,\dots,C_m$ be the connected components of $\Gamma$. Define the graph $\Ccal=\Ccal(G)$ with vertex set $[m]:=\{1,2,\dots,m\}$ such that $(i,j)$ is an edge of $\Ccal$ iff there is a blue edge $(u,v)\in\Delta$ with $u\in C_i$ and $v\in C_j$.
\end{definition}

\def\Gt{\widetilde{G}}
Let $G$ be an \affinite $ADE$ bigraph. We define its reduced version $\Gt$ to be the same as $G$ but with all the blue edges removed from each self binding in $G$. Clearly, $\Ccal(\Gt)$ is $\Ccal(G)$ with all the loops removed. It is also clear that $\Gt$ is going to be an \affinite $ADE$ bigraph as well.

Several properties of \affinite $ADE$ bigraphs have literally the same statements as their analogs for admissible $ADE$ bigraphs of \cite{S}, so we list them with the corresponding references to the parts of \cite{S} where they are proved:

\begin{lemma}\label{lemma:properties}
Let $G$ be an \affinite $ADE$ bigraph. Then:
\begin{enumerate}
 \item the component graph $\Ccal(\Gt)$ is acyclic (see \cite[proof of Lemma 2.5(b)]{S};
 \item in fact, $\Ccal(\Gt)$ is a path (see \cite[Proof of (ii) in Section 5]{S});
 \item $G$ contains at most one non-parallel double binding (see \cite[Proof of (ii) in Section 5]{S}).
\end{enumerate}
\qed
\end{lemma}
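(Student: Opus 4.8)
The plan is to prove Lemma~\ref{lemma:properties} by importing the structural arguments of \cite{S} essentially verbatim, after observing that the only property of admissible $ADE$ bigraphs that Stembridge uses in the relevant lemmas is the commutation of the adjacency matrices $A_\Gamma,A_\Delta$ together with the fact that each connected component of $\Gamma$ and each connected component of $\Delta$ has a well-defined dominant eigenvalue with a positive dominant eigenvector; these all hold for \affinite $ADE$ bigraphs by Lemma~\ref{lemma:eigenvalues} and Corollary~\ref{cor:common_coxeter}. Since $\Gt$ has no self bindings, every edge of $\Ccal(\Gt)$ comes from a parallel or a non-parallel double binding, and the three parts of the lemma become statements purely about how double bindings can be glued along shared $\Gamma$-components.

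First I would establish part (1). Suppose $\Ccal(\Gt)$ contains a cycle $C_{i_1},C_{i_2},\dots,C_{i_\ell},C_{i_1}$. Using the common dominant eigenvector $\v$ from Lemma~\ref{lemma:eigenvalues}, rescale it on each component so that, as in Corollary~\ref{cor:points}, the scaling relations (\ref{eq:points_1})--(\ref{eq:points_2}) relate the values of $\v$ on $\Gamma$-components joined by a binding. Walking around the cycle multiplies the normalization by a product of scaling factors, each a positive integer, and on the other hand must return to the original normalization, forcing all scaling factors along the cycle to equal $1$, i.e. all bindings in the cycle are parallel. But a parallel binding $\affL\equiv\affL$ is $\affL\otimes A_2$, so the two $A_2$-ends impose that the blue-degree of each vertex is exactly $1$; tracking a single vertex around the cycle of parallel bindings, the commutation of $A_\Gamma,A_\Delta$ forces the cycle to close up into a product $\affL\otimes A_\ell$ with $\ell$ odd, which is not bipartite unless $\ell$ is even — and in any case a cycle of parallel bindings produces a $\Gamma$-component that is \emph{not} simply one of the $C_{i_j}$, a contradiction. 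This is exactly the counting in \cite[proof of Lemma 2.5(b)]{S}, and I would cite it after checking the hypotheses transfer.

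Next, parts (2) and (3). Once $\Ccal(\Gt)$ is known to be a forest, connectedness of $G$ makes it a tree, and the argument of \cite[Proof of (ii) in Section 5]{S} shows it is a path: a vertex of $\Ccal(\Gt)$ of degree $\geq 3$ would force a $\Gamma$-component to carry three independent $\Delta$-attachments, which via (\ref{eq:points_1})--(\ref{eq:points_2}) and Corollary~\ref{cor:max_value_v_Y} over-constrains the dominant eigenvector (the maximal value of $\v$ on that component would have to be bounded below by the sum of the maximal values on the three neighbors, impossible for $ADE$ shapes). The same eigenvector bookkeeping shows that at most one edge of the path can be a non-parallel (scaling factor $2$ or $3$) double binding: two non-parallel bindings along the path would compound scaling factors $\geq 2$ twice, but the maximal entry of $\v_X$ is at most the maximal entry of $\v_Y$ (Corollary~\ref{cor:max_value_v_Y}) on each binding and the maximal entries of the dominant eigenvectors of affine $ADE$ diagrams are bounded (at most $6$), so two genuine expansions cannot both fit. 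Again this is the content of \cite[Proof of (ii) in Section 5]{S} and I would invoke it.

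The main obstacle is not any single computation but making precise that Stembridge's proofs, which are written for \emph{admissible} ($\Gamma,\Delta$ both finite) $ADE$ bigraphs, go through word-for-word when $\Gamma$ has affine components. The potential subtlety is that an affine component has dominant eigenvalue exactly $2$ rather than $<2$, so some inequalities in \cite{S} that were strict become equalities; I would need to check that each place where \cite{S} uses strictness of the $\Gamma$-side eigenvalue is in fact only using strictness on the $\Delta$-side (which we still have, since $\Delta$-components are finite with $\l_\Delta<2$) or does not use it at all. Concretely, the scaling-factor integrality argument (Proposition~\ref{prop:scf_types}) already shows the $\Delta$-side is what carries the arithmetic, so I expect the transfer to be routine but would spell out the one or two inequalities that need the finite-$\Delta$ hypothesis. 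Having done this, the proof of the lemma is complete by the three citations above.
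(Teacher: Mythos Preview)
The paper gives no independent argument here; it simply cites \cite{S} and appends \qed. So the only thing to check is whether your transfer of Stembridge's reasoning is accurate. On that score your sketches have real problems.

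For part (1), your scaling-factor argument does not work as stated. First, by Corollary~\ref{cor:common_coxeter} all $\Delta$-components have the same Coxeter number, so the scaling factor $\scf(G)=\l_\Delta^2$ is a single global constant, not something that can vary along the cycle; the phrase ``product of scaling factors, each a positive integer, must return to the original normalization, forcing all scaling factors to equal $1$'' is therefore meaningless. What actually changes around the cycle are the $\alpha$-factors from the proof of Corollary~\ref{cor:points}, each equal to $\l_\Delta^{\pm1}$, and the product being $1$ only forces equal numbers of each sign, not that $\l_\Delta=1$. Second, your fallback claim that a cycle of parallel bindings ``produces a $\Gamma$-component that is not simply one of the $C_{i_j}$'' is false: blue edges never merge red components. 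The actual mechanism --- and the one the paper itself uses later in the proof of Lemma~\ref{lemma:self_bindings} --- is to trace a blue walk around the cycle in $\Ccal(\Gt)$ and obtain a blue cycle in $G$, contradicting the fact that every $\Delta$-component is a finite $ADE$ tree. That is the content of \cite[Lemma~2.5(b)]{S}, and it transfers because it only uses that $\Delta$-components are acyclic, which you still have.

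For parts (2) and (3), your eigenvector bookkeeping is too loose to be a proof. The assertion that a degree-$\geq 3$ vertex in $\Ccal(\Gt)$ forces ``the maximal value of $\v$ on that component to be bounded below by the sum of the maximal values on the three neighbors'' is neither stated nor proved anywhere, and Corollary~\ref{cor:max_value_v_Y} only gives a one-sided inequality for a single binding. Likewise, your argument that two non-parallel bindings ``compound scaling factors $\geq 2$ twice'' against the bound $6$ on eigenvector entries is heuristic at best. Stembridge's actual argument in \cite[Section~5]{S} again works by controlling the structure of $\Delta$-components (each finite $ADE$ tree has at most one branch vertex), and that is what you should invoke; the $\Gamma$-side eigenvalue being exactly $2$ rather than $<2$ plays no role there, as you correctly anticipate in your final paragraph.
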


These properties allow us to describe every \affinite $ADE$ bigraph by a string of symbols $\affA_{n},\affD_n,\affE_n,\selfb{n}$ with symbols $\ast,\equiv$ inserted between them, for example, $\affL_1\equiv \affL_1\ast \affL_2$ has three red connected components (i.e. $m=3$) and $C_1$ and $C_2$ form a parallel binding while $C_2$ and $C_3$ form a double binding of type $\affL_1\ast\affL_2$. 

\begin{lemma}\label{lemma:self_bindings}
 Assume that $G$ is an \affinite $ADE$ bigraph containing a self binding. Then it contains exactly one self binding and all the double bindings in $G$ are parallel.
\end{lemma}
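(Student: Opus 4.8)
The plan is to use the fact that the presence of a self binding pins down the blue structure of all of $G$ very rigidly: every blue component becomes a single edge. This will give both ``all double bindings are parallel'' and the uniqueness of the self binding more or less for free. Concretely, let $C$ be a connected component of $\Gamma$ that carries a blue edge, so $C$ witnesses a self binding in $G$. I would first argue that the sub-bigraph $G|_C$ obtained by keeping $C$ together with the blue edges having both endpoints in $C$ is again a bipartite \affinite $ADE$ bigraph: its $\Gamma$-part is just $C$, a connected affine $ADE$ Dynkin diagram, and writing $A_\Gamma$ in block form with respect to $\Vert(C)$ and its complement (block diagonal, as $C$ is a $\Gamma$-component) and reading off the $(C,C)$-block of the identity $A_\Gamma A_\Delta = A_\Delta A_\Gamma$ shows that $A_C$ commutes with the adjacency matrix of the blue subgraph of $C$. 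Since the Perron eigenvector of $A_C$ is then a positive eigenvector of that blue subgraph, all of its components share one dominant eigenvalue, and because they are circulant-type subgraphs of an affine $ADE$ diagram they are forced to be single edges; hence $G|_C$ is a self binding in the sense of Section~\ref{subsec:self}. By Proposition~\ref{prop:self_bindings_classif} it equals $\selfb{4n+1}$ for some $n\ge 1$: $C$ has type $\affA_{4n+1}$ and the blue edges inside $C$ form the antipodal perfect matching.

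Next I would record the two consequences. From the previous step the blue components of $G|_C$ are of type $A_2$, so by Corollary~\ref{cor:common_coxeter} every blue component of $G$ has Coxeter number $3$, i.e.\ is a single edge; equivalently $h(\Delta)=3$ and every vertex of $G$ has exactly one blue neighbour. Then any double binding $\affL\ast\affL'$ (resp.\ $\affL\equiv\affL$) contained in $G$ has scaling factor $\l_\Delta^2=(2\cos(\pi/3))^2=1$, so by Proposition~\ref{prop:scf_types}(1) together with the observation following Corollary~\ref{cor:points} it must be a parallel binding. This settles the statement about double bindings.

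Finally, for uniqueness of the self binding: since every vertex of $G$ has a unique blue neighbour and, in $C\cong\selfb{4n+1}$, the antipode of each vertex of $C$ is already its blue neighbour, no blue edge can join $C$ to a different $\Gamma$-component. Thus in the component graph the vertex $C$ carries only a loop, so it is an isolated vertex of $\Ccal(\Gt)$; by Lemma~\ref{lemma:properties} the graph $\Ccal(\Gt)$ is a path, and a path with an isolated vertex is the one-vertex path. Hence $\Gamma$ has $C$ as its only component, $G=G|_C=\selfb{4n+1}$, and in particular $G$ contains exactly one self binding (and, as it happens, no double bindings at all). I expect the main obstacle to be the first step, namely ruling out the a priori possibility that the blue edges inside $C$ are merely fragments of larger blue components of $G$; this is exactly where one must exploit the commuting-adjacency-matrix structure to see that $G|_C$ is a genuine \affinite self binding, so that Proposition~\ref{prop:self_bindings_classif} and Corollary~\ref{cor:common_coxeter} apply and force $h(\Delta)=3$. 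Everything after that is bookkeeping with the component graph.
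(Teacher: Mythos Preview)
Your restriction argument in step~1 is correct: the $(C,C)$--block of the commutation relation does show that $A_C$ commutes with the adjacency matrix $B_C$ of the blue edges internal to $C$, and from there Lemma~\ref{lemma:self_bindings_eigenvalues} and Proposition~\ref{prop:self_bindings_classif} give $G|_C\cong\selfb{4n+1}$, i.e.\ the blue edges \emph{inside} $C$ form the antipodal matching.

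The gap is in step~2. From the fact that the blue components of $G|_C$ are of type $A_2$ you conclude via Corollary~\ref{cor:common_coxeter} that every blue component of $G$ has Coxeter number $3$. But Corollary~\ref{cor:common_coxeter} speaks about the connected components of $\Delta$ in $G$, not in $G|_C$; an $A_2$ component of $G|_C$ is in general only an induced subgraph of a $\Delta$--component of $G$, not a full component. Concretely, the bigraph $\selfbindingname$ for $m\ge 2$ (see Proposition~\ref{prop:self_binding} and Figure~\ref{figure:classification}) satisfies all the hypotheses, has $C=C_1$ with $G|_C=\selfb{4n+1}$, yet each blue component of $G$ is a path $v_m-\cdots-v_1-\i(v_1)-\cdots-\i(v_m)$ of type $A_{2m}$, so $h(\Delta)=2m+1\neq 3$. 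Thus the assertion ``every vertex of $G$ has exactly one blue neighbour'' is false, and with it both your derivation that all double bindings have scaling factor $1$ and your isolation argument in step~4. Indeed step~4 concludes $G=\selfb{4n+1}$, which overshoots the lemma: the statement only claims uniqueness of the self binding and parallelness of the double bindings, and $\selfbindingname$ with $m\ge 2$ shows that $G$ can genuinely have several $\Gamma$--components.

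The paper's proof avoids this trap by arguing directly with blue paths rather than via $h(\Delta)$: two self bindings at the ends of the path $\Ccal(\Gt)$ would let one bounce a blue walk back and forth to produce a blue cycle; and a self binding together with a non-parallel double binding forces, via the blue degrees in $A_3$, $A_5$, or $D_4$, a maximal blue path whose two penultimate vertices both have degree $\ge 3$, which is impossible in a finite $ADE$ tree. Your step~1 could still be salvaged as a lemma identifying $C\cong\affA_{4n+1}$, but the global conclusions need a different argument along these lines.
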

\begin{proof}
 Assume for the sake of contradiction that $G$ has at least two self bindings. We may remove everything else so that they occur at the ends of $\Ccal(\Gt)$ (which is a path on $[m]$). After some relabeling, the edges of $\Ccal(\Gt)$ become exactly $\{(i,i+1)\}_{i\in [m-1]}$. We are going to construct a blue cycle in $G$ as follows: let $v^1_1\in C_1$ be any vertex, then there is a blue path $v^1_1,v^1_2,\dots,v^1_m$ with $v^1_i\in C_i$. Since $C_m$ is a self binding, $v^1_m$ is connected by a blue edge to some other vertex $v^2_m\in C_m$, from which we can construct a blue path $v^2_m,v^2_{m-1},\dots,v^2_1$ with $v^2_i\in C_i$ again. But now $v_2^1$ is connected by a blue edge to some other vertex $v_3^1\in C_1$, so we may continue our path until it crosses itself yielding a blue cycle in $G$ which is a contradiction since all the finite $ADE$ Dynkin diagrams are acyclic.
 
 Assume now that there is a self binding and a non-parallel double binding in $G$. Again, we may assume that the self binding occurs in $C_1$ and the double binding occurs between $C_{m-1}$ and $C_m$ with $\Ccal(\Gt)$ being a path on $[m]$. Take the maximal blue path $P$ in $G$. Since all the vertices in $C_1,\dots,C_{m-1}$ have blue degree at least $2$, both endpoints of $P$ belong to $C_m$ and have blue degree $1$. But since the blue components of the double binding $C_{m-1}\ast C_m$ are either $A_3,A_5,$ or $D_4$ (see Proposition~\ref{prop:scf_types}), the vertices of $P$ adjacent to the endpoints have blue degrees at least $3$. Therefore they coincide because every finite $ADE$ Dynkin diagram contains at most one vertex of degree $3$. So $P$ has length at most $3$, and therefore $m=2$. It is clear that adding a self binding to any of the double bindings involving type $\affA$ yields either a cycle or a blue component with at least two vertices of degree $3$. 
\end{proof}

\def\selfbindingname{(\affA_{4n+1})^m}
\begin{proposition}\label{prop:self_binding}
 The only \affinite bigraphs involving self-bindings are
\[\selfbindingname:=\selfb{4n+1}\equiv\affA_{4n+1}\equiv\cdots\equiv\affA_{4n+1} \quad\text{($m$ factors, $m\geq 1,n\geq 1$)}.\]
\end{proposition}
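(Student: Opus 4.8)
The plan is to combine the structural results already established---Proposition~\ref{prop:self_bindings_classif} (self bindings are exactly $\selfb{4n+1}$), Lemma~\ref{lemma:self_bindings} (a self binding forces all other double bindings to be parallel and forces there to be exactly one self binding), and Lemma~\ref{lemma:properties} (the component graph $\Ccal(\Gt)$ is a path)---to pin down the global shape of any \affinite $ADE$ bigraph $G$ containing a self binding. By Lemma~\ref{lemma:self_bindings}, $G$ has exactly one self binding, sitting in some red component; by Lemma~\ref{lemma:properties}(2) the reduced component graph $\Ccal(\Gt)$ is a path, and by Lemma~\ref{lemma:self_bindings} every edge of that path corresponds to a parallel binding, i.e. to a blue $A_2$ between two red components of the same type. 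So $G$ looks like a chain $C_1 \equiv C_2 \equiv \cdots \equiv C_m$ of parallel bindings, with one of the $C_i$ additionally carrying the blue edges making it a self binding.

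First I would argue that the self binding can be taken to be $C_1$ (an endpoint of the path): since all the bindings along the chain are parallel, and a parallel binding $\affL\equiv\affL$ forces the two red components to be isomorphic as bigraphs-with-the-$A_2$-pairing, the "self-binding structure" propagates down the chain. Concretely, by Proposition~\ref{prop:self_bindings_classif} the self binding is $\selfb{4n+1}$, so the red component carrying it has type $\affA_{4n+1}$; because consecutive components in a parallel binding are identified via the $A_2$ edges and commuting adjacency matrices, every $C_i$ has type $\affA_{4n+1}$. Then I would observe that the self-binding blue edges (the "diameters" of the $(4n+2)$-cycle, in the language of Proposition~\ref{prop:self_bindings_classif}) can be placed on any one of the $C_i$: placing them on $C_1$ gives exactly $\selfb{4n+1}\equiv\affA_{4n+1}\equiv\cdots\equiv\affA_{4n+1}$, which is the claimed bigraph $\selfbindingname$. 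Finally, I would check that this family really does consist of \affinite $ADE$ bigraphs: each red component $\affA_{4n+1}$ is an affine Dynkin diagram, and each blue component---whether an $A_2$ from a parallel binding or an $A_2$ from the self binding's diameters---is a finite $ADE$ Dynkin diagram; the adjacency matrices commute because the whole configuration is built from rotation-symmetric cycles glued along parallel copies, which one can verify directly from Corollary~\ref{cor:recurrent_commuting} / the defining eigenvector equations of Corollary~\ref{cor:points}, or simply note it is a tensor product / covering construction.

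The one point requiring genuine care---and what I expect to be the main obstacle---is showing that the self binding may be assumed to sit at an endpoint of the path $\Ccal(\Gt)$, and relatedly that it cannot sit "in the middle" in a way that produces a genuinely different bigraph. The worry is a chain like $\affA_{4n+1}\equiv\selfb{4n+1}\equiv\affA_{4n+1}$: I must argue this is isomorphic (as a bigraph) to $\selfb{4n+1}\equiv\affA_{4n+1}\equiv\affA_{4n+1}$. This should follow because the self-binding diameters, together with the parallel-binding $A_2$'s, generate an action that lets one slide the diameters from the middle component to an end component: the blue graph is a disjoint union of $A_2$'s, and a blue connected component either lies entirely within one red component (a diameter of a self binding) or connects two consecutive red components (a parallel edge); rerouting which red component hosts the diameters is an isomorphism of the total bigraph because all red components are isomorphic $(4n+2)$-cycles matched up compatibly. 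I would make this precise by exhibiting the explicit bigraph automorphism. Once that normalization is done, Proposition~\ref{prop:self_bindings_classif} and the parallel-binding structure leave literally no freedom, and the classification $\selfbindingname$ for $m\geq 1$, $n\geq 1$ follows.
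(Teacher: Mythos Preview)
Your proposal has a genuine gap in the treatment of the ``self binding in the middle'' case. You propose to show that the configuration $\affA_{4n+1}\equiv\selfb{4n+1}\equiv\affA_{4n+1}$ is \emph{isomorphic} to $\selfb{4n+1}\equiv\affA_{4n+1}\equiv\affA_{4n+1}$, by ``sliding'' the diameter edges to an endpoint component. This isomorphism does not exist: in the middle configuration every vertex of the central component has blue degree $3$ (one self-binding diameter plus two parallel-binding edges, one to each neighbor), whereas in the endpoint configuration no vertex has blue degree exceeding $2$. The two bigraphs have different blue-degree sequences and cannot be isomorphic. Relatedly, your description of the blue components as disjoint $A_2$'s is incorrect: the self-binding diameters and the parallel-binding edges share vertices, so they assemble into longer blue paths (in $\selfbindingname$ the blue components are paths of type $A_{2m}$).

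The correct argument---and the one the paper uses---is not to reduce the middle case to the endpoint case by isomorphism, but to \emph{rule it out} as an \affinite bigraph altogether. If the self binding sits at an internal component $C_l$ (with $1<l<m$), then for any vertex $v\in C_l$ both $v$ and its diametrically opposite partner $v'$ lie in the same blue component (they are joined by the diameter edge) and each has blue degree $3$. A finite $ADE$ Dynkin diagram has at most one vertex of degree $\geq 3$, so this blue component cannot be finite $ADE$ and $G$ fails to be an \affinite bigraph. Hence $l\in\{1,m\}$, and together with Lemma~\ref{lemma:self_bindings} and Lemma~\ref{lemma:properties} the result follows immediately. Replace your isomorphism attempt with this one-line degree count.
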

\begin{proof}
 If $G$ is an \affinite bigraph with a self binding then we know that $\Ccal(\Gt)$ is a path by Lemma~\ref{lemma:properties}, so let $C_1,C_2,\dots,C_m$ be its connected components with the self binding happening in $C_l$ for some $l\in [m]$. If $l\neq 1,m$ then we immediately get two vertices of degree $3$ in every blue component, so we may assume that $l=1$. By Lemma~\ref{lemma:self_bindings}, all the double bindings are parallel and the result follows.
\end{proof}

\newcommand{\plus}[2]{%
\number\numexpr#1+#2\relax%
}
\newcommand{\po}[1]{%
\number\numexpr#1+1\relax%
}
\newcommand{\mo}[1]{%
\number\numexpr#1-1\relax%
}

\begin{figure}
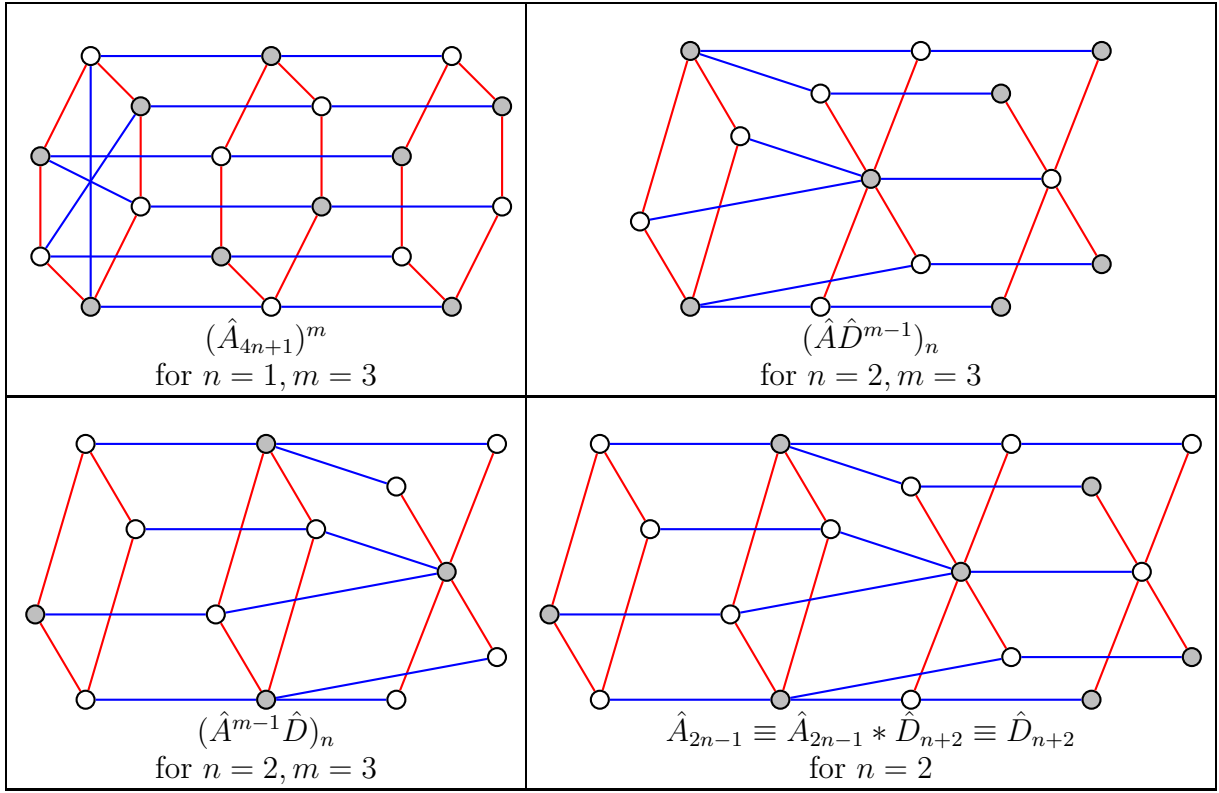

 \begin{tabular}{|c|c|} \hline
  & \\
 {
$
\psmatrix[colsep=0.4cm,rowsep=0.4cm,mnode=circle]
            &   \wh     &           &           &           & \bl       &           &           &           &\wh                       \\
            &           &   \bl     &           &           &           &\wh        &           &           &            &\bl          \\
\bl         &           &           &           &\wh        &           &           &           &\bl                                   \\
            &           &\wh        &           &           &           &\bl        &           &           &            &\wh          \\
\wh         &           &           &           &\bl        &           &           &           &\wh                                   \\
            &\bl        &           &           &           &\wh        &           &           &           &\bl                         
\psset{arrows=-,arrowscale=2}
\foreach \y in {2,6,10}
{
\ra{1,\y}{2,\po{\y}}
\ra{4,\po{\y}}{2,\po{\y}}
\ra{4,\po{\y}}{6,{\y}}
\ra{1,{\y}}{3,\mo{\y}}
\ra{5,\mo{\y}}{3,\mo{\y}}
\ra{5,\mo{\y}}{6,{\y}}
}
\ba{1,2}{1,6}
\ba{1,10}{1,6}
\ba{6,2}{6,6}
\ba{6,10}{6,6}
\ba{3,1}{3,5}
\ba{3,9}{3,5}
\ba{5,1}{5,5}
\ba{5,9}{5,5}
\ba{2,3}{2,7}
\ba{2,11}{2,7}
\ba{4,3}{4,7}
\ba{4,11}{4,7}
\ba{1,2}{6,2}
\ba{3,1}{4,3}
\ba{5,1}{2,3}
\endpsmatrix $} 
&
{$
\psmatrix[colsep=0.4cm,rowsep=0.3cm,mnode=circle]
            &  \bl      &           &           &           &           &\wh        &           &           &            &\bl        \\
            &           &           &           &\wh        &           &           &           &\bl                                 \\
            &           &\wh                                                                                                         \\
            &           &           &           &           &\bl        &           &           &           &\wh                     \\
\wh                                                                                                                                  \\
            &           &           &           &           &           &\wh        &           &           &            &\bl        \\
            &\bl        &           &           &\wh        &           &           &           &\bl                                   
\psset{arrows=-,arrowscale=2}
\ra{1,2}{3,3}
\ra{7,2}{3,3}
\ra{7,2}{5,1}
\ra{1,2}{5,1}
\ra{2,5}{4,6}
\ra{7,5}{4,6}
\ra{1,7}{4,6}
\ra{6,7}{4,6}
\ra{2,9}{4,10}
\ra{7,9}{4,10}
\ra{1,11}{4,10}
\ra{6,11}{4,10}
\ba{1,2}{1,7}
\ba{1,11}{1,7}
\ba{1,2}{2,5}
\ba{2,9}{2,5}
\ba{7,2}{6,7}
\ba{6,11}{6,7}
\ba{7,2}{7,5}
\ba{7,9}{7,5}
\ba{3,3}{4,6}
\ba{5,1}{4,6}
\ba{4,10}{4,6}
\endpsmatrix $} 
 \\
$(\affA_{4n+1})^m$ & $(\affA\affD^{m-1})_n$\\
for $n=1,m=3$      & for $n=2,m=3$\\\hline
 & \\
{$
\psmatrix[colsep=0.4cm,rowsep=0.3cm,mnode=circle]
            &  \wh      &           &           &           &    \bl    &           &           &           &            &\wh        \\
            &           &           &           &           &           &           &           & \wh                                    \\
            &           &\wh        &           &           &           & \wh                                                                   \\
            &           &           &           &           &           &           &           &           &\bl                     \\
\bl         &           &           &           & \wh                                                                                        \\
            &           &           &           &           &           &           &           &           &            &\wh        \\
            &\wh        &           &           &           &\bl        &           &           &\wh                                   
\psset{arrows=-,arrowscale=2}
\ra{1,2}{3,3}
\ra{7,2}{3,3}
\ra{7,2}{5,1}
\ra{1,2}{5,1}
\ra{1,6}{3,7}
\ra{7,6}{3,7}
\ra{7,6}{5,5}
\ra{1,6}{5,5}
\ra{2,9}{4,10}
\ra{7,9}{4,10}
\ra{1,11}{4,10}
\ra{6,11}{4,10}
\ba{1,6}{1,11}
\ba{1,6}{2,9}
\ba{3,7}{4,10}
\ba{5,5}{4,10}
\ba{7,6}{6,11}
\ba{7,6}{7,9}
\ba{1,2}{1,6}
\ba{3,3}{3,7}
\ba{5,1}{5,5}
\ba{7,2}{7,6}
\endpsmatrix $} 
&
{$
\psmatrix[colsep=0.4cm,rowsep=0.3cm,mnode=circle]
            &  \wh      &           &           &           &    \bl    &           &           &           &            &\wh &   &   &   &        \\
            &           &           &           &           &           &           &           & \wh       &            &    &   & \bl             \\
            &           &\wh        &           &           &           & \wh                                                                   \\
            &           &           &           &           &           &           &           &           &\bl         &    &   &   &\wh                  \\
\bl         &           &           &           & \wh                                                                                        \\
            &           &           &           &           &           &           &           &           &            &\wh  &  &  &  & \bl       \\
            &\wh        &           &           &           &\bl        &           &           &\wh        &            &     &  & \bl           
\psset{arrows=-,arrowscale=2}
\ra{1,2}{3,3}
\ra{7,2}{3,3}
\ra{7,2}{5,1}
\ra{1,2}{5,1}
\ra{1,6}{3,7}
\ra{7,6}{3,7}
\ra{7,6}{5,5}
\ra{1,6}{5,5}
\ra{2,9}{4,10}
\ra{7,9}{4,10}
\ra{1,11}{4,10}
\ra{6,11}{4,10}
\ra{2,13}{4,14}
\ra{7,13}{4,14}
\ra{1,15}{4,14}
\ra{6,15}{4,14}
\ba{1,6}{1,11}
\ba{1,6}{2,9}
\ba{3,7}{4,10}
\ba{5,5}{4,10}
\ba{7,6}{6,11}
\ba{7,6}{7,9}
\ba{1,2}{1,6}
\ba{3,3}{3,7}
\ba{5,1}{5,5}
\ba{7,2}{7,6}
\ba{1,11}{1,15}
\ba{2,9}{2,13}
\ba{4,10}{4,14}
\ba{6,11}{6,15}
\ba{7,9}{7,13}
\endpsmatrix $} 
\\
$(\affA^{m-1}\affD)_n$ & $\affA_{2n-1}\equiv\affA_{2n-1}\ast\affD_{n+2}\equiv\affD_{n+2}$\\
for $n=2,m=3$      & for $n=2$\\\hline
 \end{tabular}
\caption{\label{figure:classification}Items (\ref{it:self}), (\ref{it:ADmn}), (\ref{it:AmDn}), and (\ref{it:AADD}) of our classification.}
\end{figure}

\begin{theorem} \label{thm:class}
 Let $G$ be an \affinite $ADE$ bigraph. Then $G$ is isomorphic to exactly one of the following bigraphs:
 \begin{enumerate}
  \item\label{first_infinite} $\affL\otimes \Lambda'$ where $\affL$ and $\Lambda'$ are an affine and a finite $ADE$ Dynkin diagram respectively;
  \item\label{it:self} $\selfbindingname$ ($m\geq 1,n\geq 1$);
  \item\label{it:ADmn} $(\affA\affD^{m-1})_n:=\affA_{2n-1}\ast\affD_{n+2}\equiv\cdots\equiv \affD_{n+2}$ ($m$ factors, $m\geq 2, n\geq 2$);
  \item\label{it:AmDn} $(\affA^{m-1}\affD)_n:=\affA_{2n-1}\equiv\cdots\equiv\affA_{2n-1}\ast\affD_{n+2}$ ($m$ factors, $m\geq 3, n\geq 2$);
  \item $(\affA\affA^{m-1})_n:=\affA_{2n-1}\ast\affA_{4n-1}\equiv\cdots\equiv \affA_{4n-1}$ ($m$ factors, $m\geq 2, n\geq 1$);
  \item $(\affA^{m-1}\affA)_n:=\affA_{2n-1}\equiv\cdots\equiv\affA_{2n-1}\ast\affA_{4n-1}$ ($m$ factors, $m\geq 3, n\geq 1$);
  \item $(\affD\affD^{m-1})_n:=\affD_{n}\ast\affD_{2n-2}\equiv\cdots\equiv \affD_{2n-2}$ ($m$ factors, $m\geq 2, n\geq 4$);
  \item $(\affD^{m-1}\affD)_n:=\affD_{n}\equiv\cdots\equiv\affD_{n}\ast\affD_{2n-2}$ ($m$ factors, $m\geq 3, n\geq 4$);
  \item $\affE\affE^{m-1}:=\affE_{6}\ast\affE_{7}\equiv\cdots\equiv \affE_{7}$ ($m$ factors, $m\geq 2$);
  \item $\affE^{m-1}\affE:=\affE_{6}\equiv\cdots\equiv\affE_{6}\ast\affE_{7}$ ($m$ factors, $m\geq 3$);
  \item\label{it:AADD} $\affA_{2n-1}\equiv\affA_{2n-1}\ast\affD_{n+2}\equiv\affD_{n+2}$ ($n\geq 2$);
  \item $\affA_{2n-1}\equiv\affA_{2n-1}\ast\affA_{4n-1}\equiv\affA_{4n-1}$ ($n\geq 1$);
  \item $\affD_n\equiv \affD_{n}\ast\affD_{2n-2}\equiv \affD_{2n-2}$ ($n\geq 4$);
  \item $\affA_{2n-1}\ast\affA_{6n-1}$;
  \item\label{last_infinite} $\affD_{n+1}\ast\affD_{3n-1}$;
  \item\label{item_E6E6E7E7} $\affE_6\equiv\affE_{6}\ast\affE_{7}\equiv \affE_{7}$;
  \item\label{item_scf3} $\affA_3\ast\affD_5$;
  \item $\affD_6\ast\affE_7$ ;
  \item $\affD_4\ast\affE_6$.
 \end{enumerate}
\end{theorem}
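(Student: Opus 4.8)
The plan is to bootstrap the earlier classification of self and double bindings (Theorem in Section~\ref{subsec:double_DD}, together with Lemmas~\ref{lemma:properties}, \ref{lemma:self_bindings}, and Proposition~\ref{prop:self_binding}) to a classification of all \affinite $ADE$ bigraphs. By Lemma~\ref{lemma:properties}, for any such $G$ the reduced component graph $\Ccal(\Gt)$ is a path, say on vertices $C_1,\dots,C_m$ (the connected components of $\Gamma$, minus the loops coming from self bindings), and $G$ contains at most one non-parallel double binding. There are two cases: either $G$ contains a self binding or it does not. The self-binding case is already handled: by Proposition~\ref{prop:self_binding} the only such bigraphs are $(\affA_{4n+1})^m$, which is item~(\ref{it:self}) of the theorem. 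So the bulk of the argument concerns the self-binding-free case, and there the strategy is to walk along the path $C_1 \equiv C_2 \equiv \cdots$, using the fact that each consecutive pair is a double binding — parallel (i.e.\ of the form $\affL\equiv\affL$) for all but at most one pair, and for the exceptional pair one of the non-parallel double bindings enumerated in Figures~\ref{figure:double_bindings_scf_2} and~\ref{figure:double_bindings_scf_3}.

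First I would dispose of the case $m=1$: then $G=\Gamma$ itself with no blue edges other than self bindings (none, by assumption on this case), i.e.\ $\Delta$ is empty — but we assumed $h(\Delta)>2$, so in fact $\Delta$ consists of $A_1$'s, which forces... more carefully, $m=1$ with at least one blue edge forces a self binding, contradiction; and $m=1$ with no blue edge is the degenerate $\Lambda'=A_1$ instance of item~(\ref{first_infinite}). Next, suppose $m\geq 2$ and \emph{every} consecutive pair $C_i\equiv C_{i+1}$ is parallel. A parallel binding $\affL\equiv\affL'$ exists only when $\affL\cong\affL'$ (this is exactly the $\scf=1$ analysis: equation~(\ref{eq:points_2}) forces $X$ and $Y$ to have equal dominant eigenvectors, hence the same type, and then the commuting-adjacency-matrices condition pins down the blue matching as the identity). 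Hence all $C_i$ have a common type $\affL$ and the blue edges assemble $G$ into $\affL\otimes A_m$ — a finite $ADE$ diagram $A_m$ in the second factor — which is item~(\ref{first_infinite}) with $\Lambda'=A_m$. (One should double-check: the chain of parallel bindings could in principle close up, but $\Ccal(\Gt)$ being a path rules that out; and for type $\affA$ or $\affD$ there are automorphisms, so one must verify the blue matching between $C_i$ and $C_{i+1}$ is forced to be "the same" identification — this follows from commutativity of $A_\Gamma, A_\Delta$ as in the $\scf=1$ argument.)

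The remaining case is $m\geq 2$ with exactly one non-parallel double binding, occurring between some $C_l$ and $C_{l+1}$. Here I would argue that this non-parallel binding must sit at one of the two ends of the path — i.e.\ $\{l,l+1\}=\{1,2\}$ or $\{m-1,m\}$ — unless $m=2$ or $m=3$, and enumerate the small exceptions. The mechanism is exactly the degree bound used in the proof of Lemma~\ref{lemma:self_bindings}: the blue component of a non-parallel double binding is $A_3$, $A_5$, or $D_4$ (Proposition~\ref{prop:scf_types}), each of which contains a vertex of degree~$\geq 2$ (indeed $A_3,A_5$ have interior vertices of degree $2$, $D_4$ has one of degree $3$), whereas a parallel binding $\affL\equiv\affL$ uses blue components $A_2$ (degree $\leq 1$). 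Taking a maximal blue path $P$ in $G$ and tracking the blue degrees of its vertices, if the non-parallel binding is interior to the path then $P$ is forced to pass through two high-blue-degree vertices inside a \emph{red} component, which is impossible since every finite $ADE$ Dynkin diagram (the red components of $\Delta$... wait — the components of $\Delta$ carry the blue edges) has at most one vertex of degree $3$. This forces the non-parallel binding to the end, giving families like $(\affA\affD^{m-1})_n$, $(\affA^{m-1}\affD)_n$, the $\affA$-cover analogues, the $\affD$-ones, and the $\affE_6\affE_7$-chains — items~(\ref{it:ADmn})–(\ref{it:AADD}) and the rest — with the precise types read off from the double-binding classification of the previous section; and the two remaining configurations where the non-parallel binding is "in the middle" with parallel bindings on both sides ($m=3$) give the sandwich items $\affL\equiv(\affL\ast\affL')\equiv\affL'$, while $m=2$ with a non-parallel binding directly gives the bare double bindings $\affA_{2n-1}\ast\affA_{6n-1}$, $\affD_{n+1}\ast\affD_{3n-1}$, $\affA_3\ast\affD_5$, $\affD_6\ast\affE_7$, $\affD_4\ast\affE_6$ from the scaling-factor-$3$ figure.

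\textbf{Main obstacle.} The genuinely delicate part is not the degree bookkeeping but verifying that in each configuration the gluing data is \emph{rigid} — i.e.\ that knowing the red components $C_i$ and which consecutive pairs are parallel vs.\ which single pair is a given non-parallel double binding determines $G$ up to isomorphism, with no extra freedom in how the blue edges attach across the parallel links. This amounts to a repeated application of the commuting-adjacency-matrices condition (Corollary~\ref{cor:recurrent_commuting}) propagated along the path, analogous to the "using commuting adjacency matrices, one can reconstruct the rest" steps in the proofs of the previous section, but now one must also handle the diagram automorphisms of the $\affA$ and $\affD$ factors to be sure no two of the listed bigraphs coincide and no unlisted one is missed — in particular checking the stated parameter ranges ($m\geq 2$ vs.\ $m\geq 3$, $n\geq 2$ vs.\ $n\geq 4$, etc.) are exactly the non-degeneracy thresholds below which a family collapses into another item. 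I expect this to be a finite but somewhat lengthy case check, following the template of \cite{S} for admissible $ADE$ bigraphs essentially verbatim, with the affine components substituted for finite ones.
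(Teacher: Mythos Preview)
Your approach is the paper's approach in outline: reduce to the path $\Ccal(\Gt)$ via Lemma~\ref{lemma:properties}, dispose of self bindings via Proposition~\ref{prop:self_binding}, treat the all-parallel case as a tensor product, and then locate the unique non-parallel binding along the path. Two points need correcting.

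First, a counting slip: the sandwich items $\affL\equiv\affL\ast\affL'\equiv\affL'$ have \emph{four} red components, so $m=4$ with the non-parallel binding at $l=2$, not $m=3$.

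Second, your degree argument lumps the $\scf=2$ and $\scf=3$ cases together and, as written, does not close. The observation ``finite $ADE$ diagrams have at most one vertex of degree $3$'' is not enough, since they have arbitrarily many degree-$2$ vertices. The paper instead splits by scaling factor, and this is where the content is. Attaching a parallel binding $C_{l-1}\equiv C_l$ (resp.\ $C_{l+1}\equiv C_{l+2}$) adds one blue edge to every vertex of $C_l$ (resp.\ $C_{l+1}$), i.e.\ to every vertex of one colour in each blue component of $C_l\ast C_{l+1}$. For $\scf=3$ the blue components are $A_5$ or $D_4$, and one checks directly that adding an edge at every vertex of a fixed colour in either of these yields an \emph{affine} diagram ($\hat D_6$, $\hat E_7$, $\hat D_4$, or $\hat E_6$), never a finite one --- so no parallel binding can be attached on either side and $m=2$, giving items~(14), (\ref{last_infinite}), and (\ref{item_scf3})--(19). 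For $\scf=2$ the blue components are $A_3=u\!-\!v\!-\!w$ with $v\in C_l$ and $u,w\in C_{l+1}$; attaching parallels only on the $C_{l+1}$ side extends each blue component to $A_{2k+3}$, and only on the $C_l$ side to $D_{k+3}$, both finite $ADE$ for all $k$, so either end position allows arbitrary $m$ (items~(\ref{it:ADmn})--(10)). Attaching one parallel on each side gives $E_6$ (still finite), hence $l=2$, $m=4$ (items~(\ref{it:AADD})--(13) and (\ref{item_E6E6E7E7})); one more on either side yields $\hat E_6$ or $\hat E_7$, so the sandwich cannot be extended.

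Finally, the ``main obstacle'' you flag --- rigidity of the gluing up to automorphism --- is not where the work is. Once the position and type of the non-parallel binding are fixed, the commuting-adjacency-matrices condition propagated step by step along the path determines the blue edges uniquely, exactly as in the ``using commuting adjacency matrices one reconstructs the rest'' passages of the double-binding proofs; the paper accordingly spends no words on it.
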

Note that the infinite families are (\ref{first_infinite})-(\ref{last_infinite}), so there are $15$ infinite families and $4$ exceptional bigraphs. Please see Figure~\ref{figure:classification} for examples.
\begin{proof}
 By Proposition~\ref{prop:self_binding}, we may assume that $G$ has no self bindings. If all the double bindings in $G$ are parallel then $G$ is a tensor product. Otherwise consider the unique double binding $C_l\ast C_{l+1}$ of $G$. If it has scaling factor $3$ then all of its components are of type either $A_5$ or $D_4$ by Proposition~\ref{prop:scf_types}, so it is clear that adding an edge to all vertices of the same color in $A_5$ or in $D_4$ does not produce a finite $ADE$ Dynkin diagram (in fact, it always produces an affine $ADE$ Dynkin diagram). Therefore if the scaling factor is $3$ then $m=2$ and $G$ is just the double binding itself. If the scaling factor is $2$ then all the blue components are of type $A_3$ so obviously either $l=1$ or $l=m-1$ and $m$ is arbitrary, or $l=2$ and $m=4$, and the theorem follows.
\end{proof}


{\Large\part{$T$-systems of type $A \otimes \affA$.}}

\section{Variables via domino tilings}\label{sec:variables}

\subsection{Speyer's formula}

In \cite{Sp} Speyer gives the following formula for variables of the octahedron recurrence. One can think of it as the $T$-system associated with the tensor product of type $A_{\infty} \otimes A_{\infty}$, where $A_\infty$ is the infinite path graph and the tensor product is in the sense of Definition~\ref{dfn:tensor_product}, which works the same way for infinite graphs.

Let $\Zz_v(t)$ be the Aztec diamond of radius $t$ centered at vertex $v$. By abuse of notation, denote $\Zz_v(t)$ also the set of vertices of $\Zz_v(t)$ that are not its outer corners. Let $\D$ be a domino tiling of $\Zz_v(t)$. Each domino has a {\it {cut edge}} that separates its two halves. 
Let $G_{\D}$ be the graph obtained by taking $\Zz_v(t)$ as the set of vertices, and the set of all cut edges in $\D$ as the set of edges. For a vertex $u \in \Zz_v(t)$ let $d_{\D}(u)$ be the degree of vertex $u$ in graph $G_{\D}$. It is easy to see that each $d_{\D}(u)$ 
can take values $0,1,2$ only. 

\begin{theorem} \cite{Sp}
 The formula for the variable $T_v(t)$ in an $A_{\infty} \otimes A_{\infty}$ $T$-system is as follows:
 $$T_v(2t+1) = T_v(2t+2) = \sum_{\D} \prod_{u \in \Zz_v(2t+1)} u^{1 - d_{\D}(u)},$$ 
 $$T_v(-2t) = T_v(-2t-1) = \sum_{\D} \prod_{u \in \Zz_v(2t)} u^{1 - d_{\D}(u)},$$ 
  where $t \geq 0$ and the sum is taken over all domino tilings $\D$ of $\Zz_v(2t+1)$ and of $\Zz_v(2t)$, respectively. 
\end{theorem}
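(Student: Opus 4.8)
The plan is to show that the right-hand side of the displayed formula satisfies the octahedron recurrence and the initial conditions that uniquely determine the $A_\infty\otimes A_\infty$ $T$-system, and then conclude by induction on $|t|$. Write $S_v(k)$ for the claimed value of $T_v(k)$. Identifying the vertex set of $A_\infty\otimes A_\infty$ with $\Z^2$ (with bipartition value $\e_{(i,j)}\equiv i+j\pmod 2$), the $T$-system relation becomes the octahedron recurrence
\[
T_v(t+1)\,T_v(t-1)=T_{u_1}(t)\,T_{u_2}(t)+T_{w_1}(t)\,T_{w_2}(t),
\]
where $u_1,u_2$ are the two $\Gamma$-neighbors of $v$ and $w_1,w_2$ its two $\Delta$-neighbors. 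First I would dispose of the base case by enumerating directly the one or two domino tilings of the small diamonds $\Zz_v(0)$ and $\Zz_v(1)$ and reading off the exponents $1-d_{\D}(u)$, checking that $S_v(\e_v)=x_v$.

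For the inductive step I would fix $v$ and $t>0$ (the case $t<0$ is symmetric) and expand each of the products $S_v(t+1)S_v(t-1)$, $S_{u_1}(t)S_{u_2}(t)$, $S_{w_1}(t)S_{w_2}(t)$ as a double sum over \emph{pairs} of domino tilings, one of each of the two diamonds involved, the weight of a pair being the product of its two monomials $\prod_{u}u^{1-d_{\D}(u)}$. The recurrence for $S$ then becomes a weight-preserving bijection
\[
\{\text{tilings of }\Zz_v(t+1)\}\times\{\text{tilings of }\Zz_v(t-1)\}
\;\longleftrightarrow\;
\bigl(\text{pairs for }u_1,u_2\bigr)\ \sqcup\ \bigl(\text{pairs for }w_1,w_2\bigr).
\]
The construction I have in mind is superposition: $\Zz_v(t-1)$ sits concentrically inside $\Zz_v(t+1)$, so overlaying a tiling $\D$ of the larger diamond on a tiling $\D'$ of the smaller one decomposes the annular region between them into doubled dominoes and alternating cycles; each such configuration admits exactly two rotations, producing either a tiling of $\Zz_{u_1}(t)$ paired with one of $\Zz_{u_2}(t)$ or a tiling of $\Zz_{w_1}(t)$ paired with one of $\Zz_{w_2}(t)$, with a global parity of the cycle structure dictating which case occurs. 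One then verifies that under this operation the degrees $d_{\D}(u)$ in the cut-edge graphs add up so that the total exponent of each variable $u$ is preserved; since $d_{\D}(u)\in\{0,1,2\}$ this is a local check at each vertex. (Equivalently, one may package this step as an instance of Kuo-style graphical condensation applied to the weighted Aztec-diamond graphs.)

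The main obstacle is making the superposition map precise and verifying the weight identity \emph{globally}, especially near the corners and along the boundary of the Aztec diamonds, where the annular region degenerates and the bookkeeping of the exponents $1-d_{\D}(u)$ is most delicate; this is where essentially all the work lies, and it is the combinatorial core of Speyer's original argument. Should the direct bijection prove unwieldy, an alternative route is to realize $T_v(t)$ as a minor of a suitably chosen bi-infinite matrix, so that the octahedron recurrence follows from the Desnanot--Jacobi (Dodgson condensation) identity, then use the Lindström--Gessel--Viennot lemma to write that minor as a generating function over families of non-intersecting lattice paths, and finally invoke the standard bijection between such families and domino tilings of the Aztec diamond while tracking the monomial weights through each step; there the difficulty migrates to selecting matrix entries that simultaneously reproduce the initial data $T_v(\e_v)=x_v$ and the exact powers $u^{1-d_{\D}(u)}$.
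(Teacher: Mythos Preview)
The paper does not prove this statement at all: it is quoted verbatim as a result of Speyer, with the citation \cite{Sp}, and is used as a black box in what follows. So there is nothing in the paper to compare your argument against.

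That said, your sketch is broadly the right shape for how such results are established --- reduce to showing that the tiling generating function satisfies the octahedron recurrence with the correct initial values, and prove the recurrence by a weight-preserving bijection on pairs of tilings (Kuo-style graphical condensation). One point to tighten: your description of the superposition map (``global parity of the cycle structure'' deciding between the $\Gamma$-pair and the $\Delta$-pair) is not quite how the standard condensation argument runs. In the usual setup one does not overlay $\Zz_v(t+1)$ with the concentric $\Zz_v(t-1)$, but rather overlays tilings of two translated copies of the \emph{same} size diamond (or equivalently removes a specific pair of forced corner cells), and the case split comes from which corner the distinguished open path exits through, not from a parity invariant of the cycle decomposition. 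The boundary bookkeeping you flag as the main obstacle is real; in Speyer's paper this is handled via ``crosses and wrenches'' and a careful treatment of face weights versus edge weights, which is where most of the work goes. Your alternative via Desnanot--Jacobi plus LGV plus the paths-to-tilings bijection is also viable, but matching the exponent $1-d_{\D}(u)$ through that chain is no easier than doing the condensation directly.
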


\begin{example}
 In Figure \ref{fig:sadd1} we see an example of an Aztec diamond $\Zz_v(2)$, its domino tiling $\D$, and the associated graph $G_{\D}$. The Laurent monomial this tiling contributes is $\frac{a l n}{c m}$, which is easily seen to be one of the monomials in 
 $$T_v(-2) = \frac{\frac{ev+bl}{f} \frac{vk+dn}{h} + \frac{av+bd}{c} \frac{ln+vo}{m}}{v} = \frac{evk}{fh}+\frac{edn}{fh}+\frac{blk}{fh}+\frac{bldn}{fhv}+\frac{aln}{cm}+\frac{avo}{cm}+\frac{bdo}{cm}+\frac{bdln}{cmv}.$$
\end{example}

   \begin{figure}
    \begin{center}
\vspace{-.1in}
\scalebox{0.6}{
\input{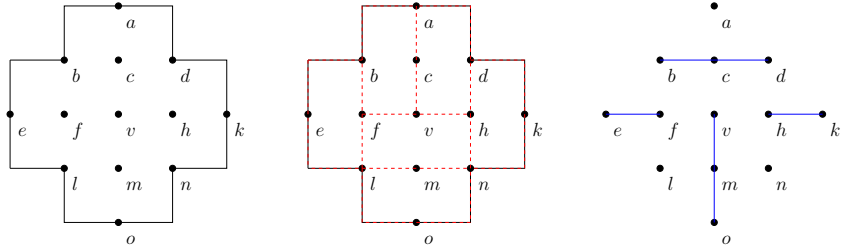} 
}
\vspace{-.1in}
    \end{center} 
    \caption{An Aztec diamond $\Zz_v(2)$, its domino tiling $\D$, and the associated graph $G_{\D}$.}
    \label{fig:sadd1}
\end{figure}

\begin{remark}
 Alternative approaches to giving explicit formulas for the octahedron recurrence can be found in the works of Di Francesco and Kedem \cite{DK1,DK2,DK3,DK4} and Henriques \cite{Hen}. We shall use Speyer's language as the most convenient for our purposes. 
\end{remark}

\subsection{Formula with cylindric boundary conditions}

Consider now the case of $T$-system of type $A_m \otimes \affA_{2n-1}$. The quiver is naturally embedded on a cylinder. Consider the lifting of the quiver to the universal cover of the cylinder, where the vertex variables are periodic. We claim that the following variation of Speyer's theorem 
holds. 

Let $\Zz_v(t)$ now be {\it {the intersection}} of the Aztec diamond of radius $t$ centered at vertex $v$ with the universal cover of the cylinder, where we include two layers of frozen variables with values $1$ on both boundaries. 
An example for $n=2$ and $m=3$ is shown in Figure \ref{fig:sadd2}. For each domino tiling $\D$ of $\Zz_v(t)$ define $G_{\D}$ and $d_{\D}(u)$ as before, but now using the periodicity of variables on the universal cover. 

   \begin{figure}
    \begin{center}
\vspace{-.1in}
\scalebox{0.6}{
\input{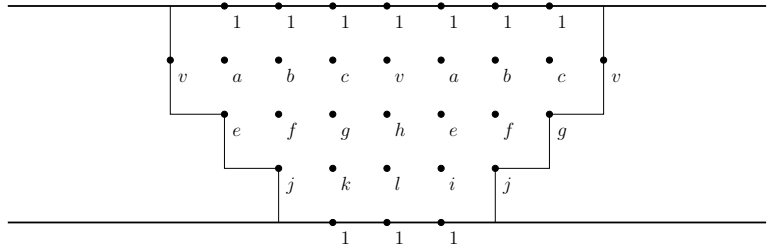} 
}
\vspace{-.1in}
    \end{center} 
    \caption{An example of region $\Zz_v(4)$ on the universal cover of a cylinder with $n=2$ and $m=3$.}
    \label{fig:sadd2}
\end{figure}

\begin{theorem} 
 The formula for the variable $T_v(t)$ in an $A_m \otimes \affA_{2n-1}$ $T$-system is as follows:
 $$T_v(2t+1) = T_v(2t+2) = \sum_{\D} \prod_{u \in \Zz_v(2t+1)} u^{1 - d_{\D}(u)},$$ 
 $$T_v(-2t) = T_v(-2t-1) = \sum_{\D} \prod_{u \in \Zz_v(2t)} u^{1 - d_{\D}(u)},$$ 
  where $t \geq 0$ and the sum is taken over all domino tilings $\D$ of $\Zz_v(2t+1)$ and of $\Zz_v(2t)$, respectively. 
\end{theorem}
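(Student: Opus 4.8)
The plan is to reduce to the universal cover of the cylinder and then run Speyer's argument almost verbatim, the only new feature being the frozen boundary walls coming from the finite factor $A_m$. First I would lift the quiver $Q(A_m\otimes\affA_{2n-1})$ from the cylinder to its universal cover: this turns it into the tensor product $A_m\otimes A_\infty$ in the sense of Definition~\ref{dfn:tensor_product}, and the initial data $T_v(\e_v)=x_v$ becomes $2n$-periodic along the $A_\infty$-direction. Since the $T$-system relation is local and deterministic, this periodicity propagates to all $T_v(t)$, so it suffices to prove the formula for the $A_m\otimes A_\infty$ $T$-system, reading vertex variables modulo the $2n$-periodicity; the graph $G_\D$ and the degrees $d_\D(u)$ are then exactly as defined, with the repeated variable labels simply producing combined exponents in the monomial. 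Note that $A_m\otimes A_\infty$ is nothing but the octahedron recurrence on the strip $1\le a\le m$ with the two walls $a=0$ and $a=m+1$ frozen to the value $1$, and the ``two layers of frozen variables with value $1$'' in the definition of $\Zz_v(t)$ record precisely this boundary condition.

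Next I would put $S_v(t):=\sum_{\D}\prod_{u\in\Zz_v(t)}u^{1-d_\D(u)}$, the sum over domino tilings $\D$ of the truncated diamond $\Zz_v(t)$, and prove $S_v(t)=T_v(t)$ by induction on $t$, with the same index and parity matching as in Speyer's theorem (so $S_v(2t+1)=S_v(2t+2)$, and likewise for negative time). The base case is a direct inspection of the small regions $\Zz_v(0)$ and $\Zz_v(1)$, yielding $S_v(\e_v)=x_v$ together with the claimed coincidences. The inductive step amounts to checking the octahedron identity $S_v(t+1)\,S_v(t-1)=\prod_{(u,v)\in\Gamma}S_u(t)+\prod_{(v,w)\in\Delta}S_w(t)$. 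When $\Zz_v(t+1)$ does not reach either wall, the five regions involved are genuine Aztec diamonds overlapping exactly as in the $A_\infty\otimes A_\infty$ case, and the identity is Speyer's, i.e.\ the local (Kuo-type condensation) bijection on domino tilings used in \cite{Sp}. When $\Zz_v(t+1)$ is truncated by a wall, I would run the same bijection inside the truncated region: the part of the diamond that would lie beyond the wall contributes only factors $1$ to every monomial and so drops out of $S$, while the two frozen layers supply exactly the data needed for the condensation identity to close up. In particular, if $v$ is adjacent to a wall then one of the factors in $\prod_{(u,v)\in\Gamma}S_u(t)$ is the ``wall variable'', whose truncated diamond consists only of frozen $1$'s and has a unique tiling contributing the empty monomial $1$; this reproduces the $A_m$ recurrence at the first (or last) vertex of the path.

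The step I expect to be the main obstacle is precisely the boundary analysis in the induction: verifying carefully that Speyer's condensation bijection restricts to domino tilings of the \emph{truncated} Aztec diamonds, and that the two frozen layers are exactly the right bookkeeping so that the recurrence holds verbatim at the vertices adjacent to the two walls and along the truncated boundary in between. Everything else should be routine: the reduction to the strip in the first step makes the periodic identification of variables harmless, since the monomial $\prod_u u^{1-d_\D(u)}$ is local in $u$, and the ``bulk'' part of the induction is word-for-word the argument of \cite{Sp}.
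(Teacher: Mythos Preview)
Your proposal is correct but takes a genuinely different route from the paper. You lift to the strip $A_m\otimes A_\infty$ and then rerun Speyer's condensation induction directly on the truncated diamonds, flagging the boundary case as the one place requiring new work. The paper instead avoids that boundary analysis entirely by an $\epsilon$-degeneration trick: it embeds the strip into a full $A_\infty\otimes A_\infty$ system by filling the rows outside the strip with the values $1,\epsilon,\epsilon^2,\dots$ (see Figure~\ref{fig:sadd3}), applies Speyer's theorem as a black box to this system, and then specializes $\epsilon=0$. The proof then consists of three short checks: (i) the minimal $\epsilon$-degree at each vertex is preserved under mutation, so setting $\epsilon=0$ is legitimate step by step; (ii) this specialization reproduces exactly the $A_m\otimes\affA_{2n-1}$ dynamics; (iii) in Speyer's formula, a tiling survives the $\epsilon\to 0$ limit precisely when the ``outside'' chunks are tiled by horizontal dominoes, and these are in bijection with tilings of the truncated region $\Zz_v(t)$.

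What each buys: your approach is conceptually straightforward and self-contained, but the boundary verification you flag (that Kuo condensation restricts cleanly to the truncated shapes with the two frozen layers) is real work and has to be done carefully. The paper's approach trades that local analysis for a global limit argument; the price is the weight-counting computation in step (iii), but this is short and uniform, and it never reopens Speyer's proof.
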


\begin{proof}
 We are going to apply Speyer's theorem to the $A_{\infty} \otimes A_{\infty}$ case with variables as shown in Figure \ref{fig:sadd3}. 
    \begin{figure}
    \begin{center}
\vspace{-.1in}
\scalebox{0.6}{
\input{sadd3.pstex_t} 
}
\vspace{-.1in}
    \end{center} 
    \caption{An assignment of variables outside of the universal cover.}
    \label{fig:sadd3}
\end{figure}

There are three logical steps to the proof. First, we claim that as we run the $T$-system dynamics, the Laurent monomials with the minimal power of $\e$ remain the same in the vertices which carry variables $1,\e,\e^2,\ldots$ at the beginning, while at the same time the minimal degree 
of $\e$ in Laurent monomials in the rest of the vertices (i.e. the ones in the middle of the universal cover) is $0$. 
Indeed, let us argue this by induction. Applying a mutation 
at a vertex with minimal Laurent monomial $1$ (i.e. with value $1+O(\e)$), we see that the new value is 
\[\frac{(1+O(\e))(1+O(\e))+(\e+O(\e^2))(O(1)+O(\e))}{1+O(\e)},\]
where $O(\e^k)$ denotes terms with $\e$-degree at least $k$.
It is clear then that specializing at $\e = 0$ we get $1$, which must be then the Laurent monomial with the smallest degree of $\e$ in the result. A similar argument applies in other locations carrying a power of $\e$ at the beginning. 

Next, we claim that plugging in $\e=0$ into the formulas for the $T$-system of type $A_{\infty} \otimes A_{\infty}$ constructed as above returns exactly the formulas for $T$-system of type $A_m \otimes \affA_{2n-1}$.
Again, we can argue this by induction. At the very beginning the claim is obvious. The step is also easy to see from the first claim above. This is because by induction assumption the  exchange relations for $A_{\infty} \otimes A_{\infty}$ $T$-system specialize to exchange relations for $A_m \otimes \affA_{2n-1}$ $T$-system,
and all the powers of $\e$ involved are non-negative. 

Finally, we want to argue that Speyer's formula applied to the above $A_{\infty} \otimes A_{\infty}$ case and specialized at $\e = 0$ indeed returns the formula stated in the theorem. 
For that, we claim that in order for a domino tiling $\D$ to contribute a term with degree of $\e$ equal $0$ (i.e. a term which will not die after specializing) the chunks of Aztec diamonds $\Zz_v(t)$ that are outside of the universal cover need to be tiled with horizontal tiles only. 
Such $\D$-s are then in bijection with the tilings of the part of $\Zz_v(t)$ that is inside the universal cover strip, as desired. 

    \begin{figure}
    \begin{center}
\vspace{-.1in}
\scalebox{0.8}{
\input{sadd4.pstex_t} 
}
\vspace{-.1in}
    \end{center} 
    \caption{An assignment of variables outside of the universal cover.}
    \label{fig:sadd4}
\end{figure}

Let us look at a chunk of $\Zz_v(t)$ that falls outside of the universal cover. Give each potential domino square weight $\e^r$ equal to the larger weight a vertex adjacent to this square has. Considering both ways a domino can be positioned, it is clear that the weight picked up 
by the corresponding edge in $G_{\D}$ is equal to the weight of its squares minus one (see Figure\ref{fig:sadd4}):
$$r + (r-1) = {\color{red}{r} + {r}} -1 \text{ and } r + r = {\color{red}{(r+1)} + {r}} -1.$$
From this it is easy to see that the dominos lying in this chunk can pick up maximal weight of at most the weight of all squares minus potential number of dominos, which is 
$$2 \cdot R + 4 \cdot (R-1) + \dotsc + 2R \cdot 1 - R(R+1)/2,$$ where $\e^R$ is the maximal power of $\e$ in the chunk. On the other hand, the total weight to burn in the chunk is 
$$1 \cdot R + 3 \cdot (R-1) + \dotsc + (2R-1) \cdot 1,$$ which is easily seen to be the same. Thus, in order for the $\e$ to not enter the resulting overall weight picked up by $G_{\D}$ inside the chunk, we need the equality to hold, which happens only if every square in the chunk is covered 
by a domino that lies in this chunk. This happens only when the chunk is tiled by the horizontal dominoes. 
\end{proof}

\section{Boundary affine slices and Goncharov-Kenyon Hamiltonians}

Let us refer to copies of $\affA_{2n-1}$ in $A_m \otimes \affA_{2n-1}$ as {\it {affine slices}}. We will distinguish {\it {boundary affine slices}} which correspond to the two boundary vertices of the Dynkin diagram $A_m$, and {\it {internal affine slices}} which correspond
to the internal vertices of the Dynkin diagram $A_m$. In this section, we identify the recurrence coefficients of boundary affine slices as Goncharov-Kenyon Hamiltonians introduced in \cite{GK}.
We shall see in Section \ref{sec:pleth} that the recurrence coefficients of the internal affine slices can be expressed through the Goncharov-Kenyon Hamiltonians using plethysm of symmetric functions. While we leave the question of an explicit formula for internal affine slices coefficients 
open, we will be able to deduce some of their properties in Sections~\ref{sec:pleth} and~\ref{sec:laur}.

\subsection{Thurston height}

Recall from \cite{Th} the following definition of {\it {Thurston height}} function associated to a domino tiling. Consider a cylinder $\Cc_{m,2n}$ which we can think of as $(m+1) \times 2n$ rectangle with sides of length $m+1$ glued. We can identify the $m \times 2n$ non-boundary nodes with vertices of the quiver $A_m \otimes \affA_{2n-1}$. Fix a chessboard coloring of the cylinder, and fix a node $O$ at its bottom boundary. Let $\D$ be a domino tiling of $\Cc_{m,2n}$. Define the function 
$$h: \text{ nodes of } \Cc_{m,2n} \longrightarrow \mathbb Z$$ as follows:
\begin{itemize}
 \item $h(O)=0$;
 \item if $a \rightarrow b$ is a directed edge of a domino in $\D$ and the cell to the right of it is black, then $h(b) - h(a) = 1$;
  \item if $a \rightarrow b$ is a directed edge of a domino in $\D$ and the cell to the right of it is white, then $h(b) - h(a) = -1$.
\end{itemize}
If there is no cell to the right, we can still decide between the two options by looking at the cell to the left and assuming the cell to the right has an opposite color. 
 
\begin{theorem}
 Thurston height $h$ is a well-defined function on the nodes of $\Cc_{m,2n}$.
\end{theorem}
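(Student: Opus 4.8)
The plan is to show that the height increment along any edge of the lattice is well-defined (i.e.\ independent of the tiling) and that summing these increments around any closed loop in $\Cc_{m,2n}$ gives zero; since the cylinder is connected, this pins down $h$ uniquely from the single boundary value $h(O)=0$.

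First I would reformulate the local rule as an assignment of an integer $\delta(e)\in\{+1,-1\}$ to each oriented edge $e=(a,b)$ of the square lattice underlying $\Cc_{m,2n}$, where the value depends only on $e$ and the chessboard coloring, \emph{not} on $\D$: namely $\delta(a,b)=+1$ if the cell to the right of $a\to b$ is black and $-1$ if it is white (with the stated left-cell convention at the boundary), and $\delta(b,a)=-\delta(a,b)$. The point is that this is a property of the coloring alone, so the rule ``$h(b)-h(a)=\delta(a,b)$'' is consistent on each individual edge. What must be checked is (i) that the domino rule never forces a contradiction on an edge that is an interior edge of some domino, and (ii) that the increments are path-independent. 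For (i), observe that the cut edge of a domino is \emph{not} one of the lattice edges we are constraining: the constraint ``$h(b)-h(a)=\pm1$'' is applied precisely to the two short sides of each domino (the edges crossed as one reads off the domino), and these always carry odd increments by construction of $\delta$, so no contradiction arises within a single tile.

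The core of the argument is path-independence, and this is the step I expect to be the main obstacle. It suffices to check that for each elementary square face of the lattice, the sum of $\delta$ around its boundary is $0$. Around a unit square with vertices $p,q,r,s$ in cyclic order, the four oriented edges have right-cells that are, by the chessboard coloring, alternately black and white, so the four values of $\delta$ are $+1,-1,+1,-1$ in some order and sum to $0$. However, $\Cc_{m,2n}$ is a cylinder, not simply connected, so vanishing around every face only shows path-independence up to the homology class of a loop winding around the cylinder; I would separately compute the sum of $\delta$ around one horizontal cycle of length $m+1$. Here the chessboard coloring on a cylinder of odd circumference $m+1$ is the relevant subtlety: one checks that going once around, the right-cell colors encountered produce increments summing to $0$ (equivalently, the chessboard coloring is globally consistent on the cylinder, which is exactly where the parity of $m+1$ — chosen so the gluing respects the coloring — is used). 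Once both the face relations and the winding-loop relation give $0$, every closed loop has zero total increment, so defining $h(x)$ as the increment along any path from $O$ to $x$ is unambiguous, and it visibly satisfies the three defining bullets. I would present the face computation explicitly and then remark that the winding loop reduces to the same alternating-sign cancellation once the global chessboard coloring is in place.
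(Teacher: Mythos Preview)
Your reformulation contains a basic error that derails the argument. You claim that $\delta(a,b)\in\{+1,-1\}$ depends only on the edge and the chessboard coloring, not on $\D$, and that going around a unit square the right-cells ``alternately'' are black and white so the increments cancel. Neither assertion is correct. Traversing the boundary of a single unit cell in one orientation, the cell on the right is always that same cell; in the other orientation, the right-cells are its four edge-neighbors, which in a checkerboard coloring all have the \emph{same} color (opposite to the center). In either case the sum of your $\delta$'s around a face is $\pm 4$, not $0$. This is not a defect of the coloring; it is the reason Thurston's rule is stated only along edges that lie on domino boundaries. Across a cut edge the height jumps by $\pm 3$, so that the total around each domino (six boundary edges, one black and one white cell inside) vanishes. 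In other words, the increment is genuinely $\D$-dependent, and the relevant $2$-cells for the cocycle check are the dominoes, not the unit squares.

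There is also a dimensional mix-up: the sides of length $m+1$ are glued, so the circumference of $\Cc_{m,2n}$ is $2n$, not $m+1$; the $m+1$ direction runs from $O$ to $O'$ and is bounded, not periodic. This matters because your ``parity of $m+1$'' remark is aimed at the wrong loop. A repaired version of your strategy would (i) use the correct $\D$-dependent increments, (ii) check vanishing around each domino, and (iii) observe that the bottom boundary circle (length $2n$, all edges being domino boundaries, right-cells alternating) has total increment $0$. The paper instead bypasses the explicit cocycle check: it lifts $\D$ to the universal cover, invokes the classical planar case, and then rules out nonzero monodromy by noting that every node is within bounded distance of the bottom boundary (whose heights are pinned to $\{0,-1\}$), so the lifted height cannot drift unboundedly as one winds around.
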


An example of a domino tiling of $\Cc_{m,2n}$ and the associated Thurston height function can be seen in Figure \ref{fig:sadd5}.

    \begin{figure}
    \begin{center}
\vspace{-.1in}
\scalebox{0.8}{
\input{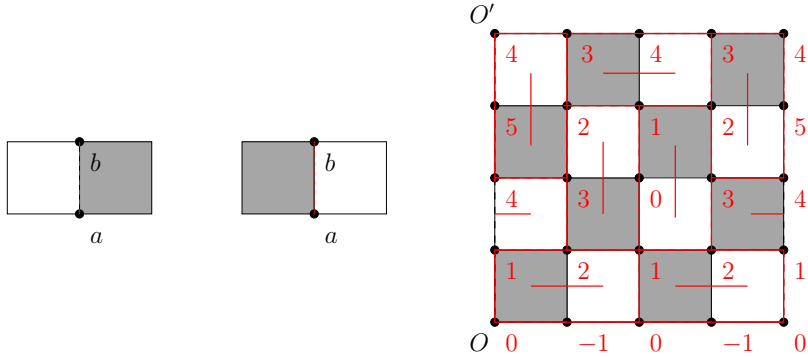} 
}
\vspace{-.1in}
    \end{center} 
    \caption{Two cases to consider when constructing $h$ and an example on $\Cc_{3,4}$.}
    \label{fig:sadd5}
\end{figure}

\begin{proof}
 It is known \cite{Th} that Thurston height function is well-defined for regions in the plane without holes. Thus, it is well-defined on the infinite periodic tiling obtained by lifting $\D$ to the universal cover of $\Cc_{m,2n}$. It remains to argue that this height function 
 is also periodic, and thus can be folded back onto the cylinder. Assume it is not periodic, then it must steadily grow or steadily decline as we circle around the cylinder. However, then it would reach arbitrary high or arbitrary low values, which is impossible since any node is within distance 
 $m$ from the lower boundary, which is filled with $0$-s and $-1$-s. The contradiction implies the desired property. 
\end{proof}

We can now define {\it {the height of a tiling}} $\D$ as  $$h(\D) = h(O') - h(O),$$ where $O'$ is the node on the top boundary component opposite of $O$. 

\begin{prop}
 The function $h(\D)$ takes values $4k$, $k= -(m+1)/2, \ldots, (m+1)/2$ if $m$ is odd, and takes values $4k+1$, $k = -m/2-1, \ldots, m/2$ if $m$ is even. There is a unique tiling having minimal height and a unique tiling having maximal height. 
\end{prop}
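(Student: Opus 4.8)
The plan is to pass to the universal cover $\widetilde{\Cc}_{m,2n}$ of $\Cc_{m,2n}$ — a bi-infinite horizontal strip of height $m+1$ — where, by the theorem just proved, the lift of $h$ is a well-defined height function and is $2n$-periodic, so tilings of $\Cc_{m,2n}$ correspond exactly to $2n$-periodic tilings of the strip. Throughout I will use the standard local description of Thurston's height: along each unit edge, traversed so that a black cell lies on the left, $h$ increases by $1$ if the edge is \emph{not} a cut edge of $\D$ and decreases by $3$ if it is (and oppositely with a white cell on the left). Two immediate consequences: (i) no boundary edge is ever a cut edge, so $h$ is forced on each boundary line independently of $\D$ — on the bottom line it is $\dots,0,-1,0,-1,\dots$ with $h(O)=0$, and on the top line it alternates between two consecutive integers, one of which is $h(\D):=h(O')$; and (ii) for two tilings the difference of height functions changes by a multiple of $4$ along every edge (the two possible increments along a fixed edge differ by $4$), so it is constant $\bmod 4$ on the connected strip, and in particular the residue class of $h$ at any vertex is determined by that vertex together with the boundary data.

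First I would bound $h(\D)$ using the vertical path $P$ of length $m+1$ joining $O$ to $O'$. Walking up $P$ the edges alternate in ``type'' (black on the left / white on the left), and a one-line check shows any two consecutive edges contribute to $h(O')-h(O)$ a total lying in $\{-4,0,4\}$. Hence if $m$ is odd, so $m+1=2p$ with $p=(m+1)/2$, then $h(\D)\in 4\{-p,\dots,p\}$; if $m$ is even, so $m+1=2p+1$ with $p=m/2$, then $P$ is $p$ such pairs plus one leftover edge contributing $+1$ or $-3$, and $h(\D)\in 4\{-p,\dots,p\}+\{1,-3\}=\{4k+1:-p-1\le k\le p\}$. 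This is exactly the claimed list of candidate values.

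Next I would check every candidate value occurs, and then handle uniqueness of the extremes. The two ``brick-wall'' tilings — all horizontal dominoes, with the odd rows in one of the two cyclic phases and the even rows in the other, and vice versa — are legitimate tilings of $\Cc_{m,2n}$ because $2n$ is even; for a suitable choice of phase one of them makes every consecutive pair along $P$ contribute $+4$ and the other makes every pair contribute $-4$ (in the $m$ even case one also fixes the phase of row $m+1$ so the leftover edge contributes $+1$ or $-3$ as desired), realizing the two extreme values. Any intermediate value is obtained by putting the appropriate brick-wall pattern in some bottom rows and vertical dominoes (adjusting one row for parity when $m$ is even) in the rest, since a vertical-domino region contributes $0$ per pair along $P$. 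For uniqueness: assume $h(\D)$ attains its maximum $M$. Applying the pairing estimate — which forces each pair along a vertical line to contribute the maximal $4$ — along $P$ and along the vertical lines through the two neighbours of $O$, and using that adjacent top-boundary vertices have heights differing by exactly $1$ together with the $\bmod 4$ rigidity of (ii) (which rules out the ``wrong'' sign choice at those neighbours), one forces $h$ along each of these three vertical lines to be as large as possible; tightness then forces the dominoes straddling each line to be horizontal dominoes in a prescribed set of rows. Propagating this around the circumference — once more using that $2n$ is even, so the forced horizontal dominoes in each fixed cyclic row already tile that row with no gaps — pins down every row, hence $\D$ itself, as one specific brick-wall tiling; the minimal case is symmetric.

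The routine parts are the local height rule and the pairing computation. \textbf{The main obstacle} is the uniqueness statement: upgrading ``$h(\D)$ is extremal'', which a priori only constrains $h$ at the single vertex $O'$, to ``the tiling is pinned down everywhere''. This is precisely what the propagation-around-the-cylinder argument accomplishes, and it is where both the evenness of $2n$ and the $\bmod 4$ rigidity are genuinely used.
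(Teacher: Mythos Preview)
Your proof is correct and follows the same approach as the paper's one-line argument---walking straight up from $O$ to $O'$ and noting that each step contributes $\pm 1$ or $\pm 3$---with your pairing into $\{-4,0,4\}$ and your propagation argument for uniqueness being exactly how one makes the paper's ``the claims of the proposition then easily follow'' precise. One minor slip: the paper's convention has black on the \emph{right} (not the left) giving $+1$ along a domino edge, but this sign discrepancy is immaterial to your pairing and uniqueness arguments.
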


\begin{proof}
 As we walk from $O$ to $O'$ straight up, at each step the height changes either by $\pm 1$ or $\pm 3$, depending on whether the step cuts a domino and what the colors on the sides are. The claims of the proposition then easily follow. 
\end{proof}

Let us refer to the tiling with the minimal height as {\it {the see}}, and denote it $\S$. One can give an alternative definition of the height of a tiling $h(\D)$ as follows.  For any tiling $\D$, put $\S$ and $\D$ on the same picture. What we get 
is a {\it {double dimer model}}, where all dominos will split into closed cycles. An example of such superposition for the tiling in Figure \ref{fig:sadd5} is given in Figure \ref{fig:sadd6}. The dominos of the sea $\S$ are shown in blue.   

    \begin{figure}
    \begin{center}
\vspace{-.1in}
\scalebox{0.8}{
\input{sadd6.pstex_t} 
}
\vspace{-.1in}
    \end{center} 
    \caption{The superposition of $\S$ and $\D$.}
    \label{fig:sadd6}
\end{figure}
The cycles created in the process may include contractible cycles and non-contractible cycles. Let us refer to the latter as {\it {hula hoops}}. Note that the contractible cycles may be just double edges, if $\S$ and $\D$ share dominos. 
The example in Figure \ref{fig:sadd6} has zero contractible cycles and three hula hoops. Denote $\hh(\D)$ the number of hula hoops created by superposing $\D$ and $\S$.

\begin{prop}
 We have $$h(\D) = h(\S) + 4 \hh(\D).$$
\end{prop}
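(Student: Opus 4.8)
The plan is to connect $\D$ to the sea $\S$ through a sequence of elementary cycle flips and track how the tiling height changes at each flip. First I would recall the structural fact that superposing any two domino tilings of $\Cc_{m,2n}$ produces a union of doubled dominoes together with pairwise vertex-disjoint alternating cycles, each of which is either contractible or a hula hoop. So let $C_1,\dots,C_r$ be the contractible cycles and $C_{r+1},\dots,C_{r+\hh(\D)}$ the hula hoops arising from superposing $\S$ and $\D$. For a subset $T\subseteq\{1,\dots,r+\hh(\D)\}$ let $\D_T$ denote the tiling obtained from $\D$ by flipping (replacing one alternating family of dominoes by the complementary one along) each cycle $C_i$ with $i\in T$. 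Because the cycles are disjoint, these flips may be performed independently, each $\D_T$ is again a genuine tiling, and $\D_{\{1,\dots,r+\hh(\D)\}}=\S$.

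Next I would invoke the basic fact about Thurston's height function (see \cite{Th}): flipping a single alternating cycle $C$ changes $h_\D$ by a function which is identically $0$ on one complementary component of $C$ and identically $\pm 4$ on the other. If $C$ is contractible it bounds a disk inside $\Cc_{m,2n}$; since the cycle lies in the interior of the cylinder, this disk contains neither boundary circle, hence neither $O$ nor $O'$, so flipping $C$ leaves $h(\D)=h_\D(O')-h_\D(O)$ unchanged. If $C=C_i$ is a hula hoop it separates $\Cc_{m,2n}$ into a lower region containing $O$ and an upper region containing $O'$, so flipping it changes $h(\D)$ by $4\sigma_i$ for a sign $\sigma_i\in\{+1,-1\}$ that depends only on $C_i$ and on the flip direction; and because $C_i$ is disjoint from the other cycles, $\sigma_i$ does not depend on which other cycles have already been flipped. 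Processing the cycles one at a time, contractible ones first, then yields $h(\D_T)=h(\D)+4\sum_{i\in T,\ i>r}\sigma_i$ for all $T$, and in particular $h(\S)=h(\D)+4\sum_{i>r}\sigma_i$.

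Finally I would use minimality of $\S$ to pin down the signs. Fix a hula hoop $C_j$ and set $T=\{1,\dots,r+\hh(\D)\}\setminus\{j\}$; then $\D_T$ is the tiling obtained from $\S$ by flipping $C_j$ back, and by the formula above $h(\D_T)=h(\S)-4\sigma_j$. Since $\S$ has the minimal height among all tilings of $\Cc_{m,2n}$, we get $h(\S)-4\sigma_j=h(\D_T)\ge h(\S)$, forcing $\sigma_j=-1$ for every $j$. Hence $h(\S)=h(\D)-4\hh(\D)$, which is the asserted identity. The step I expect to be the main obstacle is justifying the precise effect of a single cycle flip on the Thurston height (the value $\pm 4$ and its being locally constant across the cycle), together with the cylinder topology bookkeeping distinguishing contractible cycles from hula hoops; the remaining arguments are routine once the disjointness of the superposition cycles and the minimality of $\S$ are in hand.
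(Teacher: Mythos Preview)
Your argument is correct and rests on the same two pillars as the paper's proof: each hula hoop accounts for a jump of exactly $4$ in $h(\D)-h(\S)$ while contractible cycles contribute nothing, and the sign of each jump is forced by the minimality of $\S$ (if some hula hoop carried the wrong sign, flipping it in $\S$ would produce a tiling of strictly smaller height). The organization, however, is genuinely different. The paper argues the first point by choosing a single path from $O$ up to $O'$ that follows domino boundaries except where it is forced to cross a hula hoop, and reads off the difference of $4$ directly at each crossing; contractible cycles never enter because the path simply goes around them. You instead transform $\D$ into $\S$ by flipping the superposition cycles one at a time and invoke the standard Thurston fact that a single cycle flip shifts the height function by a constant $\pm 4$ on one complementary region and $0$ on the other, then use the cylinder topology to see that $O$ and $O'$ lie on the same side of a contractible cycle but on opposite sides of a hula hoop. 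The paper's route is shorter and needs nothing beyond the definition of $h$; yours is the textbook height-function argument and makes the role of contractibility versus non-contractibility completely transparent, at the cost of importing the flip lemma from \cite{Th}.
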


\begin{proof}
 One can always walk from $O$ to $O'$ so that the only steps that cross dominos, rather than follow their boundaries, are the ones crossing the hula hoops. It is easy to see that each such crossing is responsible for a difference of $4$ between the accumulated parts of 
 $h(\S)$ and $h(\D)$. Furthermore, it is easy to see that since $\S$ is the tiling with minimal height, each such crossing must make $h(\D)$ larger by $4$ than $h(\S)$, as opposed to smaller. Otherwise we could change $\S$ by using the dominos of $\D$ from the hula hoop, and decrease 
 its height even further, which is impossible. The proposition claim follows. 
\end{proof}

\subsection{The recurrence}

Define {\it {Goncharov-Kenyon Hamiltonians}} to be the sums 
$$H_r = \sum_{\hh(\D) = r} \prod_{u \in \Cc_{m,2n}} u^{1 - d_{\D}(u)},$$
where $d_{\D}(u)$ is as before the degree of $u$ in the associated graph $G_{\D}$ on the cylinder, and the sum is taken over all tilings $\D$ of height $4r + h(\S)$.
Here on the boundary we always have $u=1$, which makes $H_r$-s into functions of variables at the vertices of the quiver $A_m \otimes \affA_{2n-1}$.

\begin{example}\label{example:ham}
 Take $m=3$ and $n=1$. We have six variables $a,b,c,d,e,f$ at the vertices of the quiver $A_3 \otimes \affA_{1}$. Figure \ref{fig:sadd7} shows the domino tilings contributing to $H_1$ and the monomials they contribute.  
     \begin{figure}
    \begin{center}
\vspace{-.1in}
\scalebox{0.6}{
\input{sadd7.pstex_t} 
}
\vspace{-.1in}
    \end{center} 
    \caption{The tilings $\D$ with height $h(\D)=-4$.}
    \label{fig:sadd7}
\end{figure}
As a result, we find 
$$H_1 = \frac{ab}{de}+\frac{a}{be}+\frac{b}{ad}+\frac{c}{f}+\frac{d}{a}+\frac{ef}{bc}+\frac{e}{cf}.$$
Similarly we find 
$$H_2 = \frac{abc}{def}+\frac{ab}{dcf}+\frac{bc}{adf}+\frac{ac}{be}+\frac{be}{acdf}+\frac{ef}{ad}+\frac{af}{cd}+\frac{e}{b}+\frac{cd}{af}+\frac{de}{acf}+\frac{b}{e}+\frac{df}{be}+\frac{def}{abc}, \text{ and }$$
$$H_3 = \frac{bc}{ef}+\frac{c}{be}+\frac{b}{cf}+\frac{a}{d}+\frac{f}{c}+\frac{de}{ab}+\frac{e}{ad}.$$
 The only tiling contributing to $H_0$ is the sea $\S$, and it is easy to see that $$H_0 = H_4 =1.$$
\end{example}

Let $v'$ be the vertex diametrically opposite to $v$ on the same affine slice. Let 
$$v(j) = 
\begin{cases}
v & \text{if $j$ is even;}\\
v' & \text{if $j$ is odd.}
\end{cases}
$$
We are ready to state the main theorem of the section. 

\begin{theorem} \label{thm:rec}
 For any vertex $v$ on the top boundary affine slice of the quiver $A_m \otimes \affA_{2n-1}$ the $T$-system satisfies for any $t$ the following recursion
 $$T_{v(0)}(t+(m+1)n) - H_1 T_{v(1)}(t+mn) + \dotsc \pm H_{m} T_{v(m)}(t+n) \mp T_{v(m+1)}(t) = 0.$$
 Similarly, for any vertex $v$ on the bottom boundary affine slice and any $t$ we have 
  $$T_{v(0)}(t+(m+1)n) - H_m T_{v(1)}(t+mn) + \dotsc \pm H_{1} T_{v(m)}(t+n) \mp T_{v(m+1)}(t) = 0.$$
\end{theorem}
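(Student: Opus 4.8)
The plan is to recognize the asserted recursion as the Cayley--Hamilton relation for a transfer matrix assembled from the cylindric Speyer formula, whose characteristic polynomial turns out to be $\sum_{j=0}^{m+1}(-1)^jH_jz^{m+1-j}$. \emph{Step 1: reduction to a combinatorial identity.} By the cylindric version of Speyer's theorem proved above, $T_{v(j)}(t+(m+1-j)n)=\sum_{\D}\prod_u u^{1-d_\D(u)}$, the sum being over domino tilings $\D$ of the truncated Aztec region $\Zz_{v(j)}(t+(m+1-j)n)$, while $H_j=\sum_{\hh(\D)=j}\prod_u u^{1-d_\D(u)}$ over domino tilings of the cylinder $\Cc_{m,2n}$, with $H_0=H_{m+1}=1$. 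So the claim is a polynomial identity in the vertex variables of $A_m\otimes\affA_{2n-1}$, namely that the signed sum of the $m+2$ generating functions $(-1)^jH_j\,T_{v(j)}(t+(m+1-j)n)$ vanishes; equivalently it is an identity between weighted enumerations of pairs (cylinder tiling, Aztec tiling).

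\emph{Step 2: the transfer matrix.} The geometric point is that the regions $\Zz_{v(j)}(t+(m+1-j)n)$ are nested, and that passing from index $j$ to index $j+1$ (decreasing the radius by $n$ and swapping $v\leftrightarrow v'$) strips from the truncated Aztec diamond exactly one annular layer winding once around the cylinder. Reading a tiling of the largest region $\Zz_{v(0)}(t+(m+1)n)$ from the outside inward, one peels off these annular layers successively, and superposing each peeled annulus against the sea $\S$ produces the contractible loops and hula hoops that the numbers $H_j$ count. I would formalize this as a transfer matrix $M$ of size $m+1$ with Laurent--monomial entries (the weights of an annular layer with prescribed ``incoming'' and ``outgoing'' dominoes across its two boundary circles), arranged so that the list of $T$-values occurring in the theorem is $\xi,M\xi,\dots,M^{m+1}\xi$ for a fixed vector $\xi$ (with $\xi$ carrying the datum at time $t$ and $M^{m+1}\xi$ that at time $t+(m+1)n$). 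Granting this, Cayley--Hamilton for $M$ immediately gives a linear relation of exactly the shape in the theorem, and it remains to compute $\det(zI-M)$.

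\emph{Step 3: identifying $\det(zI-M)$ and the two cases.} Expanding the determinant and collecting the coefficient of $z^{m+1-j}$ gives, by the cycle expansion of a determinant (equivalently, Lindström--Gessel--Viennot for families of non-crossing paths running around the cylinder), a weighted count of ways to route $j$ disjoint non-contractible cycles through the annular layer; tracking these cycles around the cylinder and superposing with $\S$ matches this count with $H_j$. The signs are controlled by the identity $h(\D)=h(\S)+4\hh(\D)$ from the previous subsection: this is precisely the Kasteleyn-type bookkeeping that turns the a priori sign-indefinite determinant expansion into the positive monomial sums $H_j$, yielding $\det(zI-M)=z^{m+1}-H_1z^m+\cdots\pm H_mz\mp1$ for the top boundary slice. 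The bottom boundary slice follows by applying the diagram automorphism of $A_m$: it reflects $\Cc_{m,2n}$, interchanging its two boundary circles, hence reverses the Thurston height and sends $\hh(\D)\mapsto m+1-\hh(\D)$, so that $H_j\mapsto H_{m+1-j}$ while the top-slice $T$-values go to the bottom-slice ones; this reverses the order of $H_1,\dots,H_m$ in the recursion, as stated.

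\emph{Main obstacle.} The delicate part is making ``peeling into annular layers'' rigorous: the truncated Aztec region is not literally a disjoint union of annuli, so the transfer matrix must be defined with exactly the right boundary data (which dominoes cross each layer boundary, and in which configuration) for the Krylov identity $T_{v(j)}(t+(m+1-j)n)=(\text{a distinguished component of }M^{m+1-j}\xi)$ to hold on the nose. Equivalently, one can argue directly by a sign-reversing involution on $\bigsqcup_j\{(\D,\D'):\hh(\D)=j\}$ that either absorbs one hula hoop of the cylinder tiling $\D$ into the Aztec tiling $\D'$ (lowering $j$) or releases the outermost non-frozen annular layer of $\D'$ onto the cylinder as a new hula hoop (raising $j$), where the choice between the two moves must be made so that the map is fixed-point free, weight preserving, and involutive; controlling the dominoes straddling layer boundaries is again the crux. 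The remaining points ($N$ is finite; the extreme coefficients $H_0=H_{m+1}=1$ are nonzero) are immediate from the construction, and the fact that the recurrence exists at all may also be quoted from \cite{P}.
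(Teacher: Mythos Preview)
Your proposal correctly identifies the two natural strategies, and the paper in fact takes the second one you sketch at the end: a weight-preserving sign-reversing involution on pairs $(\D_{\Cc},\D_{\Zz})$ with $\hh(\D_{\Cc})=i$, matching each such pair with one at level $i-1$ or $i+1$. However, the paper's construction is more specific than your ``absorb a hula hoop / release the outermost annular layer'' description, and that extra precision is exactly what closes the argument.

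The structural lemma you are missing is this: when $\D_{\Zz}$ is superposed with the universal cover of the sea $\S$ inside $\Zz_{v(i)}(t+(m+1-i)n)$, the resulting double dimer consists of a single open path (the \emph{hose}) connecting the two top corner cells of the region, together with trivial $2$-cycles everywhere else. This does not fall out of an ``annular layer'' picture, and it is what supplies a canonical thread to follow. The involution is then: wrap the hose onto the cylinder and walk along it; either it first self-intersects (extract the loop thus closed off and add it as a new top hula hoop to $\D_{\Cc}$, so $i\to i+1$), or it first meets the current top hula hoop of $\D_{\Cc}$ (splice that hula hoop into the hose, so $i\to i-1$). The ``first event wins'' rule is precisely what makes the map involutive and weight-preserving. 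Your phrase ``outermost non-frozen annular layer of $\D'$'' does not isolate this hose, and without it there is no canonical choice between the two moves; this is the gap you yourself flag under ``main obstacle,'' and it is genuine.

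Your transfer-matrix / Cayley--Hamilton route is a real alternative, and the paper does use essentially that idea \emph{later}, via the tensor-invariant realization from \cite{P}, to pass from the boundary recursion to the internal slices through plethysm. But building an honest $(m{+}1)\times(m{+}1)$ matrix $M$ directly from domino combinatorics, with the Krylov property you need, amounts to redoing that invariant-theoretic construction, and the paper does not attempt it for this theorem. So as written, neither of your sketches closes: the first lacks a rigorous $M$, and the second lacks the hose lemma. The bottom-slice symmetry argument in your Step~3 (reflection of $\Cc_{m,2n}$ sending $H_j\mapsto H_{m+1-j}$) is fine and matches the paper's ``similar'' remark.
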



\section{Proof of the recurrence}

In this section we prove Theorem \ref{thm:rec}.
We consider the case of $v$ lying in the top affine slice of the quiver $A_m \otimes \affA_{2n-1}$. The case of the bottom affine slice is similar. 

As we have seen, the Laurent monomials entering both the $H_i$-s and the $T_v$-s have an interpretation in terms of weights of domino tilings. We are going to construct an involution which associates 
each Laurent monomial in the expansion by linearity of $H_i T_{v(i)}(t+(m+1-i)n)$ to an equal Laurent monomial in either $H_{i-1} T_{v(i-1)}(t+(m+2-i)n)$ or $H_{i+1} T_{v(i+1)}(t+(m-i)n)$. 
This implies that all of the terms cancel out as desired, since thus 
created pairs of Laurent monomials are equal but have opposite signs.  

Let $\D_{\Cc}$ be a domino tiling of the cylinder, contributing a term into $H_i$. Let $H_{\Cc}$ be the topmost among $i$ hula hoops created by superposing $\D_{\Cc}$ with the sea $\S$.  

Let $\Zz_{v(i)}(t+(m+1-i)n)$ be the fragment of an Aztec diamond lying inside the universal cover, as defined above. Let $\D_{\Zz}$ be a domino tiling of $\Zz_{v(i)}(t+(m+1-i)n)$, contributing a term into 
$T_{v(i)}(t+(m+1-i)n)$. Superpose $\D_{\Zz}$ with the universal cover of the sea $\S$, which is a tiling of the universal cover of the cylinder. Consider the part of the result that intersects $\Zz_{v(i)}(t+(m+1-i)n)$.

\begin{lem}
 The resulting double dimer contains a single chain of dominos, called {\it {the hose}}, connecting the top left to the top right cells of $\Zz_{v(v)}(t+(m+1-i)n)$. The rest is filled with pairs of dominos that are shared by 
 $\D_{\Zz}$ and the universal cover of $\S$.
\end{lem}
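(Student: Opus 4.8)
The plan is to analyze the double dimer $\D_\Zz \cup \S|_{\text{univ.\ cover}}$ restricted to the strip, exploiting the special cylindric structure of $\Zz_{v(i)}(t+(m+1-i)n)$ — namely, that it consists of two layers of frozen $1$'s on the top boundary and two layers of frozen $1$'s on the bottom boundary of the universal cover strip, with the Aztec-diamond shape cut off by these horizontal lines. First I would recall that whenever one superposes two domino tilings of a (simply connected) region, the symmetric difference decomposes into disjoint cycles and doubled dominoes; here the region is an Aztec-diamond fragment of the universal cover, which is simply connected, so the double dimer is a union of doubled dominoes and contractible closed cycles together with (possibly) open chains that terminate on the boundary of the fragment. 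Since $\D_\Zz$ and $\S$ agree on the frozen layers (both must tile the forced horizontal strip of $1$'s with horizontal dominoes, exactly as in the proof of the cylindric Speyer formula using the $\e$-degeneration), the only place the two tilings can disagree near the boundary is where the Aztec fragment meets the frozen rows at the top left and top right ``corner'' cells. This forces every nontrivial component to be a chain with endpoints among those two corner cells, hence exactly one such chain — the hose — and everything else is a doubled domino or a contractible cycle.

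The key steps, in order, are: (1) Set up the double dimer precisely, noting that $\S$ restricted to the universal cover is a genuine tiling of the whole cover and that both $\S$ and $\D_\Zz$ tile the two frozen rows by horizontal dominoes (invoke the argument in the proof of the cylindric Speyer theorem that forces horizontal tiles on the chunks outside the strip, equivalently on the frozen rows). (2) Observe that along the horizontal segments forming the top of the Aztec fragment, the fragment is sealed off from the rest of the universal cover, so any symmetric-difference chain of $\D_\Zz \triangle \S$ that is not a closed cycle must begin and end at a cell where the Aztec boundary is not sealed — and by the diamond geometry these are precisely the two top corner cells. (3) Rule out closed noncontractible cycles: the universal-cover strip has no nontrivial topology once we are inside a bounded Aztec fragment, so all cycles are contractible; a contractible cycle contributes the same monomial to both $\D_\Zz$ and $\S$ up to the usual local toggle, but more to the point, we only need that after removing doubled dominoes and contractible cycles, the ``net'' difference is a single chain. (4) Conclude that there is exactly one chain, it runs from the top-left to the top-right cell, and the complement is doubled dominoes shared by $\D_\Zz$ and the cover of $\S$; then identify this chain with the hose.

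I expect the main obstacle to be step (2)–(3): making rigorous the claim that the only ``open ends'' available for a symmetric-difference chain are the two specified corner cells, and that there is exactly one chain rather than zero or several. This requires a careful look at the precise shape of $\Zz_{v(i)}(t+(m+1-i)n)$ where the tilted Aztec boundary meets the horizontal frozen strip — one must check the local picture at every boundary cell and verify that $\S$ and $\D_\Zz$ are forced to coincide there except across exactly one ``gate'' on each of the two sides that the hose must pass through, and that these two gates are joined into a single chain (not two chains, which would require a second open endpoint). A parity/orientation argument on the height function $h$ restricted to the fragment, or equivalently tracking the Thurston-height increments along the boundary, should close this: the net height change forced along the sealed part of the boundary pins down that there is a single deficiency accounted for by one chain connecting the two top corners. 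The remaining bookkeeping — that off the hose the two tilings literally share dominoes, so the fragment of $\S$ outside the hose equals the fragment of $\D_\Zz$ outside the hose — is then immediate from the definition of the double dimer.
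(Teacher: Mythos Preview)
Your boundary-crossing argument in step~(2) is essentially the same as the paper's: the double dimer crosses the boundary of $\Zz_{v(i)}(t+(m+1-i)n)$ in exactly two places (the top-left and top-right cells), so there is exactly one open chain with those as its endpoints. The paper states this in one sentence without invoking Thurston heights, so your proposed parity/height bookkeeping is unnecessary overhead, but the idea is correct.

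The genuine gap is in step~(3) and your final ``bookkeeping'' remark. The lemma does not merely assert that the complement of the hose is a union of doubled dominoes \emph{and contractible cycles}; it asserts that \emph{every} remaining component is a doubled domino, i.e.\ that $\D_\Zz$ and $\S$ literally agree off the hose. You write that this ``is then immediate from the definition of the double dimer,'' but it is not: a priori the symmetric difference could contain long contractible cycles, and nothing in your argument excludes them. The paper's proof handles this with a separate observation specific to the sea $\S$: the sea tiling is monotone in the sense that, once you begin crossing from one sea-domino to the next, you always move in the same direction and can never turn around to close a long cycle. Without this (or an equivalent) argument, your proof establishes only the weaker statement that there is one hose plus some cycles, which is not what the lemma claims and is not enough for the involution constructed immediately afterward.
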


\begin{example}
 In Figure \ref{fig:sadd10} an example is presented of a superposition of $\D_{\Zz}$, shown in red, with the universal cover of $\S$, shown in blue. 
      \begin{figure}
    \begin{center}
\vspace{-.1in}
\scalebox{0.6}{
\input{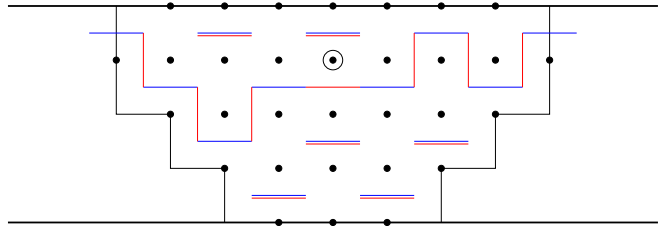} 
}
\vspace{-.1in}
    \end{center} 
    \caption{An example of superposition of $\D_{\Zz}$ (red) and the universal cover of $\S$ (blue).}
    \label{fig:sadd10}
\end{figure}
Here $m=3$, $n=2$, the vertex $v(2)=v$ is circled and $\Zz_v(4)$ is shown. One can clearly see the hose, while the rest of the dominos form $2$-cycles.
\end{example}

\begin{proof}
 It is easy to see that the resulting double dimer in $\Zz_{v(i)}(t+(m+1-i)n)$ must consist of exactly one path and several cycles. This is because there are only two places where it crosses the boundary of $\Zz_{v(i)}(t+(m+1-i)n)$,
 thus those two places must be the ends of the path, i.e. the hose. To see why all cycles must have length $2$  observe that the sea always flows in the same direction once you start crossing between its dominos, and thus you can never really 
 turn around to form a long cycle. 
\end{proof}

Now we are ready to define the involution. Assume we are given a pair $(\D_{\Cc}, \D_{\Zz})$ with corresponding Laurent monomials contributing to the product $H_i T_{v(i)}(t+(m+1-i)n)$. Take the hose associated with $\D_{\Zz}$ and start following 
its edges {\it {on the cylinder}} $\Cc_{m,2n}$. One of the two events is going to occur:
\begin{itemize}
 \item either the hose wrapping around $\Cc_{m,2n}$ will intersect itself first, without intersecting the hula hoops of $\D_{\Cc}$; or 
 \item the hose will intersect the top hula hoop $H$ of $\D_{\Cc}$ before intersecting itself. 
\end{itemize}

In the first case, take the first such self-intersection, and {\it {extract}} from it the corresponding hula hoop. By this we mean cut out from the hose the dominos of the part between endpoints of self-intersection, and add the corresponding red dominos 
to $\D_{\Cc}$ instead of the blue ones it is currently using. In the second case, take the first such intersection with $H$ and {\it {insert}} $H$ to extend the hose, by pasting it at this first point of intersection. We then remove the red edges 
of $H$ from $\D_{\Cc}$, substituting the blue sea edges instead. In either case we get a new pair $(\D_{\Cc}', \D_{\Zz}')$.

\begin{lem}
 The resulting pair is a well-defined pair of domino tilings that contributes either to $H_{i-1} T_{v(i-1)}(t+(m+2-i)n)$ or to $H_{i+1} T_{v(i+1)}(t+(m-i)n)$, depending on which of the two events occurred. 
\end{lem}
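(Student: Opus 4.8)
The plan is to verify two things about the new pair $(\D_{\Cc}', \D_{\Zz}')$: first, that both $\D_{\Cc}'$ and $\D_{\Zz}'$ are honest domino tilings of their respective regions (the cylinder $\Cc_{m,2n}$ and the Aztec-diamond fragment), and second, that the number of hula hoops in $\D_{\Cc}'$ and the time-shift governing $\D_{\Zz}'$ have been adjusted by exactly $\pm 1$ in the coordinated way required. The underlying point is that both the ``extract'' and ``insert'' operations are local cut-and-paste moves along a closed non-contractible loop on the cylinder, and such a loop partitions the plaquettes it passes through into two halves, each of which is covered consistently by one of the two tilings being superposed.

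First I would establish well-definedness of the tilings. In the self-intersection (``extract'') case: the hose is a chain of dominos in $\D_\Zz$ whose cut edges form a path; following it on the cylinder until the first self-crossing carves out a non-contractible sub-loop $\gamma$. The key observation — already used in the proof of the previous lemma, that ``the sea always flows in the same direction'' — is that $\gamma$ is a simple closed curve on $\Cc_{m,2n}$ separating it into an annulus on each side, and that along $\gamma$ the dominos of $\D_\Zz$ and of $\S$ alternate (this is exactly the structure of a double-dimer cycle). Therefore swapping the $\D_\Zz$-dominos of $\gamma$ for the corresponding $\S$-dominos inside $\D_\Cc$ replaces one perfect matching of the cells enclosed by $\gamma$ by another perfect matching of the same cells, so $\D_\Cc'$ is again a tiling; simultaneously removing that stretch from the hose and reconnecting leaves $\D_\Zz'$ a valid tiling of the same Aztec fragment, because what is removed is precisely a cyclically closed alternating stretch. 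The ``insert'' case is the reverse move and is checked the same way: the top hula hoop $H$ of $\D_\Cc$ is a non-contractible double-dimer cycle; splicing its $\D_\Zz$-dominos into the hose at the first crossing point lengthens the hose by one full wrap, and replacing the red edges of $H$ by blue sea edges in $\D_\Cc$ removes exactly one hula hoop. That the two operations are mutually inverse (so we really get an involution) follows because the spliced-in loop in the ``insert'' case is, by construction, the first self-intersection one encounters when following the new hose, matching the trigger for the ``extract'' case, and vice versa.

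Next I would track the bookkeeping of indices. In the ``extract'' case we have added one non-contractible loop to $\D_\Cc$, so $\hh(\D_\Cc') = i+1$, hence the monomial of $\D_\Cc'$ contributes to $H_{i+1}$; correspondingly the hose of $\D_\Zz'$ has lost one full wrap around the cylinder, which by the geometry of $\Zz_{v(i)}(t+(m+1-i)n)$ (each wrap advances the Aztec center diametrically, i.e. from $v$ to $v'$, and shrinks the radius by $n$) means $\D_\Zz'$ is a tiling of $\Zz_{v(i+1)}(t+(m-i)n)$, contributing to $T_{v(i+1)}(t+(m-i)n)$. In the ``insert'' case all signs are reversed: $\hh(\D_\Cc') = i-1$ and $\D_\Zz'$ becomes a tiling of $\Zz_{v(i-1)}(t+(m+2-i)n)$. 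The crucial identity to confirm here — and I expect this to be the main obstacle — is that the \emph{product} of Laurent monomials is preserved: the monomial of $(\D_\Cc,\D_\Zz)$ and that of $(\D_\Cc',\D_\Zz')$ must be equal. This holds because the total multiset of dominos (counted on the universal cover, with $\S$ appearing in both halves of the superposition picture) is unchanged by the swap — we only move a fixed alternating loop from being ``enclosed by the hose'' to being ``enclosed by a hula hoop'' or back — and the weight $\prod_u u^{1-d_\D(u)}$ depends only on the degree sequence of the cut-edge graph, which is determined by the union of all cut edges; since the swap exchanges the roles of $\S$-dominos and $\D$-dominos along $\gamma$ but keeps the set of cut edges (hence all degrees $d(u)$) fixed, the monomial is unchanged. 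I would spell out this last degree-preservation point carefully, checking the boundary cells where the frozen value $1$ makes the contribution trivial, since that is where the radius-shift bookkeeping and the weight invariance interact.

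Finally I would note that combining the two cases gives a fixed-point-free involution on the disjoint union of all monomial-carrying pairs across the products $H_i T_{v(i)}(t+(m+1-i)n)$, pairing a term of $H_i T_{v(i)}$ either with a term of $H_{i+1}T_{v(i+1)}$ or with a term of $H_{i-1}T_{v(i-1)}$, and in the alternating sum $\sum_i (-1)^i H_i T_{v(i)}(t+(m+1-i)n)$ the two members of each pair carry opposite signs, so everything cancels — which is exactly Theorem~\ref{thm:rec}. (One should also check the extreme indices $i=0$ and $i=m+1$: there $H_0=H_{m+1}=1$ and the ``tilings'' degenerate to the sea, so the involution still matches these boundary terms against the $i=1$ and $i=m$ terms respectively; this is the base case of the argument.)
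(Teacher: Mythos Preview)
Your proposal is broadly on the right track and in fact overshoots: weight preservation and the involution property are the content of the \emph{next} lemma in the paper, not this one. The paper's proof of the present lemma is much shorter and isolates a single technical issue.

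The one point the paper flags as ``somewhat non-trivial'' is precisely the one you gloss over: why, after a hula hoop is extracted from the hose, what remains is still a proper hose (equivalently, why $\D_{\Zz}'$ is a valid tiling of the \emph{smaller} region $\Zz_{v(i+1)}(t+(m-i)n)$). Your double-dimer swap argument---``swapping the $\D_\Zz$-dominos of $\gamma$ for the corresponding $\S$-dominos'' along a closed cycle---is the right mechanism for showing $\D_{\Cc}'$ is a valid cylinder tiling, but it does not settle $\D_{\Zz}'$: the hose is a path, not a cycle, and after cutting out a full wrap you must check that the two loose ends reconnect compatibly. You write that the result is ``a valid tiling of the same Aztec fragment,'' which is not correct (the region shrinks by one period), and then later correctly say it tiles $\Zz_{v(i+1)}(t+(m-i)n)$; but you never supply the check that the reconnection is legal. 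The paper handles this with a short directional argument: all blue (sea) dominos in the hose flow East, so the red dominos can only flow North, East, or South; hence at the splice point the red and blue dominos to be joined are compatible. Your outline would need either this observation or an equivalent one to close the gap.
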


\begin{example}
 An example of a pair  $(\D_{\Cc}, \D_{\Zz})$ contributing to $H_1 T_{v(1)}(4)$ for which the first event occurs is shown in Figure \ref{fig:sadd11}. 
The new pair $(\D_{\Cc}', \D_{\Zz})'$
       \begin{figure}
    \begin{center}
\vspace{-.1in}
\scalebox{0.6}{
\input{sadd11.pstex_t} 
}
\vspace{-.1in}
    \end{center} 
    \caption{The effect of the first event on a pair $(\D_{\Cc}, \D_{\Zz})$.}
    \label{fig:sadd11}
\end{figure}
in this case contributes to $H_{i+1} T_{v(i+1)}(t+(m-i)n) = H_2 T_{v(2)}(2)$. 

 An example of a pair  $(\D_{\Cc}, \D_{\Zz})$ contributing to $H_1 T_{v(1)}(4)$ for which the second event occurs is shown in Figure \ref{fig:sadd12}. 
The new pair $(\D_{\Cc}', \D_{\Zz})'$
       \begin{figure}
    \begin{center}
\vspace{-.1in}
\scalebox{0.6}{
\input{sadd12.pstex_t} 
}
\vspace{-.1in}
    \end{center} 
    \caption{The effect of the second event on a pair $(\D_{\Cc}, \D_{\Zz})$.}
    \label{fig:sadd12}
\end{figure}
in this case contributes to $H_{i-1} T_{v(i-1)}(t+(m+2-i)n) = H_0 T_{v(0)}(6)$. 

In both cases, the fragments that get either extracted or inserted are circled by a green dashed line. 
\end{example}

\begin{proof}
 The only somewhat non-trivial part of the claim is why after a hula hoop is extracted from a hose, what remains is still a proper hose. The reason is that all blue dominos in the hose flow East, which means that the red dominos must flow
 North, East or South, but not West. This means that the red and the blue dominos that we need to connect after the extraction are compatible. 
\end{proof}

The final claim we need to conclude the theorem is the following.

\begin{lem}
 This map is a weight-preserving involution on Laurent monomials.
\end{lem}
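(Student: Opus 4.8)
The plan is to verify the two required properties separately: that the map $(\D_{\Cc},\D_{\Zz})\mapsto(\D_{\Cc}',\D_{\Zz}')$ preserves the associated Laurent monomial, and that it is an involution.

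\textbf{Weight preservation.} First I would observe that the Laurent monomial attached to a pair $(\D_{\Cc},\D_{\Zz})$ is, by the formulas defining $H_i$ and the cylindric Speyer formula, the product $\prod_{u\in\Cc_{m,2n}} u^{1-d_{\D_{\Cc}}(u)}\cdot \prod_{u\in\Zz_{v(i)}} u^{1-d_{\D_{\Zz}}(u)}$. Both the ``extract'' and the ``insert'' operation only move a single closed cyclic chain of dominoes (a hula hoop) between the cylinder tiling and the hose, replacing it by its complementary set of blue sea dominoes and vice versa. The key point is that for a non-contractible cycle (a hula hoop) obtained by superposing two tilings, switching which of the two tilings contributes its dominoes along the cycle leaves the multiset of cut edges at every vertex, and hence every $d_{\D}(u)$, unchanged — a standard feature of double-dimer cycle flips. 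More precisely: at each vertex $u$ on the hula hoop exactly one cut edge of $G_{\D}$ incident to $u$ is flipped, but flipping it simultaneously on both incident vertices either preserves or exchanges cut-edge incidences in a way that the total degree $d_{\D}(u)$ is invariant because the hula hoop alternates. I would make this precise by tracking, cell by cell along the hula hoop, how the cut edge of each domino changes, using the ``East/North/South but not West'' flow constraint established in the proof of the previous lemma to see that the local picture is exactly a $2\times 1$ or $1\times 2$ shift. The vertices \emph{not} on the hula hoop keep all their incident dominoes, so their degrees are untouched. Summing over all vertices, $\prod_u u^{1-d(u)}$ is unchanged by the flip, so the total monomial of $(\D_{\Cc}',\D_{\Zz}')$ equals that of $(\D_{\Cc},\D_{\Zz})$.

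\textbf{Involutivity.} Next I would argue the map squares to the identity. Suppose the first event occurred, so we extracted the topmost self-intersecting loop of the hose and grafted it onto $\D_{\Cc}$, producing $(\D_{\Cc}',\D_{\Zz}')$ contributing to $H_{i+1}T_{v(i+1)}(t+(m-i)n)$; the new cylinder tiling $\D_{\Cc}'$ has one more hula hoop, and the loop we added is its topmost hula hoop $H'$ (one must check it sits above the others — this follows because we took the \emph{first} self-intersection as we followed the hose from the top-left cell). Now applying the map to $(\D_{\Cc}',\D_{\Zz}')$: following the new (shortened) hose, it can no longer self-intersect before meeting $H'$ — precisely because $H'$ was carved out of the part of the hose that self-intersected first — so the second event triggers, and we re-insert $H'$, recovering $(\D_{\Cc},\D_{\Zz})$. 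Conversely, if the second event occurred for $(\D_{\Cc},\D_{\Zz})$ — the hose met the top hula hoop $H$ of $\D_{\Cc}$ before self-intersecting — then after inserting $H$ the lengthened hose $\D_{\Zz}'$ self-intersects exactly at that grafting point before meeting any remaining hula hoop of $\D_{\Cc}'$, so the first event triggers on the result and extracting that loop gives back $(\D_{\Cc},\D_{\Zz})$. I would also note the map is well-defined on indices: the first event decreases the number of hula hoops of the cylinder tiling by one (from $i$ to $i-1$, shifting $T$ accordingly) and the second increases it by one, matching the statement of the preceding lemma.

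\textbf{Main obstacle.} The routine-but-delicate part is the bookkeeping in the involution proof: showing that the ``first self-intersection'' and ``first intersection with the top hula hoop'' are genuinely inverse operations — i.e. that after extraction the carved-out loop really is the topmost hula hoop of $\D_{\Cc}'$ and that re-following the hose reaches it before any self-intersection or other hula hoop, and symmetrically after insertion. This requires a careful analysis of the cyclic order in which the hose, traversed from the top-left cell of $\Zz_{v(i)}$, encounters crossings, together with the monotone ``flow'' of the sea dominoes that prevents spurious backtracking. I expect this to be the heart of the argument; weight preservation, by contrast, is a local computation once the flip picture is set up. I would present the flow/ordering lemma explicitly and then let both directions of the involution check fall out of it.
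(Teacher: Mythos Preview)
Your proposal is correct and follows essentially the same approach as the paper, which dispatches both properties in two sentences: the involution claim by noting that if the first event occurs for $(\D_{\Cc},\D_{\Zz})$ then the second event occurs for $(\D_{\Cc}',\D_{\Zz}')$ at exactly the same place (and vice versa), and weight preservation by appealing to the way weights are assigned. Your elaboration of both points is the natural unpacking of this; note however that your final bookkeeping sentence has the indices backwards (the first event \emph{increases} the hula hoop count of $\D_{\Cc}$ from $i$ to $i+1$, as you correctly stated earlier in the same paragraph).
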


\begin{proof}
 If the first event occurred in $(\D_{\Cc}, \D_{\Zz})$ and $(\D_{\Cc}', \D_{\Zz}')$ was created, then the second event occurs in $(\D_{\Cc}', \D_{\Zz}')$ at exactly the same place, and $(\D_{\Cc}, \D_{\Zz})$ is created. Same holds vice versa.
 Thus, the map is an involution. The fact that it is weight preserving is easy to see from the way we assign weights to domino tilings. 
\end{proof}

\section{Affine slices and plethysm} \label{sec:pleth}

In this section, we explain how to express the recurrence coefficients of the  affine slices in $A_m \otimes \affA_{2n-1}$ through the Goncharov-Kenyon Hamiltonians $H_i$. Note that Theorem \ref{thm:rec} did not quite answer that yet for the boundary slices, since it involved 
variables at {\it {two}} vertices $v$ and $v'$, rather than a single $v$. We rely here on results of \cite{P}, as well as on the language of tensors introduced there.

Recall that in \cite{P} the $T$-system variables are interpreted as certain polynomial $SL_{m+1}$-invariants of a collection of $2n$ vectors in $\mathbb C^{m+1}$ and one matrix $A \in SL_{m+1}$.
The key theorem is the following strengthening of \cite[Theorem 1.11]{P}.

\begin{thm}
 The variables on the $r$-th slice of  $A_m \otimes \affA_{2n-1}$, $r=1, \ldots, m$ satisfy the same recurrence as the exterior powers $\wedge^r(\hat A^q)$, $q \in \mathbb Z$, where $\hat A = A^2$.
\end{thm}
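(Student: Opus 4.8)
The plan is to leverage the framework of \cite{P}, where $T$-system variables for $A_m\otimes\affA_{2n-1}$ are realized as $\SL_{m+1}$-invariants built from a cyclic configuration of $2n$ vectors $z_1,\dots,z_{2n}\in\CC^{m+1}$ together with a monodromy matrix $A\in\SL_{m+1}$, and where \cite[Theorem 1.11]{P} already identifies how the $r$-th slice variables transform under the relevant ``shift'' operator. Recall that in that setup, moving $n$ steps in $t$ along an affine slice corresponds (up to the explicit coordinate dictionary of \cite{P}) to applying the operator $\hat A=A^2$ to the relevant wedge space. First I would recall precisely the statement of \cite[Theorem 1.11]{P}: it says the variables on the $r$-th slice evolve in $t$ (with step $n$) according to multiplication by $\wedge^r(\hat A)$ acting on $\wedge^r\CC^{m+1}$, but only after identifying the correct invariant-theoretic ``initial vector'' and ``covector''. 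The strengthening needed here is to upgrade this from a single statement about one pair of slices to the claim that the \emph{entire sequence} $q\mapsto T\text{-variables on the }r\text{-th slice}$ satisfies the \emph{same linear recurrence} as $q\mapsto\wedge^r(\hat A^q)$ — i.e. the minimal polynomial of $\wedge^r(\hat A)$ annihilates the $T$-system sequence.

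The key steps, in order: (1) State the dictionary from \cite{P} expressing $T_v(t)$ for $v$ on the $r$-th slice as $\langle \xi_r,\ \wedge^r(\hat A)^{q}\ \eta_r\rangle$ for suitable vectors $\eta_r\in\wedge^r\CC^{m+1}$, covectors $\xi_r\in(\wedge^r\CC^{m+1})^*$, and $q=q(t)$ linear in $t$ (the precise $\eta_r,\xi_r$ depending on which vertex $v$ and on the parity data, but crucially \emph{independent of $q$}). (2) Observe that any scalar sequence of the form $q\mapsto \langle\xi,\ M^q\ \eta\rangle$ for a fixed linear operator $M$ satisfies the linear recurrence whose characteristic polynomial is the minimal polynomial $\mu_M(x)$ of $M$: indeed if $\mu_M(x)=\sum_{j=0}^{N}c_jx^j$ then $\sum_{j=0}^N c_j\langle\xi,M^{q+j}\eta\rangle=\langle\xi,M^q\mu_M(M)\eta\rangle=0$ for all $q$. (3) Apply this with $M=\wedge^r(\hat A)$: the resulting recurrence is exactly ``the same recurrence as the exterior powers $\wedge^r(\hat A^q)$,'' since $\wedge^r(\hat A^q)=\wedge^r(\hat A)^q=M^q$. (4) Verify that the recurrence is nondegenerate in the sense required for Zamolodchikov integrability ($J_0,J_N\neq0$ in $\Q(\x)$): this follows because $\hat A=A^2$ is invertible (so $0$ is not a root of $\mu_M$, giving $c_0\neq 0$) and because $\mu_M$ is monic, and the leading/trailing coefficients, viewed as elements of $\Q(\x)$ under the specialization of \cite{P}, are nonzero — here one uses that $A$ is generic so $\hat A$ has distinct eigenvalues and $\wedge^r(\hat A)$ is diagonalizable with nonzero eigenvalues.

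The main obstacle I anticipate is step (1): carefully matching the combinatorial $T$-system indexing (which vertex $v$, which affine slice $r$, the role of $v'$ and the parity shift $v(j)$ appearing in Theorem~\ref{thm:rec}) with the tensor/invariant-theoretic bookkeeping of \cite{P}, and in particular checking that the covector $\xi_r$ and vector $\eta_r$ can genuinely be chosen uniformly in $q$ — i.e. that the $q$-dependence is housed \emph{entirely} inside the operator $\wedge^r(\hat A)^q$. This is where the ``strengthening of \cite[Theorem 1.11]{P}'' really lies: the cited theorem likely phrases things as an identity relating a few consecutive slice variables to a few consecutive exterior-power expressions, and one must trace through the proof there (or re-derive it) to see that it is really an identity of sequences indexed by $q\in\ZZ$, not just a finite coincidence. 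A secondary, more routine point is confirming that the specialization $\x\mapsto$ (configuration data) used in \cite{P} is generic enough that the coefficients of $\mu_M$ pull back to nonzero rational functions, so that the recurrence obtained is the one required in the definition of Zamolodchikov integrability; this should follow from the birationality of the parametrization in \cite{P}.
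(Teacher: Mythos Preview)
Your proposal is correct and follows essentially the same approach as the paper. The paper's proof is considerably terser: it focuses entirely on your step~(1), explaining that repeated Dehn twists insert $\hat A^q\otimes\cdots\otimes\hat A^q$ into the tensor and that contraction with the Levi--Civita tensor antisymmetrizes this to $\wedge^r(\hat A^q)$; your steps~(2)--(4) (the minimal-polynomial argument and nondegeneracy) are left implicit. Your identification of step~(1) as the substantive content is exactly right.
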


In particular, according to the Cayley-Hamilton theorem, the recurrence for $r=1$ is given by the characteristic polynomial of $\hat A$.  

\begin{proof}
 As $q$ grows, we keep repeating the Dehn twists, which inserts $\hat A \otimes \dotsc \otimes \hat A$ into the tensor. Thus, we obtain the tensor $\hat A^q \otimes \dotsc \otimes \hat A^q$ in the middle. Furthermore, this tensor is attached to the anti-symmetrizing Levi-Cevita 
 tensor, which results in the anti-symmetrization of $\otimes^r(\hat A^q)$, which is $\wedge^r(\hat A^q)$.
\end{proof}

\begin{cor} \label{cor:pleth}
 The recurrence coefficients of the affine slices $r=1, \ldots, m$ are expressed in terms of the Goncharov-Kenyon Hamiltonians $H_i$ as plethysms of elementary symmetric functions and the power sum symmetric function $e_j[e_r[p_2]]$ are expressed through the original elementary symmetric functions $e_i=H_i$.
\end{cor}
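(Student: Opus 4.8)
The plan is to deduce Corollary~\ref{cor:pleth} as a purely formal consequence of the theorem immediately preceding it (the strengthening of \cite[Theorem~1.11]{P}) together with Theorem~\ref{thm:rec}. First I would recall the dictionary between linear recurrences and symmetric functions of eigenvalues. If $\hat A = A^2 \in SL_{m+1}$ has eigenvalues $\mu_1,\dots,\mu_{m+1}$, then the sequence $q\mapsto \operatorname{tr}\wedge^r(\hat A^q)$ satisfies the linear recurrence whose characteristic polynomial has roots exactly the $\binom{m+1}{r}$ products $\mu_{i_1}\cdots\mu_{i_r}$ over $r$-subsets; its coefficients are, up to sign, the elementary symmetric functions of those products, i.e.\ $e_j$ evaluated at $\{\mu_{i_1}\cdots\mu_{i_r}\}$. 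In plethystic notation this is exactly $e_j\!\left[e_r[\,\cdot\,]\right]$ applied to the alphabet $\mu_1,\dots,\mu_{m+1}$. So the first step is to state precisely: the recurrence governing the $r$-th slice is $\sum_j (-1)^j\, c^{(r)}_j\, T_{v}(t+ (\text{shift})) = 0$ with $c^{(r)}_j = e_j[e_r[\{\mu_i\}]]$.

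Next I would identify the alphabet $\{\mu_i\}$ with a familiar symmetric-function input. By Theorem~\ref{thm:rec}, the boundary slice ($r=1$) recurrence has coefficients $H_1,\dots,H_m$ (with $H_0 = H_{m+1}=1$), and by the preceding theorem this recurrence is the one satisfied by $\operatorname{tr}\wedge^1(\hat A^q) = \operatorname{tr}(\hat A^q) = \sum_i \mu_i^q$; hence the characteristic polynomial $\prod_i(x-\mu_i)$ has coefficients $\pm H_i$, so $e_i(\mu_1,\dots,\mu_{m+1}) = H_i$ for $i=0,\dots,m+1$ — this is the content of ``the original elementary symmetric functions $e_i = H_i$.'' (Here the power sum $p_2$ enters because $\hat A = A^2$: the natural alphabet is the eigenvalues of $A$, and passing to $A^2$ is the plethystic substitution $f \mapsto f[p_2]$; so if one insists on writing everything in terms of the eigenvalues of $A$ rather than of $\hat A$, the coefficient of the $r$-th slice becomes $e_j[e_r[p_2]]$ evaluated on the $A$-eigenvalues, while $H_i = e_i[p_2]$ on the same alphabet.)

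Combining these two steps gives the corollary: the $r$-th slice recurrence coefficients are $e_j\!\left[e_r[p_2]\right]$, and substituting $e_i \mapsto H_i$ expresses them in the $H_i$. I would also insert a short remark (or the reader can check) that the degrees match: the $r$-th slice recurrence has order $\binom{m+1}{r}$ (the number of monomials $\mu_{i_1}\cdots\mu_{i_r}$), which is consistent with the shift structure in Theorem~\ref{thm:rec} for $r=1$ (order $m+1$). For $r=1$ the statement $e_j[e_1[p_2]] = e_j[p_2] = H_j$ recovers Theorem~\ref{thm:rec} exactly, and the Cayley--Hamilton remark preceding the corollary is precisely this $r=1$ case.

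The main obstacle is not the symmetric-function bookkeeping but justifying that ``satisfies the same recurrence as $\wedge^r(\hat A^q)$'' really means the minimal (or at least the characteristic-polynomial) recurrence, so that its coefficients are genuinely the elementary symmetric functions of the products of eigenvalues and not some multiple or divisor. Concretely one must argue that the orbit $\{\wedge^r(\hat A^q)\}_q$, viewed inside the exterior power representation, is ``generic enough'' — e.g.\ that $\hat A$ has distinct eigenvalues for generic $A$ and that the relevant matrix entries are not annihilated by a shorter recurrence — which follows because the $T$-system variables are algebraically independent (they are the initial cluster variables), so no coincidental lower-order relation can hold identically in $\Q(\x)$. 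Once that genericity point is nailed down, the rest is the standard translation between companion matrices, characteristic polynomials, and plethysm of $e_j$ with $e_r$, and the identification $e_i = H_i$ read off from the $r=1$ case of Theorem~\ref{thm:rec}.
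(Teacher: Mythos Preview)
Your overall architecture is right --- use the preceding theorem to identify the $r$-th slice recurrence with the characteristic polynomial of $\wedge^r(\hat A)$, then read off the $r=1$ case to pin down the alphabet --- but you have misread Theorem~\ref{thm:rec} and as a result misattributed the plethysm by $p_2$.

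Theorem~\ref{thm:rec} does \emph{not} give a recurrence for $T_v$ alone: the terms $T_{v(0)}, T_{v(1)},\dots$ alternate between two distinct vertices $v$ and $v'$ (diametrically opposite on the affine slice). The paper says this explicitly at the start of Section~\ref{sec:pleth}. So your sentence ``by Theorem~\ref{thm:rec}, the boundary slice ($r=1$) recurrence has coefficients $H_1,\dots,H_m$, and by the preceding theorem this recurrence is the one satisfied by $\operatorname{tr}(\hat A^q)$'' conflates two different recurrences. The preceding theorem concerns the single-vertex sequence $T_v(t)$; Theorem~\ref{thm:rec} concerns the interleaved sequence. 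To pass from the latter to the former you extract every second term, which on the level of characteristic polynomials means squaring the roots --- and \emph{that} is where the $p_2$ comes from, not from $\hat A = A^2$. Concretely: if $H_i = e_i(\lambda)$ for the roots $\lambda$ of the Theorem~\ref{thm:rec} polynomial, then the single-vertex $r=1$ recurrence has coefficients $e_i(\lambda^2) = e_i[p_2]$, and these are what match the characteristic polynomial of $\hat A$. (Check the $m=3$ example in the paper: the $r=1$ coefficients are $H_1^2-2H_2$, $H_2^2-2H_1H_3+2$, $H_3^2-2H_2$, i.e.\ $e_i[p_2]$ with $e_i=H_i$, not $H_i$ itself.) Your genericity paragraph is fine and matches the paper's argument; once you fix the source of the $p_2$, the rest of your derivation $e_j[e_r[p_2]]$ via eigenvalues of $\wedge^r(\hat A)$ goes through unchanged.
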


\begin{proof}

Theorem \ref{thm:rec} tells us the recurrence satisfied by the sequence of $T_v$-s and $T_{v'}$-s. To obtain the recurrence satisfied by $T_v$-s only, we need to take every second term of the sequence. In terms of the recurrence, this means we just need to square the roots of the recurrence polynomial.
This means that if $H_i = e_i(\lambda)$, then the coefficients on boundary levels are just the plethysms $e_i[p_2]$, which of course can be expressed as polynomials in the $H_i$-s. 

 Since in the construction of the ring of invariants in \cite{P} the dimension count forces the vectors and the matrix $A$ to be generic, the $r=1$ affine slice cannot satisfy any linear recurrence of length shorter than $2n(m+1)$. This means that any two such linear recurrences must coincide, 
 and thus the plethysms $e_i[p_2]$ of Goncharov-Kenyon Hamiltonians $H_i$ are the coefficients of the characteristic polynomial of $\hat A$.
 
Now, if $\lambda_i$, $i=1, \ldots, m+1$ are the eigenvalues of $\hat A$, then products $$\lambda_{i_1} \dotsc \lambda_{i_r}, \;\;1 \leq i_1 < \dotsc < i_r \leq m+1$$ are the eigenvalues of $\wedge^r(\hat A)$. Then the coefficients of the corresponding characteristic polynomial of 
$\wedge^r(\hat A^q)$ are exactly the plethysms $e_j[e_r[p_2(\lambda)]].$
\end{proof}

\begin{cor}\label{cor:conserved}
 The Goncharov-Kenyon Hamiltonians $H_i$ are conserved quantities of the $T$-system. 
\end{cor}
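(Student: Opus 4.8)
The plan is to exploit Theorem \ref{thm:rec}, which says that for a vertex $v$ on the top boundary affine slice the $T$-system values satisfy
\[
T_{v(0)}(t+(m+1)n) - H_1 T_{v(1)}(t+mn) + \dotsb \pm H_m T_{v(m)}(t+n) \mp T_{v(m+1)}(t) = 0
\]
for \emph{all} $t\in\Z$. The point of the word ``conserved'' is that the coefficients $H_1,\dots,H_m$ appearing in this linear relation are literally the same, independently of $t$; in other words, the $H_i$ are functions of the initial data $\x=\{x_v\}$ that can equally well be computed from the $T$-system values recorded at any later ``time''. Concretely, I would argue as follows. Fix an auxiliary time $t_0\in\Z$ and run the $T$-system backwards and forwards from the slice of initial conditions sitting at $t_0$ instead of at $t=0$; call the resulting variables $T^{(t_0)}_v(t)$. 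Since the $T$-system relation
\[
T_v(t+1)T_v(t-1)=\prod_{u\to v}T_u(t)+\prod_{v\to w}T_w(t)
\]
is time-translation invariant, $T^{(t_0)}_v(t)=T_v(t+t_0)$ as rational functions once we identify the two sets of initial variables via the birational map implementing the time shift. Theorem \ref{thm:rec} applied to the shifted system gives exactly the same recurrence but with coefficients $H_i$ computed from the shifted initial data; comparing with the original recurrence and using that the linear recurrence of minimal length is unique (as noted in the proof of Corollary \ref{cor:pleth}, genericity forces the $r=1$ slice to satisfy no shorter recurrence, so its characteristic polynomial, hence all the $H_i$, are uniquely determined by any window of consecutive $T$-values), we conclude that $H_i$ evaluated on the time-$t_0$ data equals $H_i$ evaluated on the time-$0$ data. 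Since $t_0$ was arbitrary, the $H_i$ are invariant under the evolution, i.e. conserved.

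The key steps, in order: (i) observe that the coefficients in the recurrence of Theorem \ref{thm:rec} depend only on the underlying bigraph $A_m\otimes\affA_{2n-1}$ and the choice of initial-condition slice, and are the same for every $t$; (ii) identify the $T$-system evolved one step (or $2n$ steps, corresponding to a Dehn twist, in the language of Section \ref{sec:pleth}) as again a $T$-system of the same type with a new collection of initial variables, related to the old ones by the cluster mutations $\mu_\circ,\mu_\bullet$; (iii) invoke uniqueness of the minimal linear recurrence satisfied by a boundary-slice sequence to deduce that the map ``initial data $\mapsto (H_1,\dots,H_m)$'' is intertwined with the evolution by the identity, which is precisely the statement that the $H_i$ are conserved; (iv) finally, remark that the $H_i$ for all $0\le i\le m$ (including $H_0=H_{m+1}=1$) are thereby conserved, and by Corollary \ref{cor:pleth} the recurrence coefficients of the internal slices, being fixed polynomial expressions (plethysms $e_j[e_r[p_2]]$) in the $H_i$, are conserved as well.

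I expect the main obstacle to be step (ii)--(iii): making precise the sense in which ``advancing the $T$-system in time'' is the same as ``re-reading the Goncharov--Kenyon Hamiltonians on the advanced data''. One has to be careful that the $H_i$ are defined via domino tilings of the fixed cylinder $\Cc_{m,2n}$ with the frozen boundary $1$'s, so they are a priori attached to the \emph{geometric} picture rather than to an abstract time slice; the content of the corollary is that, when expressed through $T$-variables via Theorem \ref{thm:rec}, they become evolution-invariant. The cleanest way around this is probably to bypass the combinatorics entirely and use the representation-theoretic description from \cite{P} recalled in Section \ref{sec:pleth}: the $r=1$ boundary slice evolves (under the Dehn twist) by multiplication by $\hat A=A^2$, so its recurrence is the characteristic polynomial of $\hat A$, whose coefficients manifestly do not change as one replaces $\hat A^q$ by $\hat A^{q+1}$; combined with Corollary \ref{cor:pleth} this gives conservation of all $H_i$ at once, and the domino-tiling interpretation then transports it back to the combinatorial $H_i$. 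Either route works; I would present the representation-theoretic one as the proof and mention the combinatorial reformulation as a remark.
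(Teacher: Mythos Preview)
Your proposal is correct and takes essentially the same approach as the paper: the paper's proof is a one-liner observing that the minimal recurrence satisfied by the boundary affine slice is unique, so its coefficients are the same regardless of which moment is chosen as $t=0$. Your steps (i)--(iii) spell this out in more detail than the paper does, and your step (iv) and the representation-theoretic alternative are extra (the paper also notes, in a later remark, that the result can alternatively be deduced from the Urban Renewal Theorem).
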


\begin{proof}
 As in the previous proof, the minimal recurrence satisfied by the boundary affine slice is unique, and thus its coefficients are the same no matter which moment we pick as $t=0$.
\end{proof}

\begin{cor} \label{cor:pleth2}
 The recurrence for the $r$-th affine slice has the form 
 $$T_v \left(t+2n{{m+1} \choose r}\right) - \dotsc \pm T_v(t)=0$$ 
 with exactly ${{m+1} \choose r}+1$ terms on the left. 
\end{cor}

\begin{proof}
 This is clear since the size of $\wedge^r(\hat A)$ is ${m+1} \choose r$.
\end{proof}

\begin{cor} \label{cor:pal}
 The recurrence coefficients of $r$-th and $(m+1-r)$-th affine slices are the same up to the reversal of the order.  If $m$ is odd, then the coefficients of the slice $r=(m+1)/2$ are palindromic.
\end{cor}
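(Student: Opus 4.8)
The plan is to derive Corollary~\ref{cor:pal} directly from the description of the recurrence coefficients obtained in Corollary~\ref{cor:pleth}, using only the structure of the eigenvalues of $\wedge^r(\hat A)$ versus those of $\wedge^{m+1-r}(\hat A)$. Recall from the proof of Corollary~\ref{cor:pleth} that if $\lambda_1,\dots,\lambda_{m+1}$ are the eigenvalues of $\hat A$, then the eigenvalues of $\wedge^r(\hat A)$ are the products $\lambda_{i_1}\cdots\lambda_{i_r}$ over $1\le i_1<\cdots<i_r\le m+1$, and the recurrence coefficients of the $r$-th affine slice are the coefficients of the characteristic polynomial of $\wedge^r(\hat A^q)$, i.e. (after the $p_2$-plethysm accounting for taking every second term) the elementary symmetric functions $e_j$ evaluated at the squares of those products.

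First I would note that $\hat A=A^2\in SL_{m+1}$, so $\det\hat A=\lambda_1\cdots\lambda_{m+1}=1$. The key observation is then the classical duality $\wedge^{m+1-r}(\hat A)\cong \wedge^r(\hat A)^{\vee}\otimes \wedge^{m+1}(\hat A)=\wedge^r(\hat A)^{\vee}$, since $\wedge^{m+1}(\hat A)=\det\hat A=1$. Concretely: the complement map $\{i_1,\dots,i_r\}\mapsto \{1,\dots,m+1\}\setminus\{i_1,\dots,i_r\}$ is a bijection between $r$-subsets and $(m+1-r)$-subsets, and it sends the eigenvalue $\mu_S=\prod_{i\in S}\lambda_i$ of $\wedge^r(\hat A)$ to the eigenvalue $\mu_{S^c}=\prod_{i\notin S}\lambda_i=1/\mu_S$ of $\wedge^{m+1-r}(\hat A)$. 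Hence the multiset of eigenvalues of $\wedge^{m+1-r}(\hat A)$ is exactly the multiset of reciprocals of the eigenvalues of $\wedge^r(\hat A)$, and the same holds after replacing $\hat A$ by $\hat A^q$ and after squaring (which is what the $e_i[p_2]$ plethysm encodes).

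Second, I would translate this into the statement about coefficients. If $P(z)=\prod_{k}(z-\xi_k)=\sum_{j} (-1)^j e_j(\xi)\,z^{N-r}$ is the characteristic polynomial of $\wedge^r(\hat A^q)$ (squared eigenvalues), where $N=\binom{m+1}{r}$, then the characteristic polynomial of $\wedge^{m+1-r}(\hat A^q)$ is $\prod_k(z-1/\xi_k)$, whose coefficient sequence is that of $P$ read in reverse order, up to a nonzero overall scalar $\prod_k \xi_k=\pm 1$ (again using $\det=1$). Since by Corollary~\ref{cor:pleth2} both slices have recurrences of the same length $N+1=\binom{m+1}{r}+1=\binom{m+1}{m+1-r}+1$, and by Corollary~\ref{cor:conserved} (uniqueness of the minimal recurrence) the recurrence coefficients are exactly these characteristic polynomial coefficients, we conclude that the coefficient vector of the $(m+1-r)$-th slice is the reversal of that of the $r$-th slice. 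When $m$ is odd and $r=(m+1)/2$, we have $m+1-r=r$, so the coefficient vector equals its own reversal, i.e. it is palindromic.

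The main obstacle I anticipate is purely bookkeeping rather than conceptual: making sure the overall scalar $\prod_k\xi_k$ is genuinely $\pm1$ (so that ``reversal up to scalar'' becomes honest reversal of the recurrence written in monic-at-both-ends form, matching the sign pattern $\pm$ displayed in Corollary~\ref{cor:pleth2}), and checking that the $p_2$-plethysm (squaring eigenvalues to pass from the $\{v,v'\}$-recurrence to the $v$-only recurrence) commutes with the complementation duality — it does, since squaring and taking reciprocals commute, and complementation is performed on the index sets before either operation. I would also remark that one can avoid eigenvalues entirely and phrase the argument symmetric-function-theoretically: complementation of subsets is precisely the involution $\omega$-free duality $e_r\leftrightarrow e_r$ under $e_{m+1}=1$, equivalently the statement that $e_{m+1-r}=e_r\cdot e_{m+1}^{-1}$ in the representation ring of a torus in $SL_{m+1}$; but the eigenvalue formulation is the cleanest for extracting the reversal statement about recurrence coefficients.
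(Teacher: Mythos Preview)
Your approach is correct and essentially the same as the paper's: both proofs hinge on $\det\hat A=\prod_i\lambda_i=1$ (since $\hat A\in SL_{m+1}$), which forces the eigenvalues of $\wedge^{m+1-r}(\hat A^q)$ to be the reciprocals of those of $\wedge^r(\hat A^q)$, whence the characteristic polynomials (and thus the recurrence coefficients) are reversals of one another. Two small cleanups: your display of $P(z)$ has a typo ($z^{N-r}$ should be $z^{N-j}$), and the scalar $\prod_k\xi_k$ is in fact exactly $1$, not merely $\pm 1$, since each $\lambda_i$ occurs in $\binom{m}{r-1}$ of the $r$-element subsets and $\prod_i\lambda_i=1$.
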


\begin{proof}
 Since $\hat A \in SL_{m+1}$, we know that the constant term of the characteristic polynomial is $1$. Alternatively, we have already seen that $H_{m+1}=1$. Either way, we see that $\prod_{i=1}^{m+1} \lambda_i =1$. This means that the eigenvalues of $\wedge^r(\hat A^q)$ and of 
 $\wedge^{m+1-r}(\hat A^q)$ are inverses of each other, and the claim follows. 
\end{proof}

\begin{example}
Consider the case $m=3$. In this case we have $3$ affine slices, two boundary and one internal. The recurrence relations satisfied by the $T$-system are as follows.
\begin{itemize}
 \item If $v$ lies on the $r=1$ affine slice, 
$$T_v(t+8n) - (H_1^2-2H_2) T_v(t+6n) + (H_2^2-2H_1H_3+2) T_v(t+4n) - (H_3^2-2H_2) T_v(t+2n) + T_v(t) = 0.$$
 
  \item If $v$ lies on the $r=2$ affine slice, 
$$T_v(t+12n) - (H_2^2-2H_1H_3+2) T_v(t+10n) + ((H_1^2-2H_2) (H_3^2-2H_2) - 1) T_v(t+8n) -$$ $$((H_1^2-2H_2)^2+(H_3^2-2H_2)^2-2(H_2^2-2H_1H_3+2)) T_v(t+6n)$$ $$+ ((H_1^2-2H_2) (H_3^2-2H_2) - 1) T_v(t+4n) - (H_2^2-2H_1H_3+2) T_v(t+2n) + T_v(t)= 0.$$
 
  \item If $v$ lies on the $r=3$ affine slice, 
$$T_v(t+8n) - (H_3^2-2H_2) T_v(t+6n) + (H_2^2-2H_1H_3+2) T_v(t+4n) - (H_1^2-2H_2) T_v(t+2n) + T_v(t) = 0.$$
\end{itemize}

Here for example $(H_1^2-2H_2)^2+(H_3^2-2H_2)^2-2(H_2^2-2H_1H_3+2)$ is determined by the plethysm 
$$e_3[e_2(\lambda_1, \lambda_2, \lambda_3, \lambda_4)] = e_3(\lambda_1 \lambda_2, \lambda_1 \lambda_3,  \lambda_1 \lambda_4, \lambda_2 \lambda_3, \lambda_2 \lambda_4, \lambda_3 \lambda_4) =$$ 
$$=(\lambda_1^3\lambda_2\lambda_3\lambda_4+\dotsc) + (\lambda_1^2+\lambda_2^2+\lambda_3^2+\dotsc) + 2 (\lambda_1^2\lambda_2^2\lambda_3\lambda_4+\dotsc) = e_4e_1^2+e_3^2-2e_2e_4 = e_1^2+e_3^2-2e_2,$$
 followed by the plethysm $e_1[p_2] = e_1^2-2e_2$, $e_2[p_2] = e_2^2-2e_1e_3+2$, $e_3[p_2]=e_3^2-2e_2$. Throughout we use $e_4=1$.
\end{example}

\section{Laurent property and positivity} \label{sec:laur}

Recall that the {\it {upper cluster algebra}} $\mathfrak U_{A}$ associated with a cluster algebra $A$ is the algebra of all elements of the fraction field of $A$ that can be expressed as Laurent polynomials in {\it {any}} cluster of $A$. Due to Laurent property 
of cluster algebras \cite{FZ} we know that $\mathfrak U_{A} \subseteq A$. The equality holds in some cases, while in other cases $\mathfrak U_{A}$ is strictly larger. We refer the reader to \cite{FZ3} for a rigorous definition and properties of upper cluster algebras. 

\begin{thm} \label{thm:upper}
 Goncharov-Kenyon Hamiltonians $H_i$ are elements of the upper cluster algebra associated with the quiver $A_m \otimes \affA_{2n-1}$. 
\end{thm}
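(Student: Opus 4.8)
The plan is to show that each Goncharov–Kenyon Hamiltonian $H_i$ is a Laurent polynomial in the variables $\{x_w\}$ of \emph{every} seed obtained from the initial quiver $A_m\otimes\affA_{2n-1}$ by an arbitrary sequence of mutations. Since $H_i$ lives in the fraction field (by Corollary~\ref{cor:conserved} it is a conserved quantity of the $T$-system, hence a ratio of cluster variables), membership in the upper cluster algebra $\mathfrak U_A$ is exactly the statement that it is a universal Laurent polynomial. The strategy I would follow exploits the fact that the mutation sequences appearing in the $T$-system dynamics already suffice to detect the upper cluster algebra, together with the concrete domino-tiling formula for $H_i$.

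First, I would recall (and use) that $H_i$ can be read off directly from the linear recurrence of Theorem~\ref{thm:rec}: for a boundary affine slice, $H_i$ is (up to sign) the coefficient of $T_{v(i)}(t+(m+1-i)n)$ in a recurrence all of whose other terms are genuine cluster variables $T_{v(j)}(\bullet)$. Solving for $H_i$ expresses it as an alternating sum of ratios of cluster variables; since every $T_w(s)$ is, by the Laurent phenomenon for $T$-systems, a Laurent polynomial in any single fixed cluster, $H_i$ is at least a Laurent polynomial \emph{in the clusters reachable by the bipartite dynamics}. The heart of the matter is to upgrade this to \emph{all} clusters. For this I would invoke the characterization of the upper cluster algebra via ``local'' Laurentness: $\mathfrak U_A=\bigcap_{\text{initial seed and its }n\text{ adjacent seeds}}\mathcal L$, i.e. it is enough to check that $H_i$ is a Laurent polynomial in the initial cluster and in each of the $|\VertQ|$ clusters obtained by a single mutation $\mu_w$ of the initial seed (this is the Laurent-phenomenon-for-upper-algebras statement of Berenstein–Fomin–Zelevinsky, \cite{FZ3}). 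The domino-tiling formula $H_i=\sum_{\hh(\D)=i}\prod_u u^{1-d_\D(u)}$ already exhibits $H_i$ as a Laurent polynomial in the initial cluster (each exponent $1-d_\D(u)\in\{-1,0,1\}$, and boundary $u$'s equal $1$), so the remaining task is the one-step mutations.

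For a single mutation $\mu_w$ of the initial seed, I would substitute $x_w\mapsto x_w'$ using the exchange relation $x_w x_w'=\prod_{u\to w}x_u+\prod_{w\to u}x_u$ and check that, after clearing, $H_i$ remains Laurent in the new cluster. Here I expect the cleanest route is combinatorial: group the domino tilings $\D$ contributing to $H_i$ according to the local configuration around the two cells of $\Cc_{m,2n}$ carrying the variable $x_w$ (recall that on the cylinder $A_m\otimes\affA_{2n-1}$ has a periodic structure and $x_w$ appears at two antipodal cells per slice only in the $\affA_1$ case, more generally at the cells of the given slice), and show that the terms with a negative power of $x_w$ can be matched, via a local move on the tiling (flipping the two dominoes covering those cells), with terms supplying the compensating $\prod_{u\to w}x_u+\prod_{w\to u}x_u$ factor. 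This is the same ``domino flip'' bookkeeping already used in the proof of Theorem~\ref{thm:rec} and in Speyer's formula, so the mechanics are available. Alternatively, and perhaps more robustly, one can avoid case analysis by noting that $H_i$ is a symmetric function (a polynomial $e_i[p_2]$-type expression, by Corollary~\ref{cor:pleth}) in the roots of the boundary-slice recurrence, and that those roots — being eigenvalues of $\wedge^1(\hat A)$ for a matrix $\hat A$ built functorially from the $T$-system data — transform under mutation in a way that keeps all elementary symmetric functions Laurent; since the recurrence coefficients are the \emph{unique} minimal recurrence (as argued in the proof of Corollary~\ref{cor:pleth}), they must be computable from any cluster, and each such computation a priori lands in the Laurent ring of that cluster.

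\textbf{Main obstacle.} The genuinely delicate step is controlling what happens under mutations \emph{in directions transverse to the slice}, i.e. mutations at vertices $w$ not of the form ``a vertex of an affine slice appearing in the $T$-dynamics.'' The bipartite $T$-system only moves in the $\mu_\circ,\mu_\bullet$ directions, so the recurrence-based argument gives Laurentness for free only along that sub-poset of seeds; to reach \emph{all} adjacent seeds of the initial one I must actually perform the transverse one-step mutation and verify Laurentness by hand, and it is here that the domino-flip matching must be set up carefully so that the negative powers of the mutated variable cancel exactly against the binomial $\prod_{u\to w}x_u+\prod_{w\to u}x_u$. I expect this to be the technical core of the proof; once it is done, the BFZ local criterion closes the argument, since the initial-cluster Laurentness is immediate from the tiling formula and every other cluster is handled by the recurrence.
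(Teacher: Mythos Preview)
Your approach is essentially the paper's: invoke the Berenstein--Fomin--Zelevinsky criterion (Laurentness in the initial seed and in every seed one mutation away suffices), get initial-seed Laurentness for free from the tiling formula, and handle each single mutation $\mu_w$ by a local domino-flip pairing that cancels the negative power of $x_w$ against the exchange binomial.

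Your ``main obstacle'' is illusory, though. There are no vertices of $A_m\otimes\affA_{2n-1}$ outside the $T$-dynamics---every vertex lies on some affine slice and is mutated by either $\mu_\circ$ or $\mu_\bullet$---so there is no ``transverse'' direction to worry about. More to the point, the BFZ criterion asks only for the $|\Vert(Q)|$ single-vertex mutations of the initial seed, and the domino-flip argument treats every vertex uniformly: the only tilings with $x_w$ in the denominator are those with $d_\D(w)=2$, which forces a specific local $2\times 2$ configuration around $w$; flipping that $2\times 2$ block pairs such tilings so that each pair contributes $(bg+de)/x_w$ times a Laurent monomial, and the substitution $x_w\mapsto (bg+de)/x_w'$ turns this into $x_w'$ times the same monomial. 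The whole verification is a paragraph, not a technical core, and the detours through the recurrence or through symmetric functions of eigenvalues are unnecessary.
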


Of course, the $H_i$-s are Laurent expressions in terms of the initial cluster of this $T$-system by definition. Since we know they are conserved quantities, the same holds for any cluster in the $T$-system. However, the claim of the theorem is much stronger, since the $T$-system represents only one 
way to mutate the quiver, while the Laurentness is true for {\it {any}} such way. 

\begin{cor} \label{cor:upper}
 The coefficients of recurrence polynomials of all vertices of $A_m \otimes \affA_{2n-1}$ lie in the upper cluster algebra.
\end{cor}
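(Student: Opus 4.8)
The statement to prove is Corollary~\ref{cor:upper}: the coefficients of the recurrence polynomials of \emph{all} vertices of $A_m\otimes\affA_{2n-1}$ lie in the upper cluster algebra $\mathfrak U_A$.

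\textbf{The plan.} The idea is to reduce everything to Theorem~\ref{thm:upper}, which says each $H_i\in\mathfrak U_A$, together with Corollary~\ref{cor:pleth} (and its refinements Corollary~\ref{cor:pleth2}), which expresses \emph{every} recurrence coefficient of \emph{every} affine slice as a polynomial in the $H_i$ with integer coefficients. First I would recall that $\mathfrak U_A$ is a subring of the fraction field (indeed a subring of $A$), hence closed under addition and multiplication; in particular it is closed under evaluating integer polynomials. So if $P(H_1,\dots,H_{m+1})$ is any polynomial in the $H_i$ with $\Z$ coefficients, then $P(H_1,\dots,H_{m+1})\in\mathfrak U_A$. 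Combining this with the explicit plethystic formulas $e_j[e_r[p_2]]$ from Corollary~\ref{cor:pleth}, which write the $r$-th slice's recurrence coefficients as such polynomials in the elementary symmetric functions $e_i=H_i$, we get that every recurrence coefficient of every affine slice lies in $\mathfrak U_A$. This handles all vertices of the quiver, since every vertex of $A_m\otimes\affA_{2n-1}$ lies on one of the affine slices $r=1,\dots,m$.

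\textbf{Steps in order.} (1) State that $\mathfrak U_A$ is a ring containing $1$, hence contains $\Z$ and is closed under the ring operations; therefore any $P\in\Z[H_1,\dots,H_{m+1}]$ gives $P\in\mathfrak U_A$. (2) Invoke Theorem~\ref{thm:upper} for each $H_i$. (3) Invoke Corollary~\ref{cor:pleth} (and Corollary~\ref{cor:pleth2}) to express every recurrence coefficient of the $r$-th slice as a specific element of $\Z[H_1,\dots,H_{m+1}]$ --- concretely, take the roots $\lambda_i$ of the $r=1$ characteristic polynomial whose coefficients are $e_i[p_2]$ in the $H_i$, form the $\binom{m+1}{r}$ products $\lambda_{i_1}\cdots\lambda_{i_r}$, take their elementary symmetric functions, and note the result is a polynomial in the $e_i[p_2]$, hence in the $H_i$, with integer coefficients. (4) Conclude that each such coefficient lies in $\mathfrak U_A$ by (1)+(2). (5) Observe that the vertices of $A_m\otimes\affA_{2n-1}$ are partitioned among the slices $r=1,\dots,m$, so this exhausts all vertices.

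\textbf{Main obstacle.} There is essentially no obstacle: the content has already been done in Theorem~\ref{thm:upper} and Corollary~\ref{cor:pleth}, and the corollary is a one-line deduction from the fact that $\mathfrak U_A$ is a ring. The only point requiring a word of care is confirming that the plethystic expressions are genuinely polynomial --- not merely rational --- in the $H_i$ with integer coefficients, so that no division (which could leave $\mathfrak U_A$) is needed; this is immediate since symmetric functions of the eigenvalues of $\wedge^r(\hat A^q)$ are $\Z$-polynomials in the symmetric functions of the eigenvalues of $\hat A$, which in turn (via $p_2$-plethysm) are $\Z$-polynomials in the $H_i$. I would also remark, as the text already does after the statement of Theorem~\ref{thm:upper}, that this is genuinely stronger than the ``conserved quantity'' observation of Corollary~\ref{cor:conserved}, because membership in $\mathfrak U_A$ means Laurentness in \emph{every} cluster, not only those reached by the $T$-system dynamics. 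So the proof is simply: \emph{the recurrence coefficients are integer polynomials in the $H_i$ by Corollary~\ref{cor:pleth}, each $H_i$ lies in $\mathfrak U_A$ by Theorem~\ref{thm:upper}, and $\mathfrak U_A$ is a ring; hence the coefficients lie in $\mathfrak U_A$.}
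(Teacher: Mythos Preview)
Your proposal is correct and is essentially the same one-line argument the paper gives: the recurrence coefficients are polynomials in the $H_i$ by Corollary~\ref{cor:pleth}, and the $H_i$ lie in $\mathfrak U_A$ by Theorem~\ref{thm:upper}, so the coefficients lie in the ring $\mathfrak U_A$. Your extra care about the plethystic expressions being genuine $\Z$-polynomials (rather than rational functions) in the $H_i$ is a reasonable clarification, though the paper leaves it implicit.
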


\begin{proof}
 Since those coefficients are polynomials in the $H_i$-s by Corollary \ref{cor:pleth}, the statement follows. 
\end{proof}

Sherman and Zelevinsky \cite{ShZ} have defined a {\it {positive cone}} inside the upper cluster algebra $\mathfrak U_A$ to be the subset of all elements of $\mathfrak U_A$ that are expressible as {\it {positive}} Laurent expression in any cluster of $A$.

\begin{conjecture} \label{conj:cone}
 Goncharov-Kenyon Hamiltonians $H_i$ are elements of the positive cone of the corresponding upper cluster algebras. 
\end{conjecture}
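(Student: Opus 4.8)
The plan is to upgrade Theorem~\ref{thm:upper} (which already places each $H_i$ in $\mathfrak U$) to positivity by exhibiting $H_i$, in \emph{every} cluster, as a manifestly nonnegative Laurent polynomial. The most promising route goes through the characteristic-polynomial interpretation coming from \cite{P}: by the proof of Corollary~\ref{cor:pleth}, $H_i=e_i(\hat A)$ where $\hat A=A^2$ and $A\in\SL_{m+1}$ is reconstructed from the cluster data of $A_m\otimes\affA_{2n-1}$, so $H_i$ equals the sum of the principal $i\times i$ minors of $\hat A$. I would first realize $A$, in the initial seed, as a product of elementary bidiagonal matrices whose entries are monomials in the cluster variables --- the ``network'' presentation underlying the $\SL_{m+1}$-invariant model --- so that every minor of $A$, hence of $\hat A=A^2$, hence each $e_i(\hat A)=H_i$, is a positive (nonnegative-coefficient) Laurent polynomial in the initial cluster.

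\textbf{Transporting positivity.} The second step is to check that this positive-network structure survives arbitrary mutations of $A_m\otimes\affA_{2n-1}$. For this I would use that such mutations translate into local moves on the network (Postnikov moves / urban renewal, equivalently the $\SL$-valued cluster transformations of Berenstein--Fomin--Zelevinsky), each of which keeps all entries positive Laurent in the new cluster; since every minor of $\hat A$ is a nonnegative polynomial in those entries, it remains a positive Laurent polynomial in the new cluster, and hence so does $H_i$. As this holds in every cluster, $H_i$ lies in the Sherman--Zelevinsky positive cone \cite{ShZ}; the internal-slice recurrence coefficients then follow from Corollary~\ref{cor:pleth}, being plethysms in the $H_i$ with nonnegative structure constants, which yields the positive analogue of Corollary~\ref{cor:upper} as well. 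A more combinatorial alternative is to identify $H_i$ with the Goncharov--Kenyon dimer partition function on $\Cc_{m,2n}$ written in face (cluster Poisson) coordinates --- a nonnegative sum of monomials, one per dimer cover of a fixed homology class, matching the hula-hoop count $\hh(\D)=r$ --- and then to propagate this positive-monomial form through the spider moves / urban renewal that realize mutations on the bipartite graph.

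\textbf{Main obstacle.} The crux is performing the first step \emph{uniformly over all seeds}. The $\SL_{m+1}$-invariant model of \cite{P} lives on a configuration space (of $2n$ vectors together with a matrix) whose intrinsic cluster structure is a priori richer than that of $A_m\otimes\affA_{2n-1}$, and one must both match the two cleanly and show that \emph{every} seed of the latter arises from a genuinely positive network --- equivalently, in the dimer picture, that every cluster of $A_m\otimes\affA_{2n-1}$ is reachable by positivity-preserving moves of the bipartite graph on the cylinder. This is complicated by the two frozen boundary layers (cf.\ Figure~\ref{fig:sadd2}), which make the exchange matrix degenerate and the ambient surface an annulus rather than a torus; passing to the torus double cover and folding is one way around it. Establishing this matching --- or, failing a complete matching, controlling the clusters not manifestly arising from a positive network, e.g.\ by propagating from the $T$-system orbit using the conservation law of Corollary~\ref{cor:conserved} --- is where the real work lies; with it in hand, positivity of $H_i$ is immediate.
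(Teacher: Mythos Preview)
The statement you are addressing is a \emph{conjecture} in the paper, not a theorem; the authors explicitly leave it open and offer no proof. So there is no ``paper's own proof'' to compare your proposal against. What you have written is itself a research outline rather than a proof, and you already identify the gap honestly in your ``Main obstacle'' paragraph: you have not established that every seed of the cluster algebra attached to $A_m\otimes\affA_{2n-1}$ is reached by positivity-preserving network/urban-renewal moves, and without that the argument does not close. This is the whole difficulty; the paper's remark after Theorem~\ref{thm:upper} (that urban renewal handles the $T$-system orbit) is exactly the easy part you already have, and extending beyond that orbit to arbitrary clusters is the open content of the conjecture.

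One concrete error in your aside: you claim the internal-slice recurrence coefficients are ``plethysms in the $H_i$ with nonnegative structure constants,'' giving a positive analogue of Corollary~\ref{cor:upper}. This is false. Already $e_1[p_2]=e_1^2-2e_2$ and $e_2[p_2]=e_2^2-2e_1e_3+2e_4$ have negative coefficients when expanded in elementary symmetric functions, as the paper's own example after Corollary~\ref{cor:pal} shows. So even granting Conjecture~\ref{conj:cone} for the $H_i$, positivity of the internal-slice coefficients would not follow formally from the plethysm formula; that would require a separate argument (or a separate conjecture).
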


Again, by definition the $H_i$-s are positive in terms of the clusters along time evolution of the $T$-system, but the claim of the conjecture is much stronger. 


\subsection{Proof of Theorem \ref{thm:upper}}

We are going to use the standard technique invented in \cite[Theorem 1.5]{FZ3}. Specifically, to know that certain $H_i$ lies in the upper cluster algebra, it suffices to check 
the Laurent
condition with respect to some seed together with all
the seeds obtained from it by a single mutation.
The fact that the $H_i$-s are positive in the initial seed of the $T$-system is true by definition. Thus, it remains to check positivity in all seeds obtained by mutating just a single variable in the initial seed.

Let $v$ be the variable that is mutated, and assume the surrounding variables are as in Figure \ref{fig:sadd16}.  
       \begin{figure}
    \begin{center}
\vspace{-.1in}
\scalebox{0.8}{
\input{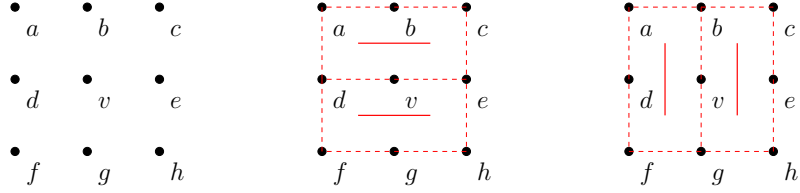} 
}
\vspace{-.1in}
    \end{center} 
    \caption{The two possible ways to get $v$ in the denominator of the corresponding monomial.}
    \label{fig:sadd16}
\end{figure}
Note that some of the variables may be equal to $1$ if $v$ is close to the boundary. 

When we mutate at $v$, we make a substitution $$v \longleftarrow \frac{bg+de}{v'}.$$
Let us consider the effect of this substitution on the Laurent monomials entering $H_i$, which as we know correspond to domino tilings $\D$:
$$H_i = \sum_{\hh(\D) = i} \prod_{u \in \Cc_{m,2n}} u^{1 - d_{\D}(u)}.$$
For each tiling $\D$ where $v$ does not appear at all in the monomial, i.e. where $d_{\D}(u)=1$, or where $v$ appears in the numerator, i.e. $d_{\D}(u)=0$, the Laurentness is not violated by the substitution $v \longleftarrow \frac{bg+de}{v'}.$
Thus, it remains to consider the terms where $v$ appears in the denominator, i.e. $d_{\D}(u)=2$. 

The key observation is that such tilings $\D$ come in pairs. This is because locally around vertex $v$ they need to look in one of the 
two ways shown in Figure \ref{fig:sadd16}. Furthermore, the local move swapping between those two ways to tile the surrounding $2 \times 2$ square does not change the height of the tiling. Thus, all tilings $\D$ contributing to the terms of $H_i$ with $v$ in the denominator 
indeed come in pairs, differing by the application of this local $2 \times 2$ square swap. Let $\D$ and $\D'$ be such a pair. Then  
$$\prod_{u \in \Cc_{m,2n}} u^{1 - d_{\D}(u)} + \prod_{u \in \Cc_{m,2n}} u^{1 - d_{\D'}(u)}= \left(\frac{bg}{v} + \frac{de}{v} \right) b^{-d_{\D}(b)} g^{-d_{\D}(g)} \prod_{u \in \Cc_{m,2n}, u \not = b,v,g} u^{1 - d_{\D}(u)}.$$
We see that after the substitution this becomes 
$$\left[\left(\frac{bg}{v} + \frac{de}{v} \right) b^{-d_{\D}(b)} g^{-d_{\D}(g)} \prod_{u \in \Cc_{m,2n}, u \not = b,v,g} u^{1 - d_{\D}(u)}\right]_{v \longleftarrow \frac{bg+de}{v'}} =$$ $$= v' b^{-d_{\D}(b)} g^{-d_{\D}(g)} \prod_{u \in \Cc_{m,2n}, u \not = b,v,g} u^{1 - d_{\D}(u)},$$
which is a Laurent expression. The statement follows. \qed

\begin{remark}
 Both Theorem~\ref{thm:upper} and Corollary~\ref{cor:conserved} can be deduced directly from Urban Renewal Theorem, see, for example, \cite[Section~5.2]{Sp}.
\end{remark}

\section{Conjectures}\label{sec:geometric_conjectures}

Let $v$ be any vertex of any of the quivers in the classification of Theorem \ref{thm:class}. We formulate here conjectures generalizing the results of this paper from $A_m \otimes \affA_{2n-1}$, which is a special case of the first family, 
to {\it {all}} families from our classification. 

The following conjecture generalizes Corollaries \ref{cor:pleth} and \ref{cor:pleth2}. In light of Theorem \ref{thm:class} this conjecture is a stronger version of \cite[Conjecture 1.9]{P}.

\begin{conjecture}\label{conj:linearizable}
 For every vertex $v$, there exist numbers $i$ and $N$ and rational functions $J_0 = 1, J_1, \dots, J_{N}, J_{N+1}=1$ in $\Q(\x)$ such that
 \begin{itemize}
  \item The $J_k$-s are the conserved quantities of the $T$-system;
  \item For any $t$ we have $$J_0 T_v(t+i(N+1)) - J_1 T_v(t+iN) + \dotsc \pm J_{N+1} T_v(t) = 0.$$
 \end{itemize}
\end{conjecture}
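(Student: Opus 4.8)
\textbf{Proof plan for Conjecture~\ref{conj:linearizable}.}

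The plan is to reduce the general case to the already-established case $A_m\otimes\affA_{2n-1}$ by exploiting the uniform structure of the classification in Theorem~\ref{thm:class}. Every bigraph in the list is built from affine $ADE$ Dynkin diagrams glued along blue $A_2$, $A_3$, $A_5$, or $D_4$ components, and each such item carries an action of Dehn twists / translation operators analogous to the one used in Sections~\ref{sec:variables}--\ref{sec:proof_of_recurrence}. So the first step is to set up, for an arbitrary quiver $Q$ in the classification, the interpretation of the $T$-system variables in the spirit of~\cite{P}: realize the $T_v(t)$ as invariants of a configuration of vectors together with a group element $A$ in an appropriate reductive group $\mathsf G$ (for the tensor-product families $\mathsf G = \SL_{m+1}$ as in~\cite{P}; for the other families $\mathsf G$ should be read off from the Dynkin type of the finite/affine factors and the ``binding'' data). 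The time evolution inserts copies of $\A := A^2$ into the tensor, exactly as in the proof of Theorem~\ref{thm:rec}.

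Next I would run the representation-theoretic argument that already works for $A_m\otimes\affA_{2n-1}$. The key point is the Cayley--Hamilton phenomenon: the sequence of group elements $(\A^q)_{q\in\Z}$ satisfies the linear recurrence given by the characteristic polynomial of $\A$ acting in whatever representation $V_v$ corresponds to the vertex $v$; the coefficients are, up to sign, the values of the fundamental characters $\chi_1(\A),\dots,\chi_N(\A)$, which are \emph{conserved} because they are symmetric functions of the eigenvalues of $\A$ and the $T$-system merely conjugates/translates $\A$. For the tensor-product families this is literally the statement that the $r$-th slice behaves like $\wedge^r(\A^q)$, and the plethysm bookkeeping of Corollary~\ref{cor:pleth} produces the $J_k$. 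For the self- and double-binding families one replaces $\wedge^r$ by the appropriate fundamental representation dictated by the binding: e.g. for the $\affD\ast\affD$ and $\affD\ast\affE$ families the relevant representations are the spin and vector representations of the orthogonal group, for $\affE_6\ast\affE_7$ one uses the $27$- and $56$-dimensional representations, and so on. In each case $N$ is the dimension of the chosen representation, $i$ is the ``height step'' (the analog of $n$, determined by the size of the affine factor and whether one is on a boundary or internal slice), and $J_0=J_{N+1}=1$ because $\A$ lies in a semisimple/simply-connected group so the top character is $1$. Writing the recurrence for $T_v$ alone rather than for the $T_v,T_{v'}$ pair requires the plethysm $e_k[p_2]$ substitution from the proof of Corollary~\ref{cor:pleth}, which does not change the fact that the coefficients are polynomials in conserved quantities.

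Finally I would verify the two bullet points: that the $J_k$ are conserved (immediate once they are identified with characters of a fixed conjugacy-class invariant of $\A$, together with genericity as in the proof of Corollary~\ref{cor:conserved} forcing uniqueness of the minimal recurrence, so its coefficients cannot depend on the choice of origin $t=0$) and that they lie in $\Q(\x)$ (clear since the $T_v$ themselves are Laurent, hence rational, in $\x$). The main obstacle is the first step: establishing, uniformly across \emph{all} nineteen families, the geometric/invariant-theoretic model and the precise statement that time evolution inserts $\A^{\pm 1}$ into the tensor. For $A_m\otimes\affA_{2n-1}$ this rests on the detailed constructions of~\cite{P}; extending it to the other affine$\,\otimes\,$finite tensor products and then to the self/double bindings (where the ``folding'' by the color-reversing involution $\i$, or the $\scf=2,3$ gluings, must be matched with a genuine group-theoretic operation) is where the real work lies, and it is exactly the content that the paper leaves as a conjecture. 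A secondary difficulty is pinning down the correct representation $V_v$ for each vertex of each family so that the dimension matches the conjectured recurrence length $N+1$; this is a finite check once the model is in place, guided by the observation that on the tensor-product family $V_v = \wedge^r \C^{m+1}$ with $r$ the position of the finite-type slice.
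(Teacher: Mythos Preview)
The statement you are addressing is a \emph{conjecture} in the paper, not a theorem; the paper offers no proof and explicitly presents it as open (see the discussion at the start of Section~\ref{sec:geometric_conjectures}, where it is called ``a stronger version of \cite[Conjecture 1.9]{P}''). So there is nothing to compare your argument against.

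As a research plan your outline is reasonable and you have correctly identified where the actual work lies: the invariant-theoretic model of \cite{P} that underpins Theorem~\ref{thm:rec} and Corollaries~\ref{cor:pleth}--\ref{cor:conserved} is specific to $A_m\otimes\affA_{2n-1}$ (vectors in $\C^{m+1}$ plus a matrix in $\SL_{m+1}$, with time evolution inserting powers of $\A$), and there is no established analogue for the other eighteen families. Your suggestion to replace $\SL_{m+1}$ by a reductive group $\mathsf G$ read off from the Dynkin data, and $\wedge^r$ by the corresponding fundamental representation, is natural but speculative: nothing in the paper or in \cite{P} constructs such a model for, say, $\affD_n\otimes D_m$ or for any of the self/double-binding families, and it is not clear that the $T$-system variables in those cases admit an interpretation as $\mathsf G$-invariants at all, let alone one in which time evolution acts by inserting a fixed group element. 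Until that is established, the Cayley--Hamilton step has nothing to act on, and the argument remains a heuristic rather than a proof. You acknowledge this yourself in the last paragraph; the honest summary is that what you have written is a program, not a proof, and the paper's authors evidently agree since they left the statement as a conjecture.
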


Note that among the linear recurrences satisfied by the sequences there is a {\it {minimal one}}. This is because if two recurrences are satisfied, then so is one given by the greatest common divisor of their characteristic polynomials. Let us from now on assume that the choices of $i,N$ and $J_k$-s 
are made so that the resulting recurrence is minimal. 

The following conjecture generalizes Theorem \ref{thm:upper} and Corollary \ref{cor:upper}.

\begin{conjecture}
The $J_k$-s belong to the upper cluster algebra of the cluster algebra associated with $Q$. In particular, they are Laurent polynomials in variables at any moment $t$. 
\end{conjecture}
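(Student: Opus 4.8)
The statement to prove is the last conjecture: that the recurrence coefficients $J_k$ belong to the upper cluster algebra associated with $Q$, for every vertex $v$ of every quiver in the classification of Theorem~\ref{thm:class}. Since this is a conjecture, I should present a plan for how one would attack it, making clear which ingredients are available and which are the genuine obstacles.

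\medskip

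\noindent\textbf{The plan.} The strategy is to reduce to the case $A_m\otimes\affA_{2n-1}$, where Theorem~\ref{thm:upper} already gives the result for the Goncharov--Kenyon Hamiltonians $H_i$, and then to propagate this along the other families in the classification. Concretely, I would proceed in the following steps.

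First, assuming Conjecture~\ref{conj:linearizable}, the $J_k$ are conserved quantities of the $T$-system, hence they are Laurent polynomials in the initial cluster $\x$, and in fact in \emph{every} cluster visited by the $T$-system dynamics. The content of the present conjecture is upgrading ``every cluster along the $T$-system orbit'' to ``every cluster of $Q$''. For this I would invoke the standard criterion of \cite[Theorem~1.5]{FZ3}, exactly as in the proof of Theorem~\ref{thm:upper}: it suffices to check Laurentness of each $J_k$ with respect to the initial seed together with all $|\Vert(Q)|$ seeds obtained from it by a single mutation $\mu_v$. Laurentness in the initial seed is immediate from Conjecture~\ref{conj:linearizable}. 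So the whole problem becomes: show that after substituting $v\leftarrow (bg+de)/v'$ (the single-mutation exchange relation at a vertex $v$, with the local neighborhood notation of Figure~\ref{fig:sadd16}) into the Laurent expansion of $J_k$, the result is still a Laurent polynomial.

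Second, for the family $A_m\otimes\affA_{2n-1}$ this is precisely what the proof of Theorem~\ref{thm:upper} does for the $H_i$: the tilings contributing a monomial with $v$ in the denominator come in pairs differing by a $2\times 2$ square flip (which preserves Thurston height), and their sum absorbs the substitution cleanly. By Corollary~\ref{cor:pleth}, each $J_k$ for an internal slice is a polynomial in the $H_i$, so it inherits the property; for boundary slices this is Theorem~\ref{thm:rec} together with the $p_2$-plethysm of Corollary~\ref{cor:pleth}. Thus the conjecture is \emph{known} for the first infinite family. The task is to establish an analogous ``pairing of monomials under a single mutation'' mechanism for each of the remaining $14$ infinite families and $4$ exceptional bigraphs of Theorem~\ref{thm:class}.

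Third, I would exploit the structural features of the classification to avoid doing all families by hand. The self-binding family $(\affA_{4n+1})^m$, the families $(\affA\affD^{m-1})_n$, $(\affA^{m-1}\affD)_n$, and so on are all built by gluing affine Dynkin diagrams along blue components of type $A_2, A_3, A_5$ or $D_4$ (Proposition~\ref{prop:scf_types}); and by the duality of symmetric bigraphs recalled in the Remark after Theorem~\ref{thm:class}, many of these are dual to tensor products $\affL\otimes A_k$ or to each other. Since the $T$-system variables for $\affL\otimes A_k$ should again admit a combinatorial (double-dimer / tiling on the appropriate surface) model generalizing Speyer's formula, I would first conjecture such a model, prove the analogue of the domino-pairing lemma, and then transport the result through the bigraph duality. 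Where a family does not reduce to a tensor product, one needs the geometric interpretation of $T$-system variables as $SL$-invariants (as in \cite{P}), which is expected to extend to the other classification items; the conserved quantities $J_k$ would then be coefficients of characteristic polynomials of the relevant monodromy matrices, and their membership in the upper cluster algebra would follow from a Urban-Renewal-type argument as in the Remark at the end of Section~\ref{sec:laur}.

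\medskip

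\noindent\textbf{The main obstacle.} The crux is that the entire approach presupposes Conjecture~\ref{conj:linearizable} (that the $J_k$ even exist and are conserved), which is itself open for all families beyond $A_m\otimes\affA_{2n-1}$; and, more seriously, it presupposes an explicit combinatorial or geometric model for the $T$-system variables of every family in the classification, generalizing Speyer's domino formula. Without such a model there is no ``square-flip pairing'' to run, and the clean cancellation in the proof of Theorem~\ref{thm:upper} has nothing to cancel against. So the real work is: (i) produce the tiling/dimer or $SL$-invariant model for each family; (ii) identify the $J_k$ inside it as honest (signed) generating functions with a height-type statistic; (iii) prove the single-mutation flip preserves that statistic, so that monomials with $v$ in the denominator cancel in pairs. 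Step (iii) is the one that mirrors the existing proof and should be comparatively routine once (i) and (ii) are in place; steps (i) and (ii)—especially for the exceptional bigraphs $\affA_3\ast\affD_5$, $\affD_6\ast\affE_7$, $\affD_4\ast\affE_6$, and $\affE_6\equiv\affE_6\ast\affE_7\equiv\affE_7$, where no tensor-product or covering structure is available—are where the difficulty genuinely lies.
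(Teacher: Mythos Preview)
The statement you are addressing is stated in the paper as a \emph{conjecture}, not as a theorem; the paper gives no proof of it. So there is nothing in the paper to compare your argument against. You correctly recognize this and offer a plan rather than a proof, and your plan sensibly parallels the paper's proof of the special case (Theorem~\ref{thm:upper}): invoke the Berenstein--Fomin--Zelevinsky single-mutation criterion from \cite{FZ3}, and for each family produce a combinatorial model of the $J_k$ in which monomials with $v$ in the denominator pair off under a local flip. Your identification of the main obstacles---the need first to establish Conjecture~\ref{conj:linearizable} and then to construct tiling or $SL$-invariant models for every family, especially the exceptional ones---is accurate and matches the spirit of the paper's discussion in Section~\ref{sec:geometric_conjectures}.

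One small slip: you write that ``assuming Conjecture~\ref{conj:linearizable}, the $J_k$ are conserved quantities of the $T$-system, hence they are Laurent polynomials in the initial cluster $\x$''. That inference is not automatic: a conserved quantity is merely an element of $\Q(\x)$ invariant under the dynamics, and nothing in Conjecture~\ref{conj:linearizable} asserts Laurentness. In the $A_m\otimes\affA_{2n-1}$ case the $H_i$ are Laurent \emph{by definition} (as weighted sums over tilings), which is why the proof of Theorem~\ref{thm:upper} can start. For the general conjecture, Laurentness in the initial seed is itself part of the content (``In particular, they are Laurent polynomials in variables at any moment $t$''), so you cannot assume it as input to the \cite{FZ3} criterion; you would need your combinatorial model to supply it, just as the tiling formula does in the known case. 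This only reinforces your conclusion that steps (i)--(ii) are where the real difficulty lies.
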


The following conjecture generalizes Conjecture \ref{conj:cone}.

\begin{conjecture} 
The $J_k$-s are {\emph {positive}} Laurent expressions in terms of any cluster of the cluster algebra. In other words, they are elements of the {\emph {positive cone}} inside the upper cluster algebra, as defined by Sherman and Zelevinsky \cite{ShZ}. 
\end{conjecture}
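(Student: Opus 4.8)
The final statement is Conjecture~\ref{conj:cone}, asserting that the Goncharov--Kenyon Hamiltonians $H_i$ lie in the positive cone of the upper cluster algebra of $A_m\otimes\affA_{2n-1}$. Since this is explicitly labeled a conjecture, I cannot hope to give a complete proof; instead I will outline the route that the structure of the paper suggests, indicate what is genuinely available, and pinpoint the gap. The key inputs are: Theorem~\ref{thm:upper} (each $H_i$ lies in the upper cluster algebra), its proof technique via \cite[Theorem~1.5]{FZ3} (it suffices to check the Laurent condition at the initial seed and all single-mutation neighbors), the explicit domino-tiling expansion $H_i=\sum_{\hh(\D)=i}\prod_{u}u^{1-d_\D(u)}$, and Urban Renewal (cf.\ the Remark after Theorem~\ref{thm:upper} and \cite[Section~5.2]{Sp}).

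\textbf{Key steps.} First I would try to upgrade the proof of Theorem~\ref{thm:upper} so that it certifies \emph{positivity}, not merely Laurentness, at each seed one mutation away from the initial one. Rereading that proof: the only tilings that threaten Laurentness at a mutated vertex $v$ are those with $d_\D(v)=2$, and they come in pairs $(\D,\D')$ differing by the $2\times 2$ swap around $v$; one computes that the sum of the two contributions, after the substitution $v\leftarrow (bg+de)/v'$, equals $v'\, b^{-d_\D(b)}g^{-d_\D(g)}\prod_{u\neq b,v,g}u^{1-d_\D(u)}$. The crucial point for positivity is that this monomial has nonnegative exponents on \emph{all} remaining variables and a genuinely positive (here, $+v'$) coefficient, so the contribution of each such pair is a \emph{positive} Laurent monomial; tilings with $d_\D(v)\in\{0,1\}$ are already positive Laurent monomials in the original cluster and remain so after the substitution. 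Hence $H_i$ is a positive Laurent polynomial in each single-mutation neighbor of the initial seed, and it is positive in the initial seed by definition.

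The remaining step is the one that makes this only a conjecture: \cite[Theorem~1.5]{FZ3}, and the Sherman--Zelevinsky framework, only guarantee membership in the upper cluster algebra from the local (initial seed $+$ neighbors) check; there is no analogous ``local certificate'' known that propagates \emph{positivity} to arbitrarily far seeds. So the honest plan is: (i) establish positivity at the initial seed and all its mutations as above; (ii) invoke the Urban Renewal interpretation of the $H_i$ --- realizing $A_m\otimes\affA_{2n-1}$ via a dimer model on the cylinder $\Cc_{m,2n}$ and the $H_i$ as weighted partition functions over tilings of fixed Thurston height --- to try to show that \emph{every} seed reachable in the cluster structure corresponds to a dimer model (via a sequence of urban renewals / spider moves) in which the $H_i$ retains a manifestly positive tiling expansion; (iii) conclude positivity in that cluster. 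Step (ii) is where I would expect to phrase the missing ingredient: one needs that all seeds of the cluster algebra --- not just those along the $T$-system orbit and its immediate neighbors --- are accessible by face-local dimer moves preserving the ``sum over tilings of fixed winding/height'' form of $H_i$, with positive weights throughout.

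\textbf{Main obstacle.} The hard part is precisely controlling \emph{arbitrary} seeds. Positivity is not a local condition in the cluster algebra the way upper-cluster membership is, so there is no finite check; one must either find a combinatorial model (dimers/urban renewal) covering the entire exchange graph with positive weights, or prove a general positivity-propagation statement for conserved quantities of $T$-systems of this type. Establishing that the dimer model for $A_m\otimes\affA_{2n-1}$ is ``totally mutation-accessible'' by positive urban-renewal-type moves --- or alternatively bounding the exchange graph so that $H_i$'s positivity can be deduced from positivity on a controlled collection of seeds --- is the genuine difficulty, and is why the assertion is stated as Conjecture~\ref{conj:cone} rather than proved.
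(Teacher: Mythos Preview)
The statement is a conjecture, so the paper offers no proof to compare against; you correctly recognize this. However, you have misidentified which conjecture you are addressing. The statement concerns the $J_k$'s of Section~\ref{sec:geometric_conjectures}: the minimal-recurrence coefficients for an arbitrary vertex of an arbitrary quiver in the classification of Theorem~\ref{thm:class}. You instead treat Conjecture~\ref{conj:cone}, which is only the special assertion that the Goncharov--Kenyon Hamiltonians $H_i$ for $A_m\otimes\affA_{2n-1}$ lie in the positive cone. The paper explicitly says the present conjecture \emph{generalizes} Conjecture~\ref{conj:cone}; they are not the same.

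This distinction is not cosmetic. Even restricting to $A_m\otimes\affA_{2n-1}$, the $J_k$'s for a vertex on the $r$-th affine slice are, by Corollary~\ref{cor:pleth}, the plethysms $e_j[e_r[p_2]]$ evaluated at the $H_i$'s; already for $r=1$ the paper's example gives $J_1=H_1^2-2H_2$, which is \emph{not} a positive polynomial in the $H_i$'s. Hence proving Conjecture~\ref{conj:cone} would not by itself yield positivity of the $J_k$'s: one would still need to show that the sign cancellations in each plethystic expression produce a positive Laurent polynomial in every cluster, a genuinely separate problem. For the remaining eighteen families in Theorem~\ref{thm:class} there is no tiling formula for the recurrence coefficients at all, so your urban-renewal strategy has no starting point there.

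Your outline for the $H_i$ case itself is reasonable, and your diagnosis of the core obstacle --- that positivity, unlike upper-cluster-algebra membership, has no known local certificate analogous to \cite[Theorem~1.5]{FZ3} --- is exactly the point. But as a proposal for the stated conjecture it addresses only a strict special case and leaves the passage from $H_i$ to $J_k$, and the other families entirely, untouched.
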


\def\H{\mathcal{J}}
Our next conjecture is open even in Type $\affA_{2n-1}\otimes A_m$. Let $Q$ be any \affinite quiver and let the $J_k$-s be as above. Consider an infinite Toeplitz matrix $\H = \H(m,n)$ where the entries are defined as follows:
$$
\H_{i,j} = 
\begin{cases}
J_{j-i} & \text{if $0 \leq j-i \leq m+1$;}\\
0 & \text{otherwise.}
\end{cases}
$$

\begin{conjecture} \label{conj:tp}
 All minors of $\H$ are either identically $0$ or positive Laurent polynomials in $\x$.
\end{conjecture}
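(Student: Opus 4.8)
\textbf{Plan of proof for Conjecture \ref{conj:tp}.}

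The plan is to reduce the statement to the known totally-positive structure of the Goncharov--Kenyon Hamiltonians $H_i$ together with Conjecture \ref{conj:linearizable} and its accompanying positivity conjecture, and then to run a path-switching argument of Lindström--Gessel--Viennot type in the domino-tiling model. First I would reduce to row-solid minors: since identifying the entries $\H_{i,j}$ with elementary symmetric functions turns arbitrary minors into skew Schur functions and row-solid minors into genuine Schur functions, the Littlewood--Richardson rule expresses the former positively in terms of the latter, so it suffices to prove that every row-solid minor $\det\bigl(\H_{[1,\dots,k]}^{[i_1,\dots,i_k]}\bigr)$ is a positive Laurent polynomial in $\x$ (or identically $0$). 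This reduction step is purely combinatorial and essentially the same as in the commented-out Section~\ref{sec:real}; I would import it verbatim. The content then lies in giving a manifestly positive combinatorial model for these row-solid minors.

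The main step is to establish an analogue of Theorem \ref{thm:JT}: a row-solid minor of $\H$ should equal a sum over $k$-tuples $(\D_1,\dots,\D_k)$ of domino tilings of a suitable cylinder $\Cc_{m,2n}$ (more generally, of the region attached to the quiver $Q$), with prescribed ``hula-hoop heights'' $\hh(\D_j)=i_j-j$ and with the condition that consecutive tilings $\D_j,\D_{j+1}$ \emph{do not cross}, weighted by $\prod_j\prod_u u^{1-d_{\D_j}(u)}$. This is proved by the switching lemmas: given a tuple where some consecutive pair crosses, one locates the lowest global violation, finds a hula hoop passing directly under it (Lemma~\ref{lem:switch}), and swaps the parts of the two tilings above that hoop; this is a weight-preserving, sign-reversing involution (Lemma~\ref{lem:switch2}) on the monomial expansion of the determinant, so all ``crossing'' contributions cancel and only the non-crossing tuples survive. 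Positivity of the minor is then immediate, since each surviving term is a Laurent monomial with coefficient $+1$. For the general \affinite quiver $Q$ I would first need the analogue of Theorem~\ref{thm:rec} and the domino/dimer interpretation of the $J_k$ on the appropriate planar (cylindric or strip) region underlying each of the $15$ infinite families — precisely the content left open in Conjecture \ref{conj:linearizable} and the conjectures following it — so that the switching argument has a model to run in.

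The hard part, and the reason the statement is only conjectural, is exactly this last point: outside type $\affA_{2n-1}\otimes A_m$ we do not yet have the dimer/tiling realization of the recurrence coefficients $J_k$, nor even a proof that the $J_k$ are positive Laurent polynomials; without that, there is no combinatorial substrate on which to perform the switching involution. Even within type $\affA_{2n-1}\otimes A_m$, where the $H_i$-model is available, one must still check that the ``no-crossing'' condition is the correct one (i.e. that it indeed captures Schur- rather than merely skew-Schur-positivity) and that the switching operation respects the global (non-contractible) versus local (contractible) dichotomy on the cylinder uniformly across all choices of the lowest violation — these are the technical points isolated in Lemma~\ref{lem:switch2}. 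So the realistic scope of a proof is: (i) carry out the reduction to row-solid minors; (ii) prove Theorem~\ref{thm:JT} in the $\affA_{2n-1}\otimes A_m$ case via switching, thereby establishing Conjecture~\ref{conj:tp} there; and (iii) indicate that the general case follows once the conjectural dimer models for the remaining families are supplied, with the switching argument being model-independent. I expect step (iii) — supplying those models — to be the genuine obstacle.
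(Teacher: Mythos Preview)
The paper offers no proof of this statement: it is explicitly presented as a \emph{conjecture}, and the sentence immediately preceding it reads ``Our next conjecture is open even in Type $\affA_{2n-1}\otimes A_m$.'' So there is nothing in the published text to compare your proposal against.

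You are, however, not working blind: your plan is lifted directly from the \emph{commented-out} Section on real-rootedness in the LaTeX source (the material between \verb|\begin{comment}| and \verb|\end{comment}|), and you say so yourself. That section contains exactly the reduction to row-solid minors, the Jacobi--Trudi-type identity (labelled \texttt{thm:JT}), and the switching lemmas (labelled \texttt{lem:switch}, \texttt{lem:switch2}) that you cite. The authors evidently drafted this argument for the $\affA_{2n-1}\otimes A_m$ case and then removed it, choosing instead to declare even that case open. Whatever their reason --- a gap in the involution argument, a subtlety with the ``lowest global violation'' being well-defined across all crossing pairs simultaneously, or simply a decision to defer --- you should treat that as a warning sign rather than a ready-made proof to import verbatim.

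Two practical points. First, all of your \verb|\ref| calls to \texttt{sec:real}, \texttt{thm:JT}, \texttt{lem:switch}, \texttt{lem:switch2} will produce undefined-reference warnings, since those labels live inside commented-out code. Second, your step (iii) is not a step at all: saying ``the general case follows once the conjectural dimer models are supplied'' is precisely the content of the open problem, and the switching argument is very much \emph{not} model-independent --- it relies on Thurston height, hula hoops, and the cylindrical geometry specific to $\affA\otimes A$. So at best your proposal is a sketch of the withdrawn special-case argument, not a plan for the conjecture as stated.
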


In other words, we conjecture that the $J_i$-s form a {\it {totally positive sequence}}, or {\it {P\'olya frequency sequence}}, see \cite{Br} for the background. We also state the following weaker version of Conjecture~\ref{conj:tp}:

\begin{conjecture} \label{conj:real}
 The roots of the recurrence polynomial
 $$J(z) = z^{N+1}-J_1 z^N + \dotsc \pm J_N z \mp 1$$
 are positive real numbers. 
\end{conjecture}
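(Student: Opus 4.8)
The statement to be proved is Conjecture~\ref{conj:real}: for an \affinite quiver $Q$ (the excerpt stops at the general conjecture, but the real content is already in Type $\affA_{2n-1}\otimes A_m$), the roots of the recurrence polynomial $J(z)=z^{N+1}-J_1z^N+\dots\pm 1$ are positive reals. The plan is to reduce this to the $r=1$ boundary affine slice and then invoke total positivity of the Goncharov--Kenyon Hamiltonians. Concretely, by Corollary~\ref{cor:pleth} the recurrence polynomial for the $r$-th affine slice has roots of the form $\lambda_{i_1}\cdots\lambda_{i_r}$ where the $\lambda_j$ are the roots of the $r=1$ recurrence; and by Corollary~\ref{cor:pal} the internal-slice coefficients are plethysms of the boundary ones. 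So it suffices to show that the $m+1$ roots $\lambda_1,\dots,\lambda_{m+1}$ of the characteristic polynomial of $\hat A=A^2$ (equivalently, of $z^{m+1}-H_1z^m+H_2z^{m-1}-\dots\pm 1$ with $H_i$ the Goncharov--Kenyon Hamiltonians) are positive real numbers as elements of the function field $\Q(\x)$ --- meaning that for every point in a dense open subset of the parameter space, all roots are positive reals.

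First I would establish the commented-out Theorem~\ref{thm:tp}: all minors of the infinite Toeplitz matrix $\H$ with entries $\H_{i,j}=H_{j-i}$ (for $0\le j-i\le m+1$, zero otherwise) are either identically $0$ or positive Laurent polynomials in $\x$. The route here is exactly the one sketched in the commented-out Section~\ref{sec:real}: reduce to row-solid minors via the Littlewood--Richardson rule (skew Schur functions expand positively into Schur functions, which correspond to the row-solid minors), and then prove positivity of row-solid minors combinatorially through Theorem~\ref{thm:JT} --- a Lindström--Gessel--Viennot-style determinant formula expressing $\det(\H_{[1,\dots,k]}^{[i_1,\dots,i_k]})$ as a weighted sum over $k$-tuples of non-crossing domino tilings of the cylinder $\Cc_{m,2n}$, where ``non-crossing'' is formulated via global vs.\ local violations of Thurston height. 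The proof of Theorem~\ref{thm:JT} is the standard path-switching argument: a sign-reversing weight-preserving involution (``switching'' along the lowest global hula hoop, Lemma~\ref{lem:switch}, Lemma~\ref{lem:switch2}) cancels all crossing tuples, leaving only non-crossing ones.

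Given Theorem~\ref{thm:tp}, I would finish as follows. The sequence $(1,H_1,\dots,H_m,H_{m+1}=1,0,0,\dots)$ is then a Pólya frequency sequence (totally positive sequence) with positive entries, specialized at any point of a dense open locus. By the Edrei--Aissen--Schoenberg--Whitney theorem on Pólya frequency sequences (see \cite[Theorem~2.2.4]{Br}), a finite such sequence is a PF sequence if and only if its generating polynomial $\sum H_i z^{m+1-i}$ has only real nonpositive roots; since all $H_i>0$ the roots are strictly negative, hence (after the sign flip $z\mapsto -z$ matching our $z^{m+1}-H_1z^m+\dots$) the roots $\lambda_1,\dots,\lambda_{m+1}$ of the characteristic polynomial of $\hat A$ are positive reals. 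Then Corollary~\ref{cor:pleth} propagates this to every affine slice: the roots $\lambda_{i_1}\cdots\lambda_{i_r}$ are products of positive reals, hence positive real, which is precisely the content of Corollary~\ref{cor:real}. For general \affinite $Q$ beyond Type $\affA$ this last step is conjectural (Conjecture~\ref{conj:linearizable} is not yet a theorem there), so the clean statement is for the $A_m\otimes\affA_{2n-1}$ family, exactly as Corollary~\ref{cor:real} asserts.

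\textbf{Main obstacle.} The genuine difficulty is Theorem~\ref{thm:JT} --- more precisely, making the switching involution well-defined and checking it really is an involution. The subtleties are: (i) identifying, among several equally-low global violations, a canonical hula hoop to switch on, and verifying it does not depend on the choice (this uses that two lowest global hula hoops cannot cross); (ii) checking that switching changes hula-hoop counts by exactly $\pm 1$ (Lemma~\ref{lem:switch2}), which rests on the Thurston-height calculation that flipping the tiling of a single hula-hoop annulus changes $h$ by $4$; and (iii) confirming that after switching at the pair $(\D_\ell,\D_{\ell+1})$ no \emph{lower} or \emph{earlier} global violation is created, so that re-applying the map returns the original tuple. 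Everything downstream --- the reduction to row-solid minors, the PF-sequence criterion, the plethysm bookkeeping --- is routine once Theorem~\ref{thm:JT} is in hand.
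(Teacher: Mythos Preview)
The statement is labeled a \emph{Conjecture} in the paper, not a theorem; the paper provides no proof. Just before stating Conjecture~\ref{conj:tp} (of which Conjecture~\ref{conj:real} is then presented as a weaker version), the authors say explicitly that it ``is open even in Type $\affA_{2n-1}\otimes A_m$''. So there is no paper proof to compare your proposal against.

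Your proposal reconstructs precisely the argument from the commented-out section on real-rootedness: establish total positivity of the Toeplitz matrix of Goncharov--Kenyon Hamiltonians via a Lindstr\"om--Gessel--Viennot-style switching involution on tuples of cylinder tilings, invoke the Aissen--Schoenberg--Whitney/Edrei characterization of P\'olya frequency sequences to get real-rootedness on the boundary slice, and then propagate to all slices via Corollary~\ref{cor:pleth}. That the authors commented out this entire section and demoted its results to conjectures is strong evidence they found a gap, and your ``main obstacle'' paragraph locates it in exactly the place one would expect: the well-definedness and involutivity of the switching map (your items (i)--(iii)). You should therefore regard your proposal not as a proof but as a plausible strategy whose central lemma is, by the authors' own assessment, not yet established; for the general \affinite $Q$ beyond type $\affA\otimes A$ the statement is, as you note yourself, purely conjectural.
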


%
%

Each of the types $A_m$, $D_m$ and $E_6$ has a canonical involution on the Dynkin diagram, sending the diagram to itself. Denote this involution $\eta$. Assume our $T$-system is of the tensor product type, and more specifically of the form $\Lambda' \otimes \hat \Lambda$, where $\Lambda'$ 
is a finite type Dynkin diagram of type $A_m$, $D_{2m+1}$ or $E_6$, and $\hat \Lambda$ is an arbitrary extended Dynkin diagram.
Let $v'$ be the vertex of $\Lambda' \otimes \hat \Lambda$ having the same $\hat \Lambda$ coordinate, but whose $\Lambda'$ coordinate is obtained from that of $v$ via involution $\eta$. The following conjecture generalizes Corollary \ref{cor:pal}.

\begin{conjecture} \label{conj:symm1}
The recurrence polynomials of $v$ and $v'$ have the same coefficients but in the opposite order. In particular, if $v=v'$, then the recurrence polynomial $J(z)$ is palindromic. 
\end{conjecture}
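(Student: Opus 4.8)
The statement to prove is Corollary~\ref{cor:pal}: the recurrence polynomials of the $r$-th and $(m+1-r)$-th affine slices of $A_m \otimes \affA_{2n-1}$ have the same coefficients but in reverse order, and for $m$ odd the $r=(m+1)/2$ slice has a palindromic recurrence polynomial.

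The plan is to reduce everything to the structure of the matrix $\hat A \in \SL_{m+1}$ supplied by Theorem~\ref{thm:rec}'s proof and the representation-theoretic interpretation of $T$-system variables from~\cite{P}. First I would invoke the theorem (stated just before Corollary~\ref{cor:pleth}) that the variables on the $r$-th slice satisfy the same linear recurrence as the sequence $\wedge^r(\hat A^q)$, $q \in \Z$, where $\hat A = A^2$. By Cayley--Hamilton, that recurrence is governed by the characteristic polynomial of $\wedge^r(\hat A)$, whose roots are the products $\lambda_{i_1} \cdots \lambda_{i_r}$ over $1 \le i_1 < \cdots < i_r \le m+1$, where $\lambda_1, \dots, \lambda_{m+1}$ are the eigenvalues of $\hat A$. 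The single key input then is that $\hat A \in \SL_{m+1}$, so $\prod_{i=1}^{m+1} \lambda_i = 1$; equivalently, as already observed in the excerpt, $H_{m+1} = 1$ forces the constant term of the characteristic polynomial of $\hat A$ to be $1$.

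The main step is the elementary observation that the map $\{i_1, \dots, i_r\} \mapsto \{1, \dots, m+1\} \setminus \{i_1, \dots, i_r\}$ is a bijection between $r$-subsets and $(m+1-r)$-subsets, and under it $\lambda_{i_1}\cdots\lambda_{i_r}$ is sent to $\lambda_{j_1}\cdots\lambda_{j_{m+1-r}} = \prod_{i=1}^{m+1}\lambda_i \big/ (\lambda_{i_1}\cdots\lambda_{i_r}) = (\lambda_{i_1}\cdots\lambda_{i_r})^{-1}$. Hence the eigenvalue multiset of $\wedge^{m+1-r}(\hat A)$ is exactly the multiset of reciprocals of the eigenvalue multiset of $\wedge^r(\hat A)$; the same holds with $\hat A$ replaced by $\hat A^q$. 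A square matrix whose eigenvalues are the reciprocals of those of another has a characteristic polynomial equal to the reverse (up to the standard normalization by the leading/constant coefficient, which is harmless here since both top Hamiltonians equal $1$) of the other's. Therefore the minimal linear recurrences for the two slices — which, as noted in the proof of Corollary~\ref{cor:conserved} and Corollary~\ref{cor:pleth}, are the unique minimal ones and hence honestly determined by these characteristic polynomials — have coefficient sequences that are reverses of one another. When $m$ is odd and $r = (m+1)/2$, the slice is self-complementary ($m+1-r = r$), so the coefficient sequence equals its own reverse, i.e. the recurrence polynomial is palindromic. Passing from the recurrence of length $2n\binom{m+1}{r}$ down to the ``essential'' polynomial of degree $\binom{m+1}{r}$ (as in Corollary~\ref{cor:pleth2}) is compatible with reversal, since squaring all roots via the plethysm $e_j[\cdots[p_2]]$ — needed to go from $(T_v, T_{v'})$ interleaved sequences to $T_v$ alone — commutes with the reciprocal operation on roots.

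I do not anticipate a serious obstacle: the argument is essentially the combinatorial symmetry $\wedge^r V^* \cong \wedge^{m+1-r} V \otimes \det V^*$ specialized to $\det = 1$, wrapped around the reduction already performed in Theorem~\ref{thm:rec} and Corollary~\ref{cor:pleth}. The one point requiring mild care is the normalization bookkeeping when one writes the recurrence polynomial $J(z) = z^{N+1} - J_1 z^N + \cdots \mp 1$: one should check that with the conventions of Theorem~\ref{thm:rec} (leading coefficient $1$ and constant term $\pm 1$, both coming from $H_0 = H_{m+1} = 1$), ``reverse of the coefficient sequence'' is literally correct rather than correct only up to an overall scalar. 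This is immediate from $\prod \lambda_i = 1$, so the writeup is short.
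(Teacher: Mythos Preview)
The statement you were asked to address is Conjecture~\ref{conj:symm1}, which concerns an arbitrary tensor product $\Lambda' \otimes \hat\Lambda$ with $\hat\Lambda$ \emph{any} affine $ADE$ Dynkin diagram (and $\Lambda'$ of type $A_m$, $D_{2m+1}$, or $E_6$). The paper does not prove this; it appears in Section~\ref{sec:geometric_conjectures} as an open conjecture generalizing Corollary~\ref{cor:pal}. You have explicitly identified the target as Corollary~\ref{cor:pal} and proved only that special case $\hat\Lambda = \affA_{2n-1}$, $\Lambda' = A_m$. For that case your argument is correct and essentially identical to the paper's own proof of Corollary~\ref{cor:pal}: both use $\hat A \in \SL_{m+1}$ (equivalently $H_{m+1}=1$, hence $\prod_i \lambda_i = 1$) to conclude that the eigenvalues of $\wedge^r(\hat A)$ and $\wedge^{m+1-r}(\hat A)$ are mutual reciprocals, so the characteristic polynomials are reverses of each other.

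The genuine gap is that none of the machinery you invoke --- the interpretation of the $r$-th slice via $\wedge^r(\hat A^q)$, the matrix $\hat A$ itself, Theorem~\ref{thm:rec}, the Goncharov--Kenyon Hamiltonians --- exists for a general affine factor $\hat\Lambda$. That model comes from~\cite{P} and is specific to $\affA_{2n-1}$. For $\hat\Lambda$ of type $\affD$ or $\affE$ there is at present no analogue of the $\SL_{m+1}$ tensor-invariant realization, so the reciprocal-eigenvalue argument has nothing to attach to; this is exactly why the paper lists the general assertion as a conjecture rather than a corollary.
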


Assume now we are in {\it {any other case}}, i.e. either our $T$-system belongs to a different family of the classification, or it is a tensor product but $\Lambda'$ is not of type $A_m$, $D_{2m+1}$ or $E_6$. 
The following conjecture again generalizes Corollary \ref{cor:pal}.

\begin{conjecture}  \label{conj:symm2}
The recurrence polynomial $J(z)$ of $v$ is palindromic. 
\end{conjecture}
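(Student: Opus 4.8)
\textbf{Proof proposal for Conjecture~\ref{conj:symm2}.}

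The plan is to reduce the palindromicity of $J(z)$ to a statement about the dominant eigenvalue structure of the double binding involved, exactly as Corollary~\ref{cor:pal} was deduced from the fact that $\hat A\in\SL_{m+1}$. First I would observe that, by the classification of Theorem~\ref{thm:class}, every quiver $Q$ which is \emph{not} of the form $\Lambda'\otimes\affL$ with $\Lambda'\in\{A_m, D_{2m+1}, E_6\}$ either (i) contains exactly one non-parallel double binding $C_l\ast C_{l+1}$, or (ii) is a tensor product $\Lambda'\otimes\affL$ where $\Lambda'$ does \emph{not} admit the diagram involution $\eta$ as a nontrivial symmetry (i.e.\ $\Lambda'$ is of type $D_{2m}$, $E_7$, $E_8$, or $A_m$ with the relevant symmetry issue). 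In the tensor-product case, the argument of Corollary~\ref{cor:pal} still applies verbatim: the recurrence for the $r$-th slice is governed by the eigenvalues of $\wedge^r(\hat A)$ with $\hat A\in\SL$, so the eigenvalues come in inverse-paired families and $J(z)$ is palindromic regardless of whether $v=v'$. So the genuinely new content is case (i), the double bindings.

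For a double binding $G=(\Gamma,\Delta)$ of type $\affL\ast\affL'$, I would use Corollary~\ref{cor:points}: with the normalization $\tau_X=\scf(G)^{1/2}\v_X$ roughly (more precisely the normalization of Corollary~\ref{cor:points}), the adjacency operator $A_\Delta$ acts on the relevant invariant structure and its nonzero eigenvalues $\mu_i$ are constrained by the fact that $\det$ of the "transfer operator" equals $\pm 1$ — this is the analogue of $\hat A\in\SL_{m+1}$. Concretely, the recurrence polynomial $J(z)$ should be (a power-substitution of) the characteristic polynomial of a matrix built from $A_\Gamma$ and $A_\Delta$ on the universal cover; the key is that, because each affine Dynkin diagram $\affL$ has dominant eigenvalue exactly $2$ and the double binding couples $X$ and $Y$ symmetrically up to the scaling factor, the spectrum of this transfer matrix is invariant under $z\mapsto 1/z$. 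I would make this rigorous by lifting to the universal cover (as in the proof of Theorem~\ref{thm:rec}) and exhibiting an explicit symmetry of the lifted dynamics — for $\affL\ast\affL'$ the map swapping the two "directions" of propagation along the cover, combined with the color-reversing structure, conjugates the forward shift to the inverse shift.

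The cleanest uniform route, which I would actually pursue, is: show that for every $Q$ in case (i) there is a $180^\circ$-type symmetry $\sigma$ of the bigraph $G$ (or of its universal cover) that reverses the "time direction" of the $T$-system and fixes the vertex $v$ up to an automorphism under which the recurrence is preserved. Such a $\sigma$ exists because all the non-parallel double bindings in Figures~\ref{figure:double_bindings_scf_2}--\ref{figure:double_bindings_scf_3} are visibly symmetric under a reflection, and the self-binding $\selfb{4n+1}$ is rotationally symmetric. Since the minimal linear recurrence at $v$ is unique (as noted after Conjecture~\ref{conj:linearizable}), $\sigma$ must send this recurrence to itself; but running the $T$-system backwards replaces the characteristic polynomial $J(z)$ by $z^{N+1}J(1/z)$ up to sign, forcing $J(z)=\pm z^{N+1}J(1/z)$, i.e.\ palindromicity (the constant term is $\mp 1$ because, as in Corollary~\ref{cor:pal}, the product of the roots is $\pm 1$, which follows from $H_{\text{top}}=1$ / the $\SL$-condition).

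\textbf{Main obstacle.} The hard part will be establishing, uniformly across the $15$ infinite families, that the relevant transfer matrix has determinant $\pm1$ and that the time-reversal symmetry $\sigma$ genuinely fixes the \emph{minimal} recurrence at $v$ (not merely some recurrence). For the tensor-product families this is the Cayley--Hamilton/$\SL$ input already used; for the mixed families like $\affA_{2n-1}\ast\affD_{n+2}\equiv\cdots\equiv\affD_{n+2}$ one needs the analogue of the "variables as $\SL$-invariants" picture of \cite{P}, which is only fully developed for type $\affA\otimes A$ in this paper — so making the argument unconditional may require first proving Conjecture~\ref{conj:linearizable} for the family in question, or at least constructing the geometric model of the $T$-system variables for each family. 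Absent that, the honest statement is that $J(z)$ is palindromic \emph{conditional on} the recurrence being the minimal one coming from the conjectured geometric model, and the symmetry argument above then closes it.
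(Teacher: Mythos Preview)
The statement you are trying to prove is a \emph{conjecture} in the paper: it appears in Section~\ref{sec:geometric_conjectures} as an open problem, and the paper offers no proof. So there is no paper argument to compare your proposal against; the question is simply whether your sketch constitutes a proof.

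It does not, and the gaps are more serious than the ``Main obstacle'' paragraph acknowledges. First, your claim that ``the argument of Corollary~\ref{cor:pal} still applies verbatim'' for tensor products $\Lambda'\otimes\affL$ with $\Lambda'\in\{D_{2m},E_7,E_8\}$ is unjustified: that corollary rests entirely on the $\SL_{m+1}$-invariant model from \cite{P}, which is developed only for $A_m\otimes\affA_{2n-1}$. There is no $\hat A\in\SL$ available for, say, $E_8\otimes\affD_n$, so there is nothing verbatim to invoke. Second, your time-reversal symmetry argument requires, concretely, a color-reversing automorphism $\sigma$ of the bigraph (since reversing time amounts to swapping $\mu_\bullet\mu_\circ$ and $\mu_\circ\mu_\bullet$). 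But several of the double bindings in the classification have unequal numbers of black and white vertices --- for instance $\affE_6\ast\affE_7$ has $8$ white and $7$ black vertices by the color analysis in the proof of Proposition~\ref{prop:no_L_E8_and_En_L} --- so no such $\sigma$ exists, and ``visibly symmetric under a reflection'' is false as stated. Finally, even granting a suitable $\sigma$, you would still need the sequence $T_v(t)$ and its time-reverse $T_{\sigma(v)}(-t)$ to satisfy the \emph{same} minimal recurrence, which already presupposes Conjecture~\ref{conj:linearizable} and the uniqueness/invariance of the $J_k$'s --- exactly the open inputs you flag at the end. In short, the proposal is a plausible heuristic outline but is conditional on several unproven statements and contains at least one concrete error in the symmetry claim.
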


{\Large\part{Tropical $T$-systems.}\label{part:tropical}}

\section{The behavior of tropical $T$-systems}
\subsection{Tropical $T$-systems: definition}\label{sec:behavior}
Each bipartite recurrent quiver $Q$ has the corresponding $T$-system which we will call \emph{the geometric $T$-system} associated with $Q$ in order to distinguish it from another system which we introduce in this section. We refer the reader to Example~\ref{example:evolution} for an illustration of most of the statements that we prove in Sections~\ref{sec:behavior}-\ref{sec:conservation}.

\begin{definition}
 Let $Q$ be a bipartite recurrent quiver, and let $\l:\VertQ\to\Z$ be any map. Then the \emph{tropical $T$-system} associated with $Q$ is a family of integers $\Ttr_v(t)\in\Z$ for every $v\in\VertQ,t\in\Z$ with $t+\e_v$ even satisfying the following relations:
 \begin{eqnarray*}
  \Ttr_v(t+1)+\Ttr_v(t-1)&=&\max\left(\sum_{u\to v}\Ttr_u(t),\sum_{v\to w}\Ttr_w(t)\right);\\
  \Ttr_v(\e_v)&=&\l(v).
 \end{eqnarray*}
\end{definition}
It is apparent from the definition that $\Ttr_v(t)$ is the \emph{tropicalization} of $T_v(t)$. One can define a tropical $T$-system with values in $\Q$ or $\R$, but for our purposes it is sufficient to consider only the integer-valued version (see also Remark~\ref{rmk:real_values}). The defining recurrence relation can be translated into the language of bigraphs as follows: if $G(\Gamma,\Delta)$ is a bipartite bigraph then the relation becomes
\[\Ttr_v(t+1)+\Ttr_v(t-1)=\max\left(\sum_{(u,v)\in\Gamma}\Ttr_u(t),\sum_{(v,w)\in\Delta}\Ttr_w(t)\right).\]

\def\maxdeg{{ \operatorname{deg}_{\max}}}
\def\mindeg{{ \operatorname{deg}_{\min}}}

%
%

If $P(\x)\in\Z[\x^{\pm1}]$ is a multivariate Laurent polynomial in variables $(x_v)_{v\in\VertQ}$ then define $P\mid_{\x=q^\l}\in\Z[q^{\pm1}]$ to be the univariate Laurent polynomial in $q$ obtained from $P$ by substituting $x_v=q^{\l(v)}$ for all $v\in\VertQ$. Further, define $\maxdeg(q,P\mid_{\x=q^\l})$ to be the maximal degree of $q$ in $P\mid_{\x=q^\l}$. The following claim gives a connection between the geometric and tropical $T$-systems:

\begin{proposition}[{see \cite[Lemma 6.3]{GP}}]\label{prop:maxdeg}
 For every $v\in\VertQ,t\in\Z$ with $t+\e_v$ even and any $\l:\VertQ\to\Z$, we have 
 \[\Ttr_v(t)=\maxdeg\left(q,T_v(t)\mid_{\x=q^\l}\right).\]
\end{proposition}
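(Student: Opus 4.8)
The proof goes by induction on $|t|$, exploiting the recursive structure of both systems. The base case $t=\e_v$ is immediate: $T_v(\e_v)=x_v$, so $T_v(\e_v)\mid_{\x=q^\l}=q^{\l(v)}$, whose maximal degree is $\l(v)=\Ttr_v(\e_v)$. For the inductive step, fix $v$ and $t$ with $t+\e_v$ even, and suppose the claim holds for all pairs with smaller value of $|t|$; I will treat $t>\e_v$ (the case $t<\e_v$ being symmetric, running the recursion in the opposite time direction). Write $d(w,s):=\maxdeg(q,T_w(s)\mid_{\x=q^\l})$ for brevity, so that the inductive hypothesis reads $d(w,s)=\Ttr_w(s)$ whenever $|s|<|t|$.

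The heart of the argument is to apply the specialization $\x=q^\l$ to the $T$-system exchange relation
\[
T_v(t+1)\,T_v(t-1)=\prod_{u\to v}T_u(t)+\prod_{v\to w}T_w(t),
\]
and to track maximal $q$-degrees through it. The right-hand side is a sum of two Laurent polynomials in $q$ with \emph{positive} coefficients (here I use the Laurent phenomenon together with the fact, used already in Lemma~\ref{lemma:implies} and throughout \cite{GP}, that the $T$-variables are Laurent polynomials with nonnegative coefficients — or more elementarily, one may just note that the two monomials of highest degree cannot cancel since all coefficients are positive integers). Consequently the maximal degree of a sum of such polynomials is the maximum of the two maximal degrees, and the maximal degree of a product is the sum of the maximal degrees. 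Hence
\[
d(v,t+1)+d(v,t-1)=\max\left(\sum_{u\to v}d(u,t),\ \sum_{v\to w}d(w,t)\right).
\]
By the inductive hypothesis applied at time $t$ (note $|t|<|t+1|$ since $t>\e_v\geq 0$ here, so $t\geq 0$ and $t<t+1$; the neighbors' values $d(u,t),d(w,t)$ are governed by induction) the right-hand side equals $\max\left(\sum_{u\to v}\Ttr_u(t),\sum_{v\to w}\Ttr_w(t)\right)$, which by the definition of the tropical $T$-system is exactly $\Ttr_v(t+1)+\Ttr_v(t-1)$. Again by induction $d(v,t-1)=\Ttr_v(t-1)$, and subtracting gives $d(v,t+1)=\Ttr_v(t+1)$, completing the step. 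The case $t<\e_v$ is identical after replacing "$t+1$" by "$t-1$" as the newly-determined value and re-indexing; one uses the exchange relation solved for $T_v(t-1)$ in terms of $T_v(t+1)$ and the neighbors at time $t$.

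\textbf{Main obstacle.} The only genuinely delicate point is the no-cancellation claim: passing from "maximal degree of a product/sum" to the clean identities above requires knowing that the leading terms of $\prod_{u\to v}T_u(t)\mid_{\x=q^\l}$ and $\prod_{v\to w}T_w(t)\mid_{\x=q^\l}$ do not cancel, and that the leading term of the sum is not somehow killed when we divide by $T_v(t-1)\mid_{\x=q^\l}$. The first is handled by positivity of coefficients (all $T$-variables are polynomials in $\x$ with positive integer coefficients, so after the monomial substitution $x_v\mapsto q^{\l(v)}$ every resulting Laurent polynomial in $q$ still has positive coefficients, and positivity is preserved under addition and multiplication; thus leading coefficients are strictly positive and never cancel). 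The division step is automatic once one knows both sides of $T_v(t+1)T_v(t-1)=(\text{positive sum})$ are honest Laurent polynomials in $q$ with positive leading coefficients, so that $\maxdeg$ is additive across the factorization. Since this positivity input is exactly what is invoked in \cite[Lemma 6.3]{GP} and is available to us, the argument is complete; this is why the proposition can simply be cited, but the inductive skeleton above is the content.
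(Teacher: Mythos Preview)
The paper does not give its own proof here; it simply cites \cite[Lemma~6.3]{GP}. Your inductive skeleton is the standard one and is correct in outline, but the justification of the no-cancellation step is both heavier than needed and mis-attributed.

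You justify no-cancellation by asserting that every $T_v(t)$ is a Laurent polynomial in $\x$ with \emph{all} coefficients nonnegative, and you say this is ``used already in Lemma~\ref{lemma:implies}.'' That attribution is wrong: Lemma~\ref{lemma:implies} only uses that $T_v(t)\mid_{\x=1}$ is a positive integer, which follows from the Laurent phenomenon together with the trivial fact that positive real values stay positive under the recursion $T_v(t+1)=(\prod+\prod)/T_v(t-1)$. That says nothing about individual coefficients. Full coefficient-positivity of cluster variables is a deep theorem (Lee--Schiffler, Gross--Hacking--Keel--Kontsevich) not invoked anywhere in this paper, so resting your argument on it is a genuine gap in the write-up as it stands.

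The fix is to strengthen the inductive hypothesis slightly and thereby avoid full positivity altogether: prove by induction that $T_v(t)\mid_{\x=q^\l}$ is a Laurent polynomial in $q$ with strictly positive \emph{leading} coefficient and top degree $\Ttr_v(t)$. The base case is immediate. In the step, both products on the right of the exchange relation have positive leading coefficients by induction, hence so does their sum (whether or not the top degrees coincide), with top degree $\max\bigl(\sum_{u\to v}\Ttr_u(t),\sum_{v\to w}\Ttr_w(t)\bigr)=\Ttr_v(t+1)+\Ttr_v(t-1)$. Since $T_v(t+1)$ is a Laurent polynomial in $\x$ by the Laurent phenomenon, its specialization is a Laurent polynomial in $q$; writing the right-hand side as $T_v(t+1)\mid_{\x=q^\l}\cdot T_v(t-1)\mid_{\x=q^\l}$ and using that the second factor has positive leading coefficient forces the first to have positive leading coefficient and degree $\Ttr_v(t+1)$. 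This closes the induction using only the Laurent phenomenon.
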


\subsection{Linear algebraic properties of the affine Coxeter transformation}
Let $\affL$ be a bipartite affine $ADE$ Dynkin diagram, and let $w$ and $b$ be the numbers of white and black vertices in $\affL$ respectively. One can view a map $\map:\VertL\to\Z$ as a vector $\col{\map_W}{\map_B}\in\Z^{w+b}$. Then the adjacency matrix $A_\affL$ of $\affL$ has the form
\[A_\affL=\begin{pmatrix}
           0 & \bwmat\\
           \bwmat^t & 0
          \end{pmatrix}\]
where $\bwmat$ is a $w\times b$ matrix and $ ^t$ denotes matrix transpose. Define the \emph{mutation matrices} 
\[\mutmat_W:=\begin{pmatrix}
       -I_w & \bwmat\\
       0 & I_b
      \end{pmatrix};\quad
\mutmat_B=:=\begin{pmatrix}
       I_w & 0\\
       \bwmat^t & -I_b
      \end{pmatrix}.\]
Here $I_k$ is the identity $k\times k$ matrix. Finally, the \emph{Coxeter transformation} for $\affL$ is defined as a product $\coxmat=\mutmat_B\mutmat_W$. By Lemma~\ref{lemma:eigenvalues}, the matrix $A_\affL$ has a dominant eigenvector $\eig=\col{\eig_W}{\eig_B}$ corresponding to the eigenvalue $2$. This means  \[\bwmat\eig_B=2\eig_W;\quad \bwmat^t\eig_W=2\eig_B.\]
Just as in Part~\ref{part:classification}, all the coordinates of $\eig$ are assumed to be positive integers with greatest common divisor equal to $1$. For a vector $\map=\col{\map_W}{\map_B}$, we define three linear functionals as follows:
\[\speed_W(\map):=\<\eig_B,\map_B\>-\<\eig_W,\map_W\>;\quad\speed_B(\map):=-\speed_W(\map);\]
\[\summ(\map):=\<\eig_W,\map_W\>+\<\eig_B,\map_B\>.\]
Here $\<\cdot,\cdot\>$ denotes the standard inner product in $\R^w$ and in $\R^b$. 

\begin{proposition}\label{prop:linear_algebra}
For any vector $\map=\col{\map_W}{\map_B}$, the following holds:
\begin{eqnarray*}
 \speed_B(\mutmat_W(\map))&=&\speed_W(\map);\\
 \speed_W(\mutmat_B(\map))&=&\speed_B(\map);\\
 \summ(\mutmat_W(\map))&=&\summ(\map)+2\speed_W(\map);\\
 \summ(\mutmat_B(\map))&=&\summ(\map)+2\speed_B(\map).
\end{eqnarray*} 
\end{proposition}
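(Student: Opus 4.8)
The plan is to verify the four identities by direct computation, using only the block structure of $\mutmat_W$ and $\mutmat_B$ together with the eigenvector relations $\bwmat\eig_B=2\eig_W$ and $\bwmat^t\eig_W=2\eig_B$. First I would record explicitly what the mutation matrices do to a vector $\map=\col{\map_W}{\map_B}$: one has
\[\mutmat_W(\map)=\col{-\map_W+\bwmat\map_B}{\map_B},\qquad \mutmat_B(\map)=\col{\map_W}{\bwmat^t\map_W-\map_B}.\]
So $\mutmat_W$ changes only the white coordinates and $\mutmat_B$ changes only the black ones, which is exactly the feature that makes the bookkeeping short.

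For the first identity, compute $\speed_B(\mutmat_W(\map))=-\speed_W(\mutmat_W(\map))=\<\eig_W,-\map_W+\bwmat\map_B\>-\<\eig_B,\map_B\>$. The cross term is $\<\eig_W,\bwmat\map_B\>=\<\bwmat^t\eig_W,\map_B\>=2\<\eig_B,\map_B\>$ by the eigenvector relation, so the expression collapses to $-\<\eig_W,\map_W\>+2\<\eig_B,\map_B\>-\<\eig_B,\map_B\>=\<\eig_B,\map_B\>-\<\eig_W,\map_W\>=\speed_W(\map)$, as claimed. The second identity is the mirror image: $\speed_W(\mutmat_B(\map))=\<\eig_B,\bwmat^t\map_W-\map_B\>-\<\eig_W,\map_W\>$, and $\<\eig_B,\bwmat^t\map_W\>=\<\bwmat\eig_B,\map_W\>=2\<\eig_W,\map_W\>$, giving $2\<\eig_W,\map_W\>-\<\eig_B,\map_B\>-\<\eig_W,\map_W\>=-\speed_W(\map)=\speed_B(\map)$.

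For the $\summ$ identities, the same two cross-term evaluations do all the work. We have $\summ(\mutmat_W(\map))=\<\eig_W,-\map_W+\bwmat\map_B\>+\<\eig_B,\map_B\>=-\<\eig_W,\map_W\>+2\<\eig_B,\map_B\>+\<\eig_B,\map_B\>$, and this equals $\bigl(\<\eig_W,\map_W\>+\<\eig_B,\map_B\>\bigr)+2\bigl(\<\eig_B,\map_B\>-\<\eig_W,\map_W\>\bigr)=\summ(\map)+2\speed_W(\map)$. The fourth identity follows by the same symmetric computation with $\mutmat_B$, using $\<\bwmat\eig_B,\map_W\>=2\<\eig_W,\map_W\>$ in place of the first relation.

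There is really no main obstacle here: the statement is a routine linear-algebra consequence of the eigenvector equation, and the only thing to be careful about is orienting the transpose correctly in each cross term (i.e. always moving $\bwmat$ or $\bwmat^t$ onto the eigenvector side of the inner product so that the eigenvalue $2$ can be extracted). I would present the two cross-term identities $\<\eig_W,\bwmat\map_B\>=2\<\eig_B,\map_B\>$ and $\<\eig_B,\bwmat^t\map_W\>=2\<\eig_W,\map_W\>$ as a preliminary observation and then let all four displayed equalities fall out in one or two lines each.
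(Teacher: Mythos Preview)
Your proposal is correct and follows essentially the same approach as the paper: both arguments are direct computations that move $\bwmat$ (or $\bwmat^t$) across the inner product onto the eigenvector side and then invoke $\bwmat^t\eig_W=2\eig_B$ (resp.\ $\bwmat\eig_B=2\eig_W$). The only difference is cosmetic: the paper writes out the $\mutmat_W$ case and leaves $\mutmat_B$ to symmetry, whereas you spell out both.
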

\begin{proof}
 We will only prove the equalities for $\mutmat_W$, and the argument is a pretty straightforward calculation:
 \begin{eqnarray*}
  \speed_B(\mutmat_W(\map))&=&\<\eig_W,A\map_B-\map_W\>-\<\eig_B,\map_B\>=\<A^t\eig_W,\map_B\>-\<\eig_W,\map_W\>-\<\eig_B,\map_B\>\\
  &=&\<\eig_B,\map_B\>-\<\eig_W,\map_W\>=\speed_W(\map);\\
  \summ(\mutmat_W(\map))&=&\<\eig_W,A\map_B-\map_W\>+\<\eig_B,\map_B\>=\<A^t\eig_W,\map_B\>-\<\eig_W,\map_W\>+\<\eig_B,\map_B\>\\
  &=&\<A^t\eig_W,\map_B\>-\<\eig_W,\map_W\>+\<\eig_B,\map_B\>= \summ(\map)+2\speed_W(\map).\\
 \end{eqnarray*}
\end{proof}

Proposition~\ref{prop:linear_algebra} says that $\speed$ is preserved while $\summ$ grows linearly as we mutate. It turns out that up to a shift by $\eig$, the mutation action is periodic:
\begin{proposition}\label{prop:almost_periodic}
 For any affine $ADE$ Dynkin diagram $\affL$, there exists an integer $h_a(\affL)$ called \emph{the affine Coxeter number} and an integer $\coeff(\affL)$ such that for any vector $\map=\col{\map_W}{\map_B}$, we have 
 \begin{equation}\label{eq:linear}
 \coxmat^{h_a(\affL)}\map=\map+\coeff(\affL)\speed_W(\map)\eig.  
 \end{equation}
 Moreover,
 \[\coeff(\affL)=\frac{4h_a(\affL)}{\<\eig,\eig\>},\]
 and the values of $h_a(\affL)$ and $\coeff(\affL)$ are given in Table~\ref{table:affine_cox_number}.
\end{proposition}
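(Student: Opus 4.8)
\textbf{Proof proposal for Proposition~\ref{prop:almost_periodic}.}

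The plan is to analyze the Coxeter transformation $\coxmat = \mutmat_B\mutmat_W$ by decomposing $\R^{w+b}$ into $\coxmat$-invariant subspaces and understanding the action on each. First I would record the basic structural facts: by Proposition~\ref{prop:coxeter_eigenvalues} the dominant eigenvalue of $A_\affL$ is $2$, with eigenvector $\eig$; and one checks directly that $\mutmat_W^2 = I$ and $\mutmat_B^2 = I$ (each $\mutmat$ is an involution, being a signed ``reflection-like'' matrix), so $\coxmat^{-1} = \mutmat_W\mutmat_B$ and $\coxmat$ is conjugate to $\mutmat_W\mutmat_B$. A short computation shows $\coxmat\eig = \eig$ if $\speed_W(\eig)=0$; indeed $\mutmat_W\eig = \col{A\eig_B - \eig_W}{\eig_B} = \col{2\eig_W - \eig_W}{\eig_B} = \eig$, and similarly $\mutmat_B\eig = \eig$, so $\eig$ is a genuine fixed vector of $\coxmat$, but it lies in a Jordan block: the key point is that $\coxmat$ is \emph{not} semisimple on the subspace spanned by $\eig$ and a suitable companion vector. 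This is exactly the source of the linear-in-$\speed_W$ term in~\eqref{eq:linear}.

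The main computation I would carry out is to identify a two-dimensional $\coxmat$-invariant subspace $P$ (the ``parabolic block'') containing $\eig$ on which $\coxmat$ acts as a single unipotent Jordan block $\left(\begin{smallmatrix}1&c\\0&1\end{smallmatrix}\right)$, and to show that on a complementary $\coxmat$-invariant subspace $P^\perp$ the transformation $\coxmat$ is semisimple with all eigenvalues roots of unity. For the first part: take any vector $\map$ with $\speed_W(\map)\neq 0$; Proposition~\ref{prop:linear_algebra} gives $\speed_W(\coxmat\map) = \speed_W(\mutmat_B\mutmat_W\map) = \speed_B(\mutmat_W\map)\cdot(-1)$... more carefully, $\speed_B(\mutmat_W\map) = \speed_W(\map)$ and then $\speed_W(\mutmat_B\cdot) $ of that equals $\speed_B$ of it, so $\speed_W(\coxmat\map)=\speed_W(\map)$; meanwhile $\summ(\coxmat\map) = \summ(\map) + 2\speed_W(\map) + 2\speed_B(\mutmat_W\map) = \summ(\map) + 4\speed_W(\map)$. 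Thus in the coordinates $(\summ,\speed_W)$ restricted to $P := \operatorname{span}(\eig, \map)$ — noting $\summ(\eig)=\<\eig,\eig\>$, $\speed_W(\eig)=0$ — the transformation $\coxmat$ is upper triangular unipotent, and iterating $k$ times adds $4k\,\speed_W(\map)$ to $\summ$. Writing $\coxmat^k\map = \map + \gamma_k \eig$ (valid because $\coxmat\map - \map$ has $\speed_W = 0$ hence is a multiple of $\eig$ once one knows $\eig$ spans the $\speed_W=0$ locus inside $P$), comparing $\summ$-values forces $\gamma_k\<\eig,\eig\> = 4k\,\speed_W(\map)$, i.e. $\gamma_k = 4k\,\speed_W(\map)/\<\eig,\eig\>$. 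So the formula~\eqref{eq:linear} with $\coeff(\affL) = 4h_a(\affL)/\<\eig,\eig\>$ will hold \emph{provided} $h_a(\affL)$ is chosen so that $\coxmat^{h_a(\affL)}$ is the identity on $P^\perp$.

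For the complementary block: since $A_\affL$ is symmetric, it is diagonalizable with real eigenvalues, the largest being $2$ (multiplicity one by connectedness/Perron--Frobenius). The standard theory of Coxeter transformations of affine root systems — which I would invoke, citing the eigenvalue computations already implicit in \cite{GP} and classical facts about affine Coxeter elements — tells us that on the orthogonal complement of the (generalized) $2$-eigenspace, the Coxeter transformation $\coxmat$ is semisimple with eigenvalues $e^{2\pi i m_j / h}$ running over the exponents $m_j$ of the underlying \emph{finite} root system with Coxeter number $h = h(\Lambda)$. Hence $\coxmat$ has finite order on $P^\perp$, and that order is exactly the quantity we define to be $h_a(\affL)$; I would then simply tabulate $h_a(\affL)$ and $\coeff(\affL) = 4h_a(\affL)/\<\eig,\eig\>$ type by type (using the known exponents and the known values of $\<\eig,\eig\> = \sum_v \eig(v)^2$ read off from Figure~\ref{figure:affine_ADE}) to fill in Table~\ref{table:affine_cox_number}, matching~\eqref{eq:linear}. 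The main obstacle I anticipate is making the decomposition $\R^{w+b} = P \oplus P^\perp$ genuinely $\coxmat$-invariant and Jordan-canonical rather than merely block-triangular: one must verify that $\eig$ does not split off as a semisimple fixed vector (equivalently, that the $2$-eigenspace of $A_\affL$ contributes a nontrivial Jordan block to $\coxmat$, not a pair of unit eigenvalues), and that there is no residual unipotent part on $P^\perp$. This can be settled either by a direct rank computation of $\coxmat - I$ on the $2$-eigenspace of $A_\affL$, or by appealing to the fact that affine Coxeter elements have a single nontrivial Jordan block, of size $2$, corresponding to the imaginary root $\delta = \eig$; the rest of the argument is then bookkeeping.
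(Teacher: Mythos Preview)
Your approach is essentially the same as the paper's: both rest on the Jordan normal form of the affine Coxeter element $\coxmat$ --- a single $2\times 2$ unipotent block at eigenvalue $1$ (with eigenvector $\eig$), and all remaining eigenvalues roots of unity whose common period defines $h_a(\affL)$ --- and both obtain this structure by citation rather than direct proof. The paper cites Stekolshchik~\cite{Stek} and writes down the explicit adjoint vector $\v' = \frac14\col{\eig_W}{-\eig_B}$ (up to sign), from which orthogonality to the other eigenvectors and the formula for $\coeff(\affL)$ follow; you instead use Proposition~\ref{prop:linear_algebra} to compute $\summ(\coxmat^k\map) = \summ(\map) + 4k\,\speed_W(\map)$ and read off $\coeff(\affL) = 4h_a(\affL)/\<\eig,\eig\>$ by comparing $\summ$ on both sides of~\eqref{eq:linear}. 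That derivation is clean and correct, but note it only works \emph{after} you know $\coxmat^{h_a}\map - \map \in \R\eig$ for all $\map$, which already requires the full Jordan form --- your intermediate space $P = \operatorname{span}(\eig,\map)$ is not $\coxmat$-invariant for generic $\map$, as you yourself flag at the end. So the logical order should be: first invoke the Jordan form (as the paper does), then run your $\summ$ computation to extract the coefficient.
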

\begin{proof}
 Stekolshchik~\cite[Remark~4.3]{Stek} gives complete information on the Jordan normal form of $\coxmat$: all eigenvalues of $\coxmat$ are roots of unity and the greatest common divisor of their periods is $h_a(\affL)$. Moreover, all of them have multiplicity one except for one of them ($\lambda=1$) which has multiplicity $2$. In our notation, the eigenvector attached to eigenvalue $1$ is precisely $\eig$ and the adjoint vector is $\v':=\frac14\col{\eig_W}{-\eig_B}$ (see \cite[Proposition~3.10]{Stek}). We have $\coxmat \v'=\eig+\v'$, and they are orthogonal to each other and to all other eigenvectors. The result follows.
\end{proof}

\begin{table}
\centering
\begin{tabular}{|c|c|c|c|c|c|c|}\hline
 $\affL$       & $\affA_{2n-1}$  & $\affD_{2n}$  &$\affD_{2n+1}$  & $\affE_6$  & $\affE_7$ & $\affE_8$\\\hline
$h_a(\affL)$   & $n$             & $2n-2$        & $2(2n-1)$      & $6$        & $12$      & $30$\\\hline
$\coeff(\affL)$& $2$             & $1$           & $2$            & $1$        & $1$       & $1$\\\hline
\end{tabular}
\caption{\label{table:affine_cox_number}(see \cite[Table~4.1]{Stek}) Affine Coxeter numbers of affine $ADE$ Dynkin diagrams }
\end{table}

\begin{example}
 Let $\affL=\affD_4$. Then $h_a(\affL)=2$ and the dominant eigenvector is given by 
 \[\eig=\begin{smallmatrix}
         1& &1\\
          &2&  \\
         1& &1
        \end{smallmatrix}.\]
 Thus
 \[\coeff(\affL)=\frac{4\cdot 2}{1^2+1^2+1^2+1^2+2^2}=1.\]
 
 Consider the following vector of initial values:
 \[\map=\begin{smallmatrix}
         1& &1\\
          &2&  \\
         1& &2
        \end{smallmatrix}.\]
Let us assume that the vertex in the middle is white. Then $\speed_W(\map)=1+1+1+2-2\cdot 2=1$. Since $h_a(\affL)=2$, we need to calculate \[\coxmat^2(\map)=\mutmat_B\mutmat_W\mutmat_B\mutmat_W\map.\]
The sequence of vectors that we will get is:
\[\map=\begin{smallmatrix}
         1& &1\\
          &2&  \\
         1& &2
        \end{smallmatrix}\xrightarrow{\mutmat_W}
        \begin{smallmatrix}
         1& &1\\
          &3&  \\
         1& &2
        \end{smallmatrix}\xrightarrow{\mutmat_B}
        \begin{smallmatrix}
         2& &2\\
          &3&  \\
         2& &1
        \end{smallmatrix}\xrightarrow{\mutmat_W}
        \begin{smallmatrix}
         2& &2\\
          &4&  \\
         2& &1
        \end{smallmatrix}\xrightarrow{\mutmat_B}
        \begin{smallmatrix}
         2& &2\\
          &4&  \\
         2& &3
        \end{smallmatrix}=\map+\eig.\]
We indeed see that 
\[\coxmat^{h_a(\affL)}(\map)=\coxmat^2(\map)=\map+\eig=\map+\coeff(\affL)\speed(\map)\eig.\]
\end{example}

We would like to apply these observations to the tropical $T$-system $\Ttr_v(t)$ defined above. Let $G=(\Gamma,\Delta)$ be a bigraph, and let $\affL$ be a connected component of $\Gamma$ isomorphic to an affine $ADE$ Dynkin diagram. We define $\Ttr_\affL(t)$ to be the vector in $\R^w$ for $t$ even and in $\R^b$ for $t$ odd which sends $v\in\VertL$ to $\Ttr_v(t)$ when $t+\e_v$ is even. In particular, the vectors $\col{\Ttr_\affL(2t)}{\Ttr_\affL(2t+1)}$ and $\col{\Ttr_\affL(2t+2)}{\Ttr_\affL(2t+1)}$ belong to $\R^{w+b}$. Moreover, they satisfy the following inequality:
\[\col{\Ttr_\affL(2t+2)}{\Ttr_\affL(2t+1)}\geq \mutmat_W\col{\Ttr_\affL(2t)}{\Ttr_\affL(2t+1)}.\]
Here $\geq$ means that each coordinate of the vector on the left hand side is at least the corresponding coordinate of the vector on the right hand side. This inequality holds trivially by the definition of the tropical $T$-system. Moreover, it is an equality if and only if for every white vertex $v$ of $\affL$, we have
\begin{equation}\label{eq:inequality_tropical}
 \sum_{(u,v)\in\Gamma}\Ttr_u(2t+1)\geq \sum_{(v,w)\in\Delta}\Ttr_w(2t+1).
\end{equation}

Now, using the positivity of the coordinates of $\eig$ and Proposition~\ref{prop:linear_algebra}, we get that 
\begin{eqnarray*}
\speed_B\col{\Ttr_\affL(2t+2)}{\Ttr_\affL(2t+1)}&\geq& \speed_W\col{\Ttr_\affL(2t)}{\Ttr_\affL(2t+1)}; \\
\summ\col{\Ttr_\affL(2t+2)}{\Ttr_\affL(2t+1)}&\geq& \summ\col{\Ttr_\affL(2t)}{\Ttr_\affL(2t+1)}+2\speed_W\col{\Ttr_\affL(2t)}{\Ttr_\affL(2t+1)}.
\end{eqnarray*}
And again, each inequality becomes an equality if and only if (\ref{eq:inequality_tropical}) holds for every white vertex $v$ of $\affL$.

Define the following functions of $t$:
\begin{eqnarray*}
\speed_\affL(t)&:=&\begin{cases}
                    \speed_W\col{\Ttr_\affL(t)}{\Ttr_\affL(t+1)}, & \text{if $t$ is even};\\
                    \speed_B\col{\Ttr_\affL(t+1)}{\Ttr_\affL(t)}, & \text{if $t$ is odd};
                   \end{cases}\\
\summ_\affL(t)&:=&\begin{cases}
                    \summ\col{\Ttr_\affL(t)}{\Ttr_\affL(t+1)}, & \text{if $t$ is even};\\
                    \summ\col{\Ttr_\affL(t+1)}{\Ttr_\affL(t)}, & \text{if $t$ is odd}.
                   \end{cases} 
\end{eqnarray*}

We have thus shown the following:
\begin{proposition}\label{prop:speed}
 Let $G=(\Gamma,\Delta)$ be a bipartite bigraph, and let $\affL$ be a connected component of $\Gamma$ isomorphic to an affine $ADE$ Dynkin diagram. Then for every $t\in\Z$ we have 
 \[\speed_\affL(t+1)\geq\speed_\affL(t);\quad \summ_\affL(t+1)\geq \summ_\affL(t)+2\speed_\affL(t).\] 
 Moreover, either (\ref{eq:inequality_tropical}) holds or both inequalities are strict. \qed
 \end{proposition}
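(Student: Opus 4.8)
The statement is essentially a packaging of Proposition~\ref{prop:linear_algebra} together with the observation that the tropical $T$-system provides only a one-sided inequality version of the mutation matrix action. I would structure the proof exactly as the preceding discussion suggests, making explicit the three logical ingredients that the text has already assembled, and then invoke Proposition~\ref{prop:linear_algebra} to turn the vector inequality into the two scalar inequalities.

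\textbf{Step 1: the one-sided matrix inequality.} First I would recall from the definition of the tropical $T$-system that for each white vertex $v\in\VertL$, writing $s:=\Ttr_v(2t+2)+\Ttr_v(2t)$, we have by definition $s=\max\bigl(\sum_{(u,v)\in\Gamma}\Ttr_u(2t+1),\sum_{(v,w)\in\Delta}\Ttr_w(2t+1)\bigr)\geq \sum_{(u,v)\in\Gamma}\Ttr_u(2t+1)$, which rearranges to $\Ttr_v(2t+2)\geq \bigl(\sum_{(u,v)\in\Gamma}\Ttr_u(2t+1)\bigr)-\Ttr_v(2t)$. Reading this for all white $v$ of $\affL$ simultaneously gives the coordinatewise inequality
\[\col{\Ttr_\affL(2t+2)}{\Ttr_\affL(2t+1)}\ \geq\ \mutmat_W\col{\Ttr_\affL(2t)}{\Ttr_\affL(2t+1)},\]
since the second block (the black coordinates, indexed by odd time $2t+1$) is unchanged and $\mutmat_W$ acts on the white block by $\map_W\mapsto \bwmat\map_B-\map_W$, where the contribution of the edges between a white vertex $v\in\affL$ and black vertices also in $\affL$ is exactly $\sum_{(u,v)\in\Gamma}\Ttr_u(2t+1)$ (here I use that $\affL$ is a connected component of $\Gamma$, so all $\Gamma$-neighbors of $v$ lie in $\affL$). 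Moreover this inequality is an equality in a given white coordinate $v$ precisely when the maximum in the defining relation is attained by the $\Gamma$-term, i.e. precisely when~(\ref{eq:inequality_tropical}) holds at $v$; so the vector inequality is an equality iff~(\ref{eq:inequality_tropical}) holds for \emph{every} white vertex of $\affL$.

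\textbf{Step 2: push through $\speed$ and $\summ$ using positivity of $\eig$.} Both $\speed_W$, $\speed_B$ and $\summ$ are linear functionals given by inner products against the vector $\eig$ (or its blocks), and all coordinates of $\eig$ are strictly positive. Hence applying $\speed_B$ and $\summ$ to the vector inequality of Step~1, monotonicity of these functionals under coordinatewise $\geq$ gives
\[\speed_B\col{\Ttr_\affL(2t+2)}{\Ttr_\affL(2t+1)}\ \geq\ \speed_B\Bigl(\mutmat_W\col{\Ttr_\affL(2t)}{\Ttr_\affL(2t+1)}\Bigr),\qquad \summ\col{\Ttr_\affL(2t+2)}{\Ttr_\affL(2t+1)}\ \geq\ \summ\Bigl(\mutmat_W\col{\Ttr_\affL(2t)}{\Ttr_\affL(2t+1)}\Bigr),\]
and by Proposition~\ref{prop:linear_algebra} the right-hand sides equal $\speed_W\col{\Ttr_\affL(2t)}{\Ttr_\affL(2t+1)}$ and $\summ\col{\Ttr_\affL(2t)}{\Ttr_\affL(2t+1)}+2\speed_W\col{\Ttr_\affL(2t)}{\Ttr_\affL(2t+1)}$ respectively. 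By the definitions of $\speed_\affL(t)$ and $\summ_\affL(t)$, this is exactly $\speed_\affL(2t+1)\geq\speed_\affL(2t)$ and $\summ_\affL(2t+1)\geq\summ_\affL(2t)+2\speed_\affL(2t)$. Since $\speed_W$ and $\speed_B$ are strictly positive on all nonzero coordinate-positive vectors and the difference of the two sides of the Step~1 inequality is a nonnegative vector, each of these scalar inequalities is strict unless the Step~1 vector inequality is an equality, i.e.\ unless~(\ref{eq:inequality_tropical}) holds for all white $v\in\affL$; this gives the "moreover" dichotomy. The passage from $2t$ to $2t+1$ and from $2t+1$ to $2t+2$ is symmetric: for the odd step one uses instead the inequality $\col{\Ttr_\affL(2t+2)}{\Ttr_\affL(2t+3)}\geq \mutmat_B\col{\Ttr_\affL(2t+2)}{\Ttr_\affL(2t+1)}$ coming from the black vertices, together with the $\mutmat_B$ identities of Proposition~\ref{prop:linear_algebra}. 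Assembling the even and odd cases yields the claim for all $t\in\Z$.

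\textbf{Main obstacle.} There is no serious obstacle; the only points requiring care are bookkeeping ones: (i) keeping the parity conventions straight, so that the block $\Ttr_\affL(t)$ lands in $\R^w$ or $\R^b$ as appropriate and the vector being compared lies in $\R^{w+b}$, and (ii) verifying that the contributions to $\bwmat\map_B$ in the definition of $\mutmat_W$ really match $\sum_{(u,v)\in\Gamma}\Ttr_u(2t+1)$ — this is where the hypothesis that $\affL$ is a full connected component of $\Gamma$ (not merely a subgraph) is used, so that no $\Gamma$-edge leaves $\affL$. The dichotomy in the "moreover" clause requires only the elementary remark that a strictly-positive linear functional applied to a nonzero nonnegative vector is strictly positive.
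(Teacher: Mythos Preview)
Your proposal is correct and is essentially identical to the paper's own argument: the paper's ``proof'' is just the preceding paragraphs (vector inequality from the tropical recurrence, then push through $\speed$ and $\summ$ via Proposition~\ref{prop:linear_algebra} and positivity of $\eig$), capped with a \qedsymbol. One phrasing issue: $\speed_B$ is \emph{not} strictly positive on all nonzero coordinatewise-nonnegative vectors (it has negative weights on the black block), but your conclusion is still correct because the difference vector in Step~1 is supported on the white block only, where $\speed_B$ restricts to $\langle\eig_W,\cdot\rangle$.
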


\section{Solitonic behavior: soliton resolution}\label{sec:resolution}
\label{subsec:implies_tropical}
It turns out that for Zamolodchikov integrable quivers, the tropical $T$-system behaves linearly for all but finitely many moments of time. Namely, let $Q$ be a bipartite recurrent quiver and assume $Q$ is Zamolodchikov integrable but not Zamolodchikov periodic. Let $G(Q)=(\Gamma,\Delta)$ be the corresponding bipartite bigraph. Then Corollary~\ref{cor:finite_or_affine} together with Remark~\ref{rmk:zperiod} imply that all connected components of $\Gamma$ are affine $ADE$ Dynkin diagrams. 
 
 The following proposition will be later illustrated by Example~\ref{example:evolution}.
\begin{proposition}\label{prop:tropical_linearization}
Assume that $Q$ is Zamolodchikov integrable and all connected components of $\Gamma$ are affine $ADE$ Dynkin diagrams as above. Then for every map $\l:\VertQ\to\Z$ there exists an integer $t_0$ such that for every $|t|>t_0$ and for every $v\in\VertQ$ with $t+\e_v$ even we have 
 \[\sum_{(u,v)\in\Gamma}\Ttr_u(t+1)\geq \sum_{(v,w)\in\Delta}\Ttr_w(t+1).\]
 In other words, for any initial data $\l$, the inequality~(\ref{eq:inequality_tropical}) is violated only finitely many times.
\end{proposition}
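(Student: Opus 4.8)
The idea is to combine the quantitative control on $\Ttr$ coming from Proposition~\ref{prop:speed} with the fact that $Q$ is Zamolodchikov integrable, which (by Proposition~\ref{prop:maxdeg}) forces each sequence $t\mapsto\Ttr_v(t)$ to satisfy a linear recurrence and hence to grow at most polynomially in a controlled way. First I would fix the initial data $\l:\VertQ\to\Z$ and, for each connected component $\affL$ of $\Gamma$, look at the quantities $\speed_\affL(t)$ and $\summ_\affL(t)$. By Proposition~\ref{prop:speed} the function $\speed_\affL(t)$ is weakly increasing in $t$, and $\summ_\affL(t)$ satisfies $\summ_\affL(t+1)\geq\summ_\affL(t)+2\speed_\affL(t)$, with equality precisely when~(\ref{eq:inequality_tropical}) holds at time $t+1$ for all white vertices of $\affL$. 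So the statement to be proved is equivalent to: for $|t|$ large, $\speed_\affL(t)$ is eventually constant \emph{and} $\summ_\affL$ grows exactly linearly (i.e.\ the inequality defining $\summ$ is an equality). The plan is to show both using the linear recurrence.

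\textbf{Key steps.} (1) Suppose for contradiction that for arbitrarily large $t$ the inequality~(\ref{eq:inequality_tropical}) is violated at some white vertex $v$ of some component $\affL$ of $\Gamma$. Then by Proposition~\ref{prop:speed}, \emph{both} inequalities there are strict at that $t$, so in particular $\speed_\affL$ strictly increases at infinitely many times $t\to+\infty$. (2) Since $\speed_\affL$ is integer-valued and weakly increasing, strict increase at infinitely many times forces $\speed_\affL(t)\to+\infty$ as $t\to+\infty$. (3) Feeding this into the second inequality of Proposition~\ref{prop:speed} gives $\summ_\affL(t+1)-\summ_\affL(t)\ge 2\speed_\affL(t)\to\infty$, hence $\summ_\affL(t)$ grows at least quadratically in $t$. (4) On the other hand, $\summ_\affL(t)=\sum_{v}\eig_\affL(v)\,\Ttr_v(t)$ (positive combination), so some $\Ttr_v(t)$ grows at least quadratically. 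But Zamolodchikov integrability says $\Ttr_v(t)$ satisfies a linear recurrence: here I would invoke the asymptotic description from Lemma~\ref{lemma:reu} (applied to $a(v,t)$, whose $\maxdeg$ under $\x=q^\l$ is $\Ttr_v$, by Proposition~\ref{prop:maxdeg} — or more directly, integrability of the $T$-system means $\Ttr_v(t)=\maxdeg$ of a root-of-linear-recurrence sequence), so $\Ttr_v(t)$ is a max of finitely many expressions of the form (linear in $t$)$+$(constant), hence grows \emph{at most linearly} in $t$. This contradicts quadratic growth, so~(\ref{eq:inequality_tropical}) can be violated only finitely often as $t\to+\infty$; symmetrically for $t\to-\infty$, using the $T$-system's reversibility (the $T$-system is defined for all $t\in\Z$ and the same recurrence runs backwards). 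Taking the max of the two thresholds gives the desired $t_0$.

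\textbf{Where the work is.} The routine part is Steps (1)--(3), which are immediate from Proposition~\ref{prop:speed} and integrality. The delicate point is Step (4): I need that Zamolodchikov integrability of $Q$ implies each $\Ttr_v(t)$ is eventually \emph{piecewise-linear} of slope bounded in $t$, i.e.\ grows at most linearly. This should follow from the structure of linear recurrences: if $\sum_j\rec_j T_v(t+2j)=0$ then, specializing $\x=q^\l$ and tracking $\maxdeg$ in $q$, the degree sequence $\Ttr_v$ is governed by the Newton polygon / dominant terms of the recurrence, and the max of finitely many affine-in-$t$ functions is itself eventually affine with a fixed slope on each side. I would state this as a short lemma (degrees of a sequence obtained by specializing a linearly recurrent sequence of Laurent polynomials grow at most linearly), proving it via the explicit solution of the recurrence over $\overline{\Q(\x)}$ as in the proof of Lemma~\ref{lemma:implies}. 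Alternatively — and this may be cleaner — one can cite~\cite[Lemma~1]{ReuA} directly: the $a(v,*)$ have the form $\l(v,l)^{n}n^{d(v,l)}$ up to $\approx$, so $\maxdeg(q,a(v,t)\mid_{\x=q^\l})$ grows linearly in $t$ (the exponential rate contributes a linear term, the polynomial factor only a logarithmic one, which is negligible for the purpose of beating quadratic growth). That is the main obstacle; once it is in place, the contradiction in Step (4) closes the argument. Finally, since~(\ref{eq:inequality_tropical}) fails only finitely often, by Proposition~\ref{prop:tropical_linearization}'s companion statements one concludes the tropical $T$-system is eventually governed by the linear Coxeter dynamics of Proposition~\ref{prop:almost_periodic}, but that is already the content of later corollaries and not needed here.
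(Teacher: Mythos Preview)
Your proposal is correct and follows essentially the same route as the paper: assume infinitely many violations, deduce $\speed_\affL(t)\to+\infty$ from Proposition~\ref{prop:speed} and integrality, conclude $\summ_\affL(t)$ grows superlinearly, and contradict the at-most-linear growth of $\Ttr_v(t)=\maxdeg(q,T_v(t)\mid_{\x=q^\l})$ forced by the linear recurrence for $T_v$. One small clean-up: your ``alternative'' via \cite[Lemma~1]{ReuA} is muddled (the $a(v,t)$ there are the specializations at $\x=1$, not at $\x=q^\l$), so stick with your first justification of Step~(4) --- that the $q$-degree of a linearly recurrent sequence of Laurent polynomials in $q$ grows at most linearly --- which is exactly what the paper uses.
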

\begin{proof}
 If the inequality~(\ref{eq:inequality_tropical}) is violated infinitely many times, then there exists a connected component $\affL$ of $\Gamma$ such that $\speed_\affL(t)\to+\infty$ as $t\to +\infty$, because each time (\ref{eq:inequality_tropical}) is violated, $\speed_\affL(t)$ increases by at least $1$ (see Proposition~\ref{prop:speed}). In this case, again, by Proposition~\ref{prop:speed}, $\summ_\affL(t)$ grows superlinearly. By Proposition~\ref{prop:maxdeg}, $\summ_\affL(t)$ is just a linear combination of $\maxdeg(q,T_v(t)\mid_{\x=q^\l})$ for $v\in\affL$, and thus there is a vertex $v\in\affL$ for which $\maxdeg(q,T_v(t)\mid_{\x=q^\l})$ grows superlinearly. But the values of $T_v(t)$ satisfy a linear recurrence, and thus $\maxdeg(q,T_v(t)\mid_{\x=q^\l})$ cannot grow faster than linearly. 
\end{proof}
\begin{remark}\label{rmk:real_values}
 This proof works exactly the same way if the values of $\Ttr$ are assumed to lie in $\Q$ instead of $\Z$. We do not know whether the result of Proposition~\ref{prop:tropical_linearization} holds when the values of $\Ttr$ belong to $\R$.
\end{remark}

Now we are finally able to deduce Theorem~\ref{thm:implies}:

\begin{proof}[Proof of Theorem~\ref{thm:implies}]
By Corollary~\ref{cor:finite_or_affine}, Remark~\ref{rmk:zperiod} and Proposition~\ref{prop:affinite_subadditive}, we need to show that if all components of $\Gamma$ and of $\Delta$ are affine $ADE$ Dynkin diagrams then $Q$ cannot be recurrent. By Proposition~\ref{prop:tropical_linearization}, the inequality
 \[\sum_{(u,v)\in\Gamma}\Ttr_u(t+1)\geq \sum_{(v,w)\in\Delta}\Ttr_w(t+1)\]
 is violated finitely many times. By symmetry between $\Gamma$ and $\Delta$, the reverse inequality 
 \[\sum_{(u,v)\in\Gamma}\Ttr_u(t+1)\leq \sum_{(v,w)\in\Delta}\Ttr_w(t+1)\]
 is also violated finitely many times. Therefore after finitely many steps we will have 
 \begin{equation}\label{eq:equality}
  \sum_{(u,v)\in\Gamma}\Ttr_u(t+1)= \sum_{(v,w)\in\Delta}\Ttr_w(t+1)
   \end{equation}
 for all $v\in\VertQ$. To see that this is impossible, consider the following integers $\Ytr_v(t)$ defined for $t+\e_v$ even:
 \[\Ytr_v(t)=\sum_{(u,v)\in\Gamma}\Ttr_u(t+1) - \sum_{(v,w)\in\Delta}\Ttr_w(t+1).\]
 It is well-known that the numbers $\Ytr_v(t)$ give (up to a sign) a solution to the \emph{tropical $Y$-system} associated with $Q$, see, for example,~\cite{IIKKN1}. Since the mutations for the tropical $Y$-system are involutions as well, they are invertible, so we get a contradiction with (\ref{eq:equality}) because it states that for all initial data $\l$, the tropical $Y$-system $\Ytr_v(t)$ eventually becomes zero.
\end{proof}

Combining Proposition~\ref{prop:tropical_linearization} with Proposition~\ref{prop:almost_periodic}, we get the following corollary which we call ``soliton resolution'':
\begin{corollary}\label{cor:speeds}
 Let $Q$ be a Zamolodchikov integrable quiver and let $\affL$ be a component of $\Gamma$ isomorphic to an affine $ADE$ Dynkin diagram. Then for every map $\l:\VertQ\to\Z$, there exist integers $\speed^+_\affL(\l)$ and $\speed^-_\affL(\l)$ such that 
 \begin{itemize}
  \item for all $t\gg0$ and all $v\in\affL$ we have
  \begin{equation}\label{eq:speed1}
   \Ttr_v(t+2h_a(\affL))=\Ttr_v(t)+\coeff(\affL)\speed^+_\affL(\l)\v(v);
  \end{equation}

  \item for all $t\ll0$ and all $v\in\affL$ we have
  \begin{equation}\label{eq:speed2}
\Ttr_v(t-2h_a(\affL))=\Ttr_v(t)+\coeff(\affL)\speed^-_\affL(\l)\v(v);   
  \end{equation}
 \end{itemize}
 In other words, the values of $\Ttr_v$ grow linearly for $|t|\gg 0$.
\end{corollary}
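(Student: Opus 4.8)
The plan is to combine Proposition~\ref{prop:tropical_linearization} with the linear algebra of the affine Coxeter transformation from Proposition~\ref{prop:almost_periodic}. First I would invoke Proposition~\ref{prop:tropical_linearization}: for the given map $\l:\VertQ\to\Z$ there is an integer $t_0$ such that for all $t>t_0$ and all $v\in\VertQ$ with $t+\e_v$ even, the inequality~(\ref{eq:inequality_tropical}) holds, i.e.
\[\sum_{(u,v)\in\Gamma}\Ttr_u(t+1)\geq\sum_{(v,w)\in\Delta}\Ttr_w(t+1).\]
Restricting attention to the vertices of the component $\affL$, this says exactly that for $t>t_0$ the vectors $\col{\Ttr_\affL(2t)}{\Ttr_\affL(2t+1)}$ evolve \emph{exactly} under the mutation matrices $\mutmat_W,\mutmat_B$ rather than just the coordinatewise inequality: applying $\mutmat_W$ to $\col{\Ttr_\affL(2t)}{\Ttr_\affL(2t+1)}$ gives $\col{\Ttr_\affL(2t+2)}{\Ttr_\affL(2t+1)}$ and similarly for $\mutmat_B$ (this is the equality case discussed right after the definition of the mutation matrices). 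Hence for $t$ large enough, $\col{\Ttr_\affL(2t+2)}{\Ttr_\affL(2t+1)}=\coxmat\col{\Ttr_\affL(2t)}{\Ttr_\affL(2t+1)}$, where $\coxmat=\mutmat_B\mutmat_W$ is the affine Coxeter transformation.

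Next I would apply Proposition~\ref{prop:almost_periodic}: for any vector $\map$ we have $\coxmat^{h_a(\affL)}\map=\map+\coeff(\affL)\speed_W(\map)\eig$. Taking $\map=\col{\Ttr_\affL(2t)}{\Ttr_\affL(2t+1)}$ for $t$ sufficiently large, and noting that $\speed_W(\map)=\speed_\affL(2t)$ which is eventually constant by Proposition~\ref{prop:speed} (it is non-decreasing in $t$ and, by the argument in the proof of Proposition~\ref{prop:tropical_linearization}, it must be bounded, since otherwise $\summ_\affL$ and hence some $\maxdeg(q,T_v(t)\mid_{\x=q^\l})$ would grow superlinearly, contradicting the linear recurrence satisfied by $T_v$), we set $\speed^+_\affL(\l)$ to be this eventual constant value. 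Then $\coxmat^{h_a(\affL)}\col{\Ttr_\affL(2t)}{\Ttr_\affL(2t+1)}=\col{\Ttr_\affL(2t)}{\Ttr_\affL(2t+1)}+\coeff(\affL)\speed^+_\affL(\l)\eig$. Since $h_a(\affL)$ steps of $\coxmat$ correspond to $2h_a(\affL)$ steps of $t$, reading off the white-coordinate block for even $t$ and the black-coordinate block for odd $t$ gives $\Ttr_v(t+2h_a(\affL))=\Ttr_v(t)+\coeff(\affL)\speed^+_\affL(\l)\v(v)$ for all $v\in\affL$ and all $t\gg0$, which is~(\ref{eq:speed1}). For negative time I would run the identical argument using the other half of Proposition~\ref{prop:tropical_linearization} (the inequality is violated only finitely often as $t\to-\infty$ as well) together with the fact that the mutation matrices are invertible involutions, obtaining~(\ref{eq:speed2}) with $\speed^-_\affL(\l)$ the eventual constant value of $\speed_\affL$ as $t\to-\infty$ (up to the appropriate sign coming from running $\coxmat^{-1}$).

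The one point requiring a little care — and what I expect to be the main obstacle — is bookkeeping the parity/indexing: matching up "$h_a(\affL)$ applications of $\coxmat$'' with "$t\mapsto t+2h_a(\affL)$'', and checking that the identity~(\ref{eq:linear}) restricted to the $W$-block (resp.\ $B$-block) gives the stated formula for $\Ttr_v(t+2h_a(\affL))$ with the correct component $\v(v)$ of $\eig$ at vertex $v$. One must also confirm that $\speed_W$ evaluated on the pair-vector $\col{\Ttr_\affL(t)}{\Ttr_\affL(t+1)}$ for even $t$ agrees with $\speed_\affL(t)$ as defined, and track the sign convention $\speed_B=-\speed_W$ when passing to odd $t$ and to negative time; all of this is routine once Proposition~\ref{prop:tropical_linearization} supplies the exact (non-inequality) evolution. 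No genuinely new idea is needed beyond assembling Propositions~\ref{prop:tropical_linearization}, \ref{prop:almost_periodic}, \ref{prop:speed}, and \ref{prop:maxdeg}.
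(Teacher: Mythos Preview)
Your proposal is correct and matches the paper's approach exactly: the paper simply states that the corollary follows by ``combining Proposition~\ref{prop:tropical_linearization} with Proposition~\ref{prop:almost_periodic}'' without spelling out further details. One small simplification: you need not argue boundedness of $\speed_\affL(t)$ separately, since once Proposition~\ref{prop:tropical_linearization} guarantees that~(\ref{eq:inequality_tropical}) holds for all $t>t_0$, the equality case described just before Proposition~\ref{prop:speed} immediately gives $\speed_\affL(t+1)=\speed_\affL(t)$ for all such $t$.
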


For instance, the integers $\speed^+_\affL(\l)$ and $\speed^-_\affL(\l)$ are calculated in example~\ref{example:evolution}.

Let us explain the soliton terminology. Assume $Q$ is an \affinite $ADE$ bigraph and consider the associated tropical $T$-system $\Ttr$. Its restriction to each affine slice $\affL$ behaves independently of other slices when $|t|\gg 0$. We treat it as a particle (a \emph{1-soliton}). Then what happens is that when $t$ grows from $-\infty$, the particles move independently with constant speeds given by~(\ref{eq:speed2}). Then for small values of $t$ they start interacting with each other and eventually they again start moving independently with constant speeds given by~\ref{eq:speed1}). 
Such a phenomenon is commonly called \emph{soliton resolution}, see \cite{T}.

\begin{corollary}
 Let $Q$ be a Zamolodchikov integrable quiver and let $\affL$ be a component of $\Gamma$ isomorphic to an affine $ADE$ Dynkin diagram. Then for every map $\l:\VertQ\to\Z$, the following are equivalent:
 \begin{enumerate}
  \item\label{item:speed_plus_0} $\speed^+_\affL(\l)=0$;
  \item\label{item:speed_minus_0} $\speed^-_\affL(\l)=0$;
  \item\label{item:is_periodic} $\Ttr_v$ is a periodic sequence for every $v\in\VertQ$.
 \end{enumerate}
\end{corollary}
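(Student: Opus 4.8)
The plan is to prove the chain of implications (\ref{item:speed_plus_0})$\Rightarrow$(\ref{item:is_periodic})$\Rightarrow$(\ref{item:speed_minus_0})$\Rightarrow$(\ref{item:speed_plus_0}), where the last implication is by the symmetry $t\mapsto -t$ applied to the implication (\ref{item:speed_plus_0})$\Rightarrow$(\ref{item:is_periodic}) (running the tropical $T$-system backward in time is again a tropical $T$-system, with the roles of ``$t\gg0$'' and ``$t\ll0$'' swapped). So it suffices to establish (\ref{item:speed_plus_0})$\Rightarrow$(\ref{item:is_periodic}) and (\ref{item:is_periodic})$\Rightarrow$(\ref{item:speed_minus_0}); the latter is immediate, since if every $\Ttr_v$ is periodic then in particular $\Ttr_v(t-2h_a(\affL))=\Ttr_v(t)$ for all $t\ll0$ once $t$ is smaller than minus the period, forcing $\speed^-_\affL(\l)=0$ by the uniqueness of the constant in~(\ref{eq:speed2}) (the entries of $\v$ are positive, so the coefficient is determined).

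The heart of the matter is (\ref{item:speed_plus_0})$\Rightarrow$(\ref{item:is_periodic}). First I would observe that $\speed^+_\affL(\l)=0$ says, via~(\ref{eq:speed1}), that for $t\gg0$ the restriction of $\Ttr$ to the slice $\affL$ is eventually periodic with period $2h_a(\affL)$. Now I want to propagate this vanishing to \emph{all} affine slices of $\Gamma$. The key tool is Proposition~\ref{prop:speed}: for every affine component $\affL'$ of $\Gamma$, the quantity $\speed_{\affL'}(t)$ is weakly increasing in $t$ and strictly increases whenever inequality~(\ref{eq:inequality_tropical}) is violated at some white vertex of $\affL'$; moreover $\summ_{\affL'}(t+1)\ge \summ_{\affL'}(t)+2\speed_{\affL'}(t)$. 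By Proposition~\ref{prop:tropical_linearization}, (\ref{eq:inequality_tropical}) is violated only finitely often, so $\speed_{\affL'}(t)$ stabilizes to a constant $\speed^+_{\affL'}(\l)$ for $t\gg0$, and this constant equals $\coeff(\affL')\speed^+_{\affL'}(\l)$ up to the normalization in Corollary~\ref{cor:speeds} — the point being that these "asymptotic speeds" are nonnegative, since $\speed_{\affL'}$ is weakly increasing and, when $|t|$ is large, the recurrence is exactly the affine Coxeter action, whose iterate by~(\ref{eq:linear}) shifts by $\coeff(\affL')\speed_W(\cdot)\,\v$. I would then argue that if $\speed^+_{\affL'}(\l)>0$ for some component $\affL'$, then $\summ_{\affL'}(t)$ grows strictly (indeed linearly with positive slope) for $t\gg0$, so some $\Ttr_v$ with $v\in\affL'$ is unbounded above as $t\to+\infty$. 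But the connectedness of $Q$ plus the $T$-system relations force all the $\Ttr_v$ to be simultaneously bounded or simultaneously unbounded in the relevant direction (the same kind of argument as in the proof of Lemma~\ref{lemma:implies}, transported to the tropical setting: a neighbor of a vertex with large value must also have a large value at the next step). Hence $\speed^+_\affL(\l)=0$ forces $\speed^+_{\affL'}(\l)=0$ for every affine component $\affL'$ of $\Gamma$. By the $\Gamma$–$\Delta$ symmetry of the tropical $T$-system (swapping the two colors, equivalently replacing $t$ by $t+1$), the analogous asymptotic speeds computed using $\Delta$-slices also vanish. Consequently, for $t\gg0$ every $\Ttr_v$ is bounded, hence — being integer-valued and satisfying a finite-memory recurrence once $|t|$ is large enough that (\ref{eq:inequality_tropical}) and its reverse both hold — eventually periodic as $t\to+\infty$.

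To upgrade "eventually periodic as $t\to+\infty$" to "periodic for all $t\in\Z$", I would use invertibility of the tropical dynamics: the simultaneous tropical mutation $\mu_\circ$ (and $\mu_\bullet$) is an involution on configurations, so the map sending the pair of levels at time $(t,t+1)$ to the pair at time $(t+1,t+2)$ is a bijection. A sequence satisfying an invertible recurrence that is eventually periodic in the forward direction is periodic on all of $\Z$: apply the inverse of the period map and use injectivity to pull the periodicity back step by step. This gives (\ref{item:is_periodic}).

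\textbf{Main obstacle.} The step I expect to be most delicate is the propagation of $\speed^+_\affL(\l)=0$ from the single slice $\affL$ to all other affine components of $\Gamma$ (and then to $\Delta$). One must be careful that "speed" as measured by the linear functional $\speed_{\affL'}$ in Proposition~\ref{prop:speed} agrees, up to the known positive normalization constants from Proposition~\ref{prop:almost_periodic} and Corollary~\ref{cor:speeds}, with the integer $\speed^+_{\affL'}(\l)$, and that it is genuinely nonnegative; and one must run the "unboundedness spreads along edges of $Q$" argument carefully in the tropical $\max$-plus world, where the relevant inequality is additive rather than multiplicative. Everything else — the $t\mapsto-t$ and color-swap symmetries, and the invertibility argument promoting eventual periodicity to genuine periodicity — is routine.
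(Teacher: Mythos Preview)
Your route is substantially more elaborate than the paper's. The paper's entire argument for (\ref{item:speed_plus_0})$\Rightarrow$(\ref{item:is_periodic}) is one line: by Corollary~\ref{cor:speeds}, $\speed^+_\affL(\l)=0$ makes $\Ttr_v$ periodic for $t\gg0$, and then invertibility of the tropical dynamics promotes this to all $t$. It does \emph{not} attempt to propagate the vanishing of the speed from $\affL$ to the other affine slices, nor does it invoke any boundedness-spreading argument.

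Your proposal, by contrast, tries to prove that $\speed^+_{\affL'}(\l)=0$ for every slice $\affL'$ before appealing to invertibility. Two steps in that argument do not go through as written:
\begin{itemize}
\item You claim the asymptotic speeds are nonnegative ``since $\speed_{\affL'}$ is weakly increasing.'' Monotonicity of $\speed_{\affL'}(t)$ only gives $\speed^+_{\affL'}\ge -\speed^-_{\affL'}$; it does not give $\speed^+_{\affL'}\ge 0$. Nothing you cite rules out a slice with $\speed^+_{\affL'}<0$, whose values drift to $-\infty$. In that scenario the tropical ``unboundedness spreads'' step can fail: at a vertex $v\in\affL$, the $\Delta$-sum $\sum_{(v,w)\in\Delta}\Ttr_w(t)$ may stay bounded because contributions from slices going to $+\infty$ and $-\infty$ cancel, so you cannot force a contradiction with the boundedness of $\Ttr_v$.
\item The appeal to ``$\Gamma$--$\Delta$ symmetry (swapping the two colors)'' is confused. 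Swapping the bipartition colors does not swap $\Gamma$ with $\Delta$; the bigraph is genuinely asymmetric here (affine $\Gamma$, finite $\Delta$), and there is no notion of ``speed'' for finite $\Delta$-components analogous to $\speed^{\pm}_{\affL}$. This step is also unnecessary: every vertex already lies in some $\Gamma$-component, so once all $\Gamma$-slice speeds vanish you are done.
\end{itemize}

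The final invertibility step you describe (eventual periodicity of the full state $\Rightarrow$ periodicity for all $t$) is fine and is exactly what the paper uses; the difficulty is entirely in getting to ``full state eventually periodic,'' and your sketch for that part is where the gap lies.
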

\begin{proof}
 It is obvious that (\ref{item:is_periodic}) implies (\ref{item:speed_plus_0}) and (\ref{item:speed_minus_0}). The fact that each of them implies (\ref{item:is_periodic}) follows from Corollary~\ref{cor:speeds}: if $\speed^+_\affL(\l)=0$ then $\Ttr_v$ is periodic for $t\gg0$, but then it is periodic for all $t$.
\end{proof}





\section{Solitonic behavior: speed conservation}\label{sec:conservation}

In this section we show that the speeds with which affine slices move get preserved after the scattering process is over, in the sense of Corollary~\ref{cor:speeds}. This can be viewed as a tropical version of Corollary \ref{cor:pal}.

Specifically, let $\affL_1,\dots,\affL_m$ be the $m$ affine slices of $A_m \otimes \affA_{2n-1}$, and for $r=1,2,\dots,m$ denote
\[\speed_r^+:=\speed_{\affL_r}^+(\l);\quad \speed_r^-:=\speed_{\affL_r}^-,\]
where $\l:\VertQ\to\Z$ is fixed throughout this section. 

\begin{thm} \label{thm:speed}
 For any $1 \leq r \leq m$ we have $$\speed_r^{+} = \speed_{m+1-r}^{-}.$$
\end{thm}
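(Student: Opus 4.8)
The statement asserts a tropical symmetry $\speed_r^+ = \speed_{m+1-r}^-$ that mirrors the palindromy in Corollary~\ref{cor:pal}. The plan is to deduce it from Corollary~\ref{cor:speeds} together with the explicit description of the geometric recurrence in Theorem~\ref{thm:rec} and Corollary~\ref{cor:pleth}, passing between geometric and tropical data via Proposition~\ref{prop:maxdeg}. First I would fix $v$ on the $r$-th affine slice $\affL_r$ and recall from Corollary~\ref{cor:speeds} that the quantity $\summ_{\affL_r}(t)$ grows linearly for $t\gg 0$ with slope proportional to $\speed_r^+$, and linearly for $t\ll 0$ with slope proportional to $\speed_r^-$; concretely, $\speed_r^{\pm}$ is read off as the asymptotic slope of $\maxdeg(q, T_v(t)\mid_{\x = q^\l})$ as $t\to\pm\infty$ (summed against the eigenvector $\v$, which contributes a fixed positive constant factor on each slice). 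So the task reduces to comparing the $t\to+\infty$ growth rate of $\maxdeg$ on slice $r$ with the $t\to-\infty$ growth rate on slice $m+1-r$.

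The key input is that by Theorem~\ref{thm:rec} and Corollary~\ref{cor:pleth}, the sequence $\big(T_v(t)\big)_t$ on the $r$-th slice satisfies a linear recurrence of length $\binom{m+1}{r}+1$ whose characteristic polynomial is (a power-of-$2$ plethysm of) the $r$-th elementary-symmetric-function deformation of the characteristic polynomial of $\hat A = A^2$; its roots are the products $\lambda_{i_1}\cdots\lambda_{i_r}$. Because $\hat A\in SL_{m+1}$, we have $\prod_i\lambda_i = 1$, so the roots of the $(m+1-r)$-slice recurrence are exactly the \emph{inverses} of the roots of the $r$-slice recurrence. The asymptotic slope of $\maxdeg(q, T_v(t)\mid_{\x=q^\l})$ as $t\to+\infty$ is governed by the dominant root of the recurrence after substituting $\x = q^\l$ — more precisely, by the largest $q$-degree appearing among the roots $\rho(q)$ of the specialized characteristic polynomial, i.e. $\max_i \operatorname{ord}_q \rho_i(q)$ (with an appropriate interpretation for Newton–Puiseux exponents). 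When we replace all roots by their inverses, the $q$-order of the dominant root for $t\to-\infty$ on slice $m+1-r$ equals the $q$-order of the dominant root for $t\to+\infty$ on slice $r$ — the inversion swaps the roles of $t\to+\infty$ and $t\to-\infty$ while reversing sign of the $q$-valuation, and these two reversals cancel. Tracking the eigenvector normalization and the plethystic squaring (which only multiplies the relevant degrees by $2$ uniformly), this yields $\speed_r^+ = \speed_{m+1-r}^-$.

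I would organize the write-up as: (i) reduce $\speed_r^{\pm}$ to the asymptotic $q$-degree growth of $T_v(t)\mid_{\x=q^\l}$ via Proposition~\ref{prop:maxdeg} and Corollary~\ref{cor:speeds}; (ii) invoke Theorem~\ref{thm:rec}/Corollary~\ref{cor:pleth} to identify the characteristic roots of slice $r$ as $\{\lambda_{i_1}\cdots\lambda_{i_r}\}$ and use $\det\hat A = 1$ to match them with the inverses of the slice-$(m+1-r)$ roots; (iii) show that for a finite-order linear recurrence with rational-function coefficients in $q$, inverting all characteristic roots swaps the $t\to+\infty$ and $t\to-\infty$ leading $q$-degree growth rates, hence $\speed_r^+=\speed_{m+1-r}^-$. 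The main obstacle I expect is step (iii): making rigorous the claim that the tropical speed at $t\to\pm\infty$ is exactly the top $q$-degree among the Puiseux roots of the specialized characteristic polynomial. One must be careful because (a) several roots may share the top $q$-valuation, (b) there can be polynomial-in-$t$ corrections (Jordan blocks) which are irrelevant to the linear slope but need to be quarantined, and (c) the leading coefficient of $T_v(t)\mid_{\x=q^\l}$ in $q$ must be verified to be nonzero so that $\maxdeg$ really follows the dominant root rather than accidentally dropping — here positivity (Corollary~\ref{cor:pleth} gives Laurent-polynomial, in fact positive, coefficients $H_i$, so no unexpected cancellation of leading terms occurs when specializing generic $\l$) does the job, possibly after passing to a suitable cofinal subsequence of $t$ as in Lemma~\ref{lemma:reu}. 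Once this analytic lemma is in place, the symmetry is a formal consequence of $\hat A\in SL_{m+1}$.
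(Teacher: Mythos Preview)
Your approach is genuinely different from the paper's, and the obstacle you flag in step~(iii) is a real gap that your proposed fix does not close. The paper never touches Puiseux roots. Instead it introduces the tropical Goncharov--Kenyon Hamiltonians $H_r^\oplus := \maxdeg(q, H_r|_{\x=q^\l})$, observes that they are conserved (Lemma~\ref{lem:tropH}), and then proves combinatorially that $\speed_r^+ = H_r^\oplus$ for $t\gg0$ and $\speed_{m+1-r}^- = H_r^\oplus$ for $t\ll0$ (Proposition~\ref{prop:tropH}). The key step is Lemma~\ref{lem:max}: the maximum defining $H_r^\oplus$ is attained on an all-horizontal domino tiling of the cylinder, whose weight telescopes to $\speed_r^+$ once the weak subadditivity of speeds (Lemma~\ref{lem:wsub}) is invoked. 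Since each $H_r$ is itself a \emph{positive} Laurent polynomial, tropicalizing it incurs no cancellation, and the Puiseux analysis is bypassed entirely.

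By contrast, your step~(iii) needs $\speed_r^+$ to equal the \emph{largest} $q$-valuation among the characteristic roots $\rho_i$, but all the linear recurrence gives is that the eventual slope of $\Ttr_v(t)$ equals the $q$-valuation of \emph{some} root---namely one whose Puiseux coefficient $c_i$ happens to be nonzero for the given initial data. Positivity of $T_v(t)$ as a Laurent polynomial in $\x$ only prevents cancellation among its monomials after specialization; it says nothing about the $c_i$, which live in an algebraic extension and depend on the particular $\l\in\Z^{\VertQ}$. The theorem must hold for every $\l$, not just generic ones, and nothing in your outline rules out $c_{\max}=0$ for a specific $\l$. (Note also that the single-vertex recurrence coefficients $e_j[e_r[p_2]](H)$ are \emph{not} positive Laurent polynomials---e.g.\ $H_1^2-2H_2$---so positivity of the $H_i$ does not transfer.) Lemma~\ref{lemma:reu} concerns growth of positive integer sequences, not $q$-degrees, and does not bridge this gap either. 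Pinning down which root valuation is actually realized is exactly the content the paper supplies through the domino-tiling argument.
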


\def\sp{{\mathbf{S}}}

\begin{example}\label{example:evolution}
 Let us give an example of the kind of phenomenon in Theorem~\ref{thm:speed}. Let us say that $m=3$ and $n=2$, so our quiver $Q$ is $A_3\otimes \affA_1$ depicted in Figure\ref{fig:A3A1}.
 \begin{figure}
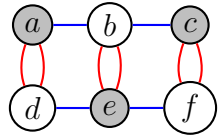


$
\psmatrix[colsep=0.4cm,rowsep=0.4cm,mnode=circle]
\bl{a}      & \wh{b}    & \bl{c}\\
\wh{d}      & \bl{e}    & \wh{f}
\psset{arrows=-,arrowscale=2}
\foreach \y in {1,2,3}
{
\ncarc[arcangle=20,linecolor=red]{-}{1,\y}{2,\y}
\ncarc[arcangle=-20,linecolor=red]{-}{1,\y}{2,\y}
}
\foreach \y in {1,2}
{
\ba{\y,1}{\y,2}
\ba{\y,2}{\y,3}
}
\endpsmatrix $
\caption{\label{fig:A3A1} The bigraph $A_3\otimes \affA_1$}
 \end{figure}
 
 \newcommand{\rr}[1]{\bf{\color{blue}#1}}
 
\newcommand{\sm}[6]{\scalebox{1}{$\begin{smallmatrix}
    #1&#2&#3\\#4&#5&#6
   \end{smallmatrix}$}}
\newcommand{\spp}[3]{\scalebox{0.9}{$\begin{smallmatrix}
    #1&#2&#3\end{smallmatrix}$}}
 We will compactly draw this quiver as \sm{a}{b}{c}{d}{e}{f}. Let $\affL_1,\affL_2,\affL_3$ be the three red connected components, and assume we start our mutation sequence with black vertices. Then we have
 \[\speed_{\affL_1}(t)=d-a;\quad\speed_{\affL_2}(t)=b-e;\quad\speed_{\affL_3}(t)=f-c.\]
 We denote $\sp(t)=(\speed_{\affL_1}(t),\speed_{\affL_2}(t),\speed_{\affL_3}(t))$. Now, for $t\ll0$, $\speed_{\affL_r}(t)=-\speed^-_r$ and for $t\gg0$, $\speed_{\affL_r}(t)=\speed^+_r$ for $r=1,2,3$. The mutations and speeds for initial values \sm{6}{6}{7}{3}{10}{5} are given in Table~\ref{table:evolution}. It is clear from the table that 
 \[\speed^-_1=\speed_3^+=3,\quad\speed^-_2=\speed_2^+=4,\quad\speed^-_3=\speed^+_1=2.\]
 This agrees with the statement of Theorem~\ref{thm:speed}. Next, it is also apparent from the table that the entries of $\sp(t)$ weakly increase, and each of them changes if and only if for at least one vertex in the corresponding connected component, the sum of blue neighbors is strictly larger than the sum of red neighbors. This is precisely the statement of Proposition~\ref{prop:speed}. Finally, observe that for every vertex $v\in\affL_r$, we have 
 \[\Ttr_v(4)=\Ttr_v(2)+2\speed^+_r,\]
 which is an application of Corollary~\ref{cor:speeds}. 
 \begin{table} 
  \begin{tabular}{|c|c|c|c|c|c|c|c|c|c|c|}\hline
  $t$&
  $-5$&
  $-4$&
  $-3$&
  $-2$&
  $-1$&
  $0$&
  $1$&
  $2$&
  $3$&
  $4$\\\hline
 \sm{a}{b}{c}{d}{e}{f}&
 \sm{6}{6}{7}{3}{10}{5}&
 \sm{0}{6}{3}{3}{2}{5}&
 \sm{0}{-2}{3}{\rr{-1}}{2}{1}&
 \sm{-2}{{-2}}{-1}{{-1}}{\rr{-2}}{1}&
 \sm{{-2}}{\rr{-1}}{{-1}}{\rr{-1}}{-2}{-3}&
 \sm{\rr1}{{-1}}{\rr0}{-1}{0}{-3}&
 \sm{1}{\rr2}{0}{3}{0}{3}&
 \sm{5}{2}{6}{3}{\rr6}{3}&
 \sm{5}{10}{6}{7}{6}{9}&
 \sm{9}{10}{12}{7}{14}{9}
 \\\hline
 $\sp(t)$&
 \spp{-3}{-4}{-2}&
 \spp{-3}{-4}{-2}&
 \spp{-1}{-4}{-2}&
 \spp{-1}{0}{-2}&
 \spp{1}{1}{-2}&
 \spp{2}{1}{3}&
 \spp{2}{2}{3}&
 \spp{2}{4}{3}&
 \spp{2}{4}{3}&
 \spp{2}{4}{3}\\\hline
   \end{tabular}
   \caption{\label{table:evolution}The evolution of the tropical $T$-system of type $A_3\otimes\affA_1$. The blue boldface numbers are the ones for which the sum of blue neighbors was strictly larger than the sum of red neighbors. }
 \end{table}
\end{example}

Let $$H_r^{\oplus} = \max_{\hh(\D) = r} \sum_{u \in \Cc_{m,2n}} (1 - d_{\D}(u))\l(u),$$
be the tropicalizations of Goncharov-Kenyon Hamiltonians $H_r$. 
Here $d_{\D}(u)$ is as before the degree of $u$ in the associated graph $G_{\D}$ on the cylinder, and the sum is taken over all tilings $\D$ of height $4r + \hh(\S)$.
Here on the boundary we always have $u=0$.

\begin{lem} \label{lem:tropH}
 The $H_r^{\oplus}$ are conserved quantities of the tropical $T$-system of type $A_m \otimes \affA_{2n-1}$.
\end{lem}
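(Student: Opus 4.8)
The plan is to deduce Lemma~\ref{lem:tropH} from the already established fact (Corollary~\ref{cor:conserved}) that the Goncharov--Kenyon Hamiltonians $H_r$ are conserved quantities of the geometric $T$-system, together with the tropicalization dictionary provided by Proposition~\ref{prop:maxdeg}. The key observation is that $H_r^{\oplus}$, as defined, is precisely $\maxdeg\bigl(q,H_r\mid_{\x=q^\l}\bigr)$: substituting $x_u=q^{\l(u)}$ into the Laurent expansion $H_r=\sum_{\hh(\D)=r}\prod_u u^{1-d_\D(u)}$ turns each monomial into $q^{\sum_u(1-d_\D(u))\l(u)}$, and since all coefficients are positive (equal to $1$) there is no cancellation of leading terms, so the top degree of the sum is the maximum of the top degrees of the summands, which is exactly $H_r^{\oplus}$.

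First I would make this identification precise, including the remark that positivity of the coefficients is what guarantees $\maxdeg$ of a sum equals the max of the $\maxdeg$'s — this is the only place where positivity is used, and it is immediate here. Next I would invoke Corollary~\ref{cor:conserved}: $H_r$ is unchanged under the $T$-system dynamics, meaning that if we compute $H_r$ using the cluster of variables at any time $t$ rather than the initial cluster $\x$, we get the same rational function. Tropically, the cluster at time $t$ consists of the variables $T_v(t), T_v(t+1)$ for appropriate $v$, whose tropicalizations are the values $\Ttr_v(t),\Ttr_v(t+1)$ of the tropical $T$-system (with initial data $\l$). Therefore $\maxdeg\bigl(q, H_r\mid_{\text{(cluster at time }t)}\bigr)$, expressed through the tropical variables at time $t$, gives the ``value of $H_r^{\oplus}$ at time $t$'', and since $H_r$ itself does not depend on $t$, neither does this maximal degree. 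Hence $H_r^{\oplus}$ is a conserved quantity of the tropical $T$-system.

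To carry this out carefully, I would phrase the tropicalization as follows: for a cluster $\mathbf z=\{z_1,\dots,z_N\}$ obtained from $\x$ by a sequence of mutations, and a subtraction-free rational expression $P$ in $\x$, the quantity $\maxdeg(q,P\mid_{\x=q^\l})$ depends only on the tropical values $\Ttr_{z_i}$ and on $P$ \emph{as a function of $\mathbf z$}; this is the standard fact that tropicalization is functorial under subtraction-free substitution, and it underlies Proposition~\ref{prop:maxdeg} itself. Applying it with $P=H_r$ expressed in the initial cluster and then in the time-$t$ cluster, and using that these two expressions define the same element by Corollary~\ref{cor:conserved}, yields that the tropical value of $H_r$ computed from $\{\Ttr_v(t),\Ttr_v(t+1)\}$ is independent of $t$; that tropical value is $H_r^{\oplus}$ by the first step.

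The main obstacle — really the only subtle point — is making rigorous the claim that $H_r^{\oplus}$, defined statically via domino tilings with base data $\l$, coincides with the ``dynamical'' tropical value of $H_r$ read off from the tropical $T$-system at an arbitrary time, i.e. that one may legitimately tropicalize the identity ``$H_r$ in initial cluster $=$ $H_r$ in time-$t$ cluster''. This requires that all the Laurent expansions involved are subtraction-free, or at least that the leading ($q$-)terms do not cancel when passing between clusters; positivity of $H_r$ in every cluster would suffice, but even the weaker statement that $H_r$ is a Laurent polynomial in each cluster (Laurent phenomenon) combined with the genericity/degree argument of Proposition~\ref{prop:maxdeg} is enough, since that proposition is exactly the tool that transports maximal degrees along mutations. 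I would therefore structure the proof as: (i) $H_r^{\oplus}=\maxdeg(q,H_r\mid_{\x=q^\l})$ by positivity of the tiling expansion; (ii) by Proposition~\ref{prop:maxdeg} applied componentwise, the analogous maximal-degree statement holds relative to the cluster at any time $t$, with the tropical $T$-system values playing the role of the $\l$-data; (iii) by Corollary~\ref{cor:conserved}, $H_r$ is the same element in all these clusters, so its maximal degree — hence $H_r^{\oplus}$ evaluated at any time — is constant. This completes the proof.
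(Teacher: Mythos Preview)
Your proof is correct and follows exactly the same approach as the paper, which simply says the result ``follows from Proposition~\ref{prop:maxdeg} combined with the fact that $H_r$'s themselves are conserved quantities of the corresponding geometric $T$-system, see Corollary~\ref{cor:conserved}.'' You have fleshed out the details --- in particular the identification $H_r^{\oplus}=\maxdeg(q,H_r\mid_{\x=q^\l})$ via positivity of the tiling expansion and the functoriality of $\maxdeg$ under subtraction-free substitution --- but the underlying argument is identical.
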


\begin{proof}
Follows from Proposition~\ref{prop:maxdeg} combined with the fact that $H_r$'s themselves are conserved quantities of the corresponding geometric $T$-system, see Corollary~\ref{cor:conserved}.
\end{proof}

Our strategy consists of proving the following proposition.

\begin{prop} \label{prop:tropH}
 Both $\speed_r^{+}$ and $\speed_{m+1-r}^{-}$ are equal to $H_r^{\oplus}$ for respectively $t\gg0$ and $t\ll0$.
\end{prop}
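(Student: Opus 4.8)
\textbf{Proof plan for Proposition~\ref{prop:tropH}.}
The plan is to establish both equalities by relating the linear growth rate of the boundary affine slice to the tropicalized recurrence coming from Theorem~\ref{thm:rec}, and then reading off the coefficients via Proposition~\ref{prop:maxdeg}. First I would fix the map $\l:\VertQ\to\Z$ and consider a vertex $v$ on the top boundary affine slice. By Theorem~\ref{thm:rec} the geometric $T$-system satisfies
\[T_{v(0)}(t+(m+1)n) - H_1 T_{v(1)}(t+mn) + \dotsc \pm H_m T_{v(m)}(t+n) \mp T_{v(m+1)}(t) = 0.\]
Applying Proposition~\ref{prop:maxdeg} and the fact (Corollary~\ref{cor:speeds}) that $\Ttr_v(t)$ grows linearly in $t$ for $t\gg 0$ along each affine slice, I would argue that the $\maxdeg$ of the leading term must be matched by $\maxdeg$ of at least one other term. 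Since the terms $H_r T_{v(r)}(t+(m+1-r)n)$ tropicalize to $H_r^\oplus + \Ttr_{v(r)}(t+(m+1-r)n)$ by Proposition~\ref{prop:maxdeg} (using that tropicalization is additive on products and that the max-degree of a sum is the max of the max-degrees), and since $\Ttr_{v(r)}(t+(m+1-r)n) = \Ttr_{v}(t) + (\text{linear correction}) + o(t)$ by soliton resolution, comparing the coefficients of $t$ in the dominant balance gives a system of linear equations whose solution forces $\speed_r^+ = H_r^\oplus$ for $t\gg 0$.

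More precisely, the key step is the following tropical balance argument. Set $\sigma := \speed^+_{\affL_1}(\l)$ (the common normalized speed of the boundary slice, since all its vertices move with speeds proportional to $\v$). For $t\gg 0$, each term $H_r T_{v(r)}(t+(m+1-r)n)$ in the recurrence tropicalizes to a quantity that is asymptotically linear in $t$ with slope depending on $r$; writing out these slopes and using that the whole expression vanishes identically, the minimum-slope (or appropriately, the maximum-$q$-degree) terms must cancel in pairs. Because $H_0 = H_{m+1} = 1$ and the $H_r^\oplus$ are finite constants independent of $t$ (Lemma~\ref{lem:tropH}), the comparison of the two extreme terms $T_{v(0)}(t+(m+1)n)$ and $\mp T_{v(m+1)}(t)$ together with the intermediate ones pins down the linear growth, and extracting the coefficient of the power of $q$ that realizes $\maxdeg$ yields exactly $\speed_r^+ = H_r^\oplus$. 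The argument for $t\ll 0$ is symmetric but uses the \emph{second} recurrence in Theorem~\ref{thm:rec} (the one for the bottom boundary affine slice, with the $H_i$ in reversed order), or equivalently applies the first recurrence run backwards in $t$; this reversal is precisely what swaps the roles of $r$ and $m+1-r$, giving $\speed^-_{m+1-r} = H_r^\oplus$.

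The main obstacle I anticipate is making the dominant-balance / tropical-degree comparison fully rigorous: one needs to show that no two terms of the recurrence have \emph{exactly} the same $\maxdeg$ growth rate over all $t\gg 0$ in a way that causes the leading terms to cancel without determining the slope (a genuine degeneration), and that the $\maxdeg$ of a sum of Laurent polynomials really does equal the max of the individual $\maxdeg$'s here (i.e.\ no unexpected cancellation of leading $q$-coefficients). This is where I would invoke positivity: the $T_v(t)\mid_{\x=q^\l}$ have positive coefficients (Laurent positivity of $T$-systems), and the $H_r$ also have positive coefficients, so within each term $H_r T_{v(r)}$ no cancellation of the leading coefficient can occur, and across terms with alternating signs a cancellation of the two top terms would force their $\maxdeg$'s to coincide — which, combined with soliton resolution making each $\Ttr_{v(r)}$ eventually exactly linear, reduces to a finite linear-algebra computation identifying the slopes with the $H_r^\oplus$. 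Once this balance is set up carefully the rest is bookkeeping, and Theorem~\ref{thm:speed} follows immediately by combining the two halves of Proposition~\ref{prop:tropH}.
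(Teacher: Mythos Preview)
There is a genuine gap in your approach. The recurrence of Theorem~\ref{thm:rec} that you propose to tropicalize involves only the values $T_{v(i)}(t+(m+1-i)n)$ for $v(i)\in\{v,v'\}$ on the \emph{same} boundary affine slice. Consequently, any tropical balance you extract from it can at best constrain the single number $\speed_1^+$ (the growth rate of that boundary slice) in terms of the constants $H_0^\oplus,\dots,H_{m+1}^\oplus$; it cannot produce the speeds $\speed_r^+$ of the interior slices at all, since those values never appear in the identity. Concretely, after using $\Ttr_v(t+2n)=\Ttr_v(t)+2\speed_1^+$ for $t\gg0$, every term $H_r^\oplus+\Ttr_{v(r)}(t+(m+1-r)n)$ has the \emph{same} slope in $t$, so ``comparing coefficients of $t$'' gives nothing, and matching the constant parts yields one relation such as $r\,\speed_1^+\ge H_r^\oplus$ with equality for some $r$ --- not the desired $m$ equalities $\speed_r^+=H_r^\oplus$. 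Your sketch never explains how $\speed_r^+$ for $1<r<m$ enters the picture, and it cannot via this recurrence alone.

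The paper's argument is entirely different and works directly with the combinatorial definition of $H_r^\oplus$ as a maximum over tilings of the cylinder. The two key lemmas are: (i) weak subadditivity of the stabilized speeds, $2\speed_r^+\ge \speed_{r-1}^++\speed_{r+1}^+$ (Lemma~\ref{lem:wsub}); and (ii) the maximum in $H_r^\oplus$ is attained on an all-horizontal tiling (Lemma~\ref{lem:max}, proved by a local ``straightening'' move on hula hoops). For an all-horizontal tiling the weight is $\tfrac12\sum_i \epsilon_i(\speed_i^+-\speed_{i-1}^+)$ with exactly $r$ of the $\epsilon_i$ equal to $+1$; weak subadditivity says the differences $\speed_i^+-\speed_{i-1}^+$ are weakly decreasing, so the maximum is achieved by taking the first $r$ signs positive, and the telescoping sum gives $\speed_r^+$. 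This is where all the $\speed_r^+$ genuinely enter, via the tiling weights spread across the whole cylinder, not via the boundary recurrence.
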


\begin{example}
 Let us continue Example~\ref{example:evolution}. Recall the formula for $H_1$ calculated in Example~\ref{example:ham}:
\[H_1 = \frac{ab}{de}+\frac{a}{be}+\frac{b}{ad}+\frac{c}{f}+\frac{d}{a}+\frac{ef}{bc}+\frac{e}{cf}.\]
 Thus,
 \[H_1^\oplus=\max(a+b-d-e,a-b-e,b-a-d,c-f,d-a,e+f-b-c,e-c-f).\]
 For instance, at $t=-4$ we have
 \[H_1^\oplus=\max(2,-11,-2,-3,2,-1,-9)=2.\]
 Or we can take $t=0$ instead and get
 \[H_1^\oplus=\max(0,1,2,2,1,-3,2)=2.\]
 We encourage the reader to check that for other moments of time, $H_1^\oplus$ is always equal to $2$, which is a statement of Lemma~\ref{lem:tropH}. In agreement with Proposition~\ref{prop:tropH}, we have
 \[H_1^\oplus=\speed_1^+=\speed_3^-.\]
\end{example}

Theorem \ref{thm:speed} follows trivially from Proposition~\ref{prop:tropH} and Lemma \ref{lem:tropH}, since as a conserved quantity $H_r^{\oplus}$ is the same at any point in time, including $t\gg0$ and $t\ll0$. Let us prove Proposition \ref{prop:tropH}. We are going to prove the $\speed_r^{+} = H_r^{\oplus}$ part,
the other part is essentially verbatim. Let us formulate several key lemmas. 

Consider the time $t\gg0$ large enough for all speeds to have stabilized. Adopt the convention $\speed_0^+ = \speed_{m+1}^+ = 0$. 

\begin{lem} \label{lem:wsub}
 The speeds form a weakly subadditive function, i.e. for any $1 < r < m$ we have 
 $$2 \speed_r^+ \geq \speed_{r-1}^+ + \speed_{r+1}^+.$$
\end{lem}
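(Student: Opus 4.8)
The statement asserts weak subadditivity of the sequence of stabilized speeds $\speed_r^+$. The natural approach is to work directly with the tropical $T$-system on $A_m\otimes\affA_{2n-1}$ at a time $t\gg0$ where, by Corollary~\ref{cor:speeds} and Proposition~\ref{prop:tropical_linearization}, all speeds have stabilized and the inequality~\eqref{eq:inequality_tropical} holds at every vertex (in both directions once the system has gone linear on each slice, but we only need one direction here). The key point is that on such a linearized window the restriction of the tropical $T$-system to the slice $\affL_r$ evolves by the affine Coxeter transformation, so $\Ttr_v(t+2h_a)=\Ttr_v(t)+\coeff(\affA_{2n-1})\speed_r^+\v(v)$, and we can extract a clean comparison across neighboring slices from the defining max-recurrence.

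First I would fix $v$ a vertex in the interior slice $\affL_r$ and write the tropical $T$-relation at $v$ for a range of consecutive times in the linearized window. Since we are at $t\gg 0$, for each such $v$ the $\max$ in the recurrence is achieved by the $\Gamma$-sum (the blue/$\Delta$ contribution is the dominated one), so
\[
\Ttr_v(t+1)+\Ttr_v(t-1)=\sum_{(u,v)\in\Gamma}\Ttr_u(t).
\]
In $A_m\otimes\affA_{2n-1}$ the $\Gamma$-neighbors of $v\in\affL_r$ are exactly the copies of $v$ in the adjacent slices $\affL_{r-1}$ and $\affL_{r+1}$ (with the boundary conventions $\speed_0^+=\speed_{m+1}^+=0$ corresponding to the frozen boundary rows of value fixed along the evolution, so those terms contribute a bounded amount). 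Now I would feed in the linear growth: applying the relation at $v$ at time $t$ and at time $t+2h_a(\affA_{2n-1})$ and subtracting, every term $\Ttr_\bullet(\cdot)$ is replaced by its increment over one affine-Coxeter period, which by Corollary~\ref{cor:speeds} is $\coeff(\affA_{2n-1})\speed_{(\cdot)}^+\,\v(v')$ for the appropriate slice index, where $v'$ is the corresponding vertex. Dividing out the common positive factor $\coeff(\affA_{2n-1})\v(v)>0$ (all copies of $v$ across slices sit at the same $\affA_{2n-1}$-coordinate, so the same eigenvector entry $\v(v)$ appears) yields exactly
\[
2\speed_r^+=\speed_{r-1}^++\speed_{r+1}^+
\]
in the case where the inner $\max$ is strictly the $\Gamma$-side, i.e. equality rather than the claimed inequality. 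To get the inequality in general I would instead only use $\Ttr_v(t+1)+\Ttr_v(t-1)\ge \sum_{(u,v)\in\Gamma}\Ttr_u(t)$, which holds unconditionally, and argue that summing this over a full period telescopes on the left-hand side to $2\coeff\,\speed_r^+\v(v)\cdot h_a$-many copies while the right-hand side telescopes to $(\speed_{r-1}^++\speed_{r+1}^+)$ times the same positive quantity; the inequality direction is preserved, giving $2\speed_r^+\ge\speed_{r-1}^++\speed_{r+1}^+$.

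The step I expect to be the main obstacle is making the telescoping rigorous: one has to be careful that the linear formula of Corollary~\ref{cor:speeds} holds \emph{simultaneously} for $v$ in $\affL_{r-1},\affL_r,\affL_{r+1}$ on a common window of times (this is fine, since there are finitely many slices and one takes $t$ larger than all the individual $t_0$'s), and that the boundary slices $r-1=0$ or $r+1=m+1$ genuinely contribute a term which is eventually constant in $t$ (hence increment $0$, matching $\speed_0^+=\speed_{m+1}^+=0$). A secondary subtlety is bookkeeping the parity: the tropical relation alternates between even and odd $t$, so I would either pass to the period-$2h_a$ sublattice of times on which everything is uniform, or note that $h_a(\affA_{2n-1})=n$ and $2h_a$ is even so no parity issue arises in the period. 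Everything else is a direct computation using Proposition~\ref{prop:speed} and Corollary~\ref{cor:speeds}, which are already available.
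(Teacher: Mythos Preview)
There is a concrete slip that breaks your argument as written: in the bigraph $G(Q)=(\Gamma,\Delta)$ for this quiver, the affine slices $\affL_r$ are the connected components of $\Gamma$, not of $\Delta$. Hence for $v\in\affL_r$ the $\Gamma$-neighbors are the two cycle-neighbors of $v$ \emph{within the same slice} $\affL_r$, while the $\Delta$-neighbors are the copies of $v$ in $\affL_{r-1}$ and $\affL_{r+1}$. With that corrected, your ``equality case'' step becomes vacuous: subtracting
\[
\Ttr_v(t+1)+\Ttr_v(t-1)=\sum_{(u,v)\in\Gamma}\Ttr_u(t)
\]
at times $t$ and $t+2h_a$ only relates slice-$r$ quantities to slice-$r$ quantities, and yields the tautology $2\speed_r^+=2\speed_r^+$. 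Likewise, the unconditional inequality you invoke, $\Ttr_v(t+1)+\Ttr_v(t-1)\ge \sum_{(u,v)\in\Gamma}\Ttr_u(t)$, again only sees slice $r$ on both sides.

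The fix is to use the other side of the $\max$. For $t\gg 0$, Proposition~\ref{prop:tropical_linearization} gives, for every $v\in\affL_r$ (with $1<r<m$),
\[
\sum_{(u,v)\in\Gamma}\Ttr_u(t)\ \ge\ \sum_{(v,w)\in\Delta}\Ttr_w(t)=\Ttr_{v_{r-1}}(t)+\Ttr_{v_{r+1}}(t),
\]
where $v_{r\pm1}$ are the copies of $v$ in the adjacent slices. By Corollary~\ref{cor:speeds} (with $\coeff(\affA_{2n-1})=2$ and $\v\equiv 1$), over one period $2h_a=2n$ the left side increments by $2\cdot 2\speed_r^+$ and the right side by $2\speed_{r-1}^++2\speed_{r+1}^+$. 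An inequality $f(t)\ge g(t)$ that holds for all large $t$, with $f(t+2n)-f(t)$ and $g(t+2n)-g(t)$ constant, forces $f(t+2n)-f(t)\ge g(t+2n)-g(t)$; otherwise $g-f\to+\infty$. This gives $4\speed_r^+\ge 2(\speed_{r-1}^++\speed_{r+1}^+)$, i.e. the lemma. Note that the paper itself states Lemma~\ref{lem:wsub} without proof; the argument above is presumably the intended one, and it is close in spirit to your plan once the $\Gamma/\Delta$ roles are straightened out and the ``subtract two instances'' step is replaced by the growth-rate comparison.
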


Informally, our strategy is to show that the maximum in the definition of $H_r^{\oplus}$ is achieved on the term equal to $\speed_r^{+}$. Note that we do not claim that this is the only term where the maximum is achieved, just that it is one of such terms. The following lemma is a major step.
We postpone its proof, and first show how to use it to imply Proposition \ref{prop:tropH}.

\begin{lem} \label{lem:max}
 The maximum in the expression $$\max_{\hh(\D) = r} \sum_{u \in \Cc_{m,2n}} (1 - d_{\D}(u))\l(u)$$ is achieved at one of the tilings $\D$ consisting entirely of horizontal dominos. 
\end{lem}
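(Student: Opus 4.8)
\textbf{Proof proposal for Lemma~\ref{lem:max}.}

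The plan is to analyze which tilings $\D$ of $\Cc_{m,2n}$ with $\hh(\D)=r$ can possibly attain the maximum of $\sum_{u}(1-d_\D(u))\l(u)$, using the structure of $\l$ as a solution of the tropical $T$-system at a large time $t\gg0$. First I would recall from Proposition~\ref{prop:maxdeg} that each $\Ttr_v(t)$ equals $\maxdeg(q,T_v(t)\mid_{\x=q^\l})$, and that by the proof of Theorem~\ref{thm:rec} the Laurent monomials of $T_v(t)$ are precisely the weights $\prod_u u^{1-d_\D(u)}$ of certain domino configurations on the universal cover. Thus $H_r^\oplus$ is the tropicalization of $H_r$, and the max over tilings with $\hh(\D)=r$ is exactly $H_r^\oplus$ by definition. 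The goal is to show this max is realized by an all-horizontal tiling.

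The key idea is a local exchange argument near any vertical domino. Suppose $\D$ attains the maximum and contains a vertical domino; I would look at a maximal ``column'' of interacting vertical dominoes and use the $2\times 2$ square swap (the same local move used in the proof of Theorem~\ref{thm:upper}, which preserves Thurston height, hence preserves $\hh(\D)$) to produce a tiling $\D'$ with $\hh(\D')=r$. I then need to show this move does not decrease the weight $\sum_u(1-d_\D(u))\l(u)$, i.e.\ that one can always push vertical dominoes toward the all-horizontal configuration without loss. Here is where the structure of $\l$ enters: because $t\gg0$, the speeds have stabilized and by Corollary~\ref{cor:speeds} the values $\Ttr_v$ grow linearly in $t$ along each affine slice, proportionally to the dominant eigenvector $\v$; combined with Lemma~\ref{lem:wsub} (weak subadditivity of the speeds across slices) this forces the ``discrete concavity'' of $\l$ in the radial direction that makes the horizontal-domino configuration weight-maximal. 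Concretely, I would compare, for each local swap, the change $\Delta = \sum_u (d_\D(u)-d_{\D'}(u))\l(u)$ and show it is $\geq 0$ by expressing it as a difference of neighboring $\l$-values that is controlled by the subadditivity inequality $2\speed_r^+\geq\speed_{r-1}^++\speed_{r+1}^+$.

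The main obstacle I anticipate is the bookkeeping of exactly how $d_\D(u)$ changes under the $2\times 2$ swap and how these local contributions assemble into a telescoping sum that is sign-definite; in particular one has to be careful near the boundary slices of $A_m$ (where $\l(u)=0$ and the formulas degenerate) and near the hula hoops themselves, since the swap must not interfere with the count $\hh(\D)=r$. A secondary subtlety is that Lemma~\ref{lem:wsub} only gives weak subadditivity, so the weight may be constant along the deformation rather than strictly increasing — which is fine for the conclusion (``the maximum is achieved at one of the all-horizontal tilings'') but means I must argue that the all-horizontal configuration with $\hh=r$ actually exists and is reachable by such swaps, using the explicit description of the sea $\S$ and of which all-horizontal tilings have $\hh=r$. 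I would close by noting that among all-horizontal tilings the weight is directly computable and, by the linear (soliton) regime of Corollary~\ref{cor:speeds}, equals $\coeff$-scaled $\speed_r^+$, which is the input needed for Proposition~\ref{prop:tropH}.
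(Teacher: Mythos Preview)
Your high-level plan---find a local move that strictly decreases the number of vertical dominos while preserving $\hh(\D)=r$ and not decreasing the weight---matches the paper's, but both the move you propose and the inequality you invoke are the wrong ones, and this is a genuine gap.

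First, the $2\times 2$ swap is not the right tool. It does preserve $\hh(\D)$, but as a one-directional simplification it only applies when two vertical dominos sit side by side in a $2\times 2$ block; a hula hoop with vertical dominos need not contain any such block (think of a staircase). To reach an all-horizontal tiling by $2\times 2$ swaps you would in general have to move in both directions, which destroys any monotonicity argument on the weight. The paper instead superposes $\D$ with the sea $\S$, takes the lowest hula hoop $H$ that still has vertical dominos, picks a \emph{peak} of $H$ (a maximal horizontal stretch flanked by two vertical dominos), and ``straightens'' that entire stretch in one move. This always strictly reduces the number of vertical dominos and stays within the same $\hh$-class.

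Second, the inequality you plan to use is not the one that controls the weight change. Lemma~\ref{lem:wsub} (weak subadditivity $2\speed_r^+\geq \speed_{r-1}^++\speed_{r+1}^+$) is a slice-level statement; in the paper it is used only \emph{after} Lemma~\ref{lem:max}, to single out which all-horizontal tiling is maximal. The weight comparison for the straightening move is instead a vertex-level inequality, and the paper's key trick is to recognize it as the tropical $T$-system inequality at a \emph{future} time: writing the affected vertices as $a_i,b_i,c_i$ around the peak of length $2k$, one computes (Lemma~\ref{lem:time}) that the weight difference equals $\Ttr_{b_k}(t+k)+\Ttr_{b_{k+2}}(t+k)-\Ttr_{a_{k+1}}(t+k)-\Ttr_{c_{k+1}}(t+k)$, which is nonnegative precisely by Proposition~\ref{prop:tropical_linearization} applied at time $t+k$. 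Your proposed ``discrete concavity in the radial direction'' via the speeds does not give this; the required inequality compares red neighbors to blue neighbors at a single vertex, not speeds across three consecutive slices.
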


Recall that  we can compute $h(\D)$ by walking up from vertex $O$ to vertex $O'$ on the cylinder, collecting a contribution of $\pm 1$ or $\pm 3$ on each step. Let $\epsilon_i = +1$ if the $i$-th step along this path contributes a positive value and let $\epsilon_i = -1$ if it 
contributes a negative value. It is easy to see that $$\hh(\D) = \frac{m+1+\sum_{i=1}^{m+1} \epsilon_i}{2}.$$ If all dominos of $\D$ are horizontal, each layer of the cylinder $\Cc_{m,2n}$ has exactly two ways to be tiled, one contributing $\epsilon_i = +1$ and the other 
contributing $\epsilon_i = -1$.

\begin{lem}
 If all dominos in $\D$ are horizontal, we have $$\sum_{u \in \Cc_{m,2n}} (1 - d_{\D}(u))\l(u) = \frac{1}{2} \sum_{i=1}^{m+1} \epsilon_i \cdot \left(\speed_i^+ - \speed_{i-1}^+\right).$$
\end{lem}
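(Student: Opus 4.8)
The plan is to analyze a horizontal-domino tiling $\D$ layer by layer, tracking how the exponents $1-d_\D(u)$ interact with the tropical $T$-system data $\l$ restricted to the affine slices. Each of the $2n$ layers of the cylinder $\Cc_{m,2n}$ (each an annulus of width $m+1$ that runs parallel to the copies of $\affA_{2n-1}$) is tiled by $\D$ in one of the two ways compatible with horizontality, and this choice is exactly the sign $\epsilon_i$ recorded when walking from $O$ to $O'$. I would first set up the bookkeeping: a horizontal domino covers two cells in a fixed layer, its cut edge is vertical, and hence it contributes to the vertical degree $d_\D$ of its two endpoints. For a fully horizontal tiling the graph $G_\D$ is a disjoint union of vertical paths, so $d_\D(u)\in\{0,1,2\}$ is determined purely by the pattern of matched/unmatched vertical edges at $u$, which in turn is governed by the $\epsilon_i$ of the two adjacent layers. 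The key computation is then to expand $\sum_u (1-d_\D(u))\l(u)$ by grouping terms according to pairs of consecutive layers $(i-1,i)$, and to recognize that the $\l$-weight accumulated between layers $i-1$ and $i$ — summed over the full circumference of the cylinder, i.e.\ over a whole copy of $\affA_{2n-1}$ — is precisely $\speed^+_{i}$ (or its negative, depending on $\epsilon_i$), using the stabilized-speed hypothesis $t\gg 0$ together with the definitions of $\speed_W,\speed_B$ from Section~\ref{sec:behavior} and the identification of affine slices $\affL_1,\dots,\affL_m$ with the internal layers.

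Concretely, I would argue as follows. Index the $m+2$ horizontal "levels" of nodes from $O$ at level $0$ to $O'$ at level $m+1$; the internal levels $1,\dots,m$ carry the affine slices $\affL_1,\dots,\affL_m$ and the boundary levels $0,m+1$ carry frozen $\l$-values (which contribute nothing since there $u=0$ in the tropical/exponent sense). When $\D$ is fully horizontal, the contribution to $\sum_u(1-d_\D(u))\l(u)$ from the nodes on level $r$ depends only on whether the dominoes in the annulus immediately above level $r$ and immediately below level $r$ are "aligned" or "offset," which is recorded by $\epsilon_r$ and $\epsilon_{r+1}$; a short case check on the four sign combinations shows that the level-$r$ contribution equals $\tfrac12(\epsilon_r-\epsilon_{r+1})\cdot(\text{weighted sum of }\l\text{ over }\affL_r)$ up to reindexing. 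Summing over $r=0,\dots,m+1$ and applying Abel summation (reorganizing $\sum_r(\epsilon_r-\epsilon_{r+1})\cdot(\cdot)_r$ into $\sum_i \epsilon_i\cdot[(\cdot)_i-(\cdot)_{i-1}]$) yields
\[
\sum_{u \in \Cc_{m,2n}} (1 - d_{\D}(u))\l(u) = \frac{1}{2} \sum_{i=1}^{m+1} \epsilon_i \cdot \left(\speed_i^+ - \speed_{i-1}^+\right),
\]
once one checks that the "weighted sum of $\l$ over $\affL_r$" is exactly $\speed^+_r$ — this is where Corollary~\ref{cor:speeds} and Proposition~\ref{prop:linear_algebra} enter, since for $t\gg0$ the growth of $\Ttr$ on $\affL_r$ is $\coeff(\affL)\speed^+_r\v$ and $\speed_{\affL}$ is the relevant linear functional of the layer data. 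The convention $\speed^+_0=\speed^+_{m+1}=0$ handles the boundary layers.

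The main obstacle I anticipate is the careful identification, for a horizontal tiling, of the quantity $\sum_{u\in\text{level }r}(1-d_\D(u))\l(u)$ with the combination $\tfrac12(\epsilon_r-\epsilon_{r+1})\speed^+_r$: this requires pinning down precisely how horizontal dominoes in adjacent annuli determine the vertical degrees at intermediate nodes, and reconciling the chessboard coloring / Thurston-height sign conventions of Figure~\ref{fig:sadd5} with the white/black bipartition $\e$ of the quiver used to define $\speed_W,\speed_B$. Getting the signs $\epsilon_i$ to line up with the $\pm1,\pm3$ increments of the height walk — and confirming that the "offset" of dominoes between layers $i-1$ and $i$ reads off $\speed^+_i-\speed^+_{i-1}$ rather than some shifted variant — is the delicate part; the rest is Abel summation and the already-established facts that speeds have stabilized and that frozen boundary variables drop out.
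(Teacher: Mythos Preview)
Your approach is correct but is the dual of the paper's and adds an unnecessary Abel summation. The paper groups the sum $\sum_u(1-d_\D(u))\l(u)$ by \emph{layer of dominoes}: the $i$-th row of horizontal dominoes has vertical cut edges that hit exactly the white vertices on affine slice $i$ and the black vertices on slice $i-1$ (or vice versa, depending on $\epsilon_i$), so after subtracting the $\epsilon_i$-independent part that cancels against $\sum_u\l(u)$, the $i$-th row contributes precisely $\tfrac12\epsilon_i(\speed^+_i-\speed^+_{i-1})$ directly --- no summation by parts is needed. You instead group by \emph{level of nodes} and then Abel-sum; this works, but your intermediate per-level formula needs a sign check (the four-case analysis in fact gives zero when $\epsilon_r=\epsilon_{r+1}$ and $\pm\speed^+_r$ otherwise, i.e.\ $\tfrac12(\epsilon_{r+1}-\epsilon_r)\speed^+_r$, consistent with your anticipated sign issues). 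Two minor slips: ``$2n$ layers'' should be ``$m+1$ layers''; and identifying the signed level-$r$ sum with $\speed^+_r$ is simply the definition of $\speed_W$ for $\affA_{2n-1}$ (where $\eig\equiv 1$) at a stabilized time, so invoking Corollary~\ref{cor:speeds} and Proposition~\ref{prop:linear_algebra} here is heavier than necessary.
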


\begin{proof}
 Depending on which of the two ways to tile the $i$-th layer of $\Cc_{m,2n}$ is used, this part of the tiling contributes into degrees $d_{\D}(u)$ for exactly half of vertices on each of the affine slices $i$ and $i-1$. Specifically, it either contributes to degrees of white $u$-s 
 on the $i$-th affine slice and degrees of black $u$-s for the $(i-1)$-st affine slice, or the other way around. The statement of the lemma is the numerical expression of this observation.  
\end{proof}

Now we are ready to prove Proposition \ref{prop:tropH}.

\begin{proof}
 By Lemma \ref{lem:max} we know that the maximum in the definition of $H_r^{\oplus}$ is achieved at one of the tilings $\D$ with all dominos horizontal. There are $2^{m+1}$ such tilings, corresponding to $2^{m+1}$ choices one can make for each $\epsilon_i$: either $\epsilon_i=1$ or 
 $\epsilon_i=-1$. Furthermore, the maximum is taken over $\D$-s with $\hh(\D) = r$, which means exactly $r$ among $\epsilon$-s are $+1$. 
 
 According to Lemma \ref{lem:wsub} we know that 
 $$\speed_1^+ = \speed_1^+ - \speed_{0}^+ \geq \speed_2^+ - \speed_{1}^+ \geq \ldots$$ $$\geq \speed_{m+1}^+ - \speed_{m}^+ = - \speed_{m}^+.$$
 
 Thus the maximum is obviously achieved when {\it {the first}} $r$ among $\epsilon$-s are equal to $+1$, and the rest of them are equal to $-1$. The terms cancel out resulting in 
 $$\frac{1}{2} \sum_{i=1}^{m+1} \epsilon_i \cdot \left(\speed_i^+ - \speed_{i-1}^+\right) = \speed_r^+.$$
 Thus, the maximal term in the expression for $H_r^{\oplus}$ is equal to $\speed_r^+$, as desired. 
 
\end{proof}

\subsection{Proof of Lemma \ref{lem:max}}

Consider a domino tiling $\D$ of the cylinder $\Cc_{m,2n}$ which has vertical dominos. Our strategy will be to construct a different tiling $\D'$, which has strictly less vertical dominos than $\D$, and such that  
$$\sum_{u \in \Cc_{m,2n}} (1 - d_{\D}(u))\l(u)  \;\; \leq \sum_{u \in \Cc_{m,2n}} (1 - d_{\D'}(u))\l(u).$$

Recall that there is a distinguished tiling $\S$ which we call the sea, such that superposition of $\S$ with any other tiling does not contain contractible closed cycles, except for possibly double dominos. Consider the double dimer $\D \cup \S$
obtained by taking superposition of our $\D$ and $\S$. Several hula hoops are formed. In fact, if $\D$ is contributing to $H_r^{\oplus}$, then $\hh(\D)=r$ and exactly $r$ hula hoops are formed. 

       \begin{figure}
    \begin{center}
\vspace{-.1in}
\scalebox{0.8}{
\input{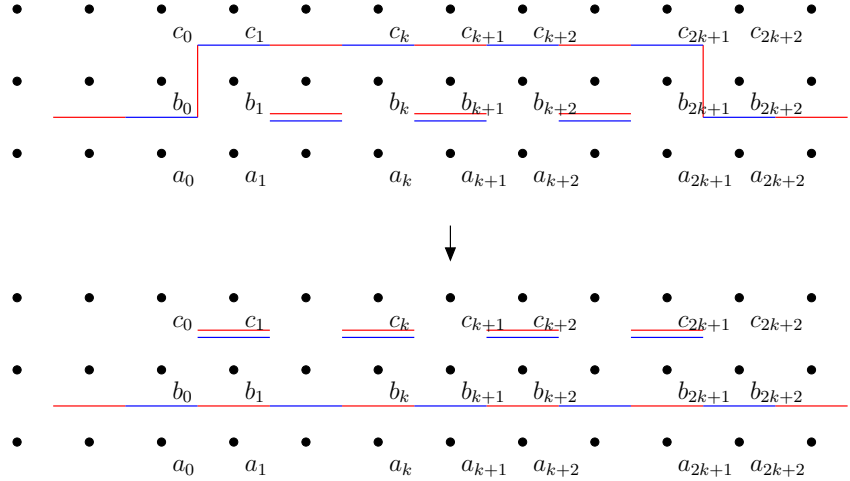} 
}
\vspace{-.1in}
    \end{center} 
    \caption{The superpositions $\S \cup \D$ (top) and $\S \cup \D'$ (bottom) at the local part where hula hoop $H$ is being straightened.}
    \label{fig:sadd17}
\end{figure}

Consider the lowest of the hula hoops $H$ which has vertical dominos. It exists because we assume $\D$ has some vertical dominos, and the part of $\D \cup \S$ not covered by hula hoops consists of horizontal double dominos. 
Choose one of the highest points in $H$ and consider the horizontal part of $H$ that contains this point together with two vertical dominos on its ends, see Figure \ref{fig:sadd17}. Note that $H$ may have several such parts, we just pick one of them. We create $\D'$ by straightening $H$ in this local spot, 
as shown at the bottom of Figure \ref{fig:sadd17}. It is clear that $\D'$ has strictly less vertical dominos than $\D$ does.

\begin{prop} \label{prop:flat}
 We have $\sum_{u \in \Cc_{m,2n}} (1 - d_{\D}(u))\l(u)  \leq \sum_{u \in \Cc_{m,2n}} (1 - d_{\D'}(u))\l(u).$.
\end{prop}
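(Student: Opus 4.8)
The plan is to compare the two tilings $\D$ and $\D'$ locally, since they agree outside the small region where we straighten the hula hoop $H$. Write $W = \sum_{u \in \Cc_{m,2n}} (1 - d_{\D}(u))\l(u)$ and $W' = \sum_{u \in \Cc_{m,2n}} (1 - d_{\D'}(u))\l(u)$. Because the modification only affects a horizontal strip of $H$ together with the two vertical dominoes capping its ends, the difference $W' - W$ is a finite sum involving only the vertices $a_i, b_i, c_i$ (in the notation of Figure~\ref{fig:sadd17}) and the change in their degrees $d_{\D}(u) \to d_{\D'}(u)$ in the cut-edge graph $G_\D \to G_{\D'}$. First I would write out, from the picture, exactly which cut edges are present in each of the two local configurations: in $\D$ the strip is a chain of horizontal dominoes with two vertical dominoes at its ends, producing a ``staircase'' of cut edges running diagonally; in $\D'$ it is replaced by an all-horizontal chain whose cut edges run along a single row. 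I would then compute $d_{\D'}(u) - d_{\D}(u)$ for each affected $u$ and hence get an explicit linear expression for $W' - W$ in terms of the labels $\l(a_i), \l(b_i), \l(c_i)$.

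Next I would express this linear combination in terms of the quantities already shown to be useful, namely the stabilized speeds $\speed_i^+$. Since $t \gg 0$, the restriction of the tropical $T$-system to each affine slice is governed by Corollary~\ref{cor:speeds}, so along each slice the values grow linearly with slope proportional to $\speed_i^+ \v(\cdot)$; moreover the local structure of the minimal tiling $\S$ and the fact that $H$ was chosen to be the \emph{lowest} hula hoop with vertical dominoes constrain which slices $a_i, b_i, c_i$ sit on and how their labels compare. Concretely I expect the telescoping difference $W' - W$ to reduce to a sum of terms of the shape $\speed_{r}^+ - \speed_{r+1}^+$ or, after grouping, to a sum of the ``discrete second differences'' $2\speed_r^+ - \speed_{r-1}^+ - \speed_{r+1}^+$ appearing in Lemma~\ref{lem:wsub}. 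That lemma (weak subadditivity of the stabilized speeds) then forces each such second-difference term to be $\le 0$ with the appropriate sign bookkeeping, yielding $W' - W \ge 0$, which is exactly Proposition~\ref{prop:flat}.

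The main obstacle, I expect, is the bookkeeping in the first step: correctly reading off the change in the cut-edge degrees $d_\D(u)$ when one ``straightens'' a local staircase of $H$, keeping careful track of the boundary effects at the two vertical dominoes (the cells $a_0,b_0,c_0$ and $a_{2k+2},b_{2k+2},c_{2k+2}$ in Figure~\ref{fig:sadd17}) and of the fact that some of the involved vertices may lie on the boundary of $\Cc_{m,2n}$ where $\l = 0$. A secondary subtlety is to justify rigorously that the chosen local part of $H$ really is ``at one of the highest points'', so that after straightening the result $\D'$ is still a legitimate tiling (the two red dominoes freed by removing the vertical caps must be reattachable horizontally) and still has the same Thurston height, hence still contributes to the maximum defining $H_r^\oplus$ with the same value of $\hh$. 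Once the local degree computation is pinned down, the reduction to Lemma~\ref{lem:wsub} should be a short telescoping argument, and iterating the construction (strictly decreasing the number of vertical dominoes each time) proves Lemma~\ref{lem:max}.
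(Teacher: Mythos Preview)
Your first step --- computing $W'-W$ as a local linear combination of the labels $\l(a_i),\l(b_i),\l(c_i)$ --- is exactly what the paper does, and the bookkeeping is not hard. The second step, however, has a genuine gap. You expect $W'-W$ to telescope into a combination of the slice speeds $\speed_r^+$ and then to appeal to Lemma~\ref{lem:wsub}. But the $\speed_r^+$ are \emph{global} invariants of an entire affine slice (a signed sum of $\l(u)$ over all $2n$ vertices of that slice, weighted by $\eig$), whereas $W'-W$ only involves the handful of vertices $a_i,b_i,c_i$ sitting under the chosen peak of $H$. There is no reason for this local linear form to be expressible in terms of the $\speed_r^+$ alone; the values away from the peak are simply not present. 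So Lemma~\ref{lem:wsub}, which is the slice-averaged version of inequality~(\ref{eq:inequality_tropical}), is too coarse to control a pointwise local difference.

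The paper's argument avoids this by a different reduction. After writing out $W'-W$ as the explicit alternating sum in the $\mu(a_i),\mu(b_i),\mu(c_i)$, it invokes Lemma~\ref{lem:time}: since for $t\gg 0$ each affine slice evolves by the \emph{linear} (Coxeter) recurrence, running the system forward $k$ steps turns precisely that alternating sum into the four quantities $\Ttr_{b_k}(t+k)$, $\Ttr_{b_{k+2}}(t+k)$, $\Ttr_{a_{k+1}}(t+k)$, $\Ttr_{c_{k+1}}(t+k)$. The desired inequality then becomes
\[
\Ttr_{b_k}(t+k)+\Ttr_{b_{k+2}}(t+k)\ \ge\ \Ttr_{a_{k+1}}(t+k)+\Ttr_{c_{k+1}}(t+k),
\]
which is exactly the pointwise inequality~(\ref{eq:inequality_tropical}) at the vertex $b_{k+1}$ at time $t+k$, guaranteed by Proposition~\ref{prop:tropical_linearization}. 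In short: the missing idea is not weak subadditivity of speeds, but the observation that the local linear combination is literally the output of the linear $T$-system dynamics after $k$ steps, reducing everything to a single ``red $\ge$ blue'' inequality at one vertex.
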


Denote the vertices surrounding this part of $H$ at time $t$ by $a_i$-s, $b_i$-s, and $c_i$-s, $0 \leq i \leq 2k+2$ as shown in Figure \ref{fig:sadd17}. Let us put $\mu(v):=\Ttr_v(t)$ for all $v\in\VertQ$.  We assume $t\gg0$ is sufficiently large for the claim of Corollary~\ref{cor:speeds} to hold. 
Then the time evolution of $a$-s depends only on the values of $\mu$ at $a$-s, etc. 
More formally, the following lemma holds,  describing the values of the tropical $T$-system at time $t+k$ for all $0\leq k\leq n-1$:

\begin{lem} \label{lem:time}
 Define $\e_k$ to be $0$ if $k$ is even and $1$ if $k$ is odd. Then we have 
 \begin{eqnarray*}
  \Ttr_{a_{k+1}}(t+k) &=& \sum_{i=1}^{k+1} \mu(a_{2i-1}) - \sum_{i=1}^{k} \mu(a_{2i}),\\
  \Ttr_{b_k}(t+k) &=& \sum_{i=0}^{k} \mu(b_{2i}) - \sum_{i=1}^{k} \mu(b_{2i-1}),\\
  \Ttr_{b_{k+2}}(t+k) &=& \sum_{i=1}^{k+1} \mu(b_{2i}) - \sum_{i=1}^{k} \mu(b_{2i+1}),\\
  \Ttr_{c_{k+1}}(t+k) &=& \sum_{i=1}^{k+1} \mu(c_{2i-1}) - \sum_{i=1}^{k} \mu(c_{2i}).
 \end{eqnarray*}  
\end{lem}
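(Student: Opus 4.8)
The plan is to prove Lemma~\ref{lem:time} by induction on $k$, unwinding the tropical $T$-system recurrence
\[
\Ttr_v(t+1)+\Ttr_v(t-1)=\max\left(\sum_{u\to v}\Ttr_u(t),\sum_{v\to w}\Ttr_w(t)\right)
\]
in the local patch surrounding the hula hoop $H$, and showing that in this patch the maximum is always attained by the \emph{red} term (the $\Gamma$-neighbors, i.e. the copies of $\affA_{2n-1}$), so that the tropical recurrence degenerates into the \emph{linear} affine Coxeter dynamics analyzed in Section~\ref{sec:behavior}. The key input is that $t\gg0$ is chosen past the time $t_0$ of Proposition~\ref{prop:tropical_linearization}, so that inequality~(\ref{eq:inequality_tropical}) holds at every subsequent moment and every white vertex; together with the symmetric statement for black vertices this means each affine slice $\affL_r$ evolves autonomously according to $\mutmat_W,\mutmat_B$. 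Concretely, the slices through $a_*$, $b_*$, $c_*$ sit on consecutive copies $\affL_{r-1},\affL_r,\affL_{r+1}$ (or with $b$ in the middle), and the only coupling between them is exactly the blue-edge term which is being dominated. So on the $a$-row the recurrence becomes $\Ttr_{a_{j}}(t+1)+\Ttr_{a_{j}}(t-1)=\Ttr_{a_{j-1}}(t)+\Ttr_{a_{j+1}}(t)$ along the cycle, i.e. the linearized octahedron/Coxeter recurrence on $\affA_{2n-1}$, and likewise for $b$ and $c$.

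The induction would go as follows. First I would set up the base case $k=0$: by definition $\Ttr_{a_1}(t)=\mu(a_1)$, $\Ttr_{b_0}(t)=\mu(b_0)$, $\Ttr_{b_2}(t)=\mu(b_2)$ (wait — careful with parity; the indices need $t+\e_v$ even, so only half of the $a_i,b_i,c_i$ literally carry values at time $t$, the rest at time $t+1$), matching the stated formulas with empty or singleton sums. For the inductive step, assuming the four formulas at level $k-1$ (and $k$ for the complementary parity), I would plug into the tropical recurrence at the relevant vertex; using Proposition~\ref{prop:tropical_linearization} to drop the $\max$ in favor of the red sum, and using the induction hypothesis for the two spatial neighbors at time $t+k-1$ and the value at time $t+k-2$, the telescoping sums collapse exactly to the claimed alternating sums at level $t+k$. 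This is the standard "tropical linear recurrence solved by telescoping" computation; the manipulation is routine once the $\max$ has been removed, because alternating partial sums $\sum(-1)^i \mu(a_i)$ satisfy the linearized recurrence by inspection. I would also note that $k$ ranges only up to $n-1$ so that the patch has not yet "wrapped around" the cycle $\affA_{2n-1}$ and interacted with itself — this is why the formulas stay in the clean linear form, and it is implicitly why Corollary~\ref{cor:speeds} (with its period $2h_a(\affA_{2n-1})=2n$) is the natural range.

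The main obstacle I expect is bookkeeping rather than conceptual: keeping track of the chessboard parity so that each of $a_i$, $b_i$, $c_i$ is assigned its value at the correct time $t$ or $t+1$, and making sure the indices in the four displayed sums line up with the geometry of Figure~\ref{fig:sadd17} (which vertices are the "ends" of the straightened segment, and which are the vertical dominoes $a_0,a_{2k+2}$ etc. that sit outside the active window). A secondary point to verify carefully is that the hypothesis "$t\gg0$ large enough for Corollary~\ref{cor:speeds}" genuinely licenses dropping the $\max$ at \emph{every} vertex appearing in the computation for all of the $n$ time steps $t,t+1,\dots,t+n-1$ — this follows because Proposition~\ref{prop:tropical_linearization} gives a single $t_0$ beyond which (\ref{eq:inequality_tropical}) holds for all $t$ and all $v$, so one just takes $t>t_0$. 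Once these are pinned down, the proof is a direct induction, so in the write-up I would state the base case, do one representative inductive step (say for $\Ttr_{a_{k+1}}$), and remark that the other three are identical with the signs and index shifts adjusted; then Proposition~\ref{prop:flat} and hence Lemma~\ref{lem:max} follow by comparing $\sum_u(1-d_{\D}(u))\l(u)$ with $\sum_u(1-d_{\D'}(u))\l(u)$ using these explicit values, which is where the inequality direction gets used.
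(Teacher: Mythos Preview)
Your proposal is correct and follows essentially the same approach as the paper: the paper's proof simply states that the formulas are a ``straightforward application'' of the linearized recurrence $\Ttr_{a_i}(t+j+1) = \Ttr_{a_{i-1}}(t+j) + \Ttr_{a_{i+1}}(t+j) - \Ttr_{a_i}(t+j-1)$, which holds because Proposition~\ref{prop:tropical_linearization} and the choice of $t\gg0$ force the $\max$ to always be attained on the $\Gamma$-side. Your write-up is just a more detailed unpacking of this same argument, with the induction and parity bookkeeping made explicit.
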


\begin{proof}
A straightforward application of recurrences $$\Ttr_{a_i}(t+j+1) = \Ttr_{a_{i-1}}(t+j) + \Ttr_{a_{i+1}}(t+j) - \Ttr_{a_i}(t+j-1), \text{  etc.}$$
which hold due to Proposition \ref{prop:tropical_linearization} and our choice of large enough initial time $t$.
\end{proof}

Now we are ready to prove Proposition \ref{prop:flat}. 

\begin{proof}
 Each edge of $G(\D)$ subtracts from the corresponding term of $H_r^{\oplus}$ two variables on its ends. Thus, we will compare those contributions for $G(\D)$ and $G(\D)$. We want to show that one is bigger than the other, which translates into 
 $$\mu(b_0) + \mu(b_1) + \sum_{i=1}^k (\mu(a_{2i}) + 2 \mu(b_{2i}) + \mu(c_{2i})) + \mu(b_{2k+1}) + \mu(b_{2k+2}) \geq \sum_{i=1}^k (\mu(a_{2i-1}) + 2 \mu(b_{2i-1}) + \mu(c_{2i-1})).$$
 By Lemma \ref{lem:time} this is easily seen to be equivalent to 
 $$\Ttr_{b_k}(t+k) + \Ttr_{b_{k+2}}(t+k) \geq \Ttr_{a_{k+1}}(t+k) + \Ttr_{c_{k+1}}(t+k),$$
 which holds by Proposition \ref{prop:tropical_linearization} and our choice of large enough $t$.
\end{proof}

\section{Conjectures}\label{sec:tropical_conjectures}

We conjecture that both soliton resolution and speed conservation properties hold for all families of our classification in Theorem \ref{thm:class}. 

For soliton resolution, we make the following conjecture, generalizing Proposition \ref{prop:tropical_linearization}. It can also be viewed as a tropical analog of Conjecture \ref{conj:real}. 

\begin{conjecture} \label{conj:sp2}
 For any quiver $Q$ in our  \affinite classification and any initial conditions either over $\mathbb Z$, or more generally over $\mathbb R$, there exists $t_0$ such that for $|t| > t_0$ the edges of finite component graph $\Delta$ do not affect the dynamics, i.e. for any vertex $v \in Q$ we have 
 \[\sum_{(u,v)\in\Gamma}\Ttr_u(t)\geq \sum_{(v,w)\in\Delta}\Ttr_w(t).\]
\end{conjecture}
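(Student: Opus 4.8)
\textbf{Proof proposal for Conjecture~\ref{conj:sp2}.}

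The natural approach is to try to extend the argument of Proposition~\ref{prop:tropical_linearization} from the tensor-product case $\affA\otimes A$ to all quivers $Q$ in the classification of Theorem~\ref{thm:class}, and to promote it from $\Z$-valued to $\R$-valued initial data. First I would set up the same machinery: by Theorem~\ref{thm:class} the red part $\Gamma$ of $G(Q)$ decomposes into affine $ADE$ components $\affL_1,\dots,\affL_k$, so the linear-algebraic package of Section~\ref{sec:behavior} (the functionals $\speed_\affL$, $\summ_\affL$, Propositions~\ref{prop:linear_algebra}, \ref{prop:almost_periodic}, and especially Proposition~\ref{prop:speed}) applies verbatim to each component, since that machinery only used that the component is an affine $ADE$ Dynkin diagram and made no reference to how the blue edges $\Delta$ are attached. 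Thus for each affine slice $\affL_j$ the quantities $\speed_{\affL_j}(t)$ are weakly increasing in $t$, and each violation of~(\ref{eq:inequality_tropical}) on $\affL_j$ forces a strict increase. The goal is then exactly: show that for any initial data, each $\speed_{\affL_j}(t)$ is eventually constant as $t\to+\infty$ (and as $t\to-\infty$), which is equivalent to the displayed inequality holding for $|t|\gg 0$.

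The key step, and the place where the proof of Proposition~\ref{prop:tropical_linearization} does not directly generalize, is the input that $\summ_\affL(t)$ cannot grow faster than linearly. In the tensor-product case this came from Proposition~\ref{prop:maxdeg} together with the fact that the geometric $T$-system $T_v(t)$ satisfies a linear recurrence — i.e.\ from Zamolodchikov integrability, which in turn was proven in~\cite{P} for $\affA\otimes A$. For a general $Q$ in the classification we only \emph{conjecture} integrability (Conjecture~\ref{conj:linearizable}), so I would proceed in two ways. The clean conditional route: assume Conjecture~\ref{conj:linearizable} (or merely that each $T_v$ satisfies some linear recurrence over $\Q(\x)$), and then the argument of Proposition~\ref{prop:tropical_linearization} goes through word for word over $\Z$: if some $\speed_{\affL_j}(t)\to+\infty$ then $\summ_{\affL_j}(t)$ is superlinear, hence some $\maxdeg(q,T_v(t)\mid_{\x=q^\l})$ is superlinear by Proposition~\ref{prop:maxdeg}, contradicting the linear recurrence. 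For an \emph{unconditional} treatment one would instead need a purely tropical/combinatorial proof that the piecewise-linear dynamics of the max-plus $T$-system on these specific bigraphs has at most linear growth; the most promising tool here is the tropical $Y$-system $\Ytr_v(t)$ used in the proof of Theorem~\ref{thm:implies}, whose mutations are invertible involutions, combined with the explicit combinatorial model (domino tilings, Thurston height, Goncharov--Kenyon Hamiltonians) developed in Part~3 — but generalizing that model to the other $18$ families is itself a substantial open problem, flagged as such in Sections~\ref{sec:geometric_conjectures}--\ref{sec:tropical_conjectures}.

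The second obstacle is the passage from $\Z$ to $\R$, which the paper explicitly leaves open even for $\affA\otimes A$ (Remark~\ref{rmk:real_values}). The difficulty is that Proposition~\ref{prop:maxdeg} and the Laurent-polynomial bridge to the geometric $T$-system genuinely use integer exponents, so there is no direct substitution $\x=q^\l$ when $\l$ is real-valued. The plan here would be to argue by a density/continuity argument: the tropical $T$-system map $\l\mapsto(\Ttr_v(t))_{v}$ is, for each fixed $t$, piecewise-linear and continuous in $\l\in\R^{\VertQ}$, and $\speed_{\affL_j}(t)$ and $\summ_{\affL_j}(t)$ are likewise continuous piecewise-linear; one would want to show the stabilization time $t_0(\l)$ is locally bounded, so that the result on the dense set $\Q^{\VertQ}$ (obtained from the $\Z$-case by scaling) extends to $\R^{\VertQ}$. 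The main obstacle to this is that $t_0(\l)$ could a priori blow up near the walls of the piecewise-linear chambers, so one needs a uniform bound on $t_0$ over compact sets — essentially a quantitative version of Corollary~\ref{cor:speeds}. I expect this uniformity statement, rather than the component-wise linear-algebra bookkeeping, to be the hardest part of a complete proof, and it is likely why the conjecture remains open.
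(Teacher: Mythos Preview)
The statement you are asked to prove is labeled as a \emph{Conjecture} in the paper, and the paper does not supply a proof --- it appears in Section~\ref{sec:tropical_conjectures} precisely as an open problem. So there is no ``paper's own proof'' to compare against, and your proposal is, appropriately, not a proof either but an honest analysis of what is known and where the obstacles lie.

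Your analysis is accurate. The conditional reduction you describe --- that assuming Conjecture~\ref{conj:linearizable} the $\Z$-valued case follows by the argument of Proposition~\ref{prop:tropical_linearization} --- is exactly right and is already implicit in the paper: Proposition~\ref{prop:tropical_linearization} is stated for any Zamolodchikov integrable $Q$ whose red components are affine, and its proof uses nothing beyond the existence of a linear recurrence for each $T_v$. So over $\Z$ the conjecture is indeed equivalent (for these quivers) to Conjecture~\ref{conj:linearizable}. Your identification of the $\R$-valued case as a genuinely separate obstacle also matches the paper's own Remark~\ref{rmk:real_values}, which flags this gap even for $\affA\otimes A$. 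The density/continuity strategy you sketch is reasonable but, as you note, founders on controlling the stabilization time $t_0(\l)$ uniformly; the paper offers no tool for this, and neither do you. In short: you have correctly diagnosed why this is stated as a conjecture rather than a theorem, but you have not closed either gap.
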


In other words, for large enough time in both directions the affine slices of $Q$ evolve as separate particles. 

For speed conservation, we need to consider two cases, just as we did in Conjectures \ref{conj:symm1} and \ref{conj:symm2}.

Each of the types $A_m$, $D_m$ and $E_6$ has a canonical involution on the Dynkin diagram, sending the diagram to itself. As before, denote this involution $\eta$. Assume our tropical $T$-system is of the tensor product type, and more specifically of the form $\Lambda' \otimes \hat \Lambda$, where $\Lambda'$ 
is a finite type Dynkin diagram of type $A_m$, $D_{2m+1}$ or $E_6$, and $\hat \Lambda$ is an arbitrary extended Dynkin diagram.
Let $\hat \Lambda_v$ and $\hat \Lambda_{\eta(v)}$ be two affine slices of $\Lambda' \otimes \hat \Lambda$ such that their $\Lambda'$ coordinates are related by $\eta$. Let $\speed_{v}^{\pm}$ and $\speed_{\eta(v)}^{\pm}$ be the corresponding speeds for $t\gg0$ and $t\ll0$. Here we assume that 
Conjecture \ref{conj:sp2} holds and thus the speeds are well-defined. 
The following conjecture generalizes Theorem \ref{thm:speed}. 
It can also be viewed as a tropical analog of Conjecture \ref{conj:symm1}.

\begin{conjecture} \label{conj:symmm1}
We have $$\speed_v^{+} = \speed_{\eta(v)}^{-}.$$
\end{conjecture}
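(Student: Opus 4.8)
\textbf{Plan of proof for Conjecture~\ref{conj:symmm1}.}

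Since this is a conjecture, the ``proof'' is necessarily programmatic: I would aim to reduce it, via a suitable tropical analog of the geometric argument, to a statement about conserved quantities of the tropical $T$-system that is amenable to the same domino-tiling technology used for Theorem~\ref{thm:speed}. The overall strategy mirrors the flow of Section~\ref{sec:conservation}: first establish that the speeds $\speed_v^\pm$ of an affine slice can be computed as the tropicalization of a conserved quantity of the geometric $T$-system; then invoke the palindromicity of the \emph{geometric} recurrence coefficients (Conjecture~\ref{conj:symm1}, or Corollary~\ref{cor:pal} in the known case) to pair up the relevant conserved quantities for $v$ and $\eta(v)$; then push the equality through tropicalization. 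Concretely, I would carry out the following steps.

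\textbf{Step 1: The speed of a slice is a tropicalized conserved quantity.} For $Q=\Lambda'\otimes\affL$ with $\Lambda'$ finite of type $A_m$, $D_{2m+1}$, or $E_6$, fix an affine slice $\hat\Lambda_v$. As in Corollary~\ref{cor:speeds}, for $t\gg0$ and $t\ll0$ the restriction of $\Ttr$ to this slice grows linearly with integer speeds $\speed_v^+$, $\speed_v^-$. The first claim to prove is that there exists a conserved quantity $J=J_v\in\Q(\x)$ of the geometric $T$-system, belonging to the upper cluster algebra (by Conjecture~\ref{conj:linearizable} and the following conjecture), such that for \emph{every} $t$ one has $\speed_v^+=\summ$-type linear functional of $\Ttr$ expressible as $\maxdeg(q,J\mid_{\x=q^\lambda})$, and similarly $\speed_v^-$ arises from the ``same'' $J$ read at $t\ll0$. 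In the $A_m\otimes\affA_{2n-1}$ case this is exactly Proposition~\ref{prop:tropH} together with Lemma~\ref{lem:tropH}: the role of $J$ is played by the Goncharov--Kenyon Hamiltonian $H_r$, and $\speed_r^\pm=H_r^\oplus$. For the general families this requires an analog of the explicit ``$H_r^\oplus$ equals the speed'' computation, which I would obtain by redoing the weak-subadditivity argument (Lemma~\ref{lem:wsub}) plus the ``flat tiling maximizes'' argument (Lemma~\ref{lem:max}) in whatever combinatorial model replaces domino tilings of the cylinder; for the tensor-product families one expects a cylindric/dimer model still to work, so the main new content is identifying the right $J_v$.

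\textbf{Step 2: Pair $v$ with $\eta(v)$ using palindromicity downstairs.} Granting Step~1, the equality $\speed_v^+=\speed_{\eta(v)}^-$ follows if I can show that the conserved quantity $J_v$ governing $\speed_v$ and the conserved quantity $J_{\eta(v)}$ governing $\speed_{\eta(v)}$ are related by the symmetry that exchanges $t\mapsto -t$. But by Conjecture~\ref{conj:symm1} (Corollary~\ref{cor:pal} in the established case $A_m\otimes\affA_{2n-1}$), the recurrence polynomials of $v$ and $\eta(v)$ have reversed coefficient sequences, i.e. the set of roots for $\eta(v)$ is the set of inverse roots for $v$. Running the geometric $T$-system backward in time, $T_v(-t)$ satisfies the recurrence whose characteristic polynomial is the reversal of that of $T_v(t)$ — hence the same recurrence as $T_{\eta(v)}(t)$. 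The ``speed $J$'' is a symmetric function of these roots chosen so that it picks out the dominant growth rate; under root-inversion the dominant growth rate of $T_v$ as $t\to+\infty$ equals the dominant growth rate of $T_{\eta(v)}$ as $t\to-\infty$. Tropicalizing via Proposition~\ref{prop:maxdeg} (which converts the leading exponential growth of $T$ into the linear growth of $\Ttr$) turns this matching of dominant roots into the desired identity $\speed_v^+=\speed_{\eta(v)}^-$. In the ``any other case'' covered by Conjecture~\ref{conj:symm2} the single recurrence polynomial $J(z)$ is itself palindromic, which is exactly what one needs for $v=\eta(v)$ and gives $\speed_v^+=\speed_v^-$; but note the present conjecture is stated only for the tensor-product types where $\eta$ is a genuine diagram involution, so I focus on that regime.

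\textbf{Main obstacle.} The serious difficulty is Step~1 in full generality: the entire chain (speed $=$ tropical conserved quantity $=$ a specific combinatorially defined Hamiltonian) currently rests on the domino-tiling picture available only for $A_m\otimes\affA_{2n-1}$, where Speyer's formula and the Thurston-height/hula-hoop machinery give explicit control. For the other fourteen infinite families and the exceptional bigraphs of Theorem~\ref{thm:class} one lacks (a) an explicit Laurent formula for $T_v(t)$, (b) a combinatorial model in which ``flat configurations maximize the tropicalization'' can be proved, and (c) even the statement that the $J_k$ are conserved quantities is only conjectural (it is Conjecture~\ref{conj:linearizable} and the ones after it). So a complete proof of Conjecture~\ref{conj:symmm1} would seem to require either first proving those structural conjectures, or finding a model-free argument: namely, directly showing from the linear-algebra of the affine Coxeter transformation (Propositions~\ref{prop:linear_algebra}--\ref{prop:almost_periodic}) and from the finitely-many-violations phenomenon (Conjecture~\ref{conj:sp2}, the tropical analog of Proposition~\ref{prop:tropical_linearization}) that the total ``charge'' $\sum_v \speed_v^+\v(v)$ and the individual speeds are invariant under the global time-reversal symmetry that, on the level of the bigraph $\Gamma=\Lambda'\otimes\affL$, is realized by the diagram involution $\eta$ of $\Lambda'$. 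This last route — a purely tropical, symmetry-based argument bypassing geometric lifting — is the cleanest conceptual target but also the one where I expect the real work to lie, since it must simultaneously handle the scattering region where slices interact and the asymptotic regions where Conjecture~\ref{conj:sp2} guarantees free propagation.
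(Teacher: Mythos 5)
The statement you are addressing is left as a conjecture in the paper; there is no proof to compare against, and your submission is explicitly a research plan rather than a proof, so there is a gap by construction. That said, your plan can be assessed against the one case the paper does prove, namely Theorem~\ref{thm:speed} for $A_m\otimes\affA_{2n-1}$. Your Step~1 faithfully reproduces the architecture of that proof: identify a tropical conserved quantity (there, $H_r^\oplus$, conserved by Lemma~\ref{lem:tropH} via Proposition~\ref{prop:maxdeg}), and show it computes the asymptotic speeds (Proposition~\ref{prop:tropH}, via Lemmas~\ref{lem:wsub}, \ref{lem:max} and \ref{lem:time}). You also correctly locate the hard part: none of this machinery (explicit Laurent formulas, a dimer model, even the existence of the conserved quantities $J_k$) is currently available outside type $\affA\otimes A$.

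Your Step~2, however, diverges from the paper's mechanism in a way that introduces an additional gap. In the proven case the index reversal $r\leftrightarrow m+1-r$ is \emph{not} obtained from palindromicity of the geometric recurrence --- Corollary~\ref{cor:pal} is never invoked in the proof of Theorem~\ref{thm:speed}; it falls out of the combinatorics of which tiling achieves the maximum in $H_r^\oplus$ at $t\gg0$ versus $t\ll0$, together with conservation of $H_r^\oplus$. Your proposed route --- reversing the characteristic polynomial, inverting its roots, and extracting a ``dominant growth rate'' that tropicalizes to the speed --- is heuristic at the key step: $\maxdeg(q,T_v(t)\mid_{\x=q^\l})$ is not in general governed by a single dominant root of the recurrence (the coefficients $\rec_j$ are themselves Laurent polynomials whose tropicalizations interact), and Proposition~\ref{prop:maxdeg} provides no such spectral statement. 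Moreover this step rests on Conjecture~\ref{conj:symm1}, which is itself unproven outside the boundary slices of $\affA\otimes A$, so even granting Step~1 you would have reduced one conjecture to another rather than proved it. If you pursue this, I would recommend abandoning the root-inversion heuristic and instead generalizing the paper's direct identification of both $\speed_v^+$ and $\speed_{\eta(v)}^-$ with a single tropical conserved quantity.
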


Assume now we are in {\it {any other case}}, i.e. either our tropical $T$-system belongs to a different family of the classification, or it is a tensor product but $\Lambda'$ is not of types $A_m$, $D_{2m+1}$ or $E_6$. 
Let $\hat \Lambda$ be any affine slice of the quiver, and let $\speed_{\hat \Lambda}^{\pm}$ be the corresponding speeds as  $t\gg0$ and $t\ll0$.
The following conjecture again generalizes Corollary \ref{cor:pal}.

\begin{conjecture}  \label{conj:symmm2}
We have $$\speed_{\hat \Lambda}^{+} = \speed_{\hat \Lambda}^{-}.$$
\end{conjecture}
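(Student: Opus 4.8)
\textbf{Proof proposal for Conjecture~\ref{conj:symmm2}.}

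The plan is to mimic, for each family in the classification of Theorem~\ref{thm:class}, the strategy that worked for $A_m\otimes\affA_{2n-1}$ in Section~\ref{sec:conservation}: namely, to identify the speed $\speed_{\affL}^{\pm}$ of an affine slice $\affL$ as a \emph{tropicalized conserved quantity} of the tropical $T$-system, and then conclude the equality $\speed_{\affL}^+=\speed_{\affL}^-$ purely from the fact that a conserved quantity takes the same value at $t\gg0$ and $t\ll0$. Concretely, suppose Conjecture~\ref{conj:linearizable} (and the accompanying conjectures on the conserved quantities $J_k$) holds for the quiver $Q$ under consideration, so that for each vertex $v$ on the slice $\affL$ the sequence $T_v(t)$ satisfies a minimal linear recurrence with palindromic coefficient vector $(J_0,J_1,\dots,J_{N},J_{N+1})$, where $J_0=J_{N+1}=1$ and $J_k=J_{N+1-k}$ (this palindromicity is exactly Conjecture~\ref{conj:symm2}, which is the ``non-tensor-product or exotic-$\Lambda'$'' case matching the hypothesis of Conjecture~\ref{conj:symmm2}). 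Tropicalizing via Proposition~\ref{prop:maxdeg}, the integers $\Ttr_v(t)=\maxdeg(q,T_v(t)\mid_{\x=q^\l})$ satisfy the max-plus analog of this recurrence, and its characteristic data is governed by the tropical roots $J_k^\oplus:=\maxdeg(q,J_k\mid_{\x=q^\l})$, which are conserved quantities of the tropical $T$-system by the same argument as Lemma~\ref{lem:tropH}.

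The key steps, in order, would be: (1) Assume Conjecture~\ref{conj:sp2} so that $\speed_\affL^\pm$ are well-defined, and recall from Corollary~\ref{cor:speeds} (whose proof only used Proposition~\ref{prop:tropical_linearization} and Proposition~\ref{prop:almost_periodic}, both valid for all \affinite quivers) that $\Ttr_v(t)$ grows linearly with slope governed by $\speed_\affL^+$ for $t\gg0$ and by $-\speed_\affL^-$ for $t\ll0$, in the direction of the dominant eigenvector $\v_\affL$. (2) Show that the asymptotic slope of $\Ttr_v(t)$ is the largest tropical root of the tropicalized recurrence polynomial $J^\oplus(z)=``z^{N+1}\oplus J_1^\oplus z^N\oplus\cdots"$, i.e.\ $\speed_\affL^+$ equals (a fixed positive multiple, depending only on $h_a(\affL)$ and $\coeff(\affL)$, of) $\max_k\bigl(J_k^\oplus/k\bigr)$ read off from the Newton polygon of $J^\oplus$ — this is the ``growth rate = top tropical eigenvalue'' principle. (3) Do the symmetric analysis at $t\ll0$: the backward recurrence has coefficient vector $(J_{N+1},J_N,\dots,J_1,J_0)$, which by palindromicity is the \emph{same} vector, so its Newton polygon is the reflection of the forward one, and its dominant tropical root — which dictates $\speed_\affL^-$ — is therefore equal to the forward dominant tropical root. (4) Invoke Lemma~\ref{lem:tropH}'s generalization: since each $J_k^\oplus$ is a conserved quantity of $\Ttr$, its value is independent of the reference time, so the Newton polygons at $t\gg0$ and $t\ll0$ literally coincide, and hence $\speed_\affL^+=\speed_\affL^-$.

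I would organize step~(2)–(3) to parallel the concrete computation in the proof of Proposition~\ref{prop:tropH}: there one shows the maximum in $H_r^\oplus$ is attained at an all-horizontal tiling and unwinds it into a telescoping sum of speed differences; the general analog is to show that the dominant exponent in the tropical characteristic polynomial is attained at the ``extreme'' monomial and equals the top speed, using the weak subadditivity of the speed sequence (Lemma~\ref{lem:wsub}, whose proof should go through for any \affinite quiver once Conjecture~\ref{conj:sp2} is granted). The hard part will be two-fold. First, Conjecture~\ref{conj:symmm2} is explicitly stated to be open even for $\affA_{2n-1}\otimes A_m$, so establishing step~(2) — that the asymptotic tropical speed is \emph{exactly} the dominant tropical root, with no ``degenerate cancellation'' lowering it — requires genuinely new input; in the $A_m\otimes\affA_{2n-1}$ case this was supplied by the domino-tiling model and Lemma~\ref{lem:max}, and no such combinatorial model is available for the other families. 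Second, the whole scheme is conditional on Conjectures~\ref{conj:linearizable} and~\ref{conj:symm2}, which are themselves open; an unconditional proof would instead have to produce the speed-conservation symmetry directly from the tropical dynamics, presumably by exhibiting an explicit time-reversal symmetry of the tropical $T$-system that swaps the roles of $\Gamma$ and $\Delta$ combined with the diagram automorphism — which is precisely the mechanism seen in Theorem~\ref{thm:implies}'s proof but would need to be upgraded from ``eventually an equality'' to an exact involution preserving the tropical Hamiltonians.
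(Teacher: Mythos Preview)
The statement you are attempting to prove is a \emph{conjecture} in the paper; there is no proof of Conjecture~\ref{conj:symmm2} in the paper to compare against. The paper proves only the special case $A_m\otimes\affA_{2n-1}$ (Theorem~\ref{thm:speed}), and that case in fact falls under Conjecture~\ref{conj:symmm1}, not~\ref{conj:symmm2}, since $\Lambda'=A_m$ carries the nontrivial involution $\eta$. Your remark that Conjecture~\ref{conj:symmm2} ``is explicitly stated to be open even for $\affA_{2n-1}\otimes A_m$'' is a misreading: that sentence in the paper refers to Conjecture~\ref{conj:tp} (total positivity of the $J_k$), not to speed conservation.

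What you have written is not a proof but a conditional program, and you yourself identify this at the end. Your outline assumes Conjectures~\ref{conj:linearizable}, \ref{conj:symm2}, and \ref{conj:sp2}, all of which are open. Even granting those, your step~(2) --- that the asymptotic tropical slope equals the dominant tropical root of the tropicalized recurrence --- is not established: tropicalization of a linear recurrence does not in general yield a max-plus recurrence for the degree sequence, because $\maxdeg$ is only subadditive under addition and cancellation can occur. In the $A_m\otimes\affA_{2n-1}$ case the paper circumvents this entirely by a direct combinatorial argument (Lemma~\ref{lem:max} and Proposition~\ref{prop:flat}) that never passes through a ``tropical characteristic polynomial''; you correctly note that no such model is available for the other families. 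So the genuine gap is that your scheme reduces one open conjecture to several others plus an unproven tropicalization lemma, rather than supplying an actual argument.
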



\bibliographystyle{plain}
\bibliography{subadd}

\end{document}